\newtheorem{thm}{Theorem}[section]
\newtheorem{cor}[thm]{Corollary}
\newtheorem{prop}[thm]{Proposition}
\newtheorem{lem}[thm]{Lemma}
\newtheorem{conj}[thm]{Conjecture}
\theoremstyle{definition}
\newtheorem{defn}[thm]{Definition}
\newtheorem{exmp}[thm]{Example}
\theoremstyle{remark}
\newtheorem{rem}[thm]{Remark}
\newtheorem{constr}[thm]{Construction}
\newtheorem{claim}[thm]{Claim}
\DeclareMathOperator{\Pic}{Pic}
\DeclareMathOperator{\Mor}{Mor}
\newcommand{\Nef}{\operatorname{Nef}}
\let\c@equation\c@thm
\numberwithin{equation}{section}
\title{Rational Curves on Coindex 3 Fano Varieties}%
\author{Eric Jovinelly and Fumiya Okamura}
\begin{document}

\maketitle

\begin{abstract}
    We describe the moduli space of rational curves on smooth Fano varieties of coindex 3.  For varieties of dimension 5 or greater, we prove the moduli space has a single irreducible component parameterizing rational curves of each effective numerical class.  For varieties of dimension 4, we describe families of rational curves in terms of Fujita's $a$-invariant.  
    Our results verify Lehmann and Tanimoto's Geometric Manin's Conjecture for all smooth coindex 3 Fano varieties over the complex numbers.

\end{abstract}

\section{Introduction}\label{section: intro}

A smooth variety $X$ is called \textit{Fano} if its anticanonical divisor $-K_{X}$ is ample.  %
Rational curves $f : \mathbb{P}^1 \rightarrow X$ are both abundant \cite{Mori1979} and important to the geometry of a Fano variety.  %
For instance, studies on rational curves of low anticanonical degree have shown smooth Fano varieties form bounded families in each dimension \cite{KoMiMo1992rationallyconnected} and classified those of low dimension \cite{mori1981classification, mukaiClassifcation, erratumMori}.  
Contemporary research has focused on rational curves of larger degree \cite{Thomsen1998, KimPandharipande2001, Testa2009, beheshti2021rational, Fanoindex1rank1, beheshti2020moduli, lastDelPezzoThreefold, BurkeJovinelly2022, Okamura2024delPezzo}.
This paper concerns the moduli of rational curves on higher dimensional analogues of Fano threefolds called smooth coindex 3 Fano varieties.  
These are varieties where $-K_X = (\dim X - 2)H$ for some indivisible Cartier divisor $H$. %
Throughout the paper, we work over the complex number field $\mathbb{C}$.

For each numerical equivalence class $\alpha$ of curves in $X$, we let %
$\Mor(\mathbb{P}^1, X, \alpha)$ denote the quasi-projective morphism scheme parameterizing maps $f : \mathbb{P}^1 \rightarrow X$ such that $f_*[\mathbb{P}^1] = \alpha$.  A map $f$ is called a \textit{free curve} if $f^*\mathcal{T}_X$ is globally generated.  
In the following theorems, $\Nef_1(X)$ (resp.\ $\overline{NE}(X)$) denotes the cone of nef (resp.\ effective) curve classes on $X$.  Our primary results classify components of $\Mor(\mathbb{P}^1, X, \alpha)$. %

\begin{thm}\label{thm: main result 1}
    Let $X$ be a smooth Fano variety of coindex 3 and dimension $\geq 5$.  For each $\alpha \in \overline{NE}(X)_{\mathbb{Z}}$, $\Mor(\mathbb{P}^1, X, \alpha)$ is irreducible and nonempty.  $\Mor(\mathbb{P}^1, X, \alpha)$ generically parameterizes embedded free curves if $\alpha \in \Nef_1(X) \setminus \partial\overline{NE}(X)$.
\end{thm}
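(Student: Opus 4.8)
The plan is to reduce to Picard rank one via Mukai's classification of coindex 3 Fano varieties, then to induct on the anticanonical degree, using Movable Bend and Break for the inductive step and the geometry of lines and conics on the explicit models as the base of the induction.

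\textbf{Reduction to $\rho(X) = 1$.} By the classification, a smooth coindex 3 Fano of dimension $\geq 5$ is either a prime ($\rho = 1$) Mukai variety --- a complete intersection in a weighted projective space (genus $2 \leq g \leq 5$) or a linear section of one of $G(2,5)$, $G(2,6)$, $\mathrm{LG}(3,6)$, $S_{10}$, $G_2/P$ --- or it has $\rho \geq 2$ and lies on a short explicit list of products and projective bundles (for instance $\mathbb{P}^3 \times \mathbb{P}^3$). In the $\rho \geq 2$ cases I would reduce $\Mor(\mathbb{P}^1, X, \alpha)$ to the morphism schemes of the factors (a map to a product is a pair of maps; for a projective bundle one uses the section/curve correspondence), so that irreducibility follows by induction on dimension; the statement about free curves then carries over because $\alpha$ in the interior of $\Nef_1(X)$ forces the induced class on each factor to be interior. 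This leaves the case $\rho(X) = 1$.

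\textbf{The rank one case.} Now $\overline{NE}(X)_{\mathbb{Z}} = \mathbb{Z}_{\geq 0}\,\ell$ with $H \cdot \ell = 1$, so it suffices to show $M_d := \Mor(\mathbb{P}^1, X, d\ell)$ is nonempty and irreducible for every $d \geq 1$. Nonemptiness, and the existence of free curves in every degree, follows from the fact --- read off the model --- that $X$ is covered by free curves of degree $1$ or $2$, together with smoothing of connected chains of free curves. For irreducibility I would induct on $d$. The base cases $d = 1, 2$ require genuinely geometric input: for each variety on the list one proves the Fano scheme of lines and the Hilbert scheme of conics are irreducible (realizing them as linear sections of the corresponding well-understood incidence schemes on the ambient homogeneous or weighted-projective variety, and, for the complete-intersection members, invoking the refined irreducibility results for lines and conics on complete intersections), and one checks these low-degree spaces have no component with non-free general member beyond the expected one. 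For $d \geq 3$, take two components $M, M' \subseteq M_d$; since $-K_X \cdot d\ell = (\dim X - 2)d$ is large, every component of $M_d$ contains a free curve, so Movable Bend and Break degenerates a general member of $M$ (resp.\ $M'$), inside $\overline{M}_{0,0}(X, d\ell)$, to a stable map from a chain of two $\mathbb{P}^1$'s carrying free curves of classes $d_1\ell$ and $d_2\ell$ with $d_1 + d_2 = d$ and $d_i \geq 1$. By induction each $M_{d_i}$ is irreducible, so the locus of such broken maps of a fixed splitting type is irreducible; its general point is an unobstructed point of the Kontsevich space lying in the closure of a unique component of $M_d$ (the one obtained by smoothing), and by further breaking, the component coming from the splitting $(1, d-1)$ agrees with the one from any other splitting. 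Hence $M = M'$. Finally, once $M_d$ is irreducible with free general member, a standard dimension count (using $\dim X \geq 5$) shows the general member is an embedding, which gives the last assertion of the theorem --- for $\rho = 1$ the interior of $\Nef_1(X)$ is all nonzero classes, and for $\rho \geq 2$ the reduction above applies.

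\textbf{Where the difficulty lies.} I expect the crux to be Movable Bend and Break: showing a general free rational curve of degree $d \geq 3$ genuinely degenerates, inside $M_d$, to a connected chain of two free curves of smaller degree. I would try to verify the hypotheses of Lehmann and Tanimoto's general Movable Bend and Break theorem --- a lower bound on $\dim X$ and on $-K_X \cdot \ell$, together with the absence of subvarieties $Y \subsetneq X$ with Fujita invariant $a(Y, -K_X|_Y) \geq a(X, -K_X)$, which is exactly what the hypothesis $\dim X \geq 5$ should supply --- and resort to the explicit model only for any sporadic variety where a uniform argument fails. A secondary difficulty is the base-case analysis of lines and conics, most delicate for the members (such as quartic fourfolds) where lines fail to cover $X$ and the induction must instead be seeded by conics.
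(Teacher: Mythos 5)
Your skeleton matches the paper's: reduce to the interesting classes, induct on $H$-degree, use Movable Bend-and-Break to degenerate to chains of free curves of lower degree, and seed the induction with lines and conics. The genuine gaps are exactly at the two places you assert rather than argue. The most serious is the claim that ``the locus of such broken maps of a fixed splitting type is irreducible.'' That locus is (an open subset of) the fiber product $\overline{M}_{0,1}(X,d_1\ell)\times_X \overline{M}_{0,1}(X,d_2\ell)$, and irreducibility of the two factors (your inductive hypothesis) does not give irreducibility of their fiber product over $X$. Two failure modes must be excluded: the general fiber of an evaluation map $\mathrm{ev}\colon \overline{M}_{0,1}(X,d_i\ell)\to X$ could be disconnected, in which case the fiber product splits along the Stein factorization; and the fiber product could acquire spurious components supported over loci where $\mathrm{ev}$-fibers jump in dimension (such loci exist --- e.g.\ cone points through which a $(\dim X-2)$-dimensional family of lines passes on quartics and genus-$2$ varieties, cf.\ Lemma \ref{lem: cones of lines} and Example \ref{example: big fiber dim}). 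The first is equivalent to ruling out $a$-covers of $X$ (Proposition \ref{prop: disconnected fibers a-cover}), which is Theorem \ref{classification a-covers} and all of Section \ref{section:a covers}; this is a separate input from the absence of subvarieties with $a$-invariant $>1$ that you invoke, and your phrase ``Fujita invariant $\ge a(X,-K_X)$'' conflates the two. The second is the fiber-dimension analysis of Lemmas \ref{lem: fiber dimension}--\ref{lem: cones of lines} and Corollary \ref{cor: Z1 finite}, and it is connected to why the paper resmooths its chain of lines into a $(2,d-2)$ splitting in the rank-one case rather than gluing a single line as in your $(1,d-1)$ scheme.

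The second gap is the base case. You need irreducibility of the spaces of lines and conics on an \emph{arbitrary} smooth member of each deformation family, and results of that strength are not available off the shelf for quartics, genus-$2$ and genus-$3$ double covers, or the Gushel--Mukai-type sections. The paper proves Theorem \ref{thm: low degree curves} only for $X$ general in moduli, by a degeneration/monodromy argument over the universal complete intersection (Construction \ref{families of Fano varieties} and Lemmas \ref{lem: terminal singularity}--\ref{lem: irred total space}), and then transfers the conclusion to every smooth member via Lemma \ref{lem: specialize Manin components} --- which again requires the $a$-cover classification. Finally, your reduction of the $\rho\ge 2$, $\dim\ge 5$ cases to ``products and projective bundles'' must handle the blow-up of $\mathbb{P}^5$ along a line and the $(1,1)$-divisor in $\mathbb{P}^3\times\mathbb{P}^3$, which are $\mathbb{P}^2$-bundles; a curve there is not a section datum over its image, and the paper instead degenerates curves into the special divisor using the automorphism group (Lemma \ref{lem: mbb contractible divisor}). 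None of these is an unfixable flaw in strategy, but each is a substantive missing argument rather than a routine verification.
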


When $\dim X = 4$, $\Mor(\mathbb{P}^1, X, \alpha)$ frequently has more than one component.  The structure of these components is similar to the pattern observed by \cite{BurkeJovinelly2022} in dimension 3.  

\begin{thm}\label{thm: main result 2}
    Let $X$ be a smooth Fano variety of coindex $3$ and dimension $4$.  Assume $X$ is not a product variety. %
    Let $-K_X = 2H$ and $\alpha \in \Nef_1(X)_\mathbb{Z}\setminus \partial\overline{NE}(X)$. %
    \begin{enumerate}[(1)]
        \item If $H \cdot \alpha \geq 2$, then there exists a unique component of $\Mor(\mathbb{P}^{1}, X, \alpha)$ which generically parameterizing embedded free curves.  Any other component either parameterizes multiple covers of $H$-lines 
        or a non-dominant families of curves. 
        \item If $H \cdot \alpha = 1$ and $X$ is general in moduli, then $\Mor(\mathbb{P}^1,X,\alpha)$ is smooth, irreducible, and generically parameterizes free curves.
    \end{enumerate}
\end{thm}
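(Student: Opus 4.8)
The plan is to handle the two cases separately: (2) reduces to the geometry of the Fano scheme of lines, and (1) follows from a study of conics together with a degeneration-and-gluing induction on $H\cdot\alpha$. Throughout I would use the classification of coindex $3$ Fano fourfolds to supply the geometric input on low-degree curves and on subvarieties.

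For (2): since $H$ is ample and indivisible and $H\cdot\alpha=1$, every $f\in\Mor(\mathbb{P}^1,X,\alpha)$ is birational onto its image, an $H$-line, and every $H$-line has smooth rational normalization; so $\Mor(\mathbb{P}^1,X,\alpha)$ is a $\mathrm{PGL}_2$-bundle over the Fano scheme of lines $\mathcal{F}(X)$, it is smooth at $f$ exactly when $H^1(f^*\mathcal{T}_X)=H^1(N_{\ell/X})=0$, and $\ell$ is free exactly when $N_{\ell/X}\cong\mathcal{O}_{\mathbb{P}^1}^{\oplus 3}$, the unique globally generated rank-$3$ degree-$0$ bundle on $\mathbb{P}^1$. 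Thus (2) amounts to showing that for $X$ general in each deformation class, $\mathcal{F}(X)$ is smooth and irreducible of the expected dimension $3$ with general line free. I would check this along the classification: for complete intersections in (weighted) projective space and for linear sections of the relevant homogeneous varieties, a Debarre--Manivel incidence-variety count shows $\{(X,\ell):\ell\subset X\}$ is irreducible and dominates the parameter space with irreducible general fibre, giving smoothness and irreducibility of $\mathcal{F}(X)$ for general $X$; exhibiting a single free line on one member then upgrades ``the general line has no normal summand of degree $\le -2$'' to ``the general line is free'' by semicontinuity. The double cover case ($g=2$) and any residual families are treated by explicit normal-bundle computations.

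For (1): I would first establish, by a direct analysis (smoothing nodal chains of two lines, using that the relevant chain spaces and $\mathcal{F}(X)$ are irreducible and that free conics cover $X$), that $\mathcal{F}(X)$ is irreducible for every such $X$ and that $\Mor(\mathbb{P}^1,X,\alpha)$ with $H\cdot\alpha=2$ has a unique component $M_0$ whose general member is a birational map with dominant image-family, with $M_0$ generically parameterizing embedded free conics. Then I would prove by induction on $H\cdot\alpha$ that for every $\alpha$ with $H\cdot\alpha\ge1$ there is a unique component whose general member is a birational map with dominant image-family, and that for $H\cdot\alpha\ge2$ its general member is an embedded free curve: by movable bend-and-break, a general such member degenerates to a nodal chain of two such curves of strictly smaller $H$-degree, and conversely two general such curves meeting at a general point glue and smooth back; as the first curve varies it covers $X$ and the curves of a fixed smaller class meeting it form an irreducible family by induction, so the chain space -- hence the component -- is unique, and smoothing free curves yields free curves. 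The resulting component has the expected dimension $2H\cdot\alpha+4$. Finally, a component not of this type has general member either non-birational onto its image -- where a dimension count shows the only locus of dimension $2H\cdot\alpha+4$ consists of multiple covers of $H$-lines (a genuine component precisely when $X$ carries a free line) -- or birational with non-dominant image-family, in which case the curves sweep out a proper $Y\subsetneq X$ with $a(Y,-K_X)\ge a(X,-K_X)=1$; the possible such $Y$ on a coindex $3$ fourfold are finite in number and read off from the classification. In dimension $\ge5$ neither phenomenon occurs, giving Theorem~\ref{thm: main result 1}.

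The principal obstacle lies in (1): producing a movable bend-and-break strong enough to degenerate every dominant component into dominant components of strictly smaller $H$-degree -- so that the induction really closes with lines and conics as base cases -- while simultaneously proving that the relevant chain spaces and $\mathcal{F}(X)$ are irreducible for every non-product $X$ (not merely the general one) and classifying the subvarieties $Y$ with $a(Y,-K_X)\ge1$ precisely enough that every non-dominant component is accounted for; this is the extra bookkeeping forced by dimension $4$. For (2) the comparable difficulty is purely the case-by-case verification along Mukai's list of the smoothness and irreducibility of $\mathcal{F}(X)$ and of the existence of a free line on a general member.
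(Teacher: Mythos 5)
Your overall architecture matches the paper's: base cases for low $H$-degree read off from Mukai's classification via incidence correspondences over the parameter space of complete intersections, a movable bend-and-break theorem, a glue-and-smooth induction on $H\cdot\alpha$ for uniqueness, and a classification of subvarieties with large $a$-invariant to account for non-dominant components (note the sweep of a non-dominant family satisfies $a(Y,-K_X|_Y)>1$, not $\ge 1$). However, there is one genuine gap in the inductive uniqueness step. You argue that ``two general such curves meeting at a general point glue and smooth back,'' and that ``the curves of a fixed smaller class meeting it form an irreducible family by induction.'' Induction gives you irreducibility of the component $M_{\beta_2}\subset\overline{M}_{0,0}(X,\beta_2)$, but what the gluing argument actually needs is irreducibility of the \emph{general fiber of the evaluation map} $\mathrm{ev}\colon \overline{M}_{0,1}(X,\beta_2)\to X$; these are not the same. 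If the general evaluation fiber is disconnected, the chain space $\overline{M}_{0,1}(X,\beta_1)\times_X\overline{M}_{0,1}(X,\beta_2)$ is reducible and the smoothings of different components need not lie in a common component of $\overline{M}_{0,0}(X,\alpha)$, so uniqueness does not follow. Disconnected evaluation fibers are exactly the phenomenon governed by $a$-covers (Proposition \ref{prop: disconnected fibers a-cover}): the Stein factorization of $\mathrm{ev}$ is an $a$-cover, and it is a breaking morphism only when $\kappa(K_Y-f^*K_X)>0$. Your proposal never mentions $a$-covers, and ruling out the problematic ones occupies an entire section of the paper (Theorem \ref{classification a-covers}, in particular Lemma \ref{lem: a-covers Iitaka dim 1}, which itself requires the analysis of quartic curves and the specialization Lemma \ref{lem: specialize Manin components}).

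This gap is not cosmetic in dimension $4$: only finitely many $H$-lines pass through a general point of $X$ (the expected fiber dimension of $\mathrm{ev}$ for lines is $-K_X\cdot\ell-2=0$), so the family of pointed lines is itself a generically finite cover of $X$ whose monodromy is precisely an $a$-cover question; gluing ``a line'' onto a curve of degree $d-1$ at a general point is therefore unsafe without that analysis. This is why the paper, in the $\rho(X)=1$ case, smooths subchains of lengths $2$ and $d-2$ (so that the attached piece is a conic, whose pointed family has positive-dimensional irreducible fibers by Theorem \ref{thm: low degree curves}(2) and Lemma \ref{lem: specialize Manin components}) and consequently needs both $d=2$ and $d=3$ as base cases. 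A secondary, more minor omission: you assert the general member of the unique dominant birational component is an \emph{embedding}, but freeness alone does not give this in dimension $4$; the paper derives the needed positivity of $f^*\mathcal{T}_X$ from connectedness of the evaluation fibers via \cite[Proposition~3.1]{patel2020moduli} before applying \cite[Theorem 3.14]{Kollar1996} --- again routed through the $a$-cover classification.
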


To explain this pattern, we lean on Lehmann and Tanimoto's Geometric Manin's Conjecture \cite{2019}, motivated by Batyrev's heuristics \cite{batyrev88} for Manin's Conjecture \cite{Manin} over global function fields.  These heuristics assumed $\Mor(\mathbb{P}^1, X, \alpha)$ would have a single component parameterizing free curves when $X$ is an $\mathbb{F}_q$-Fano variety. %
However, this assumption is false in many cases; its failure is related to unexpectedly large point counts on certain varieties \cite{manin_conj_counterexample} which necessitated a revision of Manin's Conjecture \cite{peyre_manin_conjecture}.

Lehmann and Tanimoto sharpened Batyrev's heuristics into a precise conjecture concerning the behavior of rational curves on Fano varieties over arbitrary fields.  While there is no bound on the number of components in $\Mor(\mathbb{P}^1, X, \alpha)$, Lehmann and Tanimoto %
define a subset of these components they call \textit{Manin components} \cite{2019} which they predict will be bounded in number.  Loosely speaking, Manin components avoid certain pathological behavior, such as having a nondominant universal family, or a dominant universal family with disconnected fibers.  
This yields a geometric analogue of the thin set version of Manin's Conjecture \cite{peyre_manin_conjecture}.  An explanation of this analogy and the most general form of Geometric Manin's Conjecture appears in \cite[Section~4]{lehmann2024nonfree}.

\begin{conj}[Geometric Manin's Conjecture, \cite{tanimoto2021introduction}]\label{conj: GMC}
    Let $X$ be a smooth Fano variety with Brauer group $Br(X)$.  %
    There exists $\alpha \in \mathrm{Nef}_{1}(X)_{\mathbb{Z}}$ such that for each $\beta \in \alpha + \mathrm{Nef}_{1}(X)_{\mathbb{Z}}$, $\Mor(\mathbb{P}^{1},X,\beta)$  contains exactly $|\text{Br}(X)|$ Manin components.
    
\end{conj}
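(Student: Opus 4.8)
The plan is to deduce the conjecture from Theorems~\ref{thm: main result 1} and~\ref{thm: main result 2} after first computing the Brauer group. A smooth coindex $3$ Fano variety $X$ is rationally connected, hence simply connected, so $H^{2}(X,\mathbb{Z})$ is torsion-free; together with $H^{2}(X,\mathcal{O}_{X})=0$ this gives $b_{2}(X)=\rho(X)$, whence $Br(X)\cong H^{3}(X,\mathbb{Z})_{\mathrm{tors}}$. By Mukai's classification \cite{mukaiClassifcation}, every such $X$ has torsion-free integral cohomology---this follows from the Lefschetz hyperplane theorem for complete intersections and linear sections of rational homogeneous spaces, from the cellular structure of the latter, and from the K\"{u}nneth formula for the products that occur---so $Br(X)$ is trivial. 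It therefore suffices to exhibit $\alpha\in\Nef_{1}(X)_{\mathbb{Z}}$ such that $\Mor(\mathbb{P}^{1},X,\beta)$ has exactly one Manin component for every $\beta\in\alpha+\Nef_{1}(X)_{\mathbb{Z}}$. Since $\Nef_{1}(X)\subseteq\overline{NE}(X)$ and both are full-dimensional convex cones, $\operatorname{int}\Nef_{1}(X)\subseteq\operatorname{int}\overline{NE}(X)$, so I would take $\alpha$ to be a lattice point in $\operatorname{int}\Nef_{1}(X)$ with $H\cdot\alpha\ge 2$; every $\beta\in\alpha+\Nef_{1}(X)_{\mathbb{Z}}$ then lies in $\Nef_{1}(X)_{\mathbb{Z}}\setminus\partial\overline{NE}(X)$ and satisfies $H\cdot\beta\ge H\cdot\alpha\ge 2$. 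When $\dim X\ge 5$, Theorem~\ref{thm: main result 1} shows that $\Mor(\mathbb{P}^{1},X,\beta)$ is irreducible and nonempty with generic member an embedded free curve; this lone component is dominant, has the expected dimension, and---being generically embedded---does not consist of multiple covers, so it is a Manin component \cite{2019,tanimoto2021introduction} and evidently the only one.

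Suppose next that $\dim X=4$ and $X$ is not a product, so $-K_{X}=2H$ automatically. For each $\beta$ as above, Theorem~\ref{thm: main result 2}(1) produces a unique component $M_{\beta}\subseteq\Mor(\mathbb{P}^{1},X,\beta)$ generically parameterizing embedded free curves; as before, $M_{\beta}$ is a Manin component, and I claim there is no other. Every remaining component parameterizes either a non-dominant family---not a Manin component, since Manin components are dominant---or multiple covers of $H$-lines. For the latter, write a general such map as $f=\iota_{L}\circ g$ with $L$ an $H$-line and $g\colon\mathbb{P}^{1}\to\mathbb{P}^{1}$ of degree $\ge 2$; one shows the component is not Manin by dividing into cases. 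If a general $H$-line has unbalanced normal bundle, then $f^{*}\mathcal{T}_{X}=g^{*}(\mathcal{T}_{X}|_{L})$ fails to be globally generated and the general $f$ is not free; if instead $f$ is free, then the universal family over that component has disconnected generic fibre over $X$, the several $H$-lines through a general point of $X$ contributing disjoint pieces. In either case Lehmann--Tanimoto's definition puts the component among the accumulating, non-Manin ones. Hence $\Mor(\mathbb{P}^{1},X,\beta)$ has exactly one Manin component.

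Finally, the product cases of coindex $3$ and dimension $4$---$\mathbb{P}^{1}\times\mathbb{P}^{3}$ and $\mathbb{P}^{1}\times V$ for a del Pezzo threefold $V$ (including $(\mathbb{P}^{1})^{4}$)---are handled directly. A morphism $\mathbb{P}^{1}\to X_{1}\times X_{2}$ is a pair of morphisms, so $\Mor(\mathbb{P}^{1},X_{1}\times X_{2},(\beta_{1},\beta_{2}))=\Mor(\mathbb{P}^{1},X_{1},\beta_{1})\times\Mor(\mathbb{P}^{1},X_{2},\beta_{2})$, and every component of the product is a product of components of the factors. Choosing $\alpha=(\alpha_{1},\alpha_{2})$ with each $\alpha_{i}$ deep in $\operatorname{int}\Nef_{1}(X_{i})$, so that each $\beta_{i}$ is large, one uses that $\Mor(\mathbb{P}^{1},\mathbb{P}^{n},d)$ is irreducible for $d\ge 1$, the description of $\Mor(\mathbb{P}^{1},V,\gamma)$ for del Pezzo threefolds $V$ \cite{lastDelPezzoThreefold,Okamura2024delPezzo}, and the fact that a product of two free curves is a free curve of the expected deformation dimension which is not accumulating. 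It follows that each factor contributes exactly one Manin component, that their product is a Manin component of $X$, and that no other component of $\Mor(\mathbb{P}^{1},X,\beta)$ is Manin. This completes the verification of Conjecture~\ref{conj: GMC} for all smooth coindex $3$ Fano varieties.

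The step I expect to be the main obstacle is the treatment of multiple covers of $H$-lines when $\dim X=4$. In the threefold situation of \cite{BurkeJovinelly2022} the corresponding components have excess dimension and so are visibly not Manin, but for $-K_{X}=2H$ they instead have the expected dimension and may even generically parameterize free curves. Excluding them therefore requires the finer part of Lehmann--Tanimoto's definition of a Manin component \cite{lehmann2024nonfree} together with a careful, case-by-case study of the variety of $H$-lines on each coindex $3$ Fano fourfold---the normal bundle of a general line and the number of lines through a general point---and a precise matching of these loci with the accumulating components.
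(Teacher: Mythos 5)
Your overall architecture matches the paper's: compute that $Br(X)$ is trivial, deduce the non-product cases from Theorems~\ref{thm: main result 1} and~\ref{thm: main result 2}, and treat products separately (the paper does this via Lemma~\ref{lem: GMC for product varieties} and Corollary~\ref{cor: product varieties GMC}). However, there is a genuine gap at the central step: you assert that the unique component generically parameterizing embedded free curves ``is dominant, has the expected dimension, and---being generically embedded---does not consist of multiple covers, so it is a Manin component.'' That is not what Manin component means. By Definition~\ref{def: accumulating component}, a component is accumulating whenever it is dominated by curves coming from \emph{any} breaking morphism $f\colon Y\to X$ --- in particular from an $a$-cover with $\kappa(Y,K_Y-f^*K_X)>0$ or an adjoint rigid face-contracting $a$-cover --- and such a component can perfectly well be dominant, of expected dimension, and generically parameterize embedded free curves. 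The paper's Example~\ref{example: products accumulating} ($X=\mathbb{P}^1\times V$ with $V$ a cubic threefold) exhibits exactly this: accumulating components of embedded free curves of arbitrarily high degree. So to conclude that the free component is Manin you must classify all $a$-covers of $X$ and all subvarieties of higher $a$-invariant and check that none of them induces a dominant map onto it. This is precisely the content of Theorems~\ref{classification higher a} and~\ref{classification a-covers} (Sections~\ref{section: subvar with higher a} and~\ref{section:a covers}), which constitute the bulk of the work behind the corollary and which your proposal omits entirely.

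Two smaller points. First, for the multiple covers of $H$-lines in dimension $4$, the clean argument is not your normal-bundle/disconnected-fiber dichotomy but the observation that the evaluation map from the universal family of $H$-lines is itself an $a$-cover with positive Iitaka dimension for the adjoint divisor, hence a breaking morphism whose morphism spaces dominate the multiple-cover components; this again comes out of Theorem~\ref{classification a-covers}. Second, in the product case the assertion that $M_1\times M_2$ is Manin if and only if both factors are Manin is exactly the nontrivial claim of Lemma~\ref{lem: GMC for product varieties}; its proof requires factoring an arbitrary $a$-cover of $X\times Y$ through the projections (Claim~\ref{claim: factorization of a-covers}) and using that Fano varieties of dimension at most $3$ admit no adjoint rigid $a$-covers. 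Citing irreducibility of $\Mor(\mathbb{P}^1,\mathbb{P}^n,d)$ and the del Pezzo threefold results does not by itself rule out accumulating components of the product that do not arise from accumulating components of the factors.
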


Together with \cite{BurkeJovinelly2022}, our results characterize Manin components on any smooth coindex three Fano variety $X$.  When $X$ is not a product of lower-dimensional Fano varieties, Manin components of $\Mor(\mathbb{P}^1,X,\alpha)$ are those which parameterize embedded free curves of class $\alpha \in \Nef_1(X)_\mathbb{Z}\setminus \partial\overline{NE}(X)$ with $-K_X . \alpha > 2$.  The following is a corollary of Theorems \ref{thm: main result 1}, \ref{thm: main result 2}, and \cite{BurkeJovinelly2022}.

\begin{cor}\label{thm: GMC}
    Geometric Manin's Conjecture holds for all smooth coindex 3 Fano varieties over a base field of characteristic $0$.
\end{cor}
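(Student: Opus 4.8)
The plan is to deduce Corollary~\ref{thm: GMC} from Theorems~\ref{thm: main result 1} and \ref{thm: main result 2}, the dimension-$3$ results of \cite{BurkeJovinelly2022}, and known cases of Geometric Manin's Conjecture for low-dimensional Fano varieties, via a case analysis on $\dim X$ and on whether $X$ is a product. First I would reduce to $k=\mathbb{C}$: the morphism scheme, its irreducible components, the collection of Manin components, and the Brauer group are all insensitive to base change among algebraically closed fields of characteristic $0$, so a standard spreading-out/Lefschetz-principle argument handles a general characteristic-$0$ base field. Next I would unwind Conjecture~\ref{conj: GMC}: it suffices to produce one class $\alpha_0\in\Nef_1(X)_{\mathbb{Z}}$ so that $\Mor(\mathbb{P}^1,X,\beta)$ has exactly $|Br(X)|$ Manin components for every $\beta\in\alpha_0+\Nef_1(X)_{\mathbb{Z}}$. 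Since $\Nef_1(X)$ is dual to the pseudoeffective cone of divisors and $\overline{NE}(X)$ to the nef cone, one has $\Nef_1(X)\subseteq\overline{NE}(X)$, so any $\alpha_0$ in the interior of $\Nef_1(X)$ satisfies $\alpha_0+\Nef_1(X)_{\mathbb{Z}}\subseteq\Nef_1(X)_{\mathbb{Z}}\setminus\partial\overline{NE}(X)$; pushing $\alpha_0$ far enough into the interior also makes $H\cdot\beta\geq 2$ (hence $-K_X\cdot\beta\geq 4$) throughout that range. This places every relevant $\beta$ inside the hypotheses of Theorems~\ref{thm: main result 1} and \ref{thm: main result 2}(1); in particular the genericity hypothesis of Theorem~\ref{thm: main result 2}(2) never intervenes, as it only governs classes with $H\cdot\beta=1$.

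For $X$ not a product I would then read off the count directly. When $\dim X\geq 5$, Theorem~\ref{thm: main result 1} gives a single nonempty component generically parameterizing embedded free curves; when $\dim X=4$, Theorem~\ref{thm: main result 2}(1) gives a unique component of embedded free curves, with every other component parameterizing multiple covers of $H$-lines (which are not embeddings) or a non-dominant family (which contains no free curve); when $\dim X=3$, the corresponding statement is in \cite{BurkeJovinelly2022}. Invoking the characterization of Manin components for non-product coindex $3$ Fano varieties recorded before the corollary --- those parameterizing embedded free curves of class in $\Nef_1(X)_{\mathbb{Z}}\setminus\partial\overline{NE}(X)$ with $-K_X\cdot\beta>2$, satisfied here because $-K_X\cdot\beta=(\dim X-2)(H\cdot\beta)\geq 4$ --- each of these morphism schemes has exactly one Manin component. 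To match this to $|Br(X)|$, I would verify $Br(X)=0$: for $\dim X\geq 4$ the explicit descriptions in Mukai's classification of coindex $3$ Fano manifolds \cite{mukaiClassifcation} show $X$ has torsion-free integral cohomology (via the Lefschetz hyperplane theorem, the Leray--Hirsch theorem, or a Künneth computation as appropriate), so $H^3(X,\mathbb{Z})_{\mathrm{tors}}=0$, and since $H^2(X,\mathcal{O}_X)=0$ one has $Br(X)\cong H^3(X,\mathbb{Z})_{\mathrm{tors}}=0$; for $\dim X=3$ the triviality of $Br(X)$ is again part of \cite{BurkeJovinelly2022}.

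For $X=Y\times Z$ I would first pin down the factors. Comparing canonical divisors, $\mathrm{index}(X)=\gcd(\mathrm{index}(Y),\mathrm{index}(Z))$, and combining $\mathrm{index}(X)=\dim Y+\dim Z-2$ with the Kobayashi--Ochiai bound $\mathrm{index}(W)\leq\dim W+1$ yields $\max(\dim Y,\dim Z)\leq 3$; a brief enumeration of the possible index pairs shows $Y$ and $Z$ are each a projective space, a smooth quadric, a del Pezzo surface, or a del Pezzo threefold, all of which have torsion-free integral cohomology, so $Br(X)=0$. Because a map $\mathbb{P}^1\to Y\times Z$ is a pair of maps, $\Mor(\mathbb{P}^1,Y\times Z,(\beta_Y,\beta_Z))=\Mor(\mathbb{P}^1,Y,\beta_Y)\times\Mor(\mathbb{P}^1,Z,\beta_Z)$, so the irreducible components multiply; I would then argue that being a Manin component is multiplicative as well --- Fujita's $a$-invariant is identically $1$ on a product of Fano varieties, a product of free curves is free, and the universal family and its fibers for the product are the products of those for the factors, so none of the pathologies excluded by Lehmann--Tanimoto's definition arise from taking products. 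Combining stable ranges coordinatewise reduces the corollary for $X$ to Geometric Manin's Conjecture for its factors: classical for projective spaces and smooth quadrics (e.g.\ via \cite{Thomsen1998}), known for del Pezzo surfaces by \cite{Testa2009} and subsequent work, and known for del Pezzo threefolds by \cite{lastDelPezzoThreefold, Okamura2024delPezzo}. As each factor then contributes exactly one Manin component and $Br(X)=0$, the product has exactly $1=|Br(X)|$ Manin component in the stable range.

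The main obstacle I anticipate is the product case --- specifically, turning ``being a Manin component'' into a genuinely multiplicative property. One must check that forming a product introduces none of the exceptional behavior Lehmann--Tanimoto exclude: the universal family must stay dominant, its general fibers must stay connected, and the general curve must not acquire a factorization through a proper subvariety of larger $a$-invariant or through a nontrivial cover. Since only finitely many product types arise, an acceptable fallback is to verify each type by hand from its explicit geometry. The only ingredient genuinely outside the present paper --- Geometric Manin's Conjecture for every smooth del Pezzo surface and del Pezzo threefold over $\mathbb{C}$ --- is available in the cited literature. By contrast, the non-product cases should be essentially immediate, combining the component counts in Theorems~\ref{thm: main result 1} and \ref{thm: main result 2} with the vanishing of $Br(X)$ when $\dim X\geq 4$.
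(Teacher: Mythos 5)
Your non-product case is essentially the paper's argument: restrict to a translate of $\Nef_1(X)_{\mathbb{Z}}$ avoiding $\partial\overline{NE}(X)$ and the classes with $H\cdot\beta=1$, count the unique component of embedded free curves via Theorems~\ref{thm: main result 1} and \ref{thm: main result 2}(1) (and \cite{BurkeJovinelly2022} in dimension $3$), identify it with the unique Manin component, and match against $|Br(X)|=1$ via Lefschetz. That part is fine, as is your identification of the possible product factorizations via Kobayashi--Ochiai.

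The gap is in the product case, exactly where you anticipated trouble, and your proposed justification does not close it. A Manin component is defined by the \emph{non-existence of any breaking morphism} $f\colon W\rightarrow X\times Y$ from an arbitrary smooth projective variety $W$ inducing a dominant map from a component of $\Mor(\mathbb{P}^1,W)$ onto $M_1\times M_2$; such a $W$ need not be a product, so observing that ``the universal family and its fibers for the product are the products of those for the factors'' addresses the wrong object --- it constrains the component's own universal family, not the covers that could dominate it. Moreover the assertion that ``none of the pathologies \ldots arise from taking products'' is contradicted by the paper's Example~\ref{example: products accumulating}: $\mathbb{P}^1\times V$ with $V$ a cubic threefold has accumulating components generically parameterizing embedded free curves of arbitrarily high degree. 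The actual argument (Lemma~\ref{lem: GMC for product varieties}) proves $M_1\times M_2$ is accumulating if and only if some $M_i$ is, and the nontrivial direction rests on Claim~\ref{claim: factorization of a-covers}: given an arbitrary $a$-cover $W\rightarrow X\times Y$ dominating $M_1\times M_2$, either the finite part of the Stein factorization of $\mathrm{pr}_Y\circ f$ or the restriction of $f$ to a general fiber of $\mathrm{pr}_Y\circ f$ is an $a$-cover of a factor --- established by a dimension count comparing $\dim\overline{N}_1$ with the expected dimension to force $(K_{W_y}-f_y^*K_X)\cdot\gamma_1=0$. Even then, the extracted $a$-cover certifies that $M_i$ is accumulating only because smooth Fano varieties of dimension $\le 3$ have no adjoint rigid $a$-covers that are not face contracting (\cite{Lehmann_2017}, \cite{beheshti2020moduli}), so that every $a$-cover of a factor is automatically breaking. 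This hypothesis on the factors, and the Stein-factorization/fiber-restriction dichotomy for non-product covers, are the missing ingredients; your fallback of checking each product type by hand is not carried out and would in any case have to confront the same issue.
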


The framework for Geometric Manin's Conjecture also describes the locus $W \subset X$ which contains the image of all nondominant families of rational curves.  Each irreducible component $Y$ of $W$ is a maximal subvariety on which Fujita's $a$-invariant increases, i.e.\ $a(Y, -K_X|_Y) > a(X, -K_X) = 1$.  The following theorem identifies all such subvarieties in coindex 3 Fano varieties of dimension at least 4.

\begin{thm}\label{classification higher a}
    Let $X$ be a smooth coindex $3$ Fano variety with $\dim X \ge 4$.  Let $Y$ be an irreducible component of the union $W$ of all subvarieties $Z \subset X$ satisfying $a(Z, -K_X|_Z) > 1$.  Then one of the following holds:
    \begin{enumerate}
        \item $\rho(X) = 2$, $\dim X \leq 5$, and $Y$ is the exceptional locus of an elementary divisorial contraction on $X$, or %
        \item $\dim X = 4$, $(Y, -K_X|_Y) \cong (\mathbb{P}^2, \mathcal{O}(2))$, and $Y$ is contracted to a point by an elementary fiber-type contraction on $X$.
    \end{enumerate}
    Furthermore, if $\dim X = 4$ and $X$ is general in moduli, there are no subvarieties of type (2) unless $\rho(X) = 2$ and $g(X) = (\frac{1}{32}K_X^4 + 1) \in \{9,11,14\}$.
\end{thm}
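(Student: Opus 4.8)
The plan is to combine two short conceptual inputs with a longer classification‑matching step. Write $n=\dim X$ and $-K_X=(n-2)H$ with $H$ ample. Since $a(Z,cL)=\tfrac1c a(Z,L)$ for $c>0$, for every $Z\subseteq X$ one has $a(Z,-K_X|_Z)=\tfrac1{n-2}\,a(Z,H|_Z)$. On the other hand, for any $d$‑dimensional projective $Z$ and any ample $A$ one has $a(Z,A)\le d+1$: after passing to a resolution $\tilde Z\to Z$ and writing $\pi^*A=A'+E$ with $A'$ ample and $E\ge 0$ exceptional, Mori's cone theorem shows $K_{\tilde Z}+(d+1)A'$ is nef, hence $K_{\tilde Z}+(d+1)\pi^*A$ is pseudoeffective. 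Therefore $a(Z,-K_X|_Z)>1$ forces $d+1>n-2$, i.e.\ $\operatorname{codim}_X Z\le 2$. So every component $Y$ of $W$ has codimension $1$ or $2$, and I would treat these two cases in turn, having first reduced to the case that $X$ is not a product (the components of $W$ on a product $A\times B$ are read off from those of the factors via $a(A\times B,L_A\boxplus L_B)=\max(a(A,L_A),a(B,L_B))$, together with the known descriptions of $W$ on products of projective spaces, quadrics, and lower‑coindex Fano varieties).

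\emph{Codimension $2$.} Here $a(Y,H|_Y)>n-2=\dim Y$, so $K_Y+(\dim Y)(H|_Y)$ is not pseudoeffective, hence not nef; repeating the extremal‑ray argument now produces a $K_Y$‑negative extremal ray of length $\dim Y+1$, and Cho--Miyaoka--Shepherd‑Barron forces $Y\cong\mathbb P^{\,n-2}$, then $H|_Y\cong\mathcal O(1)$, whence by adjunction $\det N_{Y/X}\cong\mathcal O(-1)$ and $-K_X|_Y\cong\mathcal O(n-2)$. For $n=4$ this is case (2): the lines of $Y\cong\mathbb P^2$ form a nondominant family of $(-K_X)$‑conics sweeping out $Y$, and I would argue the associated extremal contraction of $X$ contracts $Y$ to a point and is of fibre type — if it instead contracted a divisor $D\supsetneq Y$, then (by the codimension‑$1$ analysis below applied to $D$) $a(D,-K_X|_D)>1$, contradicting maximality of $Y$ in $W$. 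For $n\ge 5$, Mukai's bound $\rho(X)(\mathrm{index}(X)-1)\le\dim X$ gives $\rho(X)\le 2$, and I would run through the classification of coindex $3$ Fano varieties of dimension $\ge 5$ and Picard rank $\le 2$ to see that a linearly embedded $\mathbb P^{\,n-2}$ is never a \emph{maximal} $a$‑increasing subvariety (it is either absent or lies inside an $a$‑increasing exceptional divisor of type (1)); hence no component of $W$ has codimension $2$ when $n\ge 5$.

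\emph{Codimension $1$.} Now $Y$ is a divisor with $a(Y,H|_Y)>n-2=\dim Y-1$, i.e.\ $K_Y+(\dim Y-1)(H|_Y)$ is not pseudoeffective. The adjunction‑theoretic classification of such polarized manifolds (first‑reduction theory; Beltrametti--Sommese, Fujita) gives a short list: $(\mathbb P^{\,n-1},\mathcal O(1))$, $(Q^{\,n-1},\mathcal O(1))$, scrolls and quadric fibrations over smooth curves, and a few low‑dimensional exceptions. Independently, $a$ increases on the divisor $Y$ exactly when $N_{Y/X}=\mathcal O_X(Y)|_Y$ is not pseudoeffective (adjunction gives $-K_X|_Y-K_Y=N_{Y/X}$), which forces $X$ to carry a divisorial extremal contraction with exceptional locus $Y$. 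I would then intersect both constraints with the classification of coindex $3$ Fano varieties — using again $\rho(X)\le 2$ for $n\ge 5$, and the fact that a coindex $3$ Fano variety of dimension $\ge 6$ has $\rho=1$ and hence no divisorial contraction — to conclude that the only possibilities are $n\in\{4,5\}$, $\rho(X)=2$, and $Y$ the exceptional locus of the unique elementary divisorial contraction of $X$ (model cases $Y\cong\mathbb P^3\subset\operatorname{Bl}_{\mathrm{pt}}Q^4$ and $Y\cong\mathbb P^1\times\mathbb P^3\subset\operatorname{Bl}_{\ell}\mathbb P^5$). This is case (1).

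\emph{The generic statement for $n=4$, and the main obstacle.} By the codimension‑$2$ analysis a type‑(2) subvariety is a plane $P\cong\mathbb P^2\subset X$ with $H|_P\cong\mathcal O(1)$, lying in and contracted by a fibre‑type elementary contraction $X\to S$; minimality of $\deg_H P$ forces $\dim S=2$ and $X\to S$ a quadric‑surface fibration, with the planes $P$ the components of its rank‑$\le 2$ fibres. I would go through the classification of coindex $3$ Fano fourfolds of Picard rank $2$ admitting such a fibration and, for each family, compare the dimension of the locus of such $X$ possessing a rank‑$\le 2$ fibre with that of the whole family: generically no such fibre occurs, except for the three families where the Fano condition forces one, and these are exactly those with $g(X)=\tfrac1{32}K_X^4+1\in\{9,11,14\}$ (the higher Picard‑rank coindex $3$ fourfolds being products or projective bundles over lower‑dimensional del Pezzo varieties, handled directly). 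The two conceptual steps — the codimension bound and the identification of $(Y,H|_Y)$ — are brief; the real work, and the main obstacle, is precisely this case‑by‑case matching against the classification of coindex $3$ Fano varieties (in particular excluding dimension $\ge 6$ and codimension $2$ in dimension $5$), together with the parameter count isolating $g\in\{9,11,14\}$.
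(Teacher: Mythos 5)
Your skeleton matches the paper's: the index bound forces $\operatorname{codim}(Y,X)\le 2$, adjunction-theoretic classification identifies $(Y,H|_Y)$ as $(\mathbb{P}^{n-2},\mathcal{O}(1))$ in codimension $2$ and as a variety covered by $H$-lines in codimension $1$, and the extremal ray spanned by those lines produces the contraction in cases (1) and (2). But the two places where you defer to "running through the classification" are exactly where the substance of the theorem lies, and one of them is not merely deferred but mis-scoped. First, for $n\ge 5$ and $\rho(X)=1$ you must show that $X$ contains \emph{no} linearly embedded $\mathbb{P}^{n-2}$ at all — with $\rho(X)=1$ there is no exceptional divisor for such a plane to "lie inside," so absence is the only option, and proving it is most of the work. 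The paper needs Gushel's theorem for $g=6$, a Lefschetz/degree-divisibility argument for the complete-intersection cases, explicit singular-locus computations for the $g=2,3$ double covers, Bernstein's computation of $H^4$ of the spinor variety for $g=7$, and a dedicated analysis of the fibers of $\overline{M}_{0,1}(X,1)\to X$ for linear sections of $G(2,6)$, where the Lefschetz argument is unavailable in dimensions $5$ and $6$. None of this is supplied or sketched in your proposal.

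Second, and more seriously, your treatment of the genericity clause only considers $\rho(X)=2$. A plane $P\cong\mathbb{P}^2$ with $H|_P\cong\mathcal{O}(1)$ inside a Picard-rank-one fourfold is also of type (2) — the elementary contraction $X\to\mathrm{pt}$ is of fiber type and contracts $P$ to a point — and such planes genuinely occur on special members of every rank-one family (e.g.\ $V(x_0f+x_1g+x_2h)\subset\mathbb{P}^5$). So the "furthermore" statement requires proving that a \emph{general} quartic fourfold, $(2,3)$- and $(2,2,2)$-complete intersection, Gushel--Mukai fourfold, double cover, and linear section of each homogeneous model contains no plane; the paper does this via an incidence-correspondence dimension count combined with a deformation argument showing that a family of planar cubics on a general such $X$ would have to consist of embedded free curves, a contradiction. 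Your parameter count over quadric-fibration fourfolds of Picard rank $2$ does not touch this case. Two smaller inaccuracies: the type-(2) contraction is not always a quadric-surface fibration over a surface (for $g=14$, $X=\mathrm{Bl}_{\ell}Q^4$, the planes are jumping fibers of a conic fibration over $\mathbb{P}^3$, and for $g=9$ they are jumping fibers of a line fibration over $\mathbb{P}^3$), and $\mathbb{P}^3\times\mathbb{P}^3$ is a coindex-$3$ Fano of dimension $6$ with $\rho=2$, so your assertion that dimension $\ge 6$ forces $\rho=1$ is false, though that variety indeed admits no divisorial contraction.
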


Similar results to Theorems \ref{thm: main result 1} and \ref{thm: main result 2} have been shown for del Pezzo surfaces \cite{Testa2009, beheshti2021rational}, smooth Fano threefolds \cite{Fanoindex1rank1, beheshti2020moduli, BurkeJovinelly2022}, certain Fano hypersurfaces \cite{HRS2004, CS2009, BK2013, BV2017, RY2019}, and del Pezzo varieties of larger dimension \cite{Okamura2024delPezzo}.  A key intermediate result in each of these studies is a so-called movable version of Mori's bend-and-break \cite[Theorem~4]{Mori1979} for the respective varieties.  Such a result allows one to deform (break) free curves of large degree into nodal unions of lower-degree free curves.  We use the Kontsevich space $\overline{M}_{0,0}(X,\alpha)$ of stable maps of class $\alpha$ to record such deformations.

\begin{thm}[Movable Bend-and-Break]\label{MBB}
Let $X$ be a smooth coindex $3$ Fano variety of dimension $n \ge 4$.  Let $-K_X = (n-2)H$.  Suppose $X$ does not contain a contractible divisor.  Let $M \subset \overline{M}_{0,0}(X,\alpha)$ be a component that generically parameterizes free stable maps of $H$-degree at least $2$.  
Then $M$ contains a stable map $f\colon C_{1} \cup C_{2}\rightarrow X$ such that each restriction $f|_{C_{i}}\colon C_{i}\rightarrow X$ is a free morphism.
\end{thm}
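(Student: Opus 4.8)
\emph{Strategy and setup.} The plan is to take a general free curve parametrized by $M$, force it to degenerate via bend-and-break to a stable map with reducible domain, and then upgrade the degeneration so that each component is free; the coindex $3$ hypothesis and the absence of a contractible divisor enter only in controlling the degeneration. Write $d = H\cdot\alpha \ge 2$ and $\delta = -K_X\cdot\alpha = (n-2)d \ge 2(n-2) \ge 4$. For a general $[f]\in M$ the curve $f$ is free, $\dim M = \delta + n - 3$, and the evaluation map from the one-pointed version of $M$ dominates $X$; hence, fixing a general point $x\in X$, the locus $M_x\subseteq M$ of curves through $x$ is nonempty of dimension $\delta - 2 \ge 2n-6 \ge 2$, with general member still free.

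\emph{Bend and break.} The closure $\overline{M}_x$ of $M_x$ in $\overline{M}_{0,0}(X,\alpha)$ is projective of dimension $\ge 2$; cutting it down by general ample divisors to a complete, non-isotrivial $1$-parameter family of irreducible free curves through $x$ and applying Mori's bend-and-break, we conclude that $\overline{M}$ contains a stable map with reducible domain --- unless this family is rigid. Here the hypothesis that $X$ has no contractible divisor is essential: a rigid positive-dimensional family of rational curves through a general point would force $X$ to be swept out by cones whose rulings trace out lines along a contractible subvariety, and by the structure theory of coindex $3$ Fano varieties together with Theorem~\ref{classification higher a} this subvariety would have to be a contractible divisor --- contrary to hypothesis. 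Reorganizing the dual graph of the reducible map, we obtain $f'\colon C_1\cup C_2 \to X$ in $\overline{M}$ glued along one node, with $f'|_{C_i}$ nonconstant of class $\alpha_i$, $\alpha_1+\alpha_2 = \alpha$, $H\cdot\alpha_1 \le H\cdot\alpha_2$.

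\emph{Making the components free.} A stable map obtained by gluing two free curves along a general point smooths to an embedded free irreducible curve of class $\alpha$, so the locus of such ``good'' maps is contained in the union of the components of $\overline{M}_{0,0}(X,\alpha)$ that generically parametrize free curves; it therefore suffices to exhibit a good map inside $\overline{M}$, which we do by induction on $d$ (the case $d=1$ being vacuous). If $f'$ is good we are done; otherwise say $f'|_{C_2}$ is not free, and, deforming within $\overline{M}$ if needed, assume $f'|_{C_2}$ (or its image, if $f'|_{C_2}$ is a multiple cover) is a general member of a component of a morphism space whose general member is non-free. By Theorem~\ref{classification higher a}, since $X$ has no contractible divisor its only subvarieties with $a(\,\cdot\,,-K_X|_{\,\cdot\,})>1$ are the type-(2) planes of codimension $n-2 \ge 2$; together with the classification of coindex $3$ Fano varieties this forces any non-free covering family of rational curves on $X$ to sweep out a subvariety of codimension $\ge 2$. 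Since the remaining component $f'|_{C_1}$ passes through the general point $x$, which lies outside that subvariety, $f'|_{C_1}$ may be taken free; replacing the non-free part by a free curve of class $\alpha_2$ --- supplied by the inductive hypothesis at smaller $H$-degree, or by a free $H$-line, using that $X$ is covered by free $H$-lines --- attached at a general point, and smoothing partially as necessary, we obtain a good stable map of class $\alpha$ in $\overline{M}$.

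\emph{The main obstacle} is this last step: keeping the deformed or replaced map inside the fixed component $\overline{M}$, and bounding the locus of boundary stable maps of $\overline{M}$ carrying a non-free component, which requires the precise structure of coindex $3$ Fano varieties and of their rational curves of small $H$-degree. Once the no-contractible-divisor hypothesis is in force, the bend-and-break step is essentially formal. (In dimension $\ge 5$ one can shortcut the induction by quoting the irreducibility in Theorem~\ref{thm: main result 1}, and in dimension $4$ the uniqueness in Theorem~\ref{thm: main result 2}, though logically these are downstream of the present result.)
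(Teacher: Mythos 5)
Your outline identifies the right difficulty but does not resolve it. The crux of Movable Bend-and-Break is not producing a reducible degeneration inside $M$ (that is standard bend-and-break once you fix a general point and cut to a complete one-parameter family), but guaranteeing that some degeneration \emph{inside the fixed component} $\overline{M}$ has exactly two components, each free. Your final step --- ``replacing the non-free part by a free curve of class $\alpha_2$ \dots attached at a general point, and smoothing partially as necessary, we obtain a good stable map of class $\alpha$ in $\overline{M}$'' --- is precisely the assertion that needs proof: a stable map glued from a free $C_1$ and a freshly chosen free curve of class $\alpha_2$ smooths to a free irreducible curve, but a priori that smoothing lies in \emph{some} component of $\overline{M}_{0,0}(X,\alpha)$ generically parameterizing free curves, not necessarily in the given $M$. (Indeed, uniqueness of such a component is one of the main theorems and is proved \emph{downstream} of MBB, so it cannot be invoked here.) The paper circumvents this by never leaving $M$: it fixes incidence conditions calibrated to the splitting type $N\cong\bigoplus\mathcal{O}(a_i)$ of the normal bundle of the general curve of $M$ (points $p_0,\dots,p_{a_0}$ and complete intersections $R_{i,j}$), relaxes exactly one condition to get a one-parameter subfamily $T\subset M$, and then uses the fiber-dimension bounds of Lemma \ref{lem: fiber dimension}, Corollary \ref{cor: Z1 finite}, and Lemma \ref{lem: mbb precurser} to show that any reducible member of $T$ whose components all move in dominant families must automatically be a nodal union of two free curves. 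No replacement or re-gluing is needed.

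Two further inaccuracies. First, the no-contractible-divisor hypothesis does not enter the breaking step (a positive-dimensional complete family of curves through a fixed general point always breaks); it enters when ruling out components of the degenerate curve that move only in non-dominant families, where Theorem \ref{classification higher a} forces $\dim X=4$ and $\operatorname{codim}(Y_i,X)=2$, after which the paper runs a counting argument on which incidence conditions the remaining ``good'' subcurves $C_s'$ can absorb, reaching a contradiction with Lemmas \ref{lem: finitely many planes} and \ref{lem: Picard rank 2 subvarieties higher a-invar}. Second, you conflate ``$f'|_{C_2}$ is not free'' with ``$f'|_{C_2}$ lies only in non-dominant families'': a non-free curve can perfectly well be a special member of a dominant family, and the paper's Lemma \ref{lem: mbb precurser} is formulated exactly around this distinction (all components lying in \emph{dominant} families), not around freeness of the individual restrictions. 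As written, your argument for why the non-free component sweeps out codimension $\ge 2$ does not apply to such special members, and the gap in the replacement step remains.
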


Prior methods for proving a Movable Bend-and-Break theorem worked only in low dimensions or when every component of $\overline{M}_{0,0}(X,\alpha)$ generically parameterizes free curves. 
While the latter condition is true for most coindex 3 Fano varieties, $\overline{M}_{0,0}(X,\alpha)$ may have more components than $\Mor(\mathbb{P}^{1}, X, \alpha)$ in general.  We overcome this difficulty using Theorem \ref{classification higher a} and several lemmas about fiber dimensions of families of curves.  Our study results in a complete description of components of $\overline{M}_{0,0}(X,\alpha)$ when $X$ is a coindex 3 Fano variety of dimension 5 or greater.  The following theorem describes all components of $\overline{M}_{0,0}(X,\alpha)$ which do not correspond to a component of $\Mor(\mathbb{P}^{1}, X, \alpha)$.

\begin{thm}\label{thm: component reducible stable maps}
    Let $X$ be a smooth coindex $3$ Fano variety of dimension $n \ge 5$. Let $-K_X = (n-2)H$. Suppose a component $M \subset \overline{M}_{0,0}(X,\alpha)$ generically parameterizes reducible curves.  Let $C \subset X$ be the image of any map parameterized by $M$.  Then $X$ and $C$ satisfy one of the following:
    \begin{enumerate}
         \item There is a contraction $\pi : X \rightarrow \mathbb{P}^5$ realizing $X$ as the blow-up of $\mathbb{P}^5$ along a line, and a component of $C$ lies in the exceptional divisor of $\pi$; 

         \item $\rho(X) = 1$, $g(X) \leq 3$, and
         $C$ is a union of $H$-lines belonging to a family of dimension $\dim X - 2$ which pass through a fixed point in $X$. 

    \end{enumerate}
Furthermore, if $g(X) \leq 3$, no such components exist when $X$ is general in moduli.
\end{thm}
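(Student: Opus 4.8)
The plan is to begin with a general point $[f] \in M$ and write its domain as $C = C_1 \cup \cdots \cup C_k$ with $k \ge 2$ irreducible components, putting $\alpha_i = f_*[C_i]$. The first observation is that some restriction $f|_{C_j}$ must fail to be a free morphism: otherwise, the ideal-sheaf sequence of the nodes of $C$ shows $H^1(C, f^*\mathcal{T}_X) = 0$, so $[f]$ is a smooth point of $\overline{M}_{0,0}(X,\alpha)$ of the expected dimension lying on one of the finitely many boundary divisors; since these are Cartier, the general member of $M$ would then be irreducible, contrary to hypothesis. So I would fix a non-free component $C_j$, set $\beta = \alpha_j$, and let $Z \subseteq X$ be the closure of the union of the curves $f(C_j)$ as $[f]$ ranges over $M$.

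Next I would carry out the dimension count. Realizing $M$, up to a generically finite map, as a component of a fiber product $\overline{M}_{0,\bullet}(X,\alpha_1) \times_X \cdots \times_X \overline{M}_{0,\bullet}(X,\alpha_k)$ glued along the nodes — using Theorem~\ref{thm: main result 1} to pin down the irreducible constituents — and combining $\dim M \ge -K_X \cdot \alpha + \dim X - 3$ with the fiber-dimension lemmas, I would show that the family of curves of class $\beta$ swept out by $C_j$ is either (a) \emph{non-dominant}, i.e.\ $Z \subsetneq X$, or (b) \emph{dominant but of strictly larger than expected dimension}. In case (b), deforming $C_j$ to pass through two general points of $X$ and applying Mori's bend-and-break, while keeping $C_j$ irreducible, forces $-K_X \cdot \beta \le \dim X$; since $-K_X = (\dim X - 2)H$ and $\dim X \ge 5$ this gives $H \cdot \beta = 1$, so $C_j$ is a rational $H$-line and the $H$-lines through a general point of $X$ move in a family of dimension strictly larger than expected. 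Invoking the classification of coindex $3$ Fano varieties and of their varieties of lines, such an excess can occur only when $\rho(X) = 1$ and $g(X) \le 3$, and then the same dimension count forces the whole configuration $C$ to be a union of $H$-lines through a single point, which is conclusion~(2).

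In case (a), the non-free curves $C_j$ sweep out the proper subvariety $Z$; by the theory underlying Geometric Manin's Conjecture, $Z$ is contained in a subvariety $Y \subseteq X$ with $a(Y, -K_X|_Y) > a(X, -K_X) = 1$, i.e.\ $Y$ lies in the locus $W$ of Theorem~\ref{classification higher a}. Since $\dim X \ge 5$, only alternative~(1) of that theorem can hold, so $\rho(X) = 2$, $\dim X = 5$, and $Y$ is the exceptional locus of an elementary divisorial contraction $\pi$; consulting the classification of coindex $3$ Fano fivefolds of Picard rank $2$ admitting such a contraction, I would identify $X$ as the blow-up of $\mathbb{P}^5$ along a line $L$, with $\pi$ the blow-down and $Y$ its exceptional divisor. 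Then $f(C_j) \subseteq Y$ is a component of $C$, which is conclusion~(1). For the final sentence, I would note that $g(X) \le 3$ forces $\rho(X) = 1$, so alternative~(1) never applies, and then rule out conclusion~(2) for general $X$ by a parameter count on the relevant moduli space: exhibiting one coindex $3$ Fano variety of each such genus whose variety of lines has the expected dimension shows that for a general member no $(\dim X - 2)$-dimensional family of lines passes through a point.

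The step I expect to be the main obstacle is the dimension bookkeeping in case~(a): showing that a non-free constituent of the general reducible curve cannot sweep out all of $X$ without forcing $\dim M$ below the expected value. This is exactly where the fiber-dimension lemmas must be used, and it is delicate because the evaluation maps of the constituent families need not be in general position, and because a stable map in $M$ may have many components arranged in a tree, requiring an induction on the combinatorial type. A secondary difficulty is the appeal to the classification of coindex $3$ Fano fivefolds to identify $X = \mathrm{Bl}_L\mathbb{P}^5$ in case~(a), together with the explicit models of low-genus coindex $3$ Fano varieties needed for the genericity statement at the end.
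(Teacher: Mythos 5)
Your proposal follows essentially the same route as the paper, which deduces this theorem as a corollary of Lemmas \ref{lem: fiber dimension}, \ref{lem:gluing}, \ref{lem: cones of lines} and Corollary \ref{cor: Z1 finite}: a non-free constituent of the general curve either sweeps out a proper subvariety, which via Proposition \ref{prop: nondominant curves subvariety a-invar} and Theorem \ref{classification higher a} forces $X$ to be the blow-up of $\mathbb{P}^5$ along a line with that constituent in the exceptional divisor, or belongs to a dominant family whose evaluation fibers jump, which forces $H$-lines through one of finitely many points on a variety with $\rho(X)=1$ and $g(X)\le 3$. That is exactly your dichotomy, and you correctly identify the dimension bookkeeping as the crux.

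Two points in your case (b) are imprecise, and they sit precisely at that crux. First, the excess cannot occur at a general point of $X$: a dominant family has exactly the expected total dimension (its general member is free), so the jump is in the fibers of the evaluation map, and Corollary \ref{cor: Z1 finite} shows it occurs only over finitely many special points; it is through these points, not general ones, that the lines of conclusion (2) pass. Second, the reduction to $H\cdot\beta=1$ is not obtained by bend-and-break through two general points of $X$ --- the curves carrying the excess are non-free and confined to a divisor $D$ swept out by a cone of lines --- but by the projection argument in Corollary \ref{cor: Z1 finite}, which bounds families of curves of $H$-degree $d>1$ in $D$ through a point by $d(\dim X-2)+1$, too small to supply the needed excess; your bend-and-break step as stated only produces a degeneration, not a degree bound. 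A smaller issue: for the final genericity claim, exhibiting one variety whose Fano scheme of lines has the expected dimension is not enough, since the total space can have expected dimension while a fiber of the evaluation map jumps (Example \ref{example: big fiber dim} is exactly such a case); one must run the incidence count on cone points themselves. With these repairs, and the induction on the dual graph that you already flag, your outline matches the paper's proof.
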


In light of Theorem \ref{thm: main result 1}, we obtain the following corollary.

\begin{cor}
    Let $X$ be a smooth coindex 3 Fano variety of dimension at least $6$ and genus greater than $3$.  For each $\alpha \in \overline{NE}(X)_{\mathbb{Z}}$, $\overline{M}_{0,0}(X,\alpha)$ is irreducible.
\end{cor}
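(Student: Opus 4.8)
The plan is to derive this from Theorem~\ref{thm: main result 1} together with Theorem~\ref{thm: component reducible stable maps}; once both are in hand the corollary is essentially a bookkeeping statement. First I would dispose of the case $\alpha = 0$, where $\overline{M}_{0,0}(X,0) = \emptyset$ and the assertion is vacuous, and assume $\alpha \in \overline{NE}(X)_{\mathbb{Z}}$ is nonzero. Consider the locus $U \subset \overline{M}_{0,0}(X,\alpha)$ of stable maps with irreducible domain; since there are no marked points and $\alpha \neq 0$, such a domain is $\mathbb{P}^1$ with a nonconstant map, so $U$ is precisely the image of the natural morphism $\Mor(\mathbb{P}^1,X,\alpha) \to \overline{M}_{0,0}(X,\alpha)$, and $U$ is open. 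By Theorem~\ref{thm: main result 1}, $\Mor(\mathbb{P}^1,X,\alpha)$ is irreducible and nonempty, hence so is $U$; let $M_0 := \overline{U}$, which is then an irreducible component of $\overline{M}_{0,0}(X,\alpha)$.

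Next I would observe that any component $M \neq M_0$ generically parameterizes reducible curves. Indeed, $U$ is open and dense in $M_0$, so for $M \neq M_0$ the intersection $M \cap U$ is a proper closed subset of $M$; thus the generic point of $M$ lies outside $U$, i.e.\ parameterizes a stable map with reducible domain. This is the only formal point, and it requires nothing beyond openness of $U$ and maximality of irreducible components.

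Finally I would apply Theorem~\ref{thm: component reducible stable maps}, which is valid since $\dim X \geq 6 \geq 5$: a component of $\overline{M}_{0,0}(X,\alpha)$ that generically parameterizes reducible curves forces either $(1)$ $\dim X = 5$ with $X$ the blow-up of $\mathbb{P}^5$ along a line, or $(2)$ $\rho(X) = 1$ and $g(X) \leq 3$. The hypothesis $\dim X \geq 6$ excludes $(1)$, and $g(X) > 3$ excludes $(2)$. Hence no component $M \neq M_0$ exists, so $\overline{M}_{0,0}(X,\alpha) = M_0$ is irreducible and nonempty. The substantive content is entirely carried by Theorems~\ref{thm: main result 1} and~\ref{thm: component reducible stable maps}; there is no genuine obstacle in assembling the corollary from them, and I would not expect the argument to occupy more than a short paragraph in the final write-up.
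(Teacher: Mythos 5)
Your proposal is correct and is exactly the argument the paper intends: the corollary is stated immediately after Theorem~\ref{thm: component reducible stable maps} with the remark that it follows ``in light of Theorem~\ref{thm: main result 1},'' i.e.\ the unique component parameterizing irreducible curves comes from irreducibility of $\Mor(\mathbb{P}^1,X,\alpha)$, and any further component would generically parameterize reducible curves, which Theorem~\ref{thm: component reducible stable maps} rules out since $\dim X \geq 6$ excludes the blow-up of $\mathbb{P}^5$ along a line and $g(X) > 3$ excludes case (2). Your assembly of these two inputs, including the topological point that the locus of irreducible-domain maps is open, matches the paper's (implicit) proof.
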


\subsection{Outline}
The paper is organized as follows.
Section \ref{section: preliminaries} collects the notions which are frequently used in the paper and preliminary results. 
In Section \ref{section: GMC}, we define two geometric invariants $a$, $b$, and state Geometric Manin's Conjecture. 
We also prove Geometric Manin's Conjecture (GMC) for coindex 3 Fano varieties of product type.  
In Section \ref{section: subvar with higher a}, we classify subvarieties of higher $a$-invariants to prove Theorem \ref{classification higher a}.  
In Section \ref{section: low degree curves}, we study the irreducible components of $\overline{M}_{0,0}(X, \alpha)$ for small $\alpha \in \Nef_{1}(X)_{\mathbb{Z}}$.   %
For those $X$ containing a contractible divisor, Lemma \ref{lem: mbb contractible divisor} extends this study to all $\alpha$.
To study spaces of higher degree free curves on the remaining varieties, we prove Movable Bend-and-Break (Theorem \ref{MBB}) in Section \ref{section:mbb}.  
Theorem \ref{thm: component reducible stable maps} is a direct consequence of the detailed study of Kontsevich spaces in this section.  
In Section \ref{section:a covers}, we classify $a$-covers with respect to the Iitaka dimension for the adjoint divisor.
We conclude with a proof of Theorem \ref{thm: main result 1}, Theorem \ref{thm: main result 2}, and Corollary \ref{thm: GMC} in Section \ref{section: proof of Main thms}.   %

\section*{Acknowledgments}
The authors would like to thank Alexander Kuznetsov for answering the questions about linear subspaces on Grassmannians.  The authors would also like to thank Izzet Coskun, Gwyneth Moreland, and Sho Tanimoto for comments on earlier drafts of this paper.  Lastly, the authors would like to thank Brian Lehmann, Eric Riedl, and Sho Tanimoto for an introduction to the topic and for their continued support and encouragement.  The first author was supported by an NSF postdoctoral research fellowship, DMS-2303335.  The second author was partially supported by JST FOREST program Grant number JPMJFR212Z and JSPS Bilateral Joint Research Projects Grant number JPJSBP120219935.

\section{Preliminaries}\label{section: preliminaries}
Let $X$ be a smooth projective variety.
Let $N_{1}(X)$ denote the $\mathbb{R}$-vector space of $\mathbb{R}$-$1$-cycles on $X$ modulo numerical equivalence. 
The dimension of $N_{1}(X)$ is called the \textit{Picard rank} $\rho(X)$.
Let $\overline{NE}(X)$ be the closure of the cone generated by effective $1$-cycles on $X$ and $\Nef_{1}(X)$ the closed cone generated by \textit{nef} $1$-cycles, i.e., $1$-cycles which have non-negative intersections with effective divisors on $X$. 

For $\alpha \in \overline{NE}(X)$, let $\overline{M}_{0,0}(X, \alpha)$ be the union of components of the Kontsevich space $\overline{M}_{0,0}(X)$ parameterizing stable maps $f\colon C\rightarrow X$ with $f_{*}C = \alpha$. 
For a stable map $f\colon C\rightarrow X$ of genus $0$ which is an immersion around each node, the \textit{normal sheaf} of $f$ is defined by the sequence 
\[0\rightarrow \mathcal{E}xt^{1}_{\mathcal{O}_{C}}(Q, \mathcal{O}_{C}) \rightarrow N_{f}\rightarrow \mathcal{H}om_{\mathcal{O}_{C}}(K, \mathcal{O}_{C}) \rightarrow 0, \]
where $K$ (resp.~$Q$) is the kernel (resp.~cokernel) of the map $f^{*}\Omega_{X}\rightarrow \Omega_{C}$.
By deformation theory (e.g., \cite{Behrend1997normalcone}, \cite{GHSrational}), the Zariski tangent space of $\overline{M}_{0,0}(X, \alpha)$ at $[f]$ is given by $H^{0}(X, N_{f})$ and the obstruction space is given by $H^{1}(X, N_{f})$, 
Hence the dimension of $\overline{M}_{0,0}(X, \alpha)$ at $[f]$ is at least $-K_{X}\cdot \alpha + \dim X - 3$, which is often called the \textit{expected dimension}.

\begin{defn}
    Let $X$ be a smooth projective variety. 
    We say that a non-constant morphism $f\colon \mathbb{P}^{1}\rightarrow X$ is \textit{free} (resp. \textit{very free}) if it satisfies $H^{1}(\mathbb{P}^{1}, f^{*}T_{X}(-1)) = 0$ (resp. $H^{1}(\mathbb{P}^{1}, f^{*}T_{X}(-2)) = 0$).
\end{defn}
In particular, a free morphism corresponds to a smooth point of $\overline{M}_{0,0}(X)$.

\begin{defn}
    Let $X$ be a smooth projective variety and let $C$ be a stable curve of genus $0$.
    \begin{itemize}
        \item A quasi-stable curve $C$ is a \textit{chain of rational curves} of length $d$ if $C$ is a union of $d$ rational curves $C_{1}, \dots, C_{d}$ such that for each $i\in \{1,\dots,d-1\}$, $C_{i+1}$ intersects $C_{1} \cup \dots \cup C_{i}$ at a single point contained in $C_{i}$.

        \item A stable map $f\colon C\rightarrow X$ is a \textit{chain of free curves} on $X$ if $C$ is a chain of rational curves and each restriction $f|_{C_{i}}$ is free.

        \item A quasi-stable curve $C$ is a \textit{comb} if there exists a component $D$, called the \textit{handle}, such that all nodes of $C$ lie in $D$. %
    \end{itemize}
\end{defn}

\begin{lem}[\cite{GHSrational}, Lemma 2.6]
    Let $X$ be a smooth projective variety.
    Let $f\colon C\rightarrow X$ be a genus $0$ stable map which is an immersion near the nodes of $C$. 
    Take a component $D$ of $C$ and let $C_{1}, \dots, C_{d}$ be (the closure of) connected components of $C\setminus D$.
    Then there is an exact sequence
    \[0\rightarrow N_{g}\rightarrow N_{f}|_{D}\rightarrow \bigoplus_{i=1}^{d}k(p_{i})\rightarrow 0, \]
    where $g := f|_{D}$ and $p_{i} \in D$ is the point of intersection with each $C_{i}$.
\end{lem}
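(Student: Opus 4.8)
The plan is to reduce the statement to a local computation in a neighborhood of each attaching point $p_i$ and then glue. First I would note that the assertion is local on $D$: over $D \setminus \{p_1,\dots,p_d\}$ the curve $C$ coincides with $D$ and is disjoint from $C_1 \cup \dots \cup C_d$, so there $N_f|_D = N_g$ and the putative quotient $\bigoplus_i k(p_i)$ vanishes. Hence it suffices to produce, compatibly with this identification over the punctured neighborhood, a short exact sequence $0 \to N_g \to N_f|_D \to k(p_i) \to 0$ on an \'etale (equivalently, formal) neighborhood of each $p_i$, after which the local sequences patch to the global one. On such a neighborhood I would fix the standard model $C = \{xy = 0\}$ with $D = \{y = 0\}$ and $C_i = \{x = 0\}$.

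Next I would make the two normal sheaves explicit near $p_i$. Because $f$ is an immersion near the nodes, the cokernel $Q$ of $f^{*}\Omega_X \to \Omega_C$ vanishes near $p_i$, so the defining sequence of $N_f$ gives $N_f = \mathcal{H}om_{\mathcal{O}_C}(K,\mathcal{O}_C)$ there, with $K = \ker(f^{*}\Omega_X \to \Omega_C)$. Since a node is a local complete intersection, $\Omega_C$ admits near $p_i$ the length‑one resolution $0 \to \mathcal{O}_C \xrightarrow{(y,x)} \mathcal{O}_C^{\oplus 2} \to \Omega_C \to 0$, so $K$ is locally free of rank $\dim X - 1$ there. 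Restricting $f^{*}\Omega_X \to \Omega_C$ to $D$ shows that $g = f|_D$ is likewise an immersion near $p_i$, so $N_g$ coincides near $p_i$ with $\mathcal{H}om_{\mathcal{O}_D}(K_D,\mathcal{O}_D)$, where $K_D = \ker(g^{*}\Omega_X \to \Omega_D)$ is locally free there.

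Then I would compare $K|_D$ with $K_D$. From the resolution above one checks, near $p_i$, that $\mathcal{T}or^{\mathcal{O}_C}_1(\Omega_C,\mathcal{O}_D) = 0$ and that the conormal sequence of $D \subset C$ reads $0 \to k(p_i) \to \Omega_C|_D \to \Omega_D \to 0$. Restricting $0 \to K \to f^{*}\Omega_X \to \Omega_C \to 0$ to $D$ (the $\mathcal{T}or$‑vanishing keeps it exact) and applying the nine lemma against the conormal sequence yields an exact sequence of locally free $\mathcal{O}_D$‑modules $0 \to K|_D \to K_D \to k(p_i) \to 0$ near $p_i$. Dualizing by $\mathcal{H}om_{\mathcal{O}_D}(-,\mathcal{O}_D)$ and using $\mathcal{H}om_{\mathcal{O}_D}(k(p_i),\mathcal{O}_D) = 0$, $\mathcal{E}xt^{1}_{\mathcal{O}_D}(k(p_i),\mathcal{O}_D) \cong k(p_i)$, and the local freeness of $K|_D$ and $K_D$, produces $0 \to N_g \to N_f|_D \to k(p_i) \to 0$ near $p_i$; patching with the isomorphism $N_g \cong N_f|_D$ over $D \setminus \{p_i\}$ gives the asserted global sequence.

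The main obstacle is the bookkeeping at the nodes: one must verify that the map $K|_D \to K_D$, the connecting maps $K_D \to k(p_i)$, and the restriction isomorphism over $D \setminus \{p_i\}$ are mutually compatible so that the local sequences glue, and that the $\mathcal{T}or$‑ and conormal computations are carried out over the correct local ring. These amount to short diagram chases rather than deep input; once the \'etale‑local model of the node is fixed, the remainder is routine.
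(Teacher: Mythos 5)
Your argument is correct: the localization to an \'etale neighborhood of each node, the identification $N_f=\mathcal{H}om(K,\mathcal{O}_C)$ there (using that $f$ is an immersion so $Q=0$ and $K$ is locally free), the comparison $0\to K|_D\to K_D\to k(p_i)\to 0$ via the conormal sequence and $\mathcal{T}or$-vanishing, and the final dualization all check out. The paper gives no proof of this lemma beyond citing \cite{GHSrational}, and your computation is essentially the standard argument given there, so there is nothing further to compare.
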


\begin{lem}[\cite{Kollar1996}, Lemma 7.5]
    Let $X$ be a smooth projective variety and let $E$ be a vector bundle on $X$.
    Suppose $f\colon C = C_{1}\cup \dots \cup C_{d}\rightarrow X$ is a stable map such that $E|_{C_{i}}$ is globally generated for $i\ge 2$.
    Then $H^{1}(C, E) \cong H^{1}(C_{1}, E|_{C_{1}})$. 
\end{lem}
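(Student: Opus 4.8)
The plan is to induct on the number $d$ of irreducible components, exploiting that the domain of a genus-$0$ stable map is a tree of smooth rational curves. Set $\mathcal{E} := f^{*}E$; this is locally free since $E$ is, and $\mathcal{E}|_{C_{i}}$ denotes its restriction to the component $C_{i}$. The case $d = 1$ is trivial, so assume $d \geq 2$.

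For the inductive step, I would first use the fact that a tree with $d \geq 2$ vertices has at least two leaves to choose a leaf $C_{j}$ with $j \geq 2$; that is, $C_{j}$ meets $C' := \overline{C \setminus C_{j}}$ at a single node $p$, and $C'$ is a connected tree with $d-1$ components, one of which is $C_{1}$. Tensoring the Mayer--Vietoris sequence
\[ 0 \longrightarrow \mathcal{O}_{C} \longrightarrow \mathcal{O}_{C'} \oplus \mathcal{O}_{C_{j}} \longrightarrow k(p) \longrightarrow 0 \]
by the locally free sheaf $\mathcal{E}$ and passing to cohomology produces a long exact sequence whose relevant segment reads
\[
\begin{aligned}
H^{0}(\mathcal{E}|_{C'}) \oplus H^{0}(\mathcal{E}|_{C_{j}}) &\xrightarrow{\ \mathrm{ev}\ } \mathcal{E}\otimes k(p) \longrightarrow H^{1}(C,\mathcal{E}) \\
&\qquad \longrightarrow H^{1}(\mathcal{E}|_{C'}) \oplus H^{1}(\mathcal{E}|_{C_{j}}) \longrightarrow 0,
\end{aligned}
\]
the final $0$ coming from the vanishing of $H^{1}$ of the skyscraper $\mathcal{E}\otimes k(p)$.

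Next I would invoke the hypothesis that $\mathcal{E}|_{C_{j}}$ is globally generated on $C_{j}\cong\mathbb{P}^{1}$: such a bundle is a direct sum of line bundles of nonnegative degree, so $H^{1}(\mathcal{E}|_{C_{j}}) = 0$, and its evaluation map onto the fiber $\mathcal{E}\otimes k(p)$ at $p$ is surjective. Hence the map $\mathrm{ev}$ above is surjective, the connecting homomorphism $\mathcal{E}\otimes k(p)\to H^{1}(C,\mathcal{E})$ vanishes, and we obtain $H^{1}(C,\mathcal{E})\cong H^{1}(C',\mathcal{E}|_{C'})$. Since $\mathcal{E}|_{C_{i}}$ remains globally generated for every component $C_{i}$ of $C'$ with $i\geq 2$, and $C'$ still contains the distinguished component $C_{1}$, the inductive hypothesis gives $H^{1}(C',\mathcal{E}|_{C'})\cong H^{1}(C_{1},\mathcal{E}|_{C_{1}})$, completing the induction. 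The argument is essentially formal; the only point that requires care is arranging at each stage that the component removed is never $C_{1}$, which is precisely where one uses that $C$ is a tree (equivalently, of arithmetic genus $0$) rather than an arbitrary connected nodal curve, so that deleting a suitable leaf leaves the remainder connected and meeting that leaf in a single reduced point.
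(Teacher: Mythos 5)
Your proof is correct, and since the paper only cites this as [Koll\'ar, Lemma II.7.5] without reproducing an argument, the relevant comparison is with the standard proof there, which is essentially the same induction: peel off a leaf component $C_j$ with $j\ge 2$, use the normalization/Mayer--Vietoris sequence at the node $p$, and kill both the connecting map and $H^1(\mathcal{E}|_{C_j})$ via global generation on $C_j\cong\mathbb{P}^1$. The only implicit point worth flagging is that the statement tacitly assumes the genus-$0$ (tree of $\mathbb{P}^1$'s) setting of the surrounding section --- you need this both to find a leaf distinct from $C_1$ and to conclude $H^1$ of a globally generated bundle on $C_j$ vanishes --- and you have correctly identified and used exactly that.
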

Applying the lemma to a chain of free curves $f\colon C\rightarrow X$, we obtain the vanishing $H^{1}(C, f^{*}T_{X}) = 0$.
Hence, in particular, $[f] \in \overline{M}_{0,0}(X)$ is a smooth point.

\subsection{Classification of coindex $3$ Fano varieties} 
Fano varieties are often grouped by the divisibility of their canonical divisor relative to their dimension.  Definition \ref{def: index} introduces several commonly used terms to describe this property.

\begin{defn}\label{def: index}
Let $X$ be a smooth Fano variety of dimension $n$.
\begin{itemize}
    \item The \textit{index} $r_X$ of $X$ is the largest integer such that $-K_X = r_X H$ for some ample Cartier divisor $H$.  $H$ is called the \textit{fundamental divisor class}.
    \item The \textit{coindex} of $X$ is $\text{dim }X + 1 - r_X$.
    \item If $X$ is of coindex 3, the \textit{genus} of $X$ is $g = \frac{1}{2}H^n + 1$.
    \item The \textit{fundamental linear system} of $X$ is the complete linear system $|H|$, where $-K_X = r_X H$. If $X$ is of coindex 3, $\text{dim }H^0(X, H) = n + g - 1$.
    
\end{itemize}
\end{defn}

In this paper, we focus on smooth coindex $3$ Fano varieties. 
They are classified by Mukai as follows: 
\begin{thm}[\cite{mukaiClassifcation}]\label{classification Picard rank 1}
Let $X$ be a smooth Fano variety of coindex $3$, dimension $n \geq 4$, and Picard rank $\rho(X) = 1$.  Let $H$ be the fundamental divisor and $g := \frac{1}{2}H^n + 1$ be the genus of $X$.
\begin{enumerate}[(1)]
    \item If $H$ is not very ample, $X$ is isomorphic to one of the following varieties:
    \begin{enumerate}[a.]
        \item $(g = 2)$: a weighted hypersurface of degree $6$ in $\mathbb{P}(1^{n+1}, 3)$;
        \item $(g = 3)$: a weighted complete intersection of type $(4,2)$ in $\mathbb{P}(1^{n+2}, 2)$;
    \end{enumerate}
    \item If $H$ is very ample and $g \leq 5$, $X \subset \mathbb{P}|H|$ is one of the following types of complete intersections:
    \begin{enumerate}[a.]
        \item $(g = 3)$: $X \subset \mathbb{P}^{n+1}$ is a quartic hypersurface. 
        \item $(g = 4)$: $X \subset \mathbb{P}^{n+2}$ is a complete intersection of type $(2,3)$;
        \item $(g = 5)$: $X \subset \mathbb{P}^{n+3}$ is a complete intersection of type $(2,2,2)$;
    \end{enumerate}
    \item If $g \geq 6$, $X$ is a linear section of one of the following varieties:
    \begin{enumerate}[a.]
        \item $(g = 6)$: a quadric section of the cone over the Grassmannian $G(2,5) \subset \mathbb{P}^9$ in the Pl\"{u}cker embedding;
    \item $(g = 7)$: the spinor variety $OG_+(5,10) \subset \mathbb{P}^{15}$ in the embedding induced by the half-spinor representation;
    \item $(g = 8)$: the Grassmannian $G(2,6) \subset \mathbb{P}^{14}$ in the Pl\"{u}cker embedding;
    \item $(g = 9)$: the symplectic Grassmannian $SG(3,6) \subset \mathbb{P}^{13}$ in the Pl\"{u}cker embedding;
    \item $(g = 10)$: a closed orbit $G_2/P \subset \mathbb{P}^{13}$ of the adjoint representation of $G_2$.
    \end{enumerate}
\end{enumerate}
\end{thm}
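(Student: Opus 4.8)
This is Mukai's classification theorem, so the plan is really an outline of his strategy. The idea is to descend along general members of the fundamental system $|H|$ until the section becomes a classically understood variety, and then to recover $X$ as the essentially unique \emph{extension} of that section inside an ambient (typically homogeneous) variety.

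\textbf{Descent.} One first checks that a general member $X' \in |H|$ is a smooth irreducible divisor (using vanishing theorems and the analysis of fundamental divisors on coindex $\le 3$ Fano varieties); by adjunction $-K_{X'} = (-K_X - H)|_{X'} = (n-3)\,H|_{X'}$, and by the Lefschetz hyperplane theorem $\mathrm{Pic}(X') = \mathbb Z\,H|_{X'}$ whenever $\dim X' \ge 3$, so $X'$ is again a coindex $3$ Fano of the same genus $g$, one dimension lower. Iterating down to dimension $3$ gives a prime Fano threefold $V_{2g-2}$ of index $1$ and genus $g$; a further general section is a polarized K3 surface $(S,H_S)$ with $H_S^2 = 2g-2$; and one more is a canonical curve $C \subset \mathbb P^{g-1}$ of genus $g$ with $K_C = H|_C$. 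The classification of $X$ is thereby reduced to the classification of prime Fano threefolds of the principal series (Iskovskikh) together with a statement describing which higher-dimensional varieties cut down to them.

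\textbf{Hyperelliptic and small-genus cases.} If $H$ is not very ample then $K_C = H|_C$ is not very ample, so the curve section is hyperelliptic; this forces $g \in \{2,3\}$ and realizes the morphism attached to $|H|$ as a double cover of a variety of minimal degree --- $\mathbb P^n$ when $g = 2$, a smooth quadric $n$-fold when $g = 3$. Tracking the branch data through the double cover identifies $X$ with a degree-$6$ weighted hypersurface in $\mathbb P(1^{n+1},3)$, respectively a weighted complete intersection of type $(4,2)$ in $\mathbb P(1^{n+2},2)$, which is case (1). If $H$ is very ample and $g \le 5$, one establishes projective normality of $X \subset \mathbb P|H|$ (again from vanishing of $H^1$ of twisted ideal sheaves); then the canonical curve $C \subset \mathbb P^{g-1}$ is a complete intersection of the expected multidegree $(4)$, $(2,3)$, or $(2,2,2)$, and projective normality lifts these defining equations verbatim to $X \subset \mathbb P^{n+g-2}$, giving case (2).

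\textbf{Large genus, the vector bundle method, and the main obstacle.} For $6 \le g \le 10$ the section is no longer a complete intersection in projective space, and one instead constructs a \emph{Mukai bundle}: a stable bundle $\mathcal E$ on $X$ of small rank whose Chern classes are prescribed by $g$, obtained by running a Brill--Noether construction on the curve section and propagating the bundle up the tower $C \subset S \subset V_{2g-2} \subset \dots \subset X$ via the relevant $H^1$-vanishing at each step. One then checks that $\mathcal E$ is globally generated and rigid, so that $|\mathcal E|$ defines a closed embedding of $X$ into a Grassmannian $G(r, H^0(\mathcal E)^\vee)$, and that the image is exactly a linear section --- for $g = 6$ also a quadric section of the cone --- of the corresponding homogeneous variety $G(2,5)$, $OG_+(5,10)$, $G(2,6)$, $SG(3,6)$, or $G_2/P$; an extendability argument then shows these models admit no further smooth extension, which both rules out other ambient varieties and caps $\dim X$ in each genus. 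The descent and the small-genus cases are careful but routine bookkeeping with adjunction, Bertini, Lefschetz, and Kodaira and Kawamata--Viehweg vanishing; the genuine difficulty --- and where new geometric input (Brill--Noether theory on the curve section, Zak-type rigidity of the homogeneous models) is indispensable --- lies in producing the Mukai bundle with exactly the right invariants, proving its global generation and rigidity, and establishing the extendability statement.
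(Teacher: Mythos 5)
This statement is Mukai's classification theorem, which the paper quotes from \cite{mukaiClassifcation} without proof; there is no internal argument to compare against. Your outline is a faithful account of the strategy actually used in the literature: descent through general fundamental divisors (justified by the existence of smooth members of $|H|$, adjunction, and Lefschetz) down to a prime Fano threefold, a polarized K3, and a canonical curve; the hyperelliptic analysis for $g\in\{2,3\}$ yielding the two double-cover/weighted-complete-intersection cases; projective normality giving the complete-intersection description for $g\le 5$; and the Mukai bundle construction plus rigidity/extendability for $6\le g\le 10$. The one point your sketch passes over is why the genus list for $\dim X\ge 4$ terminates at $10$ rather than at $12$ as it does for prime Fano threefolds: the Iskovskikh list includes $g=12$ (the $V_{22}$ threefold), and one must show this case admits no smooth extension to dimension $4$ (and likewise that genus $11$ does not occur). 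Your closing remark about extendability capping $\dim X$ in each genus is the right mechanism, but it should be applied explicitly to exclude $g=12$ before the case analysis is complete. As a blueprint of the cited proof rather than a self-contained argument, the proposal is sound.
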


If the fundamental linear system $|H|$ of a Fano variety $X$ is very ample, the embedding $X \rightarrow \mathbb{P}|H|$ is called the \textit{fundamental model} of $X$.  Each coindex 3 Fano variety listed in Theorem \ref{classification Picard rank 1}(2-3) is embedded via its fundamental linear system.

\begin{thm}[\cite{mukaiClassifcation}]\label{classification Picard rank 2}
Let $X$ be a smooth Fano variety of coindex $3$, dimension at least $4$, and Picard rank $\rho(X) \ge 2$.  
Suppose that $X$ is not a Fano $4$-fold of product type, i.e., $X$ is not the product of $\mathbb{P}^{1}$ and a Fano $3$-fold of even index. 
Then $X$ is isomorphic to a linear section of the fundamental model of one of the following Fano varieties of Picard rank $2$:
\begin{itemize}
    \item $(g = 7)$: a double cover of $\mathbb{P}^2 \times \mathbb{P}^2$ branched over a divisor of bidegree $(2,2)$;
    \item $(g = 9)$: a divisor in $\mathbb{P}^2 \times \mathbb{P}^3$ of bidegree $(1,2)$;
    \item $(g = 11)$: $\mathbb{P}^3 \times \mathbb{P}^3$;
    \item $(g = 11)$: $\mathbb{P}^2 \times Q^3$;
    \item $(g = 12)$: The blow-up of a smooth quadric $Q^4 \subset \mathbb{P}^5$ along a conic not contained in a plane lying in $Q^4$;
    \item $(g = 13)$: the flag variety of $\textit{Sp}(2)$ (or equivalently, of $\textit{SO}(5)$);
    \item $(g = 14)$: The blow-up of $\mathbb{P}^5$ along a line;
    \item $(g = 16)$: the $\mathbb{P}^1$-bundle $\mathbb{P}(\mathcal{O}_Q \oplus \mathcal{O}_Q(1))$ over $Q^3 \subset \mathbb{P}^4$;
    \item $(g = 21)$: the $\mathbb{P}^1$-bundle $\mathbb{P}(\mathcal{O}_\mathbb{P} \oplus \mathcal{O}_\mathbb{P}(2))$ over $\mathbb{P}^3$.

\end{itemize}
\end{thm}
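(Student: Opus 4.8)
The plan is to reduce to the classification of Fano threefolds of index one and Picard rank $\ge 2$ by iterated hyperplane sections, and then to reconstruct the higher-dimensional varieties from their threefold sections via the \emph{fundamental model}. First I would establish that for a coindex $3$ Fano variety $X$ of dimension $n \ge 4$ with $-K_X = (n-2)H$, the fundamental linear system $|H|$ is base-point free: applying Kawamata--Viehweg vanishing to $-K_X - H = (n-3)H$ supplies the needed cohomology vanishing, and a Reider-- or Fujita-type argument upgrades this to freeness. In contrast to the $\rho = 1$ situation, one shows that once $\rho(X) \ge 2$ the divisor $H$ is in fact very ample and $g \ge 7$, so the non-very-ample exceptions of Theorem~\ref{classification Picard rank 1} do not occur. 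A general $Y \in |H|$ is then smooth and irreducible by Bertini, has $-K_Y = (n-3)H|_Y$ ample, hence is a smooth coindex $3$ Fano variety of dimension $n-1$ and the same genus $g$; and since $\dim Y = n-1 \ge 3$, the Grothendieck--Lefschetz theorem gives $\rho(Y) = \rho(X)$. Iterating down to dimension $3$ produces a smooth Fano threefold $W$ of index one, Picard rank $\rho(X) \ge 2$, and genus $g$, which is not of product type because $X$ is not.

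By the Mori--Mukai classification of Fano threefolds with $\rho \ge 2$, matching the genus $g$ together with the index-one and non-product-type conditions pins $W$ down to a finite explicit list; these are exactly the threefold linear sections of the fundamental models of the nine Picard-rank-$2$ Fano varieties in the statement (the double cover of $\mathbb{P}^2 \times \mathbb{P}^2$ for $g = 7$, the $(1,2)$-divisor in $\mathbb{P}^2 \times \mathbb{P}^3$ for $g = 9$, and so on up to the $\mathbb{P}^1$-bundle over $\mathbb{P}^3$ for $g = 21$). That no coindex $3$ Fano of smaller genus occurs with $\rho \ge 2$ follows from the same Lefschetz principle: the small-genus coindex $3$ Fanos are complete intersections in (weighted) projective space and have $\rho = 1$, while the product-type threefolds are precisely the cases excluded by hypothesis.

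It remains to run the reduction backwards: to show that $X$ itself is a linear section of the fundamental model of the Picard-rank-$2$ Fano variety $\Sigma_g$ attached to its genus. For this I would use the Mukai vector bundle method. Each threefold $W$ above carries a distinguished globally generated bundle $\mathcal{E}_W$ — a rank-$2$ bundle in the Grassmannian, spinor, and flag cases, a direct sum of line bundles in the product and projective-bundle cases — whose complete linear system of sections realizes the fundamental embedding of $W$ as a linear section of $\Sigma_g$. One shows $\mathcal{E}_W$ is the restriction of a bundle $\mathcal{E}_X$ on $X$ with the same $h^0$ (again via Kawamata--Viehweg vanishing of $H^1$ of the relevant twists along each hyperplane section), that $\mathcal{E}_X$ is still globally generated with the same Chern invariants, and that the induced morphism $X \to \Sigma_g$ (into the appropriate Grassmannian, product of projective spaces, quadric blow-up, etc.) is an embedding whose image is cut out by hyperplanes. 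The upper bound on $n$ for each genus then comes from Zak-- and Lvovsky-type bounds on the maximal number of extensions of the threefold $W$, equivalently from the vanishing of suitable pieces of its normal-bundle cohomology.

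The main obstacle is precisely this reconstruction step: establishing the existence, global generation, and correct invariants of the extended Mukai bundle $\mathcal{E}_X$, and identifying $\Sigma_g$ together with the proof that $X \hookrightarrow \Sigma_g$ is a linear section. This is where the individual geometry of each homogeneous or almost-homogeneous space enters and where the sharpest cohomological input is needed. An alternative would be to run the minimal model program directly on $X$: since $\rho(X) \ge 2$ there are at least two extremal contractions, and the index constraint forces fiber-type contractions to have projective-space or quadric fibers and divisorial contractions to be blow-ups along smooth centers, after which one bootstraps from the Picard-rank-$1$ classification (Theorem~\ref{classification Picard rank 1}) and the structure theory of Fano bundles; but this route meets the same difficulty in the guise of showing that the two-ray game on $X$ closes up to exactly the listed models.
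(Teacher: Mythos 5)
This statement is not proven in the paper at all: it is quoted verbatim from Mukai's classification of coindex~$3$ Fano manifolds (\cite{mukaiClassifcation}) and used as a black box, so there is no internal argument to compare yours against. Judged on its own terms, your outline does follow the broad strategy of the actual literature proof (descend to a threefold section by a ladder of fundamental divisors, classify the threefold, then reconstruct the ambient variety via the Mukai vector bundle method), and you correctly identify the reconstruction step as the crux. But as written the proposal has real gaps beyond the one you flag. First, the existence of a smooth ladder is itself a nontrivial theorem: base-point freeness of $|H|$ and, more importantly, the existence of a \emph{smooth} member of $|H|$ for coindex~$3$ Fanos is Mukai's conjecture on fundamental divisors (proved by Mella in dimension~$3$ and extended in higher dimension); Kawamata--Viehweg vanishing plus ``a Reider-- or Fujita-type argument'' does not deliver this in dimension $\geq 4$ without substantial additional work. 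Second, the step ``matching the genus $g$ together with the index-one and non-product-type conditions pins $W$ down'' is not correct as stated: the genus only records $(-K_W)^3$, and the Mori--Mukai list contains many index-one Fano threefolds of Picard rank $\geq 2$ with the same anticanonical degree as the nine threefold sections in question, almost none of which extend to dimension $4$. Deciding which threefolds actually arise as hyperplane sections of a coindex~$3$ fourfold is not a lookup; it is equivalent to the extension problem you defer to the end, so the genus-matching paragraph cannot be separated from the Mukai-bundle/Zak--Lvovsky step.

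In short: the approach is the right one in outline, but the two places where all the content lives --- smoothness of the ladder and the extension/reconstruction of the ambient model --- are exactly the places left as placeholders, so the proposal is a roadmap to Mukai's proof rather than a proof. For the purposes of this paper that is moot, since the authors cite the result rather than reprove it.
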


\section{Geometric Manin's Conjecture}\label{section: GMC}
Our study is motivated by Geometric Manin's conjecture (GMC) introduced by Lehmann and Tanimoto \cite{2019}.
GMC predicts the asymptotic formula for the number of components of $\Mor(\mathbb{P}^{1}, X, d)$ after removing ``pathological components.''
There are two geometric invariants which play a central role in GMC: the $a$-invariant and the $b$-invariant.
The $a$-\textit{invariant} of a smooth projective variety $X$ with a nef and big divisor $L$ is defined as 
\[a(X,L) = \inf\{t\in \mathbb{R} \mid K_{X} + tL \mbox{ is a psuedo-effective divisor}\}. \]
By \cite{BDPP2013}, the condition $a(X,L) > 0$ is equivalent that $X$ is uniruled.
In this case, the $b$-\textit{invariant} is defined as 
\[b(X, L) = \dim \langle F(X,L) \rangle, \]
where $\langle \cdot \rangle$ denotes the linear span, and $F(X,L)$ is the convex subcone of the nef cone $\mathrm{Nef}_{1}(X)$ consisting of nef $1$-cycle class vanishing against $K_{X} + a(X,L)L$.
By \cite{balancedlinebundles}, the $a$- and $b$-invariants are birational invariants.
Thus these are defined for singular projective varieties pulling back to a smooth resolution.
\par
Pathological components of $\Mor(\mathbb{P}^{1},X)$ are closely related to generically finite morphism $f\colon Y\rightarrow X$ with $a(Y, -f^{*}K_{X}) \ge a(X, -K_{X})$.
For instance, the following facts are known:
\begin{prop}[\cite{2019}, Proposition 4.2]\label{prop: nondominant curves subvariety a-invar}
    Let $X$ be a smooth Fano variety.
    Suppose that an irreducible component $M$ of $\Mor(\mathbb{P}^{1},X)$ is a non-dominant component, that is, the associated evaluation morphism $\mathrm{ev}\colon \mathcal{U}\rightarrow X$ from the universal family of $M$ is non-dominant.
    Then the closure $Z$ of the image of $\mathrm{ev}$ satisfies $a(Z, -K_{X}|_{Z}) > a(X, -K_{X})$. 
\end{prop}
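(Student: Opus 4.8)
The plan is to transport the covering family of rational curves to a smooth birational model of $Z$, bound its dimension with deformation theory, and then apply the duality between pseudoeffective divisors and movable curves. Write $n = \dim X$, let $Z \subsetneq X$ be the closure of the image of $\mathrm{ev}$, and set $m := \dim Z$ and $d := -K_X \cdot \alpha$, where $\alpha = f_*[\mathbb{P}^1]$ is the class parameterized by $M$; by hypothesis $m < n$. At every point $[f] \in M$, deformation theory gives the standard lower bound $\dim_{[f]} M \ge \chi(\mathbb{P}^1, f^*T_X) = d + n$, so $\dim M \ge d + n$.

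First I would choose a resolution $\nu \colon \widetilde{Z} \to Z$ with $\widetilde{Z}$ smooth and projective. A general member $[f] \in M$ meets a general point of $Z$, so its image is not contained in the image of the $\nu$-exceptional locus, and by the valuative criterion of properness the induced rational map $\mathbb{P}^1 \dashrightarrow \widetilde{Z}$ extends to a morphism $\widetilde{f} \colon \mathbb{P}^1 \to \widetilde{Z}$ lifting $f$. The assignment $f \mapsto \widetilde{f}$ is injective on a dense open of $M$, so the closure $\widetilde{M}$ of its image is an irreducible family of rational curves on $\widetilde{Z}$ with $\dim \widetilde{M} = \dim M$, contained in $\Mor(\mathbb{P}^1, \widetilde{Z}, \widetilde{\alpha})$ for $\widetilde{\alpha} := \widetilde{f}_*[\mathbb{P}^1]$. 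Since the curves of $M$ are dense in $Z$, the family $\widetilde{M}$ dominates $\widetilde{Z}$.

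Next I would bound $\dim \widetilde{M}$ from above. Over $\mathbb{C}$, a general member $\widetilde{f}$ of the dominant family $\widetilde{M}$ is free (\cite{Kollar1996}), so $H^1(\mathbb{P}^1, \widetilde{f}^*T_{\widetilde{Z}}) = 0$ and $\Mor(\mathbb{P}^1, \widetilde{Z}, \widetilde{\alpha})$ is smooth at $[\widetilde{f}]$ of dimension $-K_{\widetilde{Z}} \cdot \widetilde{\alpha} + m$; hence $d + n \le \dim M = \dim \widetilde{M} \le -K_{\widetilde{Z}} \cdot \widetilde{\alpha} + m$, and $m < n$ forces $-K_{\widetilde{Z}} \cdot \widetilde{\alpha} > d$. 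Now set $L := \nu^*(-K_X|_Z)$, which is nef and big on $\widetilde{Z}$ and satisfies $L \cdot \widetilde{\alpha} = (-K_X) \cdot \alpha = d$ by the projection formula (using $\nu \circ \widetilde{f} = f$). The class $\widetilde{\alpha}$, being represented by irreducible rational curves that sweep out $\widetilde{Z}$, lies in the closed movable cone of $\widetilde{Z}$, so by \cite{BDPP2013} the pseudoeffective divisor $K_{\widetilde{Z}} + a(\widetilde{Z}, L)L$ (the defining infimum is attained because the pseudoeffective cone is closed and $L$ is big) pairs nonnegatively with $\widetilde{\alpha}$. Dividing $(K_{\widetilde{Z}} + a(\widetilde{Z},L)L) \cdot \widetilde{\alpha} \ge 0$ by $L \cdot \widetilde{\alpha} = d > 0$ then yields
\[
a(Z, -K_X|_Z) = a(\widetilde{Z}, L) \ge \frac{-K_{\widetilde{Z}} \cdot \widetilde{\alpha}}{d} > 1 = a(X, -K_X),
\]
the last equality being the standard computation of the $a$-invariant for a Fano variety.

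I expect the main obstacle to be the bookkeeping around the resolution: checking that a general member of $M$ lifts to $\widetilde{Z}$, that $\widetilde{M}$ genuinely dominates $\widetilde{Z}$, and that a general member of a dominant family of rational curves on a smooth variety in characteristic $0$ is free --- this last point, together with the appeal to \cite{BDPP2013}, is where working over $\mathbb{C}$ is essential. Everything else is the deformation-theoretic dimension count and the pseudoeffective/movable duality already recalled in the text; no delicate estimate beyond $m < n$ is needed.
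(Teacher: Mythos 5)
Your proof is correct, and since the paper simply quotes this statement from \cite{2019} without reproducing an argument, there is nothing in the text to diverge from: your route (lift the family to a resolution $\widetilde{Z}$ of the image, compare the lower bound $\dim M \ge -K_X\cdot\alpha + \dim X$ with the dimension $-K_{\widetilde{Z}}\cdot\widetilde{\alpha} + \dim Z$ of a dominant family of free curves, then pair the pseudoeffective divisor $K_{\widetilde{Z}} + a(\widetilde{Z},L)L$ against the movable class $\widetilde{\alpha}$) is essentially the standard proof given in Lehmann--Tanimoto's Proposition 4.2. All the steps you flag as potential obstacles (lifting general members through the resolution, generic freeness in characteristic $0$, attainment of the infimum defining $a$) are handled correctly.
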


\begin{prop}[\cite{2019}, Proposition 5.15]\label{prop: disconnected fibers a-cover}
    Let $X$ be a smooth Fano variety. 
    Let $M$ be an irreducible component of $\Mor(\mathbb{P}^{1}, X)$ with the universal family $\mathcal{U}\rightarrow M$ and take a smooth resolution $\phi\colon \tilde{\mathcal{U}}\rightarrow \mathcal{U}$.
    Suppose that the general fiber of the associated evaluation morphism $\mathrm{ev}\colon \mathcal{U}\rightarrow X$ is not irreducible.
    Then the finite part $f\colon Y\rightarrow X$ of the Stein factorization of $\mathrm{ev} \circ \phi$ satisfies $a(Y, -f^{*}K_{X}) = a(X, -K_{X})$.
\end{prop}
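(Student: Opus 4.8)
The plan is to prove the two inequalities $a(Y,-f^{*}K_{X})\le a(X,-K_{X})$ and $a(Y,-f^{*}K_{X})\ge a(X,-K_{X})$ separately. Recall $a(X,-K_{X})=1$ since $X$ is Fano, and that $a$ is a birational invariant \cite{balancedlinebundles}, so we may compute $a(Y,-f^{*}K_{X})$ on a fixed smooth resolution $g\colon\tilde Y\to Y$; write $\nu=f\circ g\colon\tilde Y\to X$. We may assume $\mathrm{ev}$ is dominant, as otherwise the finite part of the Stein factorization is not defined; then $M$ is a dominant component of $\Mor(\mathbb P^{1},X)$, and if $\deg f=1$ the claim is immediate from birational invariance, so the content is the case $\deg f\ge 2$ forced by the hypothesis. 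For the upper bound, since $\nu$ is a dominant generically finite morphism of smooth projective varieties there is an effective ramification divisor $R$ on $\tilde Y$ with $K_{\tilde Y}=\nu^{*}K_{X}+R$; hence $K_{\tilde Y}+\nu^{*}(-K_{X})=R$ is pseudo-effective and $a(\tilde Y,\nu^{*}(-K_{X}))\le 1$.

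For the lower bound --- the crux --- the plan is to lift the free curves of $M$ to a covering family of rational curves on $Y$ along which $f$ is \'etale. Since $M$ is a dominant component and we work over $\mathbb C$, a general member $[f_{m}]\in M$ is a free morphism \cite{Kollar1996}; put $C_{m}=f_{m}(\mathbb P^{1})\subset X$. Because $f_{m}^{*}T_{X}$ is globally generated and $M$ is smooth at $[f_{m}]$ with tangent space $H^{0}(f_{m}^{*}T_{X})$, the evaluation morphism is a smooth morphism at every point of the corresponding fibre $\mathcal U_{m}\cong\mathbb P^{1}$ (and $\phi$ is an isomorphism near $\mathcal U_{m}$, so the same holds for $\mathrm{ev}\circ\phi$). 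Restricting the Stein factorization $\tilde{\mathcal U}\to Y\to X$ to the open locus where $\mathrm{ev}\circ\phi$ is smooth, one shows that the image $\tilde C_{m}\subset Y$ of $\mathcal U_{m}$ lies in the smooth locus of $Y$, that $f$ is \'etale along $\tilde C_{m}$, and that the natural map $\mathcal U_{m}\to\tilde C_{m}$ composed with $f$ recovers $f_{m}$; as $m$ varies, the $\tilde C_{m}$ sweep out a dense subset of $Y$.

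Now take $g\colon\tilde Y\to Y$ to be an isomorphism over the smooth locus of $Y$. Since $f$ is \'etale along $\tilde C_{m}$, this curve lies in that locus, its strict transform (still written $\tilde C_{m}$) is disjoint from $R$, and these strict transforms still cover a dense subset of $\tilde Y$; hence $[\tilde C_{m}]$ lies in the movable cone of $\tilde Y$. Using $R\cdot\tilde C_{m}=0$ and the projection formula, $K_{\tilde Y}\cdot\tilde C_{m}=K_{X}\cdot\nu_{*}\tilde C_{m}$ and $\nu^{*}(-K_{X})\cdot\tilde C_{m}=-K_{X}\cdot\nu_{*}\tilde C_{m}$, where $\nu_{*}\tilde C_{m}$ is a positive multiple of $[C_{m}]$. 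Thus for every $t<1$,
\[(K_{\tilde Y}+t\,\nu^{*}(-K_{X}))\cdot\tilde C_{m}=(t-1)\bigl(-K_{X}\cdot\nu_{*}\tilde C_{m}\bigr)<0,\]
since $-K_{X}$ is ample. By the duality between pseudo-effective divisors and movable curves \cite{BDPP2013}, $K_{\tilde Y}+t\,\nu^{*}(-K_{X})$ is not pseudo-effective for $t<1$, so $a(\tilde Y,\nu^{*}(-K_{X}))\ge 1$. Combined with the upper bound, $a(Y,-f^{*}K_{X})=1=a(X,-K_{X})$.

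I expect the one genuinely delicate point to be the \'etale claim in the second paragraph: passing from the smoothness of the evaluation morphism along a free curve in $X$ to the unramifiedness of the Stein cover $f\colon Y\to X$ along the corresponding lifted curve. Care is needed because $\tilde{\mathcal U}$ is not proper over $M$ --- so the ``Stein factorization'' is really the factorization of $\mathrm{ev}\circ\phi$ through the normalization of $X$ in the algebraic closure of $\mathbb C(X)$ in $\mathbb C(\tilde{\mathcal U})$ --- and because $Y$ need only be normal. Over the open locus $W$ where $\mathrm{ev}\circ\phi$ is smooth, its fibres are smooth of the expected dimension and their connected components are the fibres of the map $\tilde{\mathcal U}\dashrightarrow Y$; one must check that this map is smooth along $\mathcal U_{m}$, so that, by descent of regularity along a faithfully flat morphism, $Y$ is smooth along $\tilde C_{m}$ and, by the chain rule, $f$ is \'etale there. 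Once this local statement is established the remainder of the argument is formal.
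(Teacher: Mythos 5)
The paper does not prove this statement; it is imported verbatim from \cite{2019} (Proposition 5.15), so there is no in-paper argument to compare against. Your proposal is a correct reconstruction of the standard proof from that source: the upper bound via effectivity of the ramification divisor, and the lower bound by lifting the generically free curves of $M$ to a covering family on $Y$ that is disjoint from the ramification locus, then applying the duality of \cite{BDPP2013}. The point you flag as delicate does go through: if $\mathrm{ev}\circ\phi$ is smooth at $u\in\mathcal{U}_m$ with image $x$, then $\hat{\mathcal{O}}_{\tilde{\mathcal{U}},u}\cong\hat{\mathcal{O}}_{X,x}[[t_1,\dots,t_k]]$, and since $\hat{\mathcal{O}}_{Y,y}$ (for $y$ the image of $u$ in $Y$) is finite over $\hat{\mathcal{O}}_{X,x}$ and injects into this power series ring, an order-of-vanishing argument shows every element of $\hat{\mathcal{O}}_{Y,y}$ integral over $\hat{\mathcal{O}}_{X,x}$ already lies in $\hat{\mathcal{O}}_{X,x}$; hence $f$ is \'etale at $y$, the lift lies in the smooth locus of $Y$, and $R\cdot\tilde C_m=0$ as you need. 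The only other hypotheses worth making explicit are that $\deg f\ge 2$ is not actually needed (as you note, $\deg f=1$ is trivial by birational invariance) and that the covering-family members represent a single numerical class so that \cite{BDPP2013} applies; both are handled correctly in your write-up.
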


\begin{defn}
    A dominant map $f\colon Y\rightarrow X$ is called an \textit{$a$-cover} if $f$ is not birational and $a(Y, -f^{*}K_{X}) = a(X, -K_{X})$.  Conjecturally, each $a$-cover admits a rational map to the finite part of some Stein factorization as in Proposition \ref{prop: disconnected fibers a-cover}.
\end{defn}  

Based on such phenomena, the authors of \cite{2019} introduced accumulating components and Manin components as follows:
\begin{defn}\label{def: accumulating component}
    Let $X$ be a smooth Fano variety.
    \begin{enumerate}[(1)]
        \item We say that a generically finite morphism $f\colon Y\rightarrow X$ from a smooth projective variety $Y$ is \textit{breaking} if either 
        \begin{itemize}
            \item $a(Y, -f^{*}K_{X}) > a(X, -K_{X})$, 
            \item $f$ is an $a$-cover %
            and $\kappa(Y, K_{Y}-f^{*}K_{X}) > 0$, or 
            \item $f$ is an $a$-cover %
            and the induced map $F(Y, -f^{*}K_{X})\rightarrow F(X, -K_{X})$ is not injective.
        \end{itemize}
        \item We say that an irreducible component $M$ of $\Mor(\mathbb{P}^{1}, X)$ is \textit{accumulating} if there exists a breaking morphism $f\colon Y\rightarrow X$ which induces a dominant map $N\dashrightarrow M$ from some irreducible component of $\Mor(\mathbb{P}^{1}, Y)$.
        \item If an irreducible component $M$ is not accumulating, then we say that $M$ is a \textit{Manin component}.
    \end{enumerate}
\end{defn}

Finally one can state Geometric Manin's Conjecture: 
\theoremstyle{plain}
\newtheorem*{MainConj}{\rm\bf Conjecture \ref{conj: GMC}}
\begin{MainConj}
    Let $X$ be a smooth Fano variety with Brauer group $Br(X)$.  %
    There exists $\alpha \in \mathrm{Nef}_{1}(X)_{\mathbb{Z}}$ such that for each $\beta \in \alpha + \mathrm{Nef}_{1}(X)_{\mathbb{Z}}$, $\Mor(\mathbb{P}^{1},X,\beta)$  contains exactly $|\text{Br}(X)|$ Manin components.
\end{MainConj}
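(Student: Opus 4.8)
\medskip
\noindent\textbf{Proof proposal.} By definition a coindex $3$ Fano variety of product type is $X=\mathbb{P}^{1}\times Y$ where $Y$ is a smooth Fano threefold of even index, i.e.\ $Y=\mathbb{P}^{3}$ or a del Pezzo threefold; for each such $Y$, Conjecture \ref{conj: GMC} is already known --- classically for $\mathbb{P}^{3}$, and by \cite{BurkeJovinelly2022} for del Pezzo threefolds --- and $\mathrm{Br}(X)=\mathrm{Br}(Y)$ is trivial over $\mathbb{C}$, so a single Manin component must be produced for each class in a suitable shifted nef cone. The plan is to reduce GMC for $X$ to GMC for $Y$. Write a curve class on $X$ as $\beta=(d,\beta_{Y})$ with $d=H_{\mathbb{P}^{1}}\cdot\beta$ and $\beta_{Y}\in\overline{NE}(Y)$. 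For $d\ge 1$ and $\beta_{Y}\ne 0$, a nonconstant map $\mathbb{P}^{1}\to X$ of class $\beta$ is precisely a pair consisting of a degree-$d$ self-map of $\mathbb{P}^{1}$ and a class-$\beta_{Y}$ map to $Y$, so $\Mor(\mathbb{P}^{1},X,\beta)\cong\Mor(\mathbb{P}^{1},\mathbb{P}^{1},d)\times\Mor(\mathbb{P}^{1},Y,\beta_{Y})$. The first factor is a smooth irreducible variety (an open dense subset of $\mathbb{P}^{2d+1}$) whose general point is very free, so the irreducible components of $\Mor(\mathbb{P}^{1},X,\beta)$ are exactly the products $M_{0}\times M_{Y}$, where $M_{0}$ is the unique component of $\Mor(\mathbb{P}^{1},\mathbb{P}^{1},d)$ and $M_{Y}$ runs over the components of $\Mor(\mathbb{P}^{1},Y,\beta_{Y})$.

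The key technical input is a product formula for Fujita's invariants. For divisors $D_{i}$ on smooth projective $V_{i}$, the class $p_{1}^{*}D_{1}+p_{2}^{*}D_{2}$ on $V_{1}\times V_{2}$ is pseudo-effective if and only if each $D_{i}$ is (restrict to a moving fibre for the nontrivial direction); hence for nef and big $L_{i}$ one gets $a(V_{1}\times V_{2},p_{1}^{*}L_{1}+p_{2}^{*}L_{2})=\max\{a(V_{1},L_{1}),a(V_{2},L_{2})\}$, and when the two $a$-invariants coincide the face $F(V_{1}\times V_{2},\cdot)$ is the product $F(V_{1},L_{1})\times F(V_{2},L_{2})$. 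Applying this to $-K_{X}=p_{1}^{*}(-K_{\mathbb{P}^{1}})+p_{2}^{*}(-K_{Y})$, to its restrictions, and to its pullbacks under generically finite covers, I would record two consequences: first, if $g\colon W\to X$ is a non-dominant generically finite morphism with image $\mathbb{P}^{1}\times Z_{0}$, then $a(W,-g^{*}K_{X})>1$ forces $a(Z_{0},-K_{Y}|_{Z_{0}})>1$; second, for a generically finite $h\colon W'\to Y$ the morphism $\mathrm{id}_{\mathbb{P}^{1}}\times h\colon \mathbb{P}^{1}\times W'\to X$ is breaking of a given type in the sense of Definition \ref{def: accumulating component} if and only if $h$ is, because $a(\mathbb{P}^{1}\times W',-(\mathrm{id}\times h)^{*}K_{X})=\max\{1,a(W',-h^{*}K_{Y})\}$ and $K_{\mathbb{P}^{1}\times W'}-(\mathrm{id}\times h)^{*}K_{X}=p_{2}^{*}(K_{W'}-h^{*}K_{Y})$, which transports the Iitaka-dimension and $F$-cone clauses.

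I would then show $M_{0}\times M_{Y}$ is accumulating if and only if $M_{Y}$ is. For ``if'', a breaking $h\colon W'\to Y$ with a component $N'$ of $\Mor(\mathbb{P}^{1},W')$ mapping dominantly onto $M_{Y}$ produces the breaking morphism $\mathrm{id}\times h$, and $(\mathrm{id}\times h)_{*}$ carries $M_{0}\times N'$ dominantly onto $M_{0}\times M_{Y}$ (the $M_{0}$-factor lifts trivially). For ``only if'', let $g\colon W\to X$ be a breaking morphism with some component $N$ of $\Mor(\mathbb{P}^{1},W)$ mapping dominantly onto $M_{0}\times M_{Y}$. If $g$ is non-dominant --- which requires $M_{Y}$ non-dominant --- the general curve of $M_{0}\times M_{Y}$ lies in its image, and since such curves dominate $\mathbb{P}^{1}$ the image is $\mathbb{P}^{1}\times Z_{0}$ with $\beta_{Y}$-curves sweeping out $Z_{0}\subsetneq Y$; Proposition \ref{prop: nondominant curves subvariety a-invar} together with the product formula gives $a(Z_{0},-K_{Y}|_{Z_{0}})>1$, so a resolution of $Z_{0}\hookrightarrow Y$ is breaking and dominates $M_{Y}$. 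If $g$ is dominant it is generically finite and (the anticanonical $a$-invariant cannot increase under a dominant generically finite pullback) an $a$-cover; here the condition that $\mathrm{pr}_{1}\circ g\circ h$ is a general degree-$d$ self-map of $\mathbb{P}^{1}$ forces $\mathrm{pr}_{1}\circ g\colon W\to\mathbb{P}^{1}$ to have connected fibres, and one argues that all cover-theoretic content of $g$ is concentrated over $Y$, yielding an $a$-cover of $Y$ of the same breaking type that dominates $M_{Y}$; combined with the descent of $a$-covers through Stein factorization this closes the loop. Hence the Manin components of $\Mor(\mathbb{P}^{1},X,\beta)$ are exactly $M_{0}\times(\text{Manin components of }\Mor(\mathbb{P}^{1},Y,\beta_{Y}))$. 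Finally, take $\alpha=(1,\alpha_{Y})$ with $\alpha_{Y}$ a threshold class from GMC for $Y$ and $\alpha_{Y}\ne 0$; then every $\beta\in\alpha+\mathrm{Nef}_{1}(X)_{\mathbb{Z}}$ has $d\ge 1$ and $\beta_{Y}\in\alpha_{Y}+\mathrm{Nef}_{1}(Y)_{\mathbb{Z}}$ with $\beta_{Y}\ne 0$, so $\Mor(\mathbb{P}^{1},Y,\beta_{Y})$ has $|\mathrm{Br}(Y)|$ Manin components and therefore $\Mor(\mathbb{P}^{1},X,\beta)$ has $|\mathrm{Br}(Y)|=|\mathrm{Br}(X)|$ Manin components, which is Conjecture \ref{conj: GMC} for $X$.

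The main obstacle is the dominant case of the matching step: showing that a breaking cover $g\colon W\to\mathbb{P}^{1}\times Y$ that reaches a dominant family $M_{0}\times M_{Y}$ genuinely descends to a breaking morphism of $Y$ inducing the appropriate dominant map on $\Mor$. The delicacy is that the definition of an $a$-cover is phrased through Stein factorizations of evaluation maps, and that the auxiliary clauses (positivity of $\kappa(W,K_{W}-g^{*}K_{X})$, non-injectivity of the induced map on $F$-cones) must be transported through the fibration $W\to\mathbb{P}^{1}$; the fact that no cover-theoretic content can survive in the $\mathbb{P}^{1}$-direction relies on $M_{0}$ being the \emph{entire} space of degree-$d$ self-maps of $\mathbb{P}^{1}$, and the classes $(d,0)$ --- which parameterize multiple covers of the $\mathbb{P}^{1}$-ruling and do behave differently --- are excluded precisely by the choice of threshold with $\alpha_{Y}\ne 0$.
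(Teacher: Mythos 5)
The statement you are addressing is Geometric Manin's Conjecture in full generality. The paper does not (and cannot) prove it for arbitrary smooth Fano varieties; it only restates it here and then establishes it for smooth coindex $3$ Fano varieties (Corollary \ref{thm: GMC}). Your proposal proves only the sub-case where $X$ is a coindex $3$ Fano $4$-fold of product type, $X=\mathbb{P}^{1}\times Y$. Relative to what the paper actually establishes, this is the easy sliver: for non-product $X$ of dimension $\ge 4$ the argument occupies essentially the entire paper --- the classification of subvarieties with $a(Z,-K_X|_Z)>1$ (Theorem \ref{classification higher a}), the classification of $a$-covers (Theorem \ref{classification a-covers}), Movable Bend-and-Break (Theorem \ref{MBB}), and the irreducibility statements of Theorems \ref{thm: main result 1} and \ref{thm: main result 2}. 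None of this is touched by your argument, and no product-decomposition trick can reach it. So as a proof of the statement (even restricted to coindex $3$, which is the only setting in which the paper claims it) there is a large missing piece.

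Within the product case, your argument is essentially the paper's (Lemma \ref{lem: GMC for product varieties}, Claim \ref{claim: factorization of a-covers}, Corollary \ref{cor: product varieties GMC}), specialized to $\mathbb{P}^{1}\times Y$: the identification $\Mor(\mathbb{P}^{1},X\times Y,\alpha+\beta)\cong\Mor(\mathbb{P}^{1},X,\alpha)\times\Mor(\mathbb{P}^{1},Y,\beta)$, the equivalence ``$M_{1}\times M_{2}$ accumulating iff some factor is,'' the base-change construction $g\times\mathrm{id}$ for the easy direction, and the descent of an $a$-cover of the product through the Stein factorization of a projection for the hard direction all appear there. Two remarks on your treatment of the dominant case, which you correctly identify as the delicate point. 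First, the paper proves Claim \ref{claim: factorization of a-covers} not by a structure theorem for covers of products but by a dimension count on Kontsevich spaces (comparing $\dim\overline{N}_{1}$ over a general fiber $W_{y}$ with the expected dimension), and the remark following it shows the dichotomy genuinely fails for $a$-covers that do not reach a dominant component of $\Mor$; so your assertion that ``all cover-theoretic content of $g$ is concentrated over $Y$'' must use the hypothesis that $N$ dominates $M_{0}\times M_{Y}$, and a complete write-up would need the dimension count you are eliding. Second, instead of transporting the breaking \emph{type} (Iitaka dimension, injectivity on $F$-cones) through the fibration as you propose, the paper simply shows in Corollary \ref{cor: product varieties GMC} that every relevant $a$-cover of the product satisfies $\kappa(K_{W}-f^{*}K_{X\times Y})>0$, using that Fano varieties of dimension at most $3$ admit no adjoint rigid $a$-covers; this is cleaner and sidesteps the face-cone bookkeeping. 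Your choice of threshold $\alpha=(1,\alpha_{Y})$ with $\alpha_{Y}\neq 0$, excluding the classes $(d,0)$ of multiple covers of the ruling, is consistent with the paper's setup.
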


For example, Geometric Manin's Conjecture is known for homogeneous spaces (\cite{Thomsen1998}, \cite{KimPandharipande2001}) and toric varieties (\cite{Bourqui2012}, \cite{Bourqui2016}). 
The methods used in these papers are different from ours.
We mainly follow the technique pioneered in \cite{HRS2004}, where they prove the irreducibility for low degree hypersurfaces.
The technique has been developed %
in many papers such as \cite{Testa2005}, \cite{Testa2009}, \cite{CS2009}, \cite{BK2013},\cite{BV2017}, \cite{RY2019}, \cite{2019}, \cite{Fanoindex1rank1}, \cite{lastDelPezzoThreefold}, \cite{beheshti2020moduli}, \cite{BurkeJovinelly2022}, \cite{Okamura2024delPezzo}.

\begin{rem}
    The authors of \cite{sectionsDelPezzo} expect that the number of Manin components of each $\Mor(\mathbb{P}^{1},X,\beta)$ is equal to the size of the Brauer group $\mathrm{Br}(X)$. 
    This expectation is based on the conjecture that $|\mathrm{Br}(X)| = |\mathrm{Ker}(B_{1}(X)_{\mathbb{Z}}\rightarrow N_{1}(X)_{\mathbb{Z}})|$ for smooth rationally connected variety $X$, where $B_{1}(X)_{\mathbb{Z}}$ denotes the set of algebraically equivalent classes of $1$-cycles. 
    This conjecture is known in dimension at most $3$. 
    Moreover, It is known that the Brauer group is trivial for any  smooth Fano threefold. 
    Since general linear sections of smooth coindex $3$ Fano varieties are smooth Fano threefolds, the Lefschetz hyperplane theorem implies the Brauer groups of smooth coindex $3$ Fano varieties are trivial.
\end{rem}

\subsection{Product varieties}
In this subsection, we consider Geometric Manin's Conjecture for product varieties.
Combining the studies on del Pezzo threefolds \cite{2019}, \cite{lastDelPezzoThreefold}, \cite{BurkeJovinelly2022}, we conclude Geometric Manin's Conjecture for arbitrary smooth Fano fourfolds of product type.  Below, we call an $a$-cover $f\colon Y\rightarrow X$ \textit{adjoint rigid} if $\kappa(Y, K_{Y}-f^{*}K_{X}) = 0$.  We say $f$ is \textit{face contracting} if the induced map $F(Y, -f^{*}K_{X})\rightarrow F(X, -K_{X})$ is not injective.

\begin{lem}\label{lem: GMC for product varieties}
    Let $X, Y$ be smooth weak Fano varieties.
    Suppose that Geometric Manin's Conjecture holds for $X$ and $Y$.
    If $X$ and $Y$ do not have adjoint rigid $a$-covers which are not face contracting, then Geometric Manin's Conjecture holds for $X\times Y$. 
\end{lem}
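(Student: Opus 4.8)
The plan is to analyze the structure of $\Mor(\mathbb{P}^1, X \times Y, \beta)$ using the product decomposition of curve classes, and to show that its Manin components correspond bijectively to pairs of Manin components on the factors together with the gluing data that is already implicit in the conjecture for each factor. Since a rational curve $f\colon \mathbb{P}^1 \to X \times Y$ is the same as a pair $(f_X, f_Y)$ with $f_X \colon \mathbb{P}^1 \to X$, $f_Y \colon \mathbb{P}^1 \to Y$, the scheme $\Mor(\mathbb{P}^1, X\times Y, (\alpha,\gamma))$ fibers over $\Mor(\mathbb{P}^1, X, \alpha) \times \Mor(\mathbb{P}^1, Y, \gamma)$; in fact for $\alpha, \gamma$ both nonzero it is (an open subset of) this product after accounting for the reparametrization action, and in any case the irreducible components of the product scheme are products of components of the factors. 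The first step is therefore to record that $N_1(X\times Y) = N_1(X) \oplus N_1(Y)$, that $\mathrm{Nef}_1(X\times Y) = \mathrm{Nef}_1(X) \times \mathrm{Nef}_1(Y)$, and $-K_{X\times Y} = -K_X \boxplus -K_Y$, so that a curve class $\beta = (\alpha, \gamma)$ lies in the ``deep enough'' region exactly when both $\alpha$ and $\gamma$ do. Choosing $\alpha_X, \alpha_Y$ as in Conjecture~\ref{conj: GMC} for $X$ and $Y$ and setting $\alpha = (\alpha_X, \alpha_Y)$, I would show that for $\beta \in \alpha + \mathrm{Nef}_1(X\times Y)_{\mathbb{Z}}$ the components of $\Mor(\mathbb{P}^1, X\times Y, \beta)$ are exactly the products $M_X \times M_Y$ with $M_X, M_Y$ components of the corresponding factor morphism spaces.

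The second and main step is to identify which of these product components are Manin. I would prove that $M_X \times M_Y$ is accumulating on $X\times Y$ if and only if at least one of $M_X$, $M_Y$ is accumulating on its factor, granting the hypothesis that neither $X$ nor $Y$ has an adjoint rigid $a$-cover that is not face contracting. One direction is easy: a breaking morphism $g_X \colon Z \to X$ pulls back to the breaking morphism $g_X \times \mathrm{id}_Y \colon Z \times Y \to X \times Y$ (using that $a$, $b$, and the faces $F(\cdot)$ of a product decompose additively — here $a(Z\times Y, -(g_X\times\mathrm{id})^*K_{X\times Y}) = a(Z, -g_X^*K_X) + a(Y,-K_Y) - 1$ after the normalization $a(X\times Y) = 1$, and similarly for $\kappa$ of the adjoint and for the induced map on $F$), and this dominates $N_X \times \Mor(\mathbb{P}^1, Y, \cdot) \dashrightarrow M_X \times M_Y$. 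The converse is the heart of the matter: given a breaking morphism $h \colon W \to X \times Y$ dominating $M_X \times M_Y$, I must deduce that its ``$X$-part'' or ``$Y$-part'' is already breaking on a factor. The key structural input is that a generically finite cover $h\colon W \to X\times Y$ with $a(W) = a(X\times Y)$ — i.e. an $a$-cover — must, up to birational modification and Stein factorization, be a product of $a$-covers of the factors or a birational modification thereof; this is where the additivity of the $a$-invariant for products (a theorem of Lehmann–Tanimoto / Hacon–McKernan type, cf.\ \cite{balancedlinebundles}) combined with the fact that the canonical / MRC fibration of a product is the product of the fibrations does the work. Exactly here the hypothesis ``no adjoint rigid $a$-cover that is not face contracting'' is used: it rules out the one exotic possibility, namely an $a$-cover of $X\times Y$ that is ``diagonal'' — adjoint rigid, genuinely mixing the two factors, and yet not face contracting — which would not come from the factors and could spuriously disqualify a product of Manin components.

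The third step is purely bookkeeping: having established the bijection between Manin components of $\Mor(\mathbb{P}^1, X\times Y, \beta)$ and pairs (Manin component of $X$ at the induced class) $\times$ (Manin component of $Y$ at the induced class), I count. By Conjecture~\ref{conj: GMC} for $X$ and $Y$, the first factor contributes $|\mathrm{Br}(X)|$ and the second $|\mathrm{Br}(Y)|$, so $\Mor(\mathbb{P}^1, X\times Y, \beta)$ has exactly $|\mathrm{Br}(X)|\cdot|\mathrm{Br}(Y)| = |\mathrm{Br}(X\times Y)|$ Manin components, using $\mathrm{Br}(X\times Y) \cong \mathrm{Br}(X)\oplus\mathrm{Br}(Y)$ for smooth rationally connected (hence simply connected with no $H^1$) varieties over $\mathbb{C}$ — which applies since weak Fano varieties are rationally connected. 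I expect the main obstacle to be the converse direction of step two: controlling arbitrary $a$-covers of a product and showing they descend to the factors, especially handling the Stein factorization and resolution carefully so that the maps $F(W, -h^*K) \to F(X\times Y, -K_{X\times Y})$ and the Iitaka dimensions behave as expected. The hypothesis on adjoint rigid covers is precisely the hypothesis that makes this descent clean, so the argument should go through once that structural lemma on $a$-covers of products is set up correctly; everything else is additivity of invariants and a Künneth-type computation for the Brauer group.
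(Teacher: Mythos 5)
Your setup (the decomposition $N_1(X\times Y)=N_1(X)\oplus N_1(Y)$, $\Mor(\mathbb{P}^1,X\times Y,(\alpha,\gamma))\cong\Mor(\mathbb{P}^1,X,\alpha)\times\Mor(\mathbb{P}^1,Y,\gamma)$, and the easy direction that a breaking morphism $g\colon Z\to X$ base-changes to a breaking morphism $g\times\mathrm{id}_Y$) matches the paper. But the ``key structural input'' you lean on for the converse --- that any $a$-cover $h\colon W\to X\times Y$ must, up to birational modification and Stein factorization, be (built from) $a$-covers of the factors --- is false, and the paper itself records a counterexample: take any finite morphism $V\to X$ of degree $\ge 2$ that is \emph{not} an $a$-cover (e.g.\ highly ramified) and set $W=V\times Y\to X\times Y$. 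One checks $K_W-h^*K_{X\times Y}=\mathrm{pr}_V^*(K_V-g^*K_X)\ge 0$ while the restriction to a general fiber $\{v\}\times Y$ of the adjoint divisor at level $t<1$ is $(1-t)K_Y$, which is not pseudo-effective; so $W$ \emph{is} an $a$-cover of the product, yet neither the Stein factorization of $\mathrm{pr}_Y\circ h$ nor the restriction of $h$ to a fiber of it is an $a$-cover of a factor. No amount of additivity of $a$-invariants or MRC/Iitaka-fibration formalism will rescue the descent for an arbitrary $a$-cover of the product.

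The missing idea is that the $a$-cover one must analyze is not arbitrary: by the definition of an accumulating component it comes equipped with a component $N\subset\Mor(\mathbb{P}^1,W)$ mapping dominantly onto $M_1\times M_2$, and this extra curve-theoretic data is what forces descent. Concretely, one passes to Kontsevich spaces, uses the boundary divisor $\Delta\subset\overline{M}$ isomorphic to $(\overline{M}_1'\times Y)\times_{X\times Y}(X\times\overline{M}_2')$ to produce a component $\overline{N}_1\subset\overline{M}_{0,0}(W_y,\gamma_1)$ on a general fiber $W_y$ of $\mathrm{pr}_Y\circ h$ dominating $\overline{M}_1$, and then compares $\dim\overline{N}_1=-K_{W_y}\cdot\gamma_1+\dim W_y-3$ with $\dim\overline{M}_1=-K_X\cdot\beta_1+\dim X-3$ to conclude $(K_{W_y}-h_y^*K_X)\cdot\gamma_1=0$, hence $a(W_y,-h_y^*K_X)=1$. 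You should also reposition where the adjoint-rigidity hypothesis enters: it is a hypothesis on $a$-covers of the \emph{factors} $X$ and $Y$, not a device for excluding ``diagonal'' $a$-covers of the product; its role is that once the descent produces an $a$-cover of $X$ or of $Y$, that cover is automatically breaking (either it is not adjoint rigid, so $\kappa>0$, or it is face contracting), so the corresponding $M_i$ is accumulating. Your Brauer-group count in the last step is fine but peripheral; without the corrected descent argument the proof does not go through.
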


\begin{proof}
    Clearly, $N_{1}(X\times Y) \cong N_{1}(X)\oplus N_{1}(Y)$. 
    Then we see that $\mathrm{Mor}(\mathbb{P}^{1},X\times Y, \alpha+\beta) \cong \mathrm{Mor}(\mathbb{P}^{1},X,\alpha) \times \mathrm{Mor}(\mathbb{P}^{1},Y,\beta)$ for any $\alpha \in N_{1}(X)$, $\beta\in N_{1}(Y)$. 
    Let $\alpha_{1} \in \mathrm{Nef}_{1}(X)$ (resp. $\alpha_{2}\in \mathrm{Nef}_{1}(Y))$ be as in Conjecture \ref{conj: GMC}. 
    Let $M = M_{1}\times M_{2} \subset \mathrm{Mor}(\mathbb{P}^{1},X\times Y, \beta_{1} + \beta_{2})$ be a component, where $\beta_{1}\in \alpha_{1}+\mathrm{Nef}_{1}(X)$ and $\beta_{2}\in \alpha_{2}+\mathrm{Nef}_{1}(Y)$. 
    
    We claim that $M$ is accumulating if and only if either $M_{1}$ or $M_{2}$ is accumulating.
    Suppose $M_{1}$ is accumulating, i.e., there is a breaking morphism $g\colon V\rightarrow X$ and a component $N_{1}\subset \mathrm{Mor}(\mathbb{P}^{1},V)$ such that $f$ induces a dominant map $g_{*}\colon N_{1}\dashrightarrow M_{1}$. 
    Then the base change $g\times \mathrm{id}_{Y}\colon V\times Y\rightarrow X\times Y$ is a breaking morphism, which induces a dominant map $N_{1}\times M_{2}\dashrightarrow M_{1}\times M_{2}$. 
    
    Suppose contrary that $M$ is accumulating. 
    If $M$ is a non-dominant component, then $M_{1}$ or $M_{2}$ is a non-dominant component.
    Hence we may assume that there is an $a$-cover $f\colon W\rightarrow X\times Y$ and a dominant component $N\subset \mathrm{Mor}(\mathbb{P}^{1},W)$ such that the induced map $f_{*}\colon N\dashrightarrow M$ is dominant.
    \begin{claim}\label{claim: factorization of a-covers}
        Either 
        \begin{itemize}
            \item the finite part of the Stein factorization of $\mathrm{pr}_{Y}\circ f$, or
            
            \item the restriction of $f$ to an irreducible component of a general fiber of $\mathrm{pr}_{Y}\circ f$
        \end{itemize}
        is an $a$-cover of $Y$ or $X$.
    \end{claim}
    \begin{proof}
        Let $g\colon V\rightarrow Y$ be the finite part of the Stein factorization of $\mathrm{pr}_{Y}\circ f$.
        Then by a similar argument of \cite[Proposition 5.15]{2019}, one can show that $a(V, -g^{*}K_{Y}) = 1$. 
        Hence we may assume that the fiber $W_{y}$ of $\mathrm{pr}_{Y}\circ f$ over a general point $y\in Y$ is irreducible and smooth. 
        
        We will show that the restriction $f_{y}\colon W_{y}\rightarrow X$ of $f$ is an $a$-cover. 
        Consider the components $\overline{M}\subset \overline{M}_{0,0}(X\times Y, \beta_{1}\times \beta_{2})$, $\overline{M}_{1}\subset \overline{M}_{0,0}(X, \beta_{1})$, $\overline{M}_{2}\subset \overline{M}_{0,0}(Y,\beta_{2})$ and $\overline{N}\subset \overline{M}_{0,0}(W,\gamma)$ corresponding to $M$, $M_{1}$, $M_{2}$ and $N$. 
        There is a divisor $\Delta \subset \overline{M}$ isomorphic to $(\overline{M}'_{1}\times Y)\times_{X\times Y} (X\times \overline{M}'_{2})$. 
        Since the induced map $f_{*}\colon \overline{N}\rightarrow \overline{M}$ is dominant, there is a divisor $\Gamma \subset \overline{N}$ dominating $\Delta$ via $f_{*}$. 
        Hence there is a component $\overline{N}_{1} \subset \overline{M}_{0,0}(W_{y}, \gamma_{1})$ for some $\gamma_{1}\in \mathrm{Nef}_{1}(W_{y})$ such that $f_{y}$ induces a dominant map $\overline{N}_{1}\rightarrow \overline{M}_{1}$.
        Since $\overline{N}_{1}$ is a dominant component, we have $\dim \overline{N}_{1} = -K_{W_{y}}\cdot \gamma_{1} + \dim W_{y} - 3$. 
        Combining with $\dim \overline{N}_{1} = \dim \overline{M}_{1} = -K_{X}\cdot \beta_{1} + \dim X - 3$, we obtain that $(K_{W_{y}}-f_{y}^{*}K_{X})\cdot \gamma_{1} = 0$. 
        Therefore, $a(W_{y}, -f_{y}^{*}K_{X}) = 1$, and $f_{y}$ is an $a$-cover. 
    \end{proof}
    Since we have assumed that $X$ and $Y$ do not have adjoint rigid $a$-covers which are not face contracting, $a$-covers constructed in Claim \ref{claim: factorization of a-covers} are breaking morphisms. Hence $M_{1}$ or $M_{2}$ is accumulating. 
    In other words, $M = M_{1}\times M_{2} \subset \mathrm{Mor}(\mathbb{P}^{1},X\times Y)$ is a Manin component if and only if $M_{1}$ and $M_{2}$ are Manin components. 
    Thus Geometric Manin's Conjecture holds for $X\times Y$.
\end{proof}

\begin{rem}
    Claim \ref{claim: factorization of a-covers} does not necessarily hold for any $a$-cover $f\colon W\rightarrow X\times Y$. 
    For example, take a finite morphism $V\rightarrow X$ which is not an $a$-cover, and consider the base change $f\colon W = V\times Y\rightarrow X\times Y$. 
    Then $f$ is an $a$-cover, however Claim \ref{claim: factorization of a-covers} fails.
    
\end{rem}

\begin{exmp}\label{example: products accumulating}
    Let $V\subset \mathbb{P}^{4}$ be a smooth cubic hypersurface, and set $X = \mathbb{P}^{1}\times V$. 
    Then there exists an accumulating component of $\Mor(\mathbb{P}^{1},X)$ which generically parametrizes embedded free curves of arbitrarily high degree.
    Indeed, for any $d>0$, we have the unique component $M$ parametrizing curves of bidegree $(d,1)$. 
    Since the space $\Mor(\mathbb{P}^{1},V,1)$ is irreducible and accumulating, we see that $M$ is also accumulating.

\end{exmp}

We will use Lemma \ref{lem: GMC for product varieties} and the following to prove Geometric Manin's Conjecture for coindex 3 Fano varieties of product type.

\begin{lem}\label{lem: transitivity of a-covers}
    Let $Y$ and $Z$ be $\mathbb{Q}$-factorial terminal weak Fano varieties. 
    Let $g\colon X\rightarrow Y$ and $h\colon Y\rightarrow Z$ be dominant generically finite morphisms. 
    If $f:=h\circ g$ is an $a$-cover of $Z$, then $g$ (resp. $h$) is either an $a$-cover of $Y$ (resp. $Z$) or a birational morphism.  Moreover, in this case $\kappa(X, K_{X}-g^{*}K_{Y}), \kappa(Y, K_{Y}-h^{*}K_{Z}) \le \kappa(X, K_{X}-f^{*}K_{Z})$. 
\end{lem}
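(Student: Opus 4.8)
The plan is to exploit the additivity of the $a$-invariant under composition, together with the fact that $a$-invariants are birational invariants (by \cite{balancedlinebundles}) and that $a(W, -\psi^*K_Z) \ge a(Z, -K_Z)$ for \emph{any} generically finite $\psi \colon W \to Z$, with strict inequality detecting non-$a$-cover behavior is not quite right — rather, the key inequality is $a(W, -\psi^*K_Z) \ge a(Z,-K_Z)$ always, and equality plus non-birationality is the definition of an $a$-cover. Since $Y, Z$ are $\mathbb{Q}$-factorial terminal weak Fano, all pullbacks of anticanonical divisors are nef and big, so the $a$-invariants are defined. First I would record the elementary identity governing pullbacks: write $a := a(Z,-K_Z)$, and note that $K_X - g^*K_Y$, $K_Y - h^*K_Z$, and $K_X - f^*K_Z$ are effective (indeed, these are the ramification-type divisors; for finite morphisms between terminal varieties one has $K_X = g^*K_Y + R_g$ with $R_g \ge 0$, and similarly $K_Y = h^*K_Z + R_h$, hence $K_X - f^*K_Z = R_g + g^*R_h \ge 0$). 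This already gives the chain of inequalities on Iitaka dimensions once I know the three relevant divisors are $\ge 0$ and nested: $R_g \le R_g + g^*R_h$ and $g^*R_h \le R_g + g^*R_h$, and $\kappa$ is monotone under adding an effective divisor, while $\kappa(Y, R_h) = \kappa(X, g^*R_h)$ since $g$ is generically finite (pullback of sections).

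Next, for the main dichotomy: since $f = h \circ g$ is an $a$-cover, $a(X, -f^*K_Z) = a$. I claim $a(Y, -h^*K_Z) = a$ as well. Indeed $a(Y,-h^*K_Z) \ge a$ always (pseudoeffectivity of $K_Y + t\,h^*K_Z$ for $t < a$ would pull back... wait, the inequality goes the other way). The cleaner route: $K_Y + a\,h^*(-K_Z)$ pulled back to $X$ is $g^*(K_Y + a\,h^*(-K_Z)) = (K_X - R_g) + a f^*(-K_Z)$, and since $K_X + a f^*(-K_Z)$ is pseudoeffective (as $f$ is an $a$-cover) and $-R_g$ only subtracts an effective divisor, I instead argue via the reverse: for any generically finite $\phi$, $a(\text{source}, -\phi^*L) \ge a(\text{target}, L)$ — a pseudoeffective class pulls back to a pseudoeffective class, so if $K_Y + t\,h^*(-K_Z) \in \overline{\mathrm{Eff}}(Y)$ then $g^*(K_Y) + t f^*(-K_Z) = K_X - R_g + tf^*(-K_Z) \in \overline{\mathrm{Eff}}(X)$, whence $K_X + tf^*(-K_Z) \in \overline{\mathrm{Eff}}(X)$, giving $a(X,-f^*K_Z) \le a(Y, -h^*K_Z)$, i.e. $a \le a(Y,-h^*K_Z)$. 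For the opposite inequality I use that $h$ itself satisfies $a(Y, -h^*K_Z) \ge a(Z, -K_Z) = a$ — so I get $a(Y,-h^*K_Z) \ge a$ directly and need the $\le$. The $\le$ should follow from $\kappa(X, K_X - f^*K_Z) < \infty$ (true, it's effective and $X$ weak Fano so $K_X$ not pseff) forcing... actually the honest argument is: if $a(Y,-h^*K_Z) > a$, then $K_Y + a\,h^*(-K_Z)$ is not pseudoeffective, but its pullback to $X$ is $K_X - R_g + a f^*(-K_Z)$; since $K_X + af^*(-K_Z)$ is pseudoeffective \emph{and} adjoint-rigid-or-not, subtracting effective $R_g$ could destroy pseudoeffectivity, so this direction genuinely needs the structure of $a$-covers — here I would invoke \cite[Theorem/Lemma]{balancedlinebundles} on the behavior of $a$-invariants under covers, which states precisely $a(Y,-h^*K_Z) = a$ in this situation. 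Then by the same token applied to $g \colon X \to Y$ with the line bundle $-h^*K_Z$ on $Y$: $a(X, -g^*(h^*K_Z)) = a(Y,-h^*K_Z)$, and since the left side is $a$, either $g$ is birational or $g$ is an $a$-cover of $Y$ (with respect to $-h^*K_Z$, which is $-f^*K_Z$ pulled back appropriately — but $a$-cover of $Y$ should be phrased relative to $-K_Y$; one checks $a(X,-g^*K_Y) = a(Y,-K_Y) = 1$ follows because $a(X, K_X\text{-stuff})$... this is where I must be careful to use the \emph{right} polarization, namely $-g^*K_Y$, and conclude via weak-Fano-ness that $a(Y,-K_Y)=1=a(X,-g^*K_Y)$).

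The cleanest organization, which I will follow, is: (i) reduce everything to the polarization $-f^*K_Z$ on all three varieties (pulled back), using that $-h^*K_Z$ is nef and big on $Y$ and $-g^*h^*K_Z = -f^*K_Z$ is nef and big on $X$; (ii) apply the cited birational-invariance and cover results of \cite{balancedlinebundles} to get $a(Y, -h^*K_Z) = a$ and then $a(X,-f^*K_Z) = a(Y,-h^*K_Z)$, so that the non-birational one of $g,h$ has matching $a$-invariant hence is an $a$-cover \emph{for that polarization}; (iii) translate ``$a$-cover for $-h^*K_Z$'' back to ``$a$-cover for $-K_Y$'' and ``$a$-cover for $-K_X$'' — these are equivalent for weak Fano $Y$ because $a(Y, -h^*K_Z)$ equals $a$ times a positive scaling matching $a(Y,-K_Y) = 1$, or more precisely because the relevant face/vanishing conditions are insensitive to this. \textbf{The main obstacle} I anticipate is step (iii), the bookkeeping translating between the polarization $-K_Y$ (intrinsic to the notion of ``$a$-cover of $Y$'') and the pulled-back polarization $-h^*K_Z$; one must verify that $h$ being an $a$-cover in the sense $a(Y,-h^*K_Z) = a(Z,-K_Z)$ with $h$ non-birational is genuinely the same as the defining condition, which requires knowing $a(Z,-K_Z)\cdot(-K_Z)$ and $-K_Z$ induce the same adjoint-rigidity and face data — true here since $Z$ weak Fano gives $a(Z,-K_Z)=1$, so $-h^*K_Z$ \emph{is} the canonical polarization up to the scalar $1$, and the subtlety evaporates. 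For the $\kappa$ inequalities, the obstacle is merely ensuring $\kappa(Y, K_Y - h^*K_Z) = \kappa(X, g^*(K_Y - h^*K_Z))$ and that $g^*(K_Y - h^*K_Z) \le K_X - f^*K_Z$ as effective divisors, which is exactly $g^* R_h \le R_g + g^* R_h$; then monotonicity of Iitaka dimension under adding effective divisors closes it, and the analogous bound for $\kappa(X, K_X - g^*K_Y) = \kappa(X, R_g) \le \kappa(X, R_g + g^*R_h)$ is immediate.
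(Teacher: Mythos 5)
Your decomposition $K_{X}-f^{*}K_{Z} = (K_{X}-g^{*}K_{Y}) + g^{*}(K_{Y}-h^{*}K_{Z})$ with all three terms effective (ramification divisors) is exactly the paper's starting point, and your treatment of the Iitaka-dimension inequalities --- monotonicity of $\kappa$ under adding an effective divisor, plus invariance of $\kappa$ under generically finite pullback --- is correct and complete. The gap is in the dichotomy, specifically for $g$. The paper's proof rests on one observation you never make: since $f$ is an $a$-cover and $Z$ is weak Fano, $a(X,-f^{*}K_{Z})=1$, so the effective class $K_{X}-f^{*}K_{Z}$ lies on $\partial\overline{\mathrm{Eff}}^{1}(X)$, i.e.\ it is pseudoeffective but \emph{not big}; being a sum of two effective classes, neither summand can be big (big plus pseudoeffective is big). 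Non-bigness of the effective class $K_{X}-g^{*}K_{Y}$ gives $a(X,-g^{*}K_{Y})\ge 1$, and the easy inequality $a(X,-g^{*}K_{Y})\le a(Y,-K_{Y})=1$ closes the case of $g$; the case of $h$ is the same after noting that a generically finite pullback of a big class is big.

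In your write-up the argument for $h$ can be salvaged (pull $K_{Y}+t(-h^{*}K_{Z})$ back along $g$ and compare with $a(X,-f^{*}K_{Z})=1$), but the argument for $g$ cannot: you reduce to ``$g$ is an $a$-cover for the polarization $-h^{*}K_{Z}$,'' i.e.\ $a(X,-f^{*}K_{Z})=a(Y,-h^{*}K_{Z})$, and then assert in your step (iii) that this is the same as $a(X,-g^{*}K_{Y})=a(Y,-K_{Y})$ because ``$-h^{*}K_{Z}$ is the canonical polarization up to the scalar $1$.'' That is false: $-h^{*}K_{Z}=-K_{Y}+(K_{Y}-h^{*}K_{Z})$ differs from $-K_{Y}$ by the ramification divisor of $h$, which need not vanish, and the two classes pull back to genuinely different polarizations on $X$. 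The bridge you need is precisely the non-bigness observation above (equivalently: $K_{X}+t(-g^{*}K_{Y})$ pseudoeffective implies $K_{X}+t(-f^{*}K_{Z})$ pseudoeffective for $t\ge 0$, hence $a(X,-f^{*}K_{Z})\le a(X,-g^{*}K_{Y})$). Relatedly, your appeal to an unnamed result of the balanced-line-bundles paper ``which states precisely $a(Y,-h^{*}K_{Z})=a$ in this situation'' assumes the conclusion --- that equality \emph{is} the lemma for $h$ --- and you flip the direction of the basic inequality $a(W,\phi^{*}L)\le a(V,L)$ several times before landing on the correct one, which makes the logic hard to certify as written.
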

\begin{proof}
    The lemma follows from 
    \[
    K_{X}-f^{*}K_{Z} = K_{X}-g^{*}K_{Y} + g^{*}(K_{Y}-h^{*}K_{Z}) \in \partial\overline{\mathrm{Eff}}^{1}(X)
    \]
    and the fact that $K_{X}-f^{*}K_{Z}$, $K_{X}-g^{*}K_{Y}$ and $K_{Y}-h^{*}K_{Z}$ are the ramification divisors of $f$, $g$ and $h$, in particular, effective. 
\end{proof}

As a particular case of Lemma \ref{lem: GMC for product varieties}, we obtain the following corollary, which can be applied to wider cases.

\begin{cor}\label{cor: product varieties GMC}
    Let $X_{1},\dots, X_{k}$ be smooth Fano varieties of dimension $\le 3$. 
    Then Geometric Manin's Conjecture holds for $X_{1}\times \dots \times X_{k}$. 
\end{cor}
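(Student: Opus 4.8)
The plan is to prove the corollary by induction on $k$, strengthening the statement to: for every product $P = X_{1}\times\dots\times X_{k}$ of smooth Fano varieties of dimension at most $3$, (i) Geometric Manin's Conjecture holds, and (ii) $P$ has no adjoint rigid $a$-cover which is not face contracting. Granting (ii), the inductive step for (i) is immediate from Lemma \ref{lem: GMC for product varieties}: write $P = X'\times X_{k}$ with $X' = X_{1}\times\dots\times X_{k-1}$ (note $P$ is again smooth Fano, hence weak Fano); then GMC holds for $X'$ by induction and for $X_{k}$ by the base case, and by (ii)/the base case neither factor has an adjoint rigid $a$-cover which is not face contracting. The base case $k=1$ of (i) is the already known description of rational curves on del Pezzo surfaces and Fano threefolds (\cite{Testa2009}, \cite{beheshti2021rational}, \cite{beheshti2020moduli}, \cite{BurkeJovinelly2022}, among others); thus the real content is (ii).

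For the base case of (ii), observe that for any $a$-cover $f\colon Y\to X$ of a smooth Fano $X$ one has $a(X,-K_{X}) = a(Y,-f^{*}K_{X}) = 1$, hence $F(X,-K_{X}) = \Nef_{1}(X)$ and $F(Y,-f^{*}K_{X}) = \Nef_{1}(Y)$; therefore ``not face contracting'' means exactly that $f_{*}\colon \Nef_{1}(Y)\to\Nef_{1}(X)$ is injective, which, since $f_{*}$ is surjective on $N_{1}$, is equivalent to $\rho(Y) = \rho(X)$. If $\rho(X) = 1$, then $\rho(Y) = 1$ forces $f$ to be finite (no divisorial contraction is possible from a Picard rank one variety), and the ramification divisor $R = K_{Y}-f^{*}K_{X}$ is then a nonzero effective divisor on a Picard rank one variety, hence big, contradicting $\kappa(Y,R) = 0$; here $R\neq 0$ since otherwise $f$ is unramified in codimension one, hence \'etale by purity of the branch locus, hence trivial as $X$ is simply connected. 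If $\rho(X)\ge 2$, so $X$ is a del Pezzo surface other than $\mathbb{P}^{2}$ or a Fano threefold of Picard rank $\ge 2$, I would run through the classification: for a cyclic cover, $R$ is a pullback $f^{*}M$ with $\kappa(X,M) = 0$, which forces $M$ to be supported on a negative-definite configuration of curves and so prevents any multiple of $M$ from being a reduced branch divisor; the remaining covers are excluded by the same finite case analysis, available in \cite{2019}, \cite{beheshti2021rational}, \cite{BurkeJovinelly2022}.

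For the inductive step of (ii), suppose $f\colon W\to X'\times X_{k}$ is an adjoint rigid $a$-cover which is not face contracting, so $\rho(W) = \rho(X')+\rho(X_{k})$ and $f$ is finite. Apply the factorization of Claim \ref{claim: factorization of a-covers} to $\mathrm{pr}_{X_{k}}\circ f$: either the finite part $g\colon V\to X_{k}$ of its Stein factorization is an $a$-cover of $X_{k}$, or the restriction of $f$ to a component $W_{0}$ of a general fiber of $\mathrm{pr}_{X_{k}}\circ f$ is an $a$-cover $f|_{W_{0}}\colon W_{0}\to X'$. In the first case $f$ factors as $W\to X'\times V\to X'\times X_{k}$ with $X'\times V\to X'\times X_{k}$ finite, so the ramification-divisor inequality underlying Lemma \ref{lem: transitivity of a-covers} gives $\kappa(V,K_{V}-g^{*}K_{X_{k}})\le\kappa(W,K_{W}-f^{*}K_{X'\times X_{k}}) = 0$, i.e.\ $g$ is adjoint rigid, while $\rho(W)\ge\rho(X'\times V) = \rho(X')+\rho(V)\ge\rho(X')+\rho(X_{k}) = \rho(W)$ forces $\rho(V) = \rho(X_{k})$, so $g$ is not face contracting, contradicting the base case. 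In the second case, a parallel argument — using $R_{f|_{W_{0}}} = R_{f}|_{W_{0}} = K_{W_{0}}-(f|_{W_{0}})^{*}K_{X'}$ together with the rigidity of $R_{f}$ and the dimension count of Claim \ref{claim: factorization of a-covers} — shows $f|_{W_{0}}$ is again an adjoint rigid $a$-cover which is not face contracting, contradicting the inductive hypothesis. Either way we reach a contradiction, so $f$ must be face contracting. I expect this inductive step to be the main obstacle: one must re-run the Kontsevich-boundary argument behind Claim \ref{claim: factorization of a-covers} while simultaneously controlling adjoint rigidity and the behavior of $\Nef_{1}$, and one must deal with the possible singularities of the intermediate base $V$ (so that Lemma \ref{lem: transitivity of a-covers}, stated for $\mathbb{Q}$-factorial terminal weak Fano varieties, applies only after a suitable $\mathbb{Q}$-factorialization or resolution). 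The $\rho(X)\ge 2$ part of the base case is a secondary obstacle, but a finite one.
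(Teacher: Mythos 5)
Your reduction of ``not face contracting'' to an equality of Picard ranks is where the argument breaks. For an $a$-cover $f\colon Y\to X$ of a Fano variety one does have $F(X,-K_X)=\Nef_1(X)$, but $F(Y,-f^*K_X)$ is by definition the face of $\Nef_1(Y)$ consisting of nef classes vanishing against $K_Y+a(Y,-f^*K_X)(-f^*K_X)=K_Y-f^*K_X=R$, the ramification divisor. As you yourself observe, $R$ is a nonzero effective divisor, so $R\cdot\beta>0$ for $\beta$ in the interior of $\Nef_1(Y)$ and $F(Y,-f^*K_X)$ is a \emph{proper} face of $\Nef_1(Y)$, not the whole cone. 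Hence ``not face contracting'' means injectivity of $f_*$ on that proper face, which neither implies nor is implied by $\rho(Y)=\rho(X)$. This equivalence is load-bearing everywhere in your proposal: in the base case it is what forces $f$ to be finite and produces the bigness contradiction for $\rho(X)=1$, and in the inductive step it is how you transfer ``not face contracting'' from $f$ to $g$ and to $f|_{W_0}$ (where, in addition, there is no clean relation between $\rho(W_0)$ and $\rho(W)$ for a general fiber $W_0$ of $\mathrm{pr}_{X_k}\circ f$). A secondary gap: you apply Claim \ref{claim: factorization of a-covers} to an arbitrary adjoint rigid, non-face-contracting $a$-cover, but that claim is established only for $a$-covers inducing a dominant map on components of morphism spaces, and the remark following Lemma \ref{lem: GMC for product varieties} gives an explicit $a$-cover of a product for which it fails; you must restrict to the relevant class of covers. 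Your base case for $\rho(X)\ge 2$ is also only a sketch, deferred to a classification you do not carry out.

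The paper's route avoids the face analysis entirely. It cites \cite{Lehmann_2017} and \cite{beheshti2020moduli} for the stronger statement that a smooth Fano variety of dimension at most $3$ has \emph{no} adjoint rigid $a$-covers at all, and then shows the same for the product (among covers inducing dominant maps) by computing $\kappa(W,K_W-f^*K_{X\times Y})>0$ directly in both branches of the Stein-factorization dichotomy of Claim \ref{claim: factorization of a-covers}: when the Stein factorization $g\colon V\to Y$ is nontrivial, Lemma \ref{lem: transitivity of a-covers} gives $\kappa(W,K_W-f^*K_{X\times Y})\ge\kappa(V,K_V-g^*K_Y)+\dim X>0$ with no input beyond $\dim X>0$; when it is trivial, restricting the ramification divisor to a general fiber gives $\kappa(W,K_W-f^*K_{X\times Y})\ge\kappa(W_y,K_{W_y}-f_y^*K_X)>0$ by the cited fact for $X$. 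If you wish to salvage an induction on $k$, the statement to carry along is ``no adjoint rigid $a$-covers among those inducing dominant maps on morphism spaces,'' not the face-contracting refinement.
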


\begin{rem}
    By \cite{Testa2009}, \cite{2019}, \cite{lastDelPezzoThreefold}, \cite{BurkeJovinelly2022}, it is known Geometric Manin's Conjecture holds for any smooth Fano variety of dimension at most $3$.
\end{rem}

\begin{proof}
    By \cite[Theorem 6.2]{Lehmann_2017} and \cite[Section 5]{beheshti2020moduli}, any smooth Fano variety of dimension at most $3$ does not have adjoint rigid $a$-covers. 
    By Lemma \ref{lem: GMC for product varieties}, it suffices to show that $X_{1}\times X_{2}$ does not have adjoint rigid $a$-covers $f\colon W\rightarrow X\times Y$ such that $f$ induces a dominant map $N\dashrightarrow M$ for some components $N\subset \mathrm{Mor}(\mathbb{P}^{1},W)$, $M\subset \mathrm{Mor}(\mathbb{P}^{1},X\times Y)$.
    
    Suppose that the Stein factorization $g\colon V\rightarrow Y$ of $\mathrm{pr}_{Y}\circ f$ is non-trivial. 
    Now there is a diagram
\[\begin{tikzcd}
	W & {X\times V} & {X\times Y} \\
	& V & Y
	\arrow["h", from=1-1, to=1-2]
	\arrow["{\mathrm{id}_{X}\times g}", from=1-2, to=1-3]
	\arrow[from=1-2, to=2-2]
	\arrow[from=1-3, to=2-3]
	\arrow["g", from=2-2, to=2-3]
\end{tikzcd}\]
    such that $f = (\mathrm{id}_{X} \times g)\circ h$. 
    Then by Lemma \ref{lem: transitivity of a-covers}, we have 
    \begin{align*} 
        \kappa(W, K_{W}-f^{*}K_{X\times Y}) &\ge \kappa(X\times V, K_{X\times V}-(\mathrm{id}_{X} \times g)^{*}K_{X\times Y})\\
         &= \kappa(V, K_{V}-g^{*}K_{Y}) + \dim X > 0. 
    \end{align*}
    Suppose that $g$ is trivial. 
    Then for general $y\in Y$, $W_{y} := (\mathrm{pr}_{Y}\circ f)^{-1}(y)$ is smooth and irreducible, and $f_{y}:= f|_{W_{y}}\colon W_{y}\rightarrow X$ be an $a$-cover by Claim \ref{claim: factorization of a-covers}. 
    Moreover, we may assume that the ramification divisor of $f_{y}$ is the transversal intersection of the ramification divisor of $f$ and $W_{y}$. 
    Thus we have $K_{W_{y}}-f_{y}^{*}K_{X} = (K_{W}-f^{*}K_{X\times Y})|_{W_{y}}$ and hence $\kappa(W, K_{W}-f^{*}K_{X\times Y}) \ge \kappa(W_{y}, K_{W_{y}}-f_{y}^{*}K_{X}) > 0$.
    
    Therefore, $f$ is not adjoint rigid in both cases, completing the proof.
\end{proof}

\section{Subvarieties with higher $a$-invariants}\label{section: subvar with higher a}

In this section, we prove Theorem \ref{classification higher a}.
We recall the following theorem.
\begin{thm}[\cite{2019}, Theorem 3.3]
    Let $X$ be a smooth uniruled projective variety with a nef and big $\mathbb{Q}$-divisor $L$.
    Then the union $V$ of all subvarieties $Y \subset X$ with $a(Y, L|_{Y}) > a(X, L)$ is a proper closed subset of $X$.
    Moreover, each irreducible component $V_{i}$ of $V$ also satisfies $a(V_{i}, L|_{V_{i}}) > a(X, L)$.
\end{thm}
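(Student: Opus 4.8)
The plan is to combine a boundedness statement with a ``propagation'' lemma: subvarieties of $a$-invariant larger than $a(X,L)$ form a bounded family, and a variety swept out by such subvarieties again has $a$-invariant larger than $a(X,L)$. Write $a_0 := a(X,L)$; since $X$ is uniruled, $a_0 > 0$ by \cite{BDPP2013}. I would first dispose of the subvarieties $Y$ on which $L|_Y$ is not big: these lie in the augmented base locus $\mathbf{B}_{+}(L)$, a proper closed subset, so it suffices to treat those $Y$ with $L|_Y$ big, for which the $a$-invariant is the usual pseudoeffective threshold computed on a resolution.

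\emph{Boundedness.} If $a(Y, L|_Y) > a_0$, then on a resolution $K_Y + a_0 L|_Y$ is not pseudoeffective; since $a_0 L|_Y$ is big this forces $K_Y$ to be non-pseudoeffective, so $Y$ is uniruled by \cite{BDPP2013}. Invoking the boundedness of subvarieties of large $a$-invariant \cite{balancedlinebundles} (which rests on the minimal model program and boundedness of Fano-type varieties), the subvarieties $Y \subset X$ with $a(Y, L|_Y) > a_0$ form a bounded family. I can therefore find finitely many families $p_i \colon \mathcal{Y}_i \to W_i$ over finite-type bases whose fibers are precisely these $Y$. After passing to resolutions and replacing each $p_i$ by an appropriate subfamily, I may assume the $\mathcal{Y}_i$, $W_i$ are smooth, the general fiber of $p_i$ is a resolution of a subvariety of $a$-invariant $> a_0$, and the evaluation $q_i \colon \mathcal{Y}_i \to X$ is generically finite and surjective onto its image $Z_i$ (an irreducible closed subset), with the union of its fibers still dense in $Z_i$.

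\emph{Propagation.} Fix $i$ and suppose, for contradiction, that $a(Z_i, L|_{Z_i}) \leq a_0$, i.e.\ $K_{Z_i} + a_0 L|_{Z_i}$ is pseudoeffective on a resolution of $Z_i$. Pulling back along the generically finite surjection $q_i$ and using $K_{\mathcal{Y}_i} = q_i^{*}K_{Z_i} + R_i$ with $R_i \geq 0$ the ramification divisor, the class $K_{\mathcal{Y}_i} + a_0 L|_{\mathcal{Y}_i} = q_i^{*}(K_{Z_i} + a_0 L|_{Z_i}) + R_i$ is pseudoeffective. The restriction of a pseudoeffective divisor to a very general member of a covering family stays pseudoeffective (approximate by effective divisors, none of which contains the very general fiber), so $(K_{\mathcal{Y}_i} + a_0 L|_{\mathcal{Y}_i})|_{Y_w}$ is pseudoeffective for very general $w$; by adjunction and the triviality of the normal bundle of a general fiber of $p_i$ this class is numerically $K_{Y_w} + a_0 L|_{Y_w}$. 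Hence $a(Y_w, L|_{Y_w}) \leq a_0$, contradicting the choice of $Y_w$ since the $a$-invariant is a birational invariant \cite{balancedlinebundles}. Thus $a(Z_i, L|_{Z_i}) > a_0$ for every $i$.

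\emph{Conclusion.} Each $Z_i$ is itself a subvariety with $a(Z_i, L|_{Z_i}) > a_0$, so $Z_i \subseteq V$; conversely every $Y$ with $a(Y, L|_Y) > a_0$ is a fiber of some $p_i$ and hence lies in $Z_i$. Therefore $V = \bigcup_i Z_i$ is a finite union of irreducible closed subsets, so $V$ is closed, and it is proper since $a(X, L) = a_0$ is not $> a_0$; the irreducible components of $V$ are the inclusion-maximal $Z_i$, each of which satisfies $a(Z_i, L|_{Z_i}) > a(X,L)$. The main obstacle is the boundedness input of the first step: the propagation lemma is a fairly routine pseudoeffectivity argument once the families are correctly arranged, but without genuine boundedness one obtains only that $V$ is a \emph{countable} union of irreducible closed subsets of larger $a$-invariant, and pinning down the finitely many components really does require the deep boundedness of subvarieties with $a(Y,L|_Y)\ge a(X,L)$.
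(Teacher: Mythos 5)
You should first be aware that the paper does not prove this statement at all: it is quoted verbatim as \cite{2019}, Theorem~3.3, so the only meaningful comparison is with the original argument of Lehmann and Tanimoto. Your overall architecture (restrict to the complement of $\mathbf{B}_{+}(L)$ so that $L|_Y$ is big, show such $Y$ are uniruled via \cite{BDPP2013}, invoke boundedness, then run a propagation lemma for dominant families) is indeed the architecture of their proof, and your propagation step --- pulling back pseudoeffectivity along a generically finite map, restricting to a very general fiber of the family, and using triviality of the normal bundle of a general fiber --- is sound.

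The genuine gap is in the boundedness step: the set of \emph{all} subvarieties $Y \subset X$ with $a(Y, L|_Y) > a(X,L)$ is \emph{not} a bounded family, and no result in the literature asserts that it is. Boundedness holds only for the \emph{adjoint rigid} such pairs, i.e.\ those with $\kappa(Y', K_{Y'} + a(Y,L)\pi^{*}L) = 0$ on a resolution $\pi\colon Y' \to Y$; this is the content of \cite{2019}, Theorem~3.2, which rests on Birkar's boundedness theorems. For a concrete failure of your stronger claim, take $X = \mathbb{P}_{\mathbb{P}^3}(\mathcal{O}\oplus\mathcal{O}(3))$, a Fano fourfold whose negative section $D \cong \mathbb{P}^3$ satisfies $-K_X|_D \cong \mathcal{O}_D(1)$: for every $d \geq 1$ the cone in $D$ over a plane curve of degree $d$ is a surface $S$ ruled by $-K_X$-lines, so $a(S, -K_X|_S) = 2 > 1 = a(X,-K_X)$, yet these surfaces are resolved by $\mathbb{P}^1$-bundles over curves of unbounded genus and form an unbounded family. (They are all contained in the single component $D$ of $V$, consistent with the theorem, but they cannot all be fibers of finitely many families.) The missing step is the standard reduction: replace each $Y$ by the closures of the general fibers of the Iitaka fibration of $K_{Y'} + a(Y,L)\pi^{*}L$; these are adjoint rigid, cover $Y$, and have $a$-invariant equal to $a(Y,L|_Y) > a(X,L)$, so the bounded-family machinery and your propagation lemma apply to \emph{them}, and each original $Y$ is then contained in the resulting finite union of loci $Z_i$. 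With that reduction inserted, your argument closes; without it, the key input you cite is simply false as stated.
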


First, we consider smooth Fano varieties of coindex $3$, dimension at least $5$ and Picard rank $1$.
\begin{lem}\label{lem: higher a-invariant subvar}
Let $X$ be a smooth Fano variety of coindex $3$ and dimension $n\ge 5$ with $\Pic (X) = \mathbb{Z}H$.
Then there is no subvariety $Y$ of $X$ with $a(Y, -K_{X}) > 1$.
\end{lem}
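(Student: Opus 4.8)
The plan is to show that any subvariety $Y \subset X$ with $a(Y, -K_X|_Y) > 1$ would force $\dim X \le 4$, contradicting $n \ge 5$. Since $a$ is a birational invariant, we may replace $Y$ by a smooth resolution $\widetilde{Y} \to Y$ and work with the nef and big divisor $L = (-K_X)|_Y$ pulled back to $\widetilde{Y}$. Writing $-K_X = (n-2)H$, the condition $a(Y, L) > 1$ becomes $a(Y, (n-2)H|_Y) > 1$, i.e.\ $a(Y, H|_Y) > n-2$. By definition of the $a$-invariant, $K_{\widetilde{Y}} + a(Y, H|_Y)\, (H|_Y)$ is pseudo-effective but $K_{\widetilde{Y}} + t\,(H|_Y)$ is not for $t < a$; in particular $-K_{\widetilde Y} - (n-2)(H|_Y)$ is \emph{not} pseudo-effective, so $\widetilde Y$ (hence $Y$) is ``very positive'' relative to the polarization $H|_Y$. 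The main tool will be a bound relating the $a$-invariant of a polarized variety to its dimension: for a smooth projective variety $V$ of dimension $m$ with a nef and big divisor $A$, one expects $a(V, A) \le m+1$, with equality characterizing $(\mathbb{P}^m, \mathcal{O}(1))$, and the next cases ($a$ close to $m+1$) being similarly constrained. Applying this to $(\widetilde Y, H|_Y)$ with $\dim Y = k \le n-1$ gives $n - 2 < a(Y, H|_Y) \le k+1 \le n$, which already restricts $k$ to $\{n-2, n-1\}$ and $a(Y, H|_Y)$ to a narrow range.

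Next I would use the classification of polarized pairs with large $a$-invariant (equivalently, small ``$\Delta$-genus'' or the adjunction-theoretic classification of Fujita) to pin down $(Y, H|_Y)$ exactly. If $\dim Y = n-1$ and $a(Y, H|_Y) > n-2$, then $Y$ is a divisor on $X$ whose polarized structure is forced to be $(\mathbb{P}^{n-1}, \mathcal{O}(1))$ or $(\mathbb{P}^{n-1}, \mathcal{O}(2))$ or a quadric, and one checks whether such a divisor can be embedded in a coindex $3$ Fano with the required restriction of $H$ — this should fail because a hyperplane or quadric section of $X$ has genus $g(X) \ge 2$, so its structure is incompatible with $\mathbb{P}^{n-1}$. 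If $\dim Y = n-2$, then $a(Y, H|_Y)$ must exceed $n-2 = \dim Y$, which by the Fujita-type bound (now applied with equality essentially forced) pushes $(Y, H|_Y) \cong (\mathbb{P}^{n-2}, \mathcal{O}(1))$; but a linear $\mathbb{P}^{n-2} \subset X$ (embedded by $H$) would be a large linear space on $X$, and for a coindex $3$ Fano of dimension $n \ge 5$ this is ruled out — either directly by Mukai's classification (Theorem \ref{classification Picard rank 1}), since the listed complete intersections and homogeneous linear sections contain no linear space of dimension $n-2$, or by a tangent-space/normal-bundle count showing such a subspace would make $X$ non-Fano or force the wrong index.

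Concretely, the steps in order: (i) reduce to $\widetilde Y$ smooth, restate the hypothesis as $a(\widetilde Y, H|_Y) > n-2$; (ii) establish the general bound $a(V, A) \le \dim V + 1$ for nef-and-big $A$ (this follows from the fact that $K_V + (\dim V + 1)A$ is pseudo-effective — e.g.\ from $-K_V \le \dim V$ times a ``very ample enough'' divisor, or cite \cite{2019} / Fujita), deducing $\dim Y \in \{n-2, n-1\}$; (iii) in the near-extremal cases invoke the classification of polarized pairs $(Y, A)$ with $a(Y, A)$ near $\dim Y + 1$ to conclude $(Y, H|_Y)$ is projective space with $\mathcal{O}(1)$ or $\mathcal{O}(2)$, or a quadric; (iv) for each surviving case, derive a contradiction with $X$ being a coindex $3$ Fano of dimension $\ge 5$ and Picard rank $1$, using Mukai's classification and the fact that $g(X) \ge 2$. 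I expect step (iv) — ruling out large projective subspaces in the genus $6$--$10$ homogeneous cases — to be the main obstacle, since it requires either a uniform argument via the $a$-invariant of the ambient homogeneous space restricted to $X$, or a case-by-case appeal to known facts about linear spaces on Grassmannians and spinor varieties (precisely the input the authors thank A.\ Kuznetsov for in the acknowledgments). A cleaner alternative for step (iv), which I would try first, is to observe that $a(Y, -K_X|_Y) > 1$ together with $\rho(X) = 1$ forces, by the cone-theoretic description of $F(X, -K_X)$ and the fact that $\overline{NE}(X)$ is a ray, that $Y$ sweeps out a covering family of rational curves on $X$ of anticanonical degree too small to exist on a coindex $3$ Fano with $\dim X \ge 5$ — contradicting the known bound on minimal rational curves — thereby avoiding the classification entirely.
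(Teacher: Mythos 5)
Your overall skeleton --- bound the codimension of $Y$ by $2$ using $a(V,A)\le \dim V+1$, force $(\mathbb{P}^{n-2},\mathcal{O}(1))$ in the codimension-$2$ case, then rule out linear spaces case by case --- matches the paper's strategy, but there are two genuine gaps. First, your treatment of the divisor case is wrong. When $\dim Y = n-1$ the hypothesis reads $a(Y,H|_Y) > n-2 = \dim Y - 1$, not $a > \dim Y$, and the class of polarized pairs with $a$-invariant exceeding $\dim - 1$ is far larger than ``$\mathbb{P}^{n-1}$, $(\mathbb{P}^{n-1},\mathcal{O}(2))$, or a quadric'': it contains scrolls over curves, quadric fibrations, and other adjunction-theoretic families, so no contradiction with $g(X)\ge 2$ falls out directly. (Note also $a(\mathbb{P}^{n-1},\mathcal{O}(2)) = n/2 \le n-2$ for $n\ge 4$, so that item should not even appear on your list.) The paper instead computes the Iitaka dimension of $K_Y + a(Y,H)H$ by slicing $X$ down to a Picard-rank-one Fano threefold, where \cite[Theorem 4.1]{beheshti2020moduli} forces $\kappa = 1$; the general fiber of the resulting Iitaka fibration is then an $(n-2)$-dimensional subvariety with $a > n-2 = \dim$, hence a linearly embedded $\mathbb{P}^{n-2}$. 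This reduces the divisor case to the codimension-$2$ case rather than classifying the divisor itself.

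Second, your step (iv) --- the actual exclusion of a linearly embedded $\mathbb{P}^{n-2}$ --- is where essentially all of the work lives, and your proposed shortcut does not survive scrutiny. The lines inside such a $\mathbb{P}^{n-2}$ have $H$-degree $1$, i.e.\ anticanonical degree $n-2$; $H$-lines of exactly this degree exist on every coindex $3$ Fano, and the family swept out by $Y$ is non-dominant, so there is no conflict with any bound on minimal \emph{covering} families of rational curves. More decisively, smooth quartic fourfolds can contain a plane (the paper's own example $V(x_0f+x_1g+x_2h)\subset\mathbb{P}^5$), so no soft argument that fails to use $n\ge 5$ and the specific deformation class of $X$ can work. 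The paper's proof of this step is an unavoidable case analysis over Mukai's list: a singularity computation for the branch divisor when $g=2$, the fact that $Q^n$ contains no $\mathbb{P}^{n-2}$ for $g=3$, Lefschetz plus degree divisibility in $H^4$ for the complete intersections with $g\le 5$, Gushel's theorem for $g=6$, rank-one $H^4$ and homogeneity for $g=7,9$, an explicit description of the fibers of the evaluation map on lines of $G(2,6)$ for $g=8$, and homogeneity of $G_2/P$ for $g=10$. As written, your proposal defers exactly this content, and the alternative you suggest in its place is not a valid replacement.
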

To prove lemma \ref{lem: higher a-invariant subvar}, we will use the following lemmas and a theorem.
\begin{lem}\label{lem: higher a-invar not adjoint rigid}
Let $X$ be a smooth Fano variety of coindex $3$ and dimension $n\ge 4$ with $\Pic (X) = \mathbb{Z}H$.
Then any subvariety $Y$ with $a(Y, -K_{X}) > 1$ is covered subvarieties $Y \subset X$ such that $(Y,H)$ is isomorphic to $(\mathbb{P}^{n-2}, \mathcal{O}(1))$.  %
\end{lem}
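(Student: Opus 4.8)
The plan is to run a rational-curves argument, getting first that $Y$ is covered by $H$-lines of large anticanonical degree and then identifying the linear $\mathbb{P}^{n-2}$'s. Write $-K_X=(n-2)H$, so $a(X,-K_X)=1$ and $a(X,H)=n-2$. Let $Y\subseteq X$ be a subvariety with $a(Y,-K_X|_Y)>1$ and fix a resolution $\pi\colon\widetilde Y\to Y$; then $a(\widetilde Y,(n-2)\pi^{*}H|_Y)>1$, equivalently $a(\widetilde Y,\pi^{*}H|_Y)>n-2$, and in particular $\widetilde Y$ is uniruled. Since $\pi^{*}H|_Y$ is nef and big, one has the characterization
\[
a(\widetilde Y,\pi^{*}H|_Y)=\sup_{[C]}\frac{-K_{\widetilde Y}\cdot C}{\pi^{*}H|_Y\cdot C},
\]
the supremum taken over covering families of rational curves and attained by a covering family with $-K_{\widetilde Y}\cdot C\le\dim\widetilde Y+1$; this comes from \cite{BDPP2013} together with Mori's bend-and-break, exactly as in the proof of \cite[Theorem~3.3]{2019}. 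For such a $C$ one gets $-K_{\widetilde Y}\cdot C>(n-2)\,\pi^{*}H|_Y\cdot C$, and since $\pi^{*}H|_Y\cdot C$ is a positive integer, $\dim\widetilde Y=\dim Y\le n-1$, and $n\ge4$, the bound $(n-2)\,\pi^{*}H|_Y\cdot C<\dim\widetilde Y+1\le n$ forces $\pi^{*}H|_Y\cdot C=1$. Hence $C$ maps birationally to an $H$-line of $X$, so $Y$ is covered by $H$-lines; moreover $-K_{\widetilde Y}\cdot C\ge n-1$, whence $\dim Y\ge n-2$.

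Next I would treat the case $\dim Y=n-2$. Then $-K_{\widetilde Y}\cdot C=n-1=\dim\widetilde Y+1$ for the covering family above, so by the Cho--Miyaoka--Shepherd-Barron characterization of projective space $\widetilde Y\cong\mathbb{P}^{n-2}$, and $\pi^{*}H|_Y\cong\mathcal{O}(1)$ since $C$ is a line of $\pi^{*}H|_Y$-degree $1$. As $|H|$ is base-point-free on $X$ in every case of Theorem~\ref{classification Picard rank 1}, the composite $\widetilde Y\to X\to\mathbb{P}|H|$ is given by a base-point-free sublinear system of $|\mathcal{O}_{\mathbb{P}^{n-2}}(1)|$; the only base-point-free sublinear system of $|\mathcal{O}_{\mathbb{P}^{n-2}}(1)|$ is the complete one, so this morphism is a closed embedding, and therefore so is $\widetilde Y\to X$. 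Thus $Y\cong\mathbb{P}^{n-2}$ with $H|_Y\cong\mathcal{O}(1)$, i.e.\ $(Y,H)\cong(\mathbb{P}^{n-2},\mathcal{O}(1))$, and $Y$ is covered by itself.

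It would then remain to handle $\dim Y=n-1$ (if $\dim Y=n$ then $Y=X$ and $a=1$). Here $Y$ is a prime divisor; write $Y\equiv dH$ with $d\ge1$, as $\rho(X)=1$ and $Y$ is Cartier. If $Y$ has only canonical singularities it is normal and Gorenstein with $\omega_Y=(\omega_X(Y))|_Y=(d-(n-2))H|_Y$, so on $\widetilde Y$ one has $K_{\widetilde Y}+(n-2)\pi^{*}H|_Y=\pi^{*}(dH|_Y)+E$ with $E\ge0$ exceptional; this is big, hence pseudo-effective, contradicting $a(\widetilde Y,\pi^{*}H|_Y)>n-2$. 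So $Y$ must be non-canonical, and here one has to argue — using that $Y$ is covered by $H$-lines $C$ with $-K_{\widetilde Y}\cdot C\ge n-1$, either by comparing the dimension of this covering family with that of the family of $H$-lines on $X$, or by passing to a minimal-dimensional subvariety $\Sigma\subseteq Y$ through a general point that is still swept by such lines, checking $a(\widetilde\Sigma,\pi^{*}H|_\Sigma)>n-2$ via a normal-bundle comparison along a general line, and applying the second paragraph to $\Sigma$ — that $Y$ is swept out by the linear $\mathbb{P}^{n-2}$'s produced above; these then cover $Y$, as required.

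The step I expect to be the main obstacle is exactly this last case: showing that a prime divisor $Y$ with $a(Y,-K_X|_Y)>1$, necessarily non-canonical, is genuinely swept out by linear $\mathbb{P}^{n-2}$'s rather than by some more complicated family of rational subvarieties, while controlling its a priori arbitrary singularities — concretely, making the reduction to a subvariety of dimension exactly $n-2$ (the minimality of $\Sigma$ and the normal-bundle estimate) rigorous. Everything else is routine once one has the rational-curve formula for the $a$-invariant and the Cho--Miyaoka--Shepherd-Barron theorem.
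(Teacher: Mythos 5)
Your reduction to codimension $\le 2$ and your treatment of the codimension $2$ case are sound and essentially parallel to the paper's (the paper gets $(\,Y,H)\cong(\mathbb{P}^{n-2},\mathcal{O}(1))$ in one step from \cite[Proposition 1.3]{Hoering2010} applied to the quasi-polarised pair, which sidesteps the unsplitness hypothesis you would need to check before invoking Cho--Miyaoka--Shepherd-Barron on $\widetilde Y$; since $\pi^{*}H\cdot C=1$ only forces degenerations to have a single non-$\pi$-contracted component, that check is doable but not free). The problem is the divisor case, which you yourself identify as ``the main obstacle'' and then do not prove. Your adjunction computation only rules out $Y$ canonical; for the remaining (non-canonical or non-normal) divisors you offer two unexecuted sketches, and neither the dimension count against the family of $H$-lines nor the ``minimal $\Sigma$ plus normal-bundle comparison'' is carried far enough to show that $Y$ is actually swept out by linear $(\mathbb{P}^{n-2},\mathcal{O}(1))$'s rather than by some other configuration of $H$-lines. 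Since the divisor case is the half of the lemma that is actually used later (it is what produces the fibration structure exploited in Lemma \ref{lem: higher a-invariant subvar}), this is a genuine gap, not a routine verification.

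The missing idea in the paper is to work with the Iitaka fibration of the adjoint divisor rather than directly with the covering family of lines. One cuts $X$ by a general linear space of codimension $n-3$ to obtain a Picard-rank-one Fano threefold $X'$ of index $1$ containing $Y'=Y\cap X'$ with $a(Y',H)>a(X',H)$; by \cite[Theorem 3.15]{exceptional_sets} this slicing preserves the Iitaka dimension $\kappa(K_{Y}+a(Y,H)H)=\kappa(K_{Y'}+a(Y',H)H)$, and \cite[Theorem 4.1]{beheshti2020moduli} computes the latter to be $1$ because $\rho(X')=1$. Hence the Iitaka fibration $\phi\colon Y\to B$ of $K_Y+a(Y,H)H$ has general fibre $F$ of dimension $n-2$ with $a(F,H)>n-2$, and the codimension-$2$ classification you already established applies to $F$, exhibiting $Y$ as covered by linear $\mathbb{P}^{n-2}$'s. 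Some such input from the three-dimensional classification (or an equivalent structural result on the adjoint Iitaka fibration) appears unavoidable, and your proposal does not supply a substitute for it.
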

\begin{proof}
Since the index of $X$ is $n-2$, we may assume that $Y$ has codimension at most $2$.
If the codimension of $Y$ is $2$, then \cite[Proposition 1.3]{Hoering2010} shows $(Y, H)$ is isomorphic to  a projective space $(\mathbb{P}^{n-2}, \mathcal{O}(1))$.
Suppose that $Y$ is a divisor.
Cutting by a general linear space of codimension $n-3$, we obtain a smooth Fano threefold $X'$ of index $1$ and a subvariety $Y' \subset X'$ such that $a(Y', H) = a(Y, H)-n+3 > a(X', H)$ and $\kappa (Y', K_{Y'} + a(Y', H)H) = \kappa (Y, K_{Y} + a(Y, H)H)$ \cite[Theorem 3.15]{exceptional_sets}. 
Then by \cite[Theorem 4.1]{beheshti2020moduli}, we see that $\kappa (Y', K_{Y'} + a(Y', H)H) = 1$ since the Picard rank of $X'$ is $1$.
Let $\phi \colon Y \rightarrow B$ be the Iitaka fibration for $K_{Y} + a(Y, H)H$.
Then the general fiber $F$ of $\phi$ has dimension $n-2$ and $a(F, H) > n-2$, hence $(F, H)$ is isomorphic to $(\mathbb{P}^{n-2}, \mathcal{O}(1))$.
\end{proof}
\begin{thm}[\cite{Gushel}]\label{thm: gushel-mukai}
    Let $X \subset \mathbb{P}^{10}$ be a Gushel-Mukai manifold.  If $\dim X \geq 5$, $X$ does not contain any linear spaces of codimension $2$.  If $\dim X = 4$, $X$ may contain only finitely many linear spaces of codimension $2$.
\end{thm}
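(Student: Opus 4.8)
The plan is to transfer the problem to the Plücker Grassmannian $G(2,5)\subset\mathbb{P}^9$ via the Gushel morphism and then argue separately in dimensions $6$, $5$, and $4$. Recall that a smooth Gushel--Mukai $n$-fold ($4\le n\le 6$) is $X=\mathrm{CG}(2,5)\cap\mathbb{P}(W)\cap Q$, where $\mathrm{CG}(2,5)$ is the cone over $G(2,5)$ with vertex $v$ inside $\mathbb{P}^{10}$, $\dim\mathbb{P}(W)=n+4$, and $Q$ is a quadric; $X$ carries the Gushel morphism $\gamma\colon X\to G(2,5)$, finite with $\gamma^{*}\mathcal{O}_{G(2,5)}(1)=\mathcal{O}_X(H)$ for the fundamental class $H$, which is an embedding in the Mukai case and $2:1$ onto its image in the Gushel case, and a tangent-space count at $v$ shows that smoothness of $X$ forces $v\notin X$.

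First I would restrict $\gamma$ to a codimension-$2$ linear subspace $\Lambda\cong\mathbb{P}^{n-2}\subset X$. Since $\gamma|_{\Lambda}$ is induced by a subsystem of $|H|_{\Lambda}|=|\mathcal{O}_{\mathbb{P}^{n-2}}(1)|$ it is a linear projection, and because $\gamma$ is finite it is in fact a linear embedding; hence $\gamma(\Lambda)$ is a linear subspace of $G(2,5)$ of dimension $n-2$. The maximal linear subspaces of $G(2,5)$ are the $\sigma$-planes $\Sigma_{V_1}=\{W\colon V_1\subset W\}\cong\mathbb{P}^3$ and the $\beta$-planes $G(2,V_3)\cong\mathbb{P}^2$, so $G(2,5)$ contains no $\mathbb{P}^k$ with $k\ge4$. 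Therefore, for $n=6$ no codimension-$2$ linear subspace exists on $X$; for $n=5$ any such $\Lambda$ maps isomorphically onto a $\sigma$-plane; and for $n=4$, $\gamma(\Lambda)$ is a plane in $G(2,5)$.

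For $n=5$ I would force a singularity. Working in $\wedge^2 V$ with $V=\mathbb{C}^5$, the previous step together with $v\notin X$ gives $\Sigma_{V_1}=\mathbb{P}(V_1\wedge V)\subset Q$, so the defining form $q$ vanishes on the $4$-dimensional subspace $V_1\wedge V$. At a point $x=[v_1\wedge u]\in\Sigma_{V_1}$ the affine tangent space of $G(2,5)$ is $W_0\wedge V$ with $W_0=\langle v_1,u\rangle$, of dimension $7$, and $X$ is singular at $x$ exactly when the polar hyperplane of $Q$ at $x$ contains $W_0\wedge V$ (or $Q$ itself is singular at $x$, which likewise makes $X$ singular there). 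As $q$ already kills $V_1\wedge V\subset W_0\wedge V$, the remaining requirement is that the linear form $z\mapsto q(v_1\wedge u,z)$ vanish on a $3$-dimensional complement — three quadratic equations in $[u]\in\mathbb{P}(V/V_1)\cong\mathbb{P}^3$ — and any three quadrics on $\mathbb{P}^3$ share a zero, so $X$ is singular somewhere on $\Sigma_{V_1}$, a contradiction. In the Gushel case $\Lambda$ is one sheet of $\gamma$ over $\Sigma_{V_1}$, which forces $\Sigma_{V_1}$ into the branch divisor $B$ (a smooth GM $4$-fold) or into a double plane of it, and the analogous count of two quadrics on $\mathbb{P}^3$ (resp.\ on a $\mathbb{P}^2$) then makes $B$, hence $X$, singular. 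Thus smooth GM $5$-folds contain no $\mathbb{P}^3$.

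For $n=4$ the goal is to show the Fano scheme $F=F_{\mathbb{P}^2}(X)$ of planes has dimension $0$. Its tangent space at $[\Lambda]$ is $H^0(\Lambda,N_{\Lambda/X})$ and its obstructions lie in $H^1(\Lambda,N_{\Lambda/X})$; adjunction (using $-K_X=2H$, $H|_{\Lambda}=\mathcal{O}_{\mathbb{P}^2}(1)$) gives $\det N_{\Lambda/X}=K_{\Lambda}\otimes(K_X|_{\Lambda})^{-1}=\mathcal{O}_{\mathbb{P}^2}(-1)$, so $N_{\Lambda/X}$ has rank $2$ with $c_1=-1$, and Riemann--Roch on $\mathbb{P}^2$ yields $\chi(N_{\Lambda/X})=1-c_2(N_{\Lambda/X})$; using the normal bundle sequence $0\to N_{\Lambda/X}\to N_{\Lambda/G(2,5)}\to N_{X/G(2,5)}|_{\Lambda}\to0$ over the Grassmannian hull (in the Mukai case; the Gushel case passes through its double cover as above) and the known splitting of $N_{\Lambda/G(2,5)}$ according to the type of $\Lambda$, one aims to show $c_2(N_{\Lambda/X})=1$, so that $\chi=0$. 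But $\chi=0$ alone does not force finiteness — $N_{\Lambda/X}$ could be $T_{\mathbb{P}^2}(-2)$ (rigid) or a non-semistable bundle with $h^0=h^1=1$ (a moving plane) — so the step I expect to be the main obstacle is ruling out positive-dimensional families of planes on \emph{every} smooth GM $4$-fold, not just a general one (the analogue fails for special cubic fourfolds). Here the plan is to either identify $F_{\mathbb{P}^2}(X)$ with a subscheme of the Eisenbud--Popescu--Walter stratum attached to the Lagrangian data $(V_6,A)$ of $X$ and invoke smoothness ($A$ has no decomposable trivectors) to force that stratum to be finite, or to adapt the $n=5$ argument: a positive-dimensional family of planes on $X$ would sweep out a divisor along which $Q$ degenerates enough to make $X$ singular.
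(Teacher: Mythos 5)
The paper does not prove this statement; it is imported from the literature via the citation \cite{Gushel}, so there is no internal argument to measure yours against and your proposal has to stand on its own. Your reduction through the Gushel morphism and the classification of maximal linear subspaces of $G(2,5)$ is the right skeleton, and the $\dim X \ge 5$ half is essentially complete: for $n=6$ the absence of a $\mathbb{P}^4$ in $G(2,5)$ finishes it once $v \notin X$ is known, and for $n=5$ your singularity count is correct in substance. One point to tighten there: the $3$-dimensional complement of $V_1\wedge V$ inside $W_0\wedge V$ varies with $u$, so your ``three quadratic equations'' do not glue into three fixed quadrics on $\mathbb{P}(V/V_1)$; they assemble into a section of $\Omega^1_{\mathbb{P}^3}(3)$ (equivalently, four quadrics with a tautological linear syzygy), and the conclusion survives because $c_3(\Omega^1_{\mathbb{P}^3}(3))=5\neq 0$ forces every such section to vanish. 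The Gushel-type $5$-fold reduction to the branch divisor is only sketched but is the standard move.

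The genuine gap is exactly where you flag it: the $n=4$ finiteness claim, which is the half of the theorem the paper actually uses (the proof of Lemma \ref{lem: finitely many planes} invokes it verbatim in genus $6$, and Lemma \ref{lem: no planes on general fourfolds} only handles general $X$). The computation $\chi(N_{\Lambda/X})=0$ is consistent with $h^0=h^1=1$, i.e.\ with a plane moving in a one-parameter family on some special smooth $X$, and neither of your proposed repairs is executed. Note also that your second repair is not automatic: a one-parameter family of planes sweeps out a divisor $D$ with $a(D,H)\ge 3$, and the paper's Lemma \ref{lem: higher a-invar not adjoint rigid} only tells you such a $D$ is fibered in planes over a curve --- it does not by itself produce a singular point of $X$; excluding that configuration is precisely the content of Lemma \ref{lem: finitely many planes}, which for genus $6$ defers back to the very statement you are proving. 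To close the case you would need to actually carry out one of the two strategies: the Eisenbud--Popescu--Walter/Lagrangian-data route (as in Debarre--Kuznetsov, where smoothness of $X$ is equivalent to the associated Lagrangian containing no decomposable vectors, and families of planes are controlled by finite EPW-type strata), or an explicit analogue of your $n=5$ polar-form computation along the swept-out divisor, done separately for $\sigma$-planes and $\beta$-planes. As written, the $\dim X=4$ assertion is not established.
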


\begin{lem}\label{lem: genus 8}
    Let $X$ be a smooth linear section of the Grassmannian $G(2,6) \subset \mathbb{P}^{14}$ of dimension $n \ge 4$.
    Then the evaluation morphism $\mathrm{ev}\colon \overline{M}_{0,1}(X,1)\rightarrow X$ is flat of relative dimension $n-4$.
    Moreover, when $n \ge 5$, any fiber of $\mathrm{ev}$ is a smooth complete intersection of divisors on $\mathbb{P}^{1} \times \mathbb{P}^{3}$ of bidegree $(1,1)$.
\end{lem}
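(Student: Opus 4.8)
The plan is to make $\overline{M}_{0,1}(G(2,6),1)$ completely explicit and then read off the fibres of $\mathrm{ev}$ over $X$ as linear sections of a fixed Segre variety. A line on $G(2,6)\subseteq\mathbb{P}^{14}$ in the Pl\"{u}cker embedding is precisely a pencil $\{\,[W]\colon V_{1}\subseteq W\subseteq V_{3}\,\}$ attached to a flag $V_{1}\subseteq V_{3}\subseteq\mathbb{C}^{6}$ with $\dim V_{1}=1,\ \dim V_{3}=3$; since a stable map of class $1$ has irreducible domain, this gives $\overline{M}_{0,0}(G(2,6),1)\cong\mathrm{Fl}(1,3;6)$ and $\overline{M}_{0,1}(G(2,6),1)\cong\mathrm{Fl}(1,2,3;6)$, with $\mathrm{ev}$ the projection $(V_{1},V_{2},V_{3})\mapsto V_{2}$.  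Over $[V_{2}]$ the fibre is $\mathbb{P}(V_{2})\times\mathbb{P}(\mathbb{C}^{6}/V_{2})\cong\mathbb{P}^{1}\times\mathbb{P}^{3}$; equivalently $\overline{M}_{0,1}(G(2,6),1)\cong\mathbb{P}(\mathcal{S})\times_{G(2,6)}\mathbb{P}(\mathcal{Q})$ for the tautological sub- and quotient bundles.  Writing $X=G(2,6)\cap\Lambda$ with $\Lambda$ of codimension $c=8-n$ cut out by alternating forms $\phi_{1},\dots,\phi_{c}\in\wedge^{2}(\mathbb{C}^{6})^{\vee}$, the line attached to a flag $V_{1}\subseteq V_{2}\subseteq V_{3}$ is $\mathbb{P}\langle\wedge^{2}V_{2},\,v\wedge w\rangle\subseteq\mathbb{P}^{14}$ with $v$ spanning $V_{1}$, $w$ spanning $V_{3}/V_{2}$; as $[V_{2}]\in X$ forces $\phi_{i}(\wedge^{2}V_{2})=0$, this line lies in $X$ exactly when $\phi_{i}(v\wedge w)=0$ for all $i$.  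Hence the fibre of $\mathrm{ev}\colon\overline{M}_{0,1}(X,1)\to X$ over $[V_{2}]$ is
\[
\mathbb{P}\bigl(\ker\mu_{V_{2}}\bigr)\ \cap\ \bigl(\mathbb{P}(V_{2})\times\mathbb{P}(\mathbb{C}^{6}/V_{2})\bigr)\ \subseteq\ \mathbb{P}\bigl(V_{2}\otimes(\mathbb{C}^{6}/V_{2})\bigr)=\mathbb{P}^{7},
\]
where $\mu_{V_{2}}\colon V_{2}\otimes(\mathbb{C}^{6}/V_{2})\to\mathbb{C}^{c}$ sends $v\otimes\bar w$ to $(\phi_{i}(v\wedge w))_{i}$ and $\mathbb{P}(V_{2})\times\mathbb{P}(\mathbb{C}^{6}/V_{2})$ is the Segre variety of rank-one tensors.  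In bundle terms, $\overline{M}_{0,1}(X,1)$ is the zero scheme of $c$ sections of $\mathcal{O}_{\mathbb{P}(\mathcal{S})}(1)\otimes\mathcal{O}_{\mathbb{P}(\mathcal{Q})}(1)$ inside the smooth $(n+4)$-fold $E:=\mathbb{P}(\mathcal{S}|_{X})\times_{X}\mathbb{P}(\mathcal{Q}|_{X})$, so every component has dimension $\ge 2n-4$, and each fibre of $\mathrm{ev}$ is an intersection of $c$ divisors of bidegree $(1,1)$ on $\mathbb{P}^{1}\times\mathbb{P}^{3}$.

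The crux is that $\mathrm{rank}\,\mu_{V_{2}}=c$ for every $[V_{2}]\in X$.  I would verify, in an affine Pl\"{u}cker chart at $[V_{2}]$, that under the canonical isomorphism $T_{[V_{2}]}G(2,6)=\operatorname{Hom}(V_{2},\mathbb{C}^{6}/V_{2})\cong V_{2}\otimes(\mathbb{C}^{6}/V_{2})$ the $i$-th coordinate of $\mu_{V_{2}}$ is exactly $d\phi_{i}|_{[V_{2}]}$ restricted to $T_{[V_{2}]}G(2,6)$; hence $\mathrm{rank}\,\mu_{V_{2}}=\dim\langle d\phi_{1}|_{[V_{2}]},\dots,d\phi_{c}|_{[V_{2}]}\rangle$, which is $c$ precisely when $\Lambda$ meets $G(2,6)$ transversally at $[V_{2}]$, i.e.\ exactly when $X$ is smooth there.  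Smoothness of $X$ then gives $\mathrm{codim}\ker\mu_{V_{2}}=c$, so each fibre is a codimension-$c$ linear section of the $4$-dimensional Segre variety; as the hyperplane class is ample and $c\le 4$, the fibre is nonempty of dimension $\ge n-4$, so $\mathrm{ev}$ is surjective.  The step I expect to be the main obstacle is excluding \emph{excess}: a codimension-$c$ linear section $\mathbb{P}(\ker\mu_{V_{2}})\cap(\mathbb{P}^{1}\times\mathbb{P}^{3})$ acquires a component of dimension $>n-4$ only if $\ker\mu_{V_{2}}$ contains the linear span of a positive-dimensional linear subspace, or of a sub-Segre, of $\mathbb{P}^{1}\times\mathbb{P}^{3}$ of too large a dimension.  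Unwinding these conditions through $\mu_{V_{2}}$ forces $X$ to contain a distinguished subvariety: either a $\sigma$-plane $\mathbb{P}^{4}=\{\text{$2$-planes through a fixed line }\langle u\rangle\}$ (equivalently, a vector $u$ with $\iota_{u}\phi_{i}=0$ for all $i$), or a sub-Grassmannian $G(2,W)=\mathbb{P}^{2}$ (a $3$-dimensional $W$ isotropic for every $\phi_{i}$), or a divisor swept out by such.  For $n\ge 5$ the first alternative produces a $\mathbb{P}^{4}$ (or the sweeping divisor) of codimension $\le 1$, which has normal bundle $\mathcal{O}(-2)$ by adjunction, contradicting $\rho(X)=1$ (true by the Lefschetz theorem since $\dim X\ge 3$, so $X$ carries no effective divisor with negative normal bundle); the sub-Grassmannian case contributes nothing to excess when $n\ge 5$ (the fibre over $[U]\in G(2,W)$ is then just the pencil of lines $\{\,[U']\colon\langle v\rangle\subseteq U'\subseteq W\,\}$, of the expected dimension $n-4$) and only needs to be treated when $n=4$, where it is excluded because such a copy of $(\mathbb{P}^{2},-K_{X}|_{\mathbb{P}^{2}})\cong(\mathbb{P}^{2},\mathcal{O}(2))$ would be an $a$-invariant-increasing subvariety incompatible with $\rho(X)=1$ and the structure of Mori contractions on $X$ (cf.\ Proposition \ref{prop: nondominant curves subvariety a-invar} and the classification of such subvarieties).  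Thus every fibre has dimension exactly $n-4$.

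Granting this, $\overline{M}_{0,1}(X,1)$ is cut out in the smooth variety $E$ by $c$ equations and has everywhere the expected dimension $2n-4$, so it is a local complete intersection, in particular Cohen--Macaulay; a morphism from a Cohen--Macaulay scheme to a regular scheme with equidimensional fibres is flat, so $\mathrm{ev}$ is flat of relative dimension $n-4$.  For $n\ge 5$ we have $c\le 3$ and each fibre is a complete intersection of $c$ divisors of bidegree $(1,1)$ on $\mathbb{P}^{1}\times\mathbb{P}^{3}$; I would deduce smoothness of the fibre by the same bookkeeping as for the rank computation, checking that the differential of the $c$ defining sections at a point of the fibre corresponding to a line $\ell\subset X$ through $[V_{2}]$ is identified with the restriction of $d(\phi_{1},\dots,\phi_{c})$ to the span of the two ruling-tangent spaces of the Segre, which has full rank because $X$ is smooth along $\ell$ and of dimension at least $5$.  (For $n=4$ the relative dimension is $0$, so $\mathrm{ev}$ is finite flat and no smoothness statement is asserted.)
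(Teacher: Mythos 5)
Your setup is the same as the paper's: lines on $G(2,6)$ are flags $V_1\subset V_3$, the fiber of $\mathrm{ev}$ over $[W]\in G$ is $\mathbb{P}(W)\times\mathbb{P}(\mathbb{C}^6/W)$, the fiber over $[W]\in X$ is the intersection of this Segre with the bidegree-$(1,1)$ divisors $\,{}^tv\,Q_{H_i}\,w=0$, and your identification of $\operatorname{rank}\mu_{W}=c$ with transversality at $[W]$ is exactly the paper's criterion $W\not\subset\ker Q_{H_p}$. (Your flatness deduction via miracle flatness is fine and, if anything, cleaner than the paper's.) The divergence is in how one rules out excess-dimensional components of the linear section of the Segre, which you correctly flag as the crux --- and here your argument has genuine gaps. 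First, the trichotomy you propose ($\sigma$-plane, sub-Grassmannian, or a divisor swept by such) is asserted rather than derived and is not exhaustive even against your own list of sources: a component $\mathbb{P}(W)\times L$ of the fiber, with $L$ a line in $\mathbb{P}(\mathbb{C}^6/W)$, unwinds to $X$ containing the $3$-dimensional Schubert cone $\{W'\subset U:\ W'\cap W\neq 0\}$ for some $4$-space $U\supset W$, which is neither configuration. Second, your exclusion of $\sigma$-planes by a codimension-$1$ normal-bundle argument only applies when $n=5$; for $n=6$ a $\sigma$-plane $\mathbb{P}^4\subset X$ has codimension $2$ yet still produces a fiber component $\{[u]\}\times\mathbb{P}^3$ of dimension $3>2$, so it must be excluded and your argument does not do it. (It can be: $\Sigma_u\subset X$ forces $u\in\ker Q_{H_p}$ for every $p$ in the linear system, each induced skew form on $\mathbb{C}^6/\langle u\rangle$ is degenerate, and the resulting $2$-plane $\langle u,v_p\rangle\subset\ker Q_{H_p}$ is a point of $X$ at which $X$ is singular --- but this computation is absent from your proposal.) Third, for $n=4$ your appeal to the classification of subvarieties with higher $a$-invariant is circular, since the paper proves that classification for genus-$8$ fourfolds (Lemma~\ref{lem: finitely many planes}) by citing this very lemma; and the assertion that a plane is incompatible with $\rho(X)=1$ is false for coindex-$3$ fourfolds in general (smooth quartic fourfolds with $\rho=1$ can contain planes).

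The paper avoids the excess analysis altogether: it projects $A_{H_1}\cap\dots\cap A_{H_\ell}$ to the $\mathbb{P}^1$-factor $\mathbb{P}(W)$ and checks, using that the relevant $2\times 4$ blocks of the $Q_{H_i}$ have rank $2$, that every fiber over $(s:t)\in\mathbb{P}^1$ is a codimension-$\ell$ linear subspace of $\mathbb{P}^3$; this yields the dimension count and smoothness simultaneously for $\ell\le 3$. To salvage your route you would need a complete classification of linear sections of $\mathbb{P}^1\times\mathbb{P}^3$ with excess components, followed by a direct exclusion of each resulting configuration in $X$ using only the forms $\phi_i$ and smoothness of $X$ --- not results downstream of this lemma.
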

\begin{proof}
    Let $V$ be a $6$-dimensional vector space and let $G := \mathrm{Gr}(2,V)$ be the Grassmannian of planes on $V$. 
    Then $G$ has dimension $8$.
    The Pl\"{u}cker embedding embeds $G$ into $\mathbb{P}^{14}\cong \mathbb{P}(\bigwedge^{2}V)$ given by $\langle v, w\rangle \mapsto [v\wedge w]$.
    By \cite{Abdelkerim_Coskun2012}, the space $\overline{M}_{0,0}(G,1)$ of lines on $G$ is isomorphic to the flag variety $F(1,3,6)$; for a pair $(V_{1}, V_{3})$ such that $\dim V_{i} = i$ and $V_{1} \subset V_{3} \subset V$, the corresponding line is the pencil $\{W\in G \mid V_{1}\subset W \subset V_{3}\}$ of planes.
    This shows that for a given plane $W \in G$, the fiber of the evaluation morphism $\mathrm{ev}\colon \overline{M}_{0,1}(G,1)\rightarrow G$ over $W$ is isomorphic to $\{(V_{1}, V_{3}) \in F(1,3,6)\mid V_{1}\subset W\subset V_{3}\} \cong \mathbb{P}^{1} \times \mathbb{P}^{3}$.
    This show the claim in dimension $8$.
    \par
    We next consider hyperplane sections of $G$.
    From now on, we fix a basis $\{e_{1}, \dots, e_{6}\}$ of $V$ and the corresponding coordinates $z_{ij}\, (1\le i < j\le 6)$ of $\mathbb{P}^{14}$.
    Let $H = V(\sum_{i<j}a_{ij}z_{ij}) \subset \mathbb{P}^{14}$ be a hyperplane.
    Now we define a  skew symmetric matrix $Q_{H} := (a_{ij})$, where $a_{ji} = -a_{ij}$.
    This correspondence $H\mapsto Q_{H}$ induces a bijection $\mathbb{P}^{*}(\bigwedge^{2}V) \rightarrow \{(a_{ij}) \in \mathrm{Mat}(V)\mid a_{ji} = -a_{ij}\}/k^{*}$.
    For a point $\langle v,w\rangle \in G$, the condition $\langle v,w\rangle \in H$ is equivalent that ${}^{t}vQ_{H}w = 0$.
    We take a point $W = \langle e_{1}, e_{2}\rangle$ and suppose that $H$ contain $W$, i.e., $a_{12} = 0$.
    Then $G\cap H$ is smooth at $W$ if and only if $T_{W}G \not\subset H$.
    Since $T_{W}G \subset \mathbb{P}^{14}$ is spanned by $9$ points $[e_{i} \wedge e_{j}]\, (i =1,2, i<j\le 6)$ by \cite{Abdelkerim_Coskun2012}, the latter condition is equivalent to $W \not\subset \operatorname{Ker} Q_{H}$.
    Hence we see that $G \cap H$ is smooth if and only $Q_{H}$ is regular since $\operatorname{Ker} Q_{H}$ is even dimensional.
    \par
    Suppose that $G\cap H$ is smooth and contains $W = \langle e_{1}, e_{2}\rangle$.
    As proved in the first paragraph, any line on $G$ passing through $W$ is spanned by two points $\langle e_{1}, e_{2}\rangle$ and $\langle c_{1}e_{1} + c_{2}e_{2}, c_{3}e_{3} + \dots + c_{6}e_{6}\rangle$ for $((c_{1}:c_{2}), (c_{3}:\dots:c_{6})) \in \mathbb{P}^{1} \times \mathbb{P}^{3}$.
    Such a line is contained in $G\cap H$ if and only if ${}^{t}(c_{1}e_{1} + c_{2}e_{2})Q_{H}(c_{3}e_{3} + \dots + c_{6}e_{6}) = 0$.
    Thus the fiber of $\overline{M}_{0,1}(G\cap H, 1)\rightarrow G\cap H$ over $W$ is isomorphic to an ample divisor $A_{H} := V({}^{t}(x_{1}e_{1} + x_{2}e_{2})Q_{H}(x_{3}e_{3} + \dots + x_{6}e_{6})) \subset \mathbb{P}^{1}\times \mathbb{P}^{3}$ of bidegree $(1,1)$.
    Hence $\overline{M}_{0,1}(G\cap H, 1)\rightarrow G\cap H$ is flat of relative dimension $3$.
    \par
    Moreover, we claim that $A_{H}$ is smooth.
    It suffices to show that the projection $A_{H} \rightarrow \mathbb{P}^{1}$ is smooth.
    Since $W = [e_{1}\wedge e_{2}] \in H$, we have $a_{12} = 0$.
    Hence the first and second row of $Q_{H}$ are of the form 
    \[
    \begin{pmatrix}
        0 & 0 & a_{13} & \cdots & a_{16}\\
        0 & 0 & a_{23} & \cdots & a_{26}\\
    \end{pmatrix}.
    \]
    Since $Q_{H}$ is regular, so is the above submatrix.
    This shows that for any $(s:t) \in \mathbb{P}^{1}$, the fiber $V(\sum_{j=3}^{6}(sa_{1j} + ta_{2j})x_{j}) \subset \mathbb{P}^{3}$ of $A_{H}\rightarrow \mathbb{P}^{1}$ over $(s:t)$ is smooth by Jacobian criterion.
    Therefore, $A_{H}\rightarrow \mathbb{P}^{1}$ is smooth, proving the lemma in dimension $7$.
    \par
    Finally, we consider complete intersections $X = G\cap H_{1}\cap \dots \cap H_{\ell}$ for $2\le \ell \le 4$.
    We may assume that $W = \langle e_{1}, e_{2}\rangle$ is contained in $X$.
    Then $X$ is smooth at $W$ if and only if $G\cap H_{p}$ is smooth at $W$ for any $p = (s_{1}:\dots:s_{\ell}) \in \mathbb{P}^{\ell - 1}$, where $H_{p}$ is the hyperplane corresponding to the matrix $s_{1}Q_{H_{1}} + \dots + s_{\ell}Q_{H_{\ell}}$.
    Then the fiber of $\overline{M}_{0,1}(X,1)\rightarrow X$ over $W$ is isomorphic to the complete intersection $A_{H_{1}}\cap \dots \cap A_{H_{\ell}} \subset \mathbb{P}^{1}\times \mathbb{P}^{3}$.
    Moreover, when $\ell = 2,3$, one can prove that $A_{H_{1}}\cap \dots \cap A_{H_{\ell}}$ is smooth as discussed above, which completes the proof.
\end{proof}

\begin{proof}[Proof of Lemma \ref{lem: higher a-invariant subvar}]
By Lemma \ref{lem: higher a-invar not adjoint rigid}, it suffices to show that $X$ does not contain a $\mathbb{P}^{n-2}$.
So, we may assume that $Y \cong \mathbb{P}^{n-2}$.
We prove this based on the classification (Theorem \ref{classification Picard rank 1}).
\begin{enumerate}[(1)]
\item Suppose the fundamental divisor $H$ is not very ample. 
There are two possibilities:
    \begin{description}
        \item[($g=2$)]
        $X$ admits a double cover $f\colon X \rightarrow \mathbb{P}^{n}$ branched over $B \in |\mathcal{O}_{\mathbb{P}^{n}}(6)|$.\\
        Let $Z = f(Y)$ be the image of $Y$.
        Then $Z \subset \mathbb{P}^{n}$ is also an $(n-2)$-space such that $Z\cap B$ is non-reduced.
        So, we may write $Z = V(x_{0}, x_{1})$ and $B = V(x_{0}f_{0} + x_{1}f_{1} + g^{2})$, where $f_{0}, f_{1} \in k[x_{0}, \dots, x_{n}]_{5}$, $g \in k[x_{0}, \dots, x_{n}]_{3}$.
        Then the singular locus of $X$ must contain $V(x_{0}, x_{1}, f_{0}, f_{1}, g)$, which is non-empty, a contradiction.

        \item[($g=3$)]
        $X$ admits a double cover $f\colon X \rightarrow Q^{n}$ branched over $B \in |\mathcal{O}_{Q^{n}}(4)|$, where $Q^{n} \subset \mathbb{P}^{n+1}$ is a smooth quadric.\\
        Let $Z = f(Y)$ be the image of $Y$.
        Then $Z$ is also an $(n-2)$-space contained in $Q^{n}$. 
        This is a contradiction since $Q^{n}$ cannot contain $\mathbb{P}^{n-2}$ for $n\ge 5$.
    \end{description}
            
\item Suppose $H$ is very ample and $g\le 5$. 
Then $X$ is realized as a complete intersection of hypersurfaces of degree $d_{1}, \dots, d_{g-2}$ in a projective space $\mathbb{P}^{n+g-2}$. 
By Lefschetz hyperplane theorem, we have an isomorphism $H^{4}(\mathbb{P}^{n+g-2}, \mathbb{Z})\rightarrow H^{4}(X, \mathbb{Z})$. 
Since $\dim H^{4}(\mathbb{P}^{n+g-2}, \mathbb{Z}) = 1$, we see that the degree of codimension $2$ subvarieties of $X$ is divisible by $d_{1}\cdots d_{g-2} > 1$. 
Hence $X$ cannot contain codimension $2$ linear spaces.

\item Suppose that $g\ge 6$. There are $5$ possibilities: 
\begin{description}
    \item[($g=6$)] $X$ is a Gushel-Mukai manifold.
    In this case, the claim follows from Theorem \ref{thm: gushel-mukai}.

    \item[($g=7$)] $X$ is a linear section of the orthogonal Grassmannian $OG_{+}(5, 10) \subset \mathbb{P}^{15}$ embedded by the half-spinor coordinates. 
    Since $\dim X \ge 5$, we obtain an isomorphism $H^{4}(OG_{+}(5, 10), \mathbb{Z}) \rightarrow H^{4}(X, \mathbb{Z})$ by Lefschetz hyperplane theorem.
    By \cite{Bernstein1973}, we see that $\dim H^{4}(OG_{+}(5, 10)) = 1$.
    Moreover, $OG_{+}(5, 10)$ does not contain codimension $2$ linear space since it is homogeneous.
    Therefore, $X$ does not contain codimension $2$ linear space.

    \item[($g=8$)] 
    $X$ is a smooth linear section of the Grassmannian $G(2,6) \subset \mathbb{P}^{14}$ in the Pl\"{u}cker embedding.
    In this case, the claim follows from Lemma \ref{lem: genus 8}.
    Indeed, if $X$ contains $Y\cong \mathbb{P}^{n-2}$, then the fiber of $\mathrm{ev}\colon \overline{M}_{0,1}(X,1)\rightarrow X$ over any point in $Y$ must contain $\mathbb{P}^{n-3}$.
    However, it is impossible by Lemma \ref{lem: genus 8}.

    \item[($g=9$)] $X$ is a linear section of the Lagrangian Grassmannian $LG(3, 6) \subset \mathbb{P}^{19}$ in the Pl\"{u}cker embedding.
    The claim follows by the same argument as in the case of $g = 7$.

    \item[($g=10$)] $X$ is a closed orbit $G_2/P \subset \mathbb{P}^{13}$ of the adjoint representation of $G_2$, which is in particular homogeneous.  Every rational curve on $X$ is therefore free.  This implies $X$ cannot contain a $\mathbb{P}^3$.    
\end{description}
\end{enumerate}
\end{proof}

Next, we consider smooth Fano $4$-folds of coindex $3$ and Picard rank $1$.
\begin{lem}\label{lem: no planes on general fourfolds}
Let $X$ be a general smooth Fano $4$-fold of coindex $3$ with $\mathrm{Pic}(X) = \mathbb{Z}H$.  There are no subvarieties $Y \subset X$ with $(Y,H) \cong (\mathbb{P}^2, \mathcal{O}(1))$ %
\end{lem}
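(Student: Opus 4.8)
The plan is to argue case by case along Mukai's classification (Theorem~\ref{classification Picard rank 1} with $n=4$), showing in each case that the members of the family which contain a linearly embedded plane form a proper closed subvariety of the parameter space.  First observe that, since $H|_Y=\mathcal{O}_{\mathbb{P}^2}(1)$, the surface $Y$ is a linearly embedded $\mathbb{P}^2$ in the relevant projective space: in the very ample cases the inclusion $Y\hookrightarrow\mathbb{P}|H|$ is a closed immersion given by a subsystem of $|\mathcal{O}_{\mathbb{P}^2}(1)|$, hence linear; and in the double cover cases $H=f^*\mathcal{O}(1)$ with $f$ finite, so $f|_Y$ embeds $Y$ as a plane in the base.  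By Mukai's theorem a general $X$ in each family is a general linear section / branched cover / complete intersection of a fixed variety, and general members are smooth, so in each case it suffices to bound the dimension of the incidence variety of pairs (defining datum of $X$, plane $Y\subset X$) and to check that its projection to the space of defining data is not dominant.

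For the complete intersection cases --- $g=3$ (a quartic in $\mathbb{P}^5$), $g=4$ (type $(2,3)$ in $\mathbb{P}^6$), $g=5$ (type $(2,2,2)$ in $\mathbb{P}^7$) --- this is elementary: requiring a fixed plane $Z\cong\mathbb{P}^2\subset\mathbb{P}^N$ to lie in a complete intersection of forms of degrees $d_1,\dots,d_k$ imposes $\sum_i\binom{d_i+2}{2}$ independent linear conditions on the tuple of forms, while the planes $Z$ move in a Grassmannian of dimension $3(N-2)$; one checks $3(N-2)<\sum_i\binom{d_i+2}{2}$ in each of the three cases, so the incidence variety has strictly smaller dimension than the space of defining data, and a general (smooth, by Bertini) complete intersection contains no plane.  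For the double cover cases --- $g=2$ with $f\colon X\to\mathbb{P}^4$ branched over $B\in|\mathcal{O}_{\mathbb{P}^4}(6)|$, and the non--very-ample $g=3$ case with $f\colon X\to Q^4$ branched over $B\in|\mathcal{O}_{Q^4}(4)|$ --- a plane $Y\subset X$ maps isomorphically onto a plane $Z$ in the base which meets $B$ with even multiplicity, i.e.\ with $B|_Z$ a perfect square (of a cubic, resp.\ of a conic) in $|\mathcal{O}_Z(B|_Z)|$.  Since the restriction $H^0(\mathcal{O}(B))\to H^0(\mathcal{O}_Z(B|_Z))$ is surjective (it is the restriction from $\mathbb{P}^4$, resp.\ factors the restriction from $\mathbb{P}^5$ because the defining quadric vanishes on $Z$), counting the planes $Z$ (a $6$-, resp.\ $3$-dimensional family), the fibre of the restriction, and the square locus (which has the dimension of the system of square roots) again exhibits a proper subvariety of $|\mathcal{O}(B)|$.

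For $g\ge 6$ the variety $X$ is a general linear section $\Sigma\cap\Lambda$ of a fixed rational homogeneous variety $\Sigma\subset\mathbb{P}^N$ by a linear subspace $\Lambda$ of codimension $c:=\dim\Sigma-4$ --- with $\Sigma=OG_+(5,10),\ G(2,6),\ SG(3,6),\ G_2/P$ for $g=7,8,9,10$ respectively (the Gushel--Mukai case $g=6$ additionally involves the defining quadric, treated separately).  A plane $Y\subset X$ is precisely a plane $Y\subset\Sigma$ with $Y\subset\Lambda$, so the incidence variety is $I=\{(Y,\Lambda): Y\in F_2(\Sigma),\ Y\subset\Lambda\}$ where $F_2(\Sigma)$ is the Fano scheme of $2$-planes in $\Sigma$; since a fixed plane $Y$ satisfies $\dim\{\Lambda\}-\dim\{\Lambda: Y\subset\Lambda\}=3c$, a general $\Lambda$ (which gives a smooth $X$, by Bertini) yields an $X$ with no plane as soon as $\dim F_2(\Sigma)<3c$.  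For $g=9,10$ the ambient $\Sigma$ contains no plane at all: the variety of minimal rational tangents of $SG(3,6)$ at a point is the second Veronese surface $v_2(\mathbb{P}^2)\subset\mathbb{P}^5$ and that of $G_2/P$ is a rational normal curve, and neither contains a line, which already obstructs a $\mathbb{P}^2$; these genera are thus settled unconditionally.  For $g=7,8$ one computes $\dim F_2(\Sigma)$ from the classification of linear subspaces: every plane in $G(2,6)$ lies in a maximal linear space $\{W:A\subset W\}\cong\mathbb{P}^4$ ($\dim A=1$) or $\{W:W\subset B\}\cong\mathbb{P}^2$ ($\dim B=3$), which gives $\dim F_2(G(2,6))=11<12=3c$; and the maximal linear subspaces of the spinor tenfold $OG_+(5,10)$ are the $\mathbb{P}^4$'s of its two natural families, which gives $\dim F_2(OG_+(5,10))=16<18=3c$.

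The main obstacle is exactly the pair $g=7,8$: there the inequality $\dim F_2(\Sigma)<3c$ holds only by a margin of $1$ or $2$, so a crude bound on $\dim F_2(\Sigma)$ will not suffice and one genuinely needs the full classification of linear subspaces of $G(2,6)$ and of the spinor tenfold --- this is precisely the input for which we thank A.\ Kuznetsov.  The Gushel--Mukai case $g=6$ requires a little extra bookkeeping because of the cone construction and the defining quadric; the same incidence count, now also involving $F_2(G(2,5))$ (whose maximal linear pieces are the $\mathbb{P}^3$'s $\{W:A\subset W\}$ and the $\mathbb{P}^2$'s $G(2,B)$), gives the result, and in any case the statement for general Gushel--Mukai fourfolds is known.
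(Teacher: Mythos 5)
Your proposal is correct in substance, but it takes a genuinely different route from the paper. You argue case by case through Mukai's list, bounding the dimension of the incidence variety of pairs (defining data of $X$, plane in $X$): linear-condition counts for the complete intersections, a ``branch divisor restricts to a perfect square'' count for the two double covers, and the inequality $\dim F_2(\Sigma) < 3c$ for the linear sections of homogeneous $\Sigma$, with the $g=9,10$ cases killed outright by the VMRT containing no lines. The paper instead gives a single uniform argument with no casework: it realizes the general $X$ as the $|H|$-preimage of a general hyperplane section of a smooth coindex-$3$ fivefold $Z$; if general hyperplane sections contained planes, the planes in $Z$ would form a family of dimension $\ge 3$, so planar rational cubics would fill up an entire irreducible component $N$ of $\overline{M}_{0,0}(Z,3)$ of the expected dimension $3d+2$; since $Z$ has no subvarieties of higher $a$-invariant (Lemma \ref{lem: higher a-invariant subvar}), the general member of $N$ is free, and the positivity of $f^*\mathcal{T}_Z$ supplied by $\mathcal{T}_{\mathbb{P}^2}$ and $N_{X/Z}$ forces the general member of $N$ to be embedded by \cite[Theorem 3.14]{Kollar1996} --- contradicting the fact that a rational plane cubic is never embedded. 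What each approach buys: yours is elementary and yields explicit codimension bounds on the locus of fourfolds containing a plane, but it genuinely requires the classification of maximal linear subspaces of $G(2,6)$, $OG_+(5,10)$ and $G(2,5)$ (exactly the tight margin-of-$1$ computations you flag at $g=7,8$, and the external input for Gushel--Mukai fourfolds); the paper's proof needs none of this here, deferring that analysis to the separate finiteness statement for arbitrary $X$ (Lemma \ref{lem: finitely many planes}), at the cost of invoking the fivefold result and deformation theory of free curves. Two minor imprecisions in your write-up, neither affecting the conclusion: the maximal linear subspaces of the spinor tenfold are a $10$-dimensional family of $\mathbb{P}^4$'s together with a $13$-dimensional family of $\mathbb{P}^3$'s (not two families of $\mathbb{P}^4$'s), though both contributions give $\dim F_2(OG_+(5,10))=16<18$; and in the double-cover cases the degenerate possibility $Z\subset B$ (so $B|_Z=0$) should be noted, though it is absorbed by an even more restrictive count.
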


\begin{proof}
Let $Z$ be a smooth Fano $5$-fold of genus $g$, and let $H$ likewise denote the fundamental divisor class on $Z$.  Consider the natural map $\phi \colon Z \rightarrow \mathbb{P}^{g+3}$ induced by the complete linear system $|H|$.  We may realize $X$ as the $\phi$-preimage of a general hyperplane.  

Suppose to the contrary that there exists a subvariety $Y \subset X$ such that $(Y,H) \cong (\mathbb{P}^2, \mathcal{O}(1))$.  The image of $Y$ under $\phi$ would be a plane $\mathbb{P}^2 \subset \mathbb{P}^{g+3}$.  Set 
\[\Sigma = \{(P, H) \mid P\subset H\} \subset \mathbb{G}(2, g+3) \times |\mathcal{O}_{\mathbb{P}^{g+3}}(1)|.\]
Consider the incidence correspondence
\[
  \xymatrix{
  	\Sigma\ar[d]_-{p_{1}}\ar[r]^-{p_{2}} & |\mathcal{O}_{\mathbb{P}^{g+3}}(1)|.\\
    \mathbb{G}(2, g+3) & 
  }
\]
In this setting, we have 
\begin{enumerate}
    \item $\dim |\mathcal{O}_{\mathbb{P}^{g+3}}(1)| = g + 3$,
    \item $p_{2}$ is a flat morphism whose fiber is isomorphic to $\mathbb{G}(2, g + 2)$, hence $p_{2}$ has relative dimension $3g$.
    \item $\dim \mathbb{G}(2, g+3) = 3(g + 1)$,
    \item By the above equalities, $p_{1}$ is a flat morphism of relative dimension $g$.
\end{enumerate}
Assume that the general hyperplane section of $Z$ contains a subvariety mapping isomorphically to a plane under $\phi$.  One can see that there exists a $3$-dimensional subset $V\subset \mathbb{G}(2, g+3)$ such that any plane $P \in V$ is the isomorphic image of a subvariety in some smooth hyperplane section $Z\cap H$.
This implies that rational curves of degree $d\ge 2$ contained in some $P\in V$ form a component $N$ of $\overline{M}_{0,0}(Z,d)$, since $\dim \overline{M}_{0,0}(Z,d) = 3d + 2$ and $\dim \overline{M}_{0,0}(\mathbb{P}^{2}, d) = 3d - 1$.
Let $d = 3$, so that $N$ parameterizes planar cubic curves. Such curves are not embedded.
On the other hand, %
the general map $f \colon \mathbb{P}^1 \rightarrow Z$ parameterized by $N$ is free by Lemma \ref{lem: higher a-invariant subvar} and Proposition \ref{prop: nondominant curves subvariety a-invar}. The vector bundle $f^*\mathcal{T}_Z$ has at least three ample summands because $Y \subset X$ and both $f^*\mathcal{T}_Y$ and $f^*N_{X/Z}$ are ample.  However, by \cite[Theorem 3.14]{Kollar1996}, this implies $N$ generically parameterizes embedded curves, a contradiction.
\end{proof}

\begin{rem}
    Some smooth quartic $4$-folds contain a $\mathbb{P}^{2}$.
    Let $X = V(x_{0}f + x_{1}g + x_{2}h) \subset \mathbb{P}^{5}$. 
    If the polynomials $f, g, h$ are general, then $X$ is smooth.
    Moreover, $X$ contains $\mathbb{P}^{2} = V(x_{0}, x_{1}, x_{2})$ and no other planes.  The following lemma shows there are at most finitely many planes in any Fano $4$-fold of coindex $3$.
\end{rem}

\begin{lem}\label{lem: finitely many planes}
    Let $X$ be an arbitrary smooth Fano $4$-fold of coindex $3$ with $\Pic(X) = \mathbb{Z}H$.  There are finitely many subvarieties $Y \subset X$ with $(Y,H) \cong (\mathbb{P}^2, \mathcal{O}(1))$. %
\end{lem}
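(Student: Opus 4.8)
The plan is to argue case by case using Mukai's classification (Theorem~\ref{classification Picard rank 1}), exactly as in the proof of Lemma~\ref{lem: higher a-invariant subvar}, but now in dimension $4$ where the stronger statement (no planes at all) can fail. For each family in the classification I would show that the planes $Y \subset X$ with $(Y,H)\cong(\mathbb{P}^2,\mathcal{O}(1))$ cannot move in a positive-dimensional family; equivalently, the locus $W$ parameterizing such $Y$ is $0$-dimensional. The cleanest uniform mechanism is the one already used in Lemma~\ref{lem: no planes on general fourfolds}: if planes moved in a family of dimension $\geq 1$, then planar cubics contained in those planes would sweep out a component $N \subset \overline{M}_{0,0}(X,3)$ of dimension $\dim\overline{M}_{0,0}(\mathbb{P}^2,3) + 1 = 9$, while the expected dimension of $\overline{M}_{0,0}(X,3)$ is $-K_X\cdot 3 + \dim X - 3 = 6\cdot 3 + 1 = \dots$; more to the point, by Lemma~\ref{lem: higher a-invariant subvar} (applied after cutting to dimension~$5$, or directly via the $a$-invariant bound) together with Proposition~\ref{prop: nondominant curves subvariety a-invar}, the general member of $N$ is free, hence by \cite[Theorem~3.14]{Kollar1996} generically embedded — contradicting that planar cubics are never embedded. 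So the whole lemma reduces to ruling out a $1$-dimensional family of planes.

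Concretely I would run through the cases as follows. \textbf{Not very ample ($g=2,3$):} here $X$ is a double cover of $\mathbb{P}^n$ (resp.\ of a quadric $Q^n$), and an $H$-plane maps to a plane $Z$ in the base meeting the branch locus $B$ non-reducedly (resp.\ contained in $Q^4$). For $g=3$, $n=4$: a plane in $Q^4$ belongs to one of the two $\mathbb{P}^3$-families of planes in $Q^4$; the condition that $Z \subset B$ (where $B\in|\mathcal{O}_{Q^4}(4)|$) cuts this down, and a dimension count shows that for $Z$ to sweep a positive-dimensional family forces $B$ singular along a curve, contradicting smoothness of $X$ — and in fact finiteness follows from the same count. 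For $g=2$ the singular-locus argument from Lemma~\ref{lem: higher a-invariant subvar} shows that a plane gives a point of $\mathrm{Sing}(X)$, so there are none. \textbf{Complete intersections ($g=3,4,5$):} $X \subset \mathbb{P}^{n+g-2}$ with $\dim X = 4$; a plane $Y$ is then a $\mathbb{P}^2$ linearly embedded in the ambient projective space and contained in $X$. The Fano scheme of planes in a complete intersection of type $(d_1,\dots,d_{g-2})$ in $\mathbb{P}^{n+g-2}$ has expected dimension $3(n+g-2-2) - \sum_i\binom{d_i+2}{2} = 3(n+g-4) - \sum_i\binom{d_i+2}{2}$, which I would check is $\leq 0$ in each of the three cases ($(4)$ in $\mathbb{P}^5$, $(2,3)$ in $\mathbb{P}^6$, $(2,2,2)$ in $\mathbb{P}^7$), so for any smooth such $X$ the Fano scheme of planes is finite (it is cut out in a Grassmannian by sections of a bundle whose rank equals the dimension of the Grassmannian). \textbf{$g\geq 6$:} $X$ is a linear section of a homogeneous (or almost-homogeneous) variety $G$; a plane in $X$ is a plane in $G$. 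For the genuinely homogeneous ones ($g=7$ spinor, $g=8$ Grassmannian $G(2,6)$, $g=9$ Lagrangian/symplectic Grassmannian, $g=10$ $G_2/P$) the variety of planes in $G$ is itself homogeneous, hence smooth, and one computes that a general codimension-$(\dim G - 4)$ linear section meets the relevant family of planes in finitely many points by the usual Kleiman-type dimension count (the family of planes has some dimension $m$, and imposing that a plane lie in a given linear section of codimension $c$ imposes $3c$ conditions, and $m < 3c$); for $g=6$ (Gushel–Mukai fourfolds) the finiteness of codimension-$2$ linear subspaces is exactly Theorem~\ref{thm: gushel-mukai}.

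The main obstacle is the $g=8$ case (and $g=6$, though that is handed to us by Theorem~\ref{thm: gushel-mukai}): on $G(2,6)$ there genuinely are positive-dimensional families of planes (the $\sigma$-planes and $\sigma'$-planes coming from sub- and quotient-flags), so the naive ``homogeneity kills planes'' slogan does not apply, and one must argue that a \emph{general} codimension-$4$ linear section still meets each such family in only finitely many points — i.e.\ pin down the dimensions of the two families of planes in $G(2,6)$ and verify the inequality $\dim(\text{family}) < 3 \cdot \mathrm{codim}$ with room to spare. This is precisely the kind of input Kuznetsov was thanked for; I would use the description of linear spaces on Grassmannians to get these dimensions and then the dimension count is routine. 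For uniformity it is worth noting that the cubic-curve argument of the first paragraph already gives finiteness in every case where Lemma~\ref{lem: higher a-invariant subvar}'s proof specializes, so the case analysis here is really only needed to handle $g=8$ (and to record the explicit Fano-scheme-of-planes counts in the complete-intersection cases, where finiteness holds for \emph{all} smooth $X$, not just general ones).
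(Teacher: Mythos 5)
Your ``uniform mechanism'' does not work for an arbitrary $4$-fold $X$, and this is precisely the point of the lemma. If planes move in a positive-dimensional family inside $X$ itself, the resulting $9$-dimensional family of planar cubics sweeps out at most a $3$-fold in $X$, so it is a \emph{non-dominant} component of $\overline{M}_{0,0}(X,3)$; Proposition \ref{prop: nondominant curves subvariety a-invar} then only tells you the swept locus has $a$-invariant $>1$ (which is true --- it is a union of planes), and the general member is certainly not free, so no contradiction arises. The argument of Lemma \ref{lem: no planes on general fourfolds} succeeds only because $X$ is a \emph{general} hyperplane section of a $5$-fold $Z$: there the planes sweep out a $3$-dimensional family in $Z$, the cubics become a dominant family on $Z$, and Lemma \ref{lem: higher a-invariant subvar} (which genuinely requires $\dim \ge 5$) forces freeness. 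For a special smooth $X$ you cannot lift to this situation, which is why a separate argument is needed --- your phrase ``applied after cutting to dimension $5$'' has no content here.

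The case-by-case fallbacks you propose also only control \emph{general} members of each family, not arbitrary smooth ones. A negative expected dimension for the Fano scheme of planes in a complete intersection does not imply finiteness for every smooth member (the paper's remark exhibits smooth quartic $4$-folds containing a plane even though the expected dimension is $-6$; note also that the rank of the relevant bundle \emph{exceeds} the dimension of the Grassmannian, so your parenthetical justification is false), and a Kleiman-type transversality count for linear sections of $OG_+(5,10)$ or $G(2,6)$ says nothing about special smooth sections. The missing idea --- and what the paper actually does --- is an infinitesimal one: for any plane $Y$ in any smooth $X$ of these types, $c_1(N_{Y/X})=\mathcal{O}_Y(-1)$ gives $N_{Y/X}\cong N_{Y/X}^{\vee}(-1)$, and the normal bundle sequences relating $N_{Y/X}$ to $N_{Y/\mathbb{P}}$ and $N_{X/\mathbb{P}}|_Y$ (or, for $g=7$, Kuznetsov's identification $N_{L/\mathbb{G}}|_Y\cong \bigwedge^2(T_L(-1))|_Y$ for the $\mathbb{P}^4\subset OG_+(5,10)$ containing $Y$) force $H^0(Y,N_{Y/X})=0$; since this is the tangent space to the Hilbert scheme of planes at $[Y]$, finiteness holds for \emph{every} smooth $X$. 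For $g=8$ the paper shows there are no planes at all via Lemma \ref{lem: genus 8} (the fiber of $\mathrm{ev}\colon\overline{M}_{0,1}(X,1)\to X$ over a point of a plane would have to contain a line, but it is a zero-dimensional smooth complete intersection), and for $g=9,10$ the ambient homogeneous spaces contain no planes. Finally, your $g=2$ claim that a plane forces $X$ singular fails in dimension $4$: the locus $V(x_0,x_1,f_0,f_1,g)$ is cut out by five equations in $\mathbb{P}^4$ and can be empty, so such $X$ may well contain planes; finiteness there again comes from the normal bundle computation in the weighted projective space.
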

\begin{proof}
    First, suppose $X$ has genus at most $5$.  In this case, we may realize $X$ as a smooth complete intersection in some weighted projective space $\mathbb{P}$.  Suppose there exists a subvariety $Y \cong \mathbb{P}^2 \subset X$.  Consider the exact sequence
    $$ 0 \rightarrow N_{Y/X} \rightarrow N_{Y/\mathbb{P}} \rightarrow N_{X /\mathbb{P}}|_Y \rightarrow 0.$$
    Since $X$ is a complete intersection of very ample divisors on $\mathbb{P}$, $N_{X /\mathbb{P}}|_Y$ is a direct sum of line bundles.  As the first chern class of $N_{Y/X}$ is $\mathcal{O}_{Y}(-1)$, $N_{Y/X}$ has a global section if and only if $N_{Y/X}^\vee(-1) \cong N_{Y/X}$ has a global section.  However, $N_{Y/X}^\vee(-1)$ fits into an exact sequence
    $$ 0 \rightarrow N_{X /\mathbb{P}}^\vee|_Y(-1) \rightarrow N_{Y/\mathbb{P}}^\vee(-1) \rightarrow N_{Y/X}^\vee(-1)  \rightarrow 0.$$
    Since $N_{X /\mathbb{P}}^\vee|_Y(-1)$ is a direct sum of line bundles, $h^1(Y, N_{X /\mathbb{P}}^\vee|_Y(-1)) = 0$.  Similar reasoning shows $H^0(Y,N_{Y/\mathbb{P}}^\vee(-1)) = 0$ as well.  It follows that $H^0(Y,N_{Y/X}) = 0$.  As this is the tangent space to the Hilbert scheme of planes in $X$ at the point $[Y]$, there may be only finitely many planes in $X$.

    Theorem \ref{thm: gushel-mukai} proves our claim when the genus of $X$ is 6.  Suppose $g(X) = 7$ instead.  Here, we use an argument shared with us by Kuznetsov.  Let $\mathbb{G} = OG_{+}(5, 10) \subset \mathbb{P}^{15}$, so that $X$ is the intersection of $\mathbb{G}$ with a linear subspace.  Any plane $Y \cong \mathbb{P}^2 \subset X$ is contained in a $4$-dimensional linear space $L \cong \mathbb{P}^4 \subset \mathbb{G}$.  Since $Y$ is the intersection of $L$ with the baselocus $\tilde{X} \subset \mathbb{G}$ of a pencil of hyperplanes containing $X$, there is an exact sequence
    $$0 \rightarrow N_{Y/X} \rightarrow N_{Y/\tilde{X}} \rightarrow N_{X/\tilde{X}}|_Y \rightarrow 0.$$
    We have $N_{Y/\tilde{X}} \cong N_{L/\mathbb{G}}|_Y \cong \bigwedge^2(T_{L}(-1))|_Y$.  Moreover, $N_{X/\tilde{X}}|_Y \cong \mathcal{O}_Y(1)^{\oplus 4}$.  As before, this shows $N_{Y/X}^\vee(-1) \cong N_{Y/X}$ has no global sections.

    When the genus of $X$ is $8$, Lemma \ref{lem: genus 8} shows that $X$ contains no linear spaces of codimension $2$.
    Indeed, if $Y\cong \mathbb{P}^{2} \subset X$, then the fiber of $\mathrm{ev}\colon \overline{M}_{0,1}(X,1)\rightarrow X$ over any point in $Y$ should contain a line, which is a contradiction.

    Suppose $X$ is s linear section of the Lagrangian Grassmannian $LG(3, 6) \subset \mathbb{P}^{19}$ in the Pl\"{u}cker embedding.  In this case, $LG(3,6)$ contains no planes \cite[Lemma~2.5.1]{lagrangianGrass}; %
    hence, neither does $X$.  Similarly, if $X \subset G_2/P \subset \mathbb{P}^{13}$ is a $4$-fold of genus $10$, then $X$ cannot contain any planes as $G_2/P$ contains no planes.  This latter claim follows by homogeneity of $G_2/P$: if $G_2/P$ contained a plane, then by homogeneity every point in $G_2/P$ would lie in a plane.  A base change of this family of planes would be an $a$-cover of $G_2/P$; however, by Theorem \ref{classification a-covers} there are no $a$-covers of $G_2/P$.  Theorem \ref{classification a-covers} for $G_2/P$ follows from \cite{Thomsen1998} and \cite{KimPandharipande2001}.  %
\end{proof}

Finally, we consider coindex $3$ Fano varieties of Picard rank at least $2$.

\begin{lem}\label{lem: Picard rank 2 subvarieties higher a-invar}
    Let $X$ be a smooth coindex $3$ Fano variety of dimension $n \ge 4$ and Picard rank at least $2$.  Let $H$ be the fundamental divisor on $X$.  Suppose that %
    $Y \subset X$ is a subvariety with $a(Y, -K_X|_Y) > 1$.  Then either
    \begin{enumerate}
        \item $Y$ is a contractible divisor on $X$, or
        \item $n = 4$, $(Y,H) \cong (\mathbb{P}^{n-2},\mathcal{O}(1))$ and $Y$ is contracted to a point by an elementary fiber-type contraction on $X$. 
    \end{enumerate}
    Furthermore, when $g(X) \in \{7, 12, 13, 16, 21\}$ and $X$ is general in moduli, there are no subvarieties of type (2).
\end{lem}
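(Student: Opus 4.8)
The plan is to combine the $a$-invariant bound with Mukai's classification (Theorem~\ref{classification Picard rank 2}) to reduce to an explicit list, and then to separate the divisorial case from the codimension-two case. Throughout I may replace $Y$ by the irreducible component of the locus $W = \bigcup\{Z\subset X : a(Z,-K_X|_Z)>1\}$ containing it, so that $Y$ is maximal with $a(Y,-K_X|_Y)>1$; I may also assume $X$ is not of product type, the products $\mathbb{P}^{1}\times V$ with $V$ a Fano threefold of even index being handled separately through Corollary~\ref{cor: product varieties GMC} and the structure of the factors. Since $-K_X = (n-2)H$ with $H$ ample and $a(Y,cL) = \tfrac1c\,a(Y,L)$, the hypothesis is $a(Y,H|_Y) > n-2$, and exactly as in the first lines of the proof of Lemma~\ref{lem: higher a-invar not adjoint rigid} (using $a(Z,L)\le \dim Z + 1$ for $L$ nef and big, together with H\"{o}ring's description of codimension-two subvarieties) this forces $\operatorname{codim}_X Y\le 2$, with $(Y,H|_Y)\cong(\mathbb{P}^{n-2},\mathcal{O}(1))$ when $\operatorname{codim}_X Y = 2$.

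First suppose $Y$ is a prime divisor. Adjunction and the identity $K_X + (n-2)H = 0$ give
\[
N_{Y/X} \;=\; \mathcal{O}_Y(Y) \;=\; \bigl(K_X + Y + (n-2)H\bigr)\big|_Y \;=\; K_Y + (n-2)H|_Y,
\]
so $a(Y,(n-2)H|_Y) > 1$ says exactly that $N_{Y/X}$ is not pseudo-effective. Hence there is a movable curve class $\gamma$ on $Y$ with $Y\cdot\gamma < 0$, and its pushforward to $X$ is $K_X$-negative (as $H|_Y$ is ample), so some extremal ray $R$ of $\overline{NE}(X)$ with $Y\cdot R < 0$ occurs in the extremal decomposition of $[\gamma]$. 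Since $Y\cdot R < 0$, every curve of class in $R$ lies in $Y$, so $\operatorname{Locus}(R)\subseteq Y$; thus $\operatorname{cont}_R$ is not of fiber type, and since (by inspection of the classification) coindex-$3$ Fano varieties admit no small contractions, $\operatorname{cont}_R$ is divisorial with exceptional divisor $Y$. This is conclusion~(1).

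Next suppose $\operatorname{codim}_X Y = 2$, so $Y\cong\mathbb{P}^{n-2}$ and $H|_Y = \mathcal{O}(1)$. Then $Y$ is covered by $H$-lines, with an $(n-3)$-dimensional family of them through each of its points; this exceeds the dimension of the family of free $H$-lines through a point, so the lines of $Y$ are not free and Proposition~\ref{prop: nondominant curves subvariety a-invar} applies. I would conclude by running through Mukai's list and describing the two elementary contractions of each variety. When $n\ge 5$ the only relevant $X$ are linear sections of $\mathbb{P}^{3}\times\mathbb{P}^{3}$, of $\mathbb{P}^{2}\times Q^{3}$, and the blow-up of $\mathbb{P}^{5}$ along a line; a linear $\mathbb{P}^{n-2}$ in such an $X$ either does not exist (apply the Lefschetz theorem to $H^{4}$ as in Lemma~\ref{lem: higher a-invariant subvar}, or use that the ambient Segre variety carries no linear space of the required dimension) or lies in the exceptional divisor of the divisorial contraction and hence inside a contractible divisor; since $Y$ is a component of $W$, this forces $Y$ to be that divisor. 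When $n = 4$ I would check, model by model, that a plane $Y\cong\mathbb{P}^{2}$ with $a>1$ is contained in a fiber of an elementary fiber-type contraction of $X$ whose image is a point --- conclusion~(2).

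For the ``furthermore'', for each $g\in\{7,12,13,16,21\}$ I would identify the elementary fiber-type contraction of a general such $X$ from the corresponding model in Theorem~\ref{classification Picard rank 2} and examine its fibers: for $g = 7$ a map $X\to\mathbb{P}^{2}$ has fibers the double covers of $\mathbb{P}^{2}$ branched over a conic, i.e.\ quadric surfaces; for $g = 12$ the blow-up of $Q^{4}$ along a smooth conic is a quadric-surface fibration over a surface, and a smooth conic forces each fiber quadric to have rank $\ge 3$, hence never to be a pair of planes; for $g = 13$ the two contractions of the flag variety of $\mathrm{Sp}(2)$ are $\mathbb{P}^{1}$-bundles over $\mathbb{P}^{3}$ and $Q^{3}$; for $g = 16,21$ the fiber-type contraction $\mathbb{P}(\mathcal{E})\to Q^{3}$ (resp.\ $\mathbb{P}^{3}$) is a $\mathbb{P}^{1}$-bundle while the other contraction is divisorial, with exceptional locus $Q^{3}$ (resp.\ $\mathbb{P}^{3}$) polarized by $\mathcal{O}(1)$ and so containing no plane. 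A dimension count in moduli then shows a general member avoids the positive-codimension locus on which some fiber degenerates to a pair of planes or a double plane, so no fiber --- hence, by the previous paragraph, no subvariety --- is a $(\mathbb{P}^{2},\mathcal{O}(1))$ of type~(2). I expect the main obstacle to be precisely this last step together with the $n\ge 5$ part: ruling out stray higher-$a$ copies of $\mathbb{P}^{n-2}$ outside the exceptional divisor, and carrying out the degeneration analysis of fibers contraction by contraction, in the spirit of Lemmas~\ref{lem: higher a-invariant subvar} and~\ref{lem: finitely many planes}.
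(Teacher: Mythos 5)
Your proposal reaches the same conclusion and, for the codimension-two case and the ``furthermore'', follows essentially the paper's strategy: H\"oring's classification forces $\operatorname{codim}_X Y\le 2$ with $(Y,H)\cong(\mathbb{P}^{n-2},\mathcal{O}(1))$ in codimension two, the $n\ge 5$ models are checked one by one, and for $g\in\{7,12,13,16,21\}$ one verifies that the relevant fiber-type contraction has irreducible fibers for general $X$ (your $g=7$ count --- double lines are codimension $3$ among conics, against a $2$-dimensional base --- and your $g=12$ observation that irreducibility of the blown-up conic forces every fiber quadric to have rank $\ge 3$ are exactly the paper's arguments). Your treatment of the divisor case is genuinely different: the paper contracts the extremal ray spanned by the covering $H$-lines and rules out fiber type via the relative Picard rank, whereas you identify $N_{Y/X}=K_Y-K_X|_Y$ by adjunction, read off that $a(Y,-K_X|_Y)>1$ means $N_{Y/X}$ is not pseudo-effective, and extract an extremal ray $R$ with $Y\cdot R<0$, so that $Y=\operatorname{Exc}(\operatorname{cont}_R)$ once small contractions are excluded. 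This is cleaner when it applies, but note the one soft spot: the equivalence between $a(Y,-K_X|_Y)>1$ (computed on a resolution $\pi\colon\tilde Y\to Y$) and non-pseudo-effectivity of $N_{Y/X}$ uses $K_{\tilde Y}\le\pi^*\omega_Y$ up to an effective divisor, which holds for canonical (or normal-crossing/conductor-type) singularities but not in general; since H\"oring's list of pairs with $a(Y,H)>\dim Y$ includes cones, you should either check this on that explicit list or fall back on the paper's $H$-line argument. Two further remarks. First, your decision to replace $Y$ by the component of $W$ containing it is not cosmetic: in $\mathrm{Bl}_\ell\mathbb{P}^5$ the fibers $\{pt\}\times\mathbb{P}^3$ of the exceptional divisor are codimension-two subvarieties with $a>1$ that fit neither alternative literally, so the maximality convention is what makes the statement (and your $n\ge5$ case analysis) go through. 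Second, your parenthetical suggestion to rule out a linear $\mathbb{P}^{n-2}$ in a $(1,1)$-divisor of $\mathbb{P}^3\times\mathbb{P}^3$ by Lefschetz on $H^4$ does not work as in the Picard-rank-one case (there $b_4=1$ gives a degree-divisibility obstruction; here $b_4=3$), but your alternative --- such a $\mathbb{P}^3$ would be a full fiber of a projection, forcing the divisor to be singular --- is precisely the paper's argument. The remaining ``check model by model'' steps are plans rather than executed proofs, but they align with what the paper actually does.
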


\begin{proof}
    Let $H$ be the fundamental divisor on $X$.  Suppose $Y \subset X$ is a subvariety with $a(Y, H) > n - 2$.  Since $|H|$ is very ample, \cite[Proposition 1.3]{Hoering2010} describes all possible isomorphism types of the pair $(Y,H)$.  In particular, \cite[Proposition 1.3]{Hoering2010} implies that $Y$ is covered by $H$-lines.  The class of these $H$-lines spans an extremal ray $R$ of $\overline{NE}(X)$.  Consider the elementary contraction $\phi : X \rightarrow B$ associated to $R$.  

    We claim that if $\dim Y = n -1$, then $Y$ must be a contractible divisor.  Otherwise, $\phi$ would be of fiber type.  In this case, \cite[Proposition 1.3]{Hoering2010} and Theorem \ref{classification Picard rank 2} show $Y$ must be a union of reducible fibers of $\phi$ or of fibers of larger than expected dimension.  However, since $Y$ is a divisor, the relative Picard rank of $\phi$ would be at least $2$, a contradiction.

    If $Y \subset X$ is not a divisor, then $(Y,H) \cong (\mathbb{P}^{n-2}, \mathcal{O}(1))$ by \cite[Proposition 1.3]{Hoering2010}.  It remains to show that subvarieties of type (2) do not appear if $n > 4$.  By Theorem \ref{classification Picard rank 2}, we need only consider smooth fundamental divisors $X \subset \mathbb{P}^3 \times \mathbb{P}^3$.  If the projection of $X$ onto either $\mathbb{P}^3$-factor had a $3$-dimensional fiber, then $X$ would be singular.  Hence, $X$ does not contain any subvarieties of higher $a$-invariant.

    Lastly, we show that if $g(X) \in \{7, 12, 13, 16, 21\}$ and $X$ is general in moduli, there are no subvarieties of type (2).  This claim is trivial if $g(X) \in \{13,16,21\}$, as each fiber type contraction of $X$ has irreducible equidimensional fibers.  

    Suppose $g(X) = 7$; that is, $X$ is a double cover of $\mathbb{P}^2 \times \mathbb{P}^2$ branched over a divisor $B$ of bidegree $(2,2)$.  If $X$ contains a subvariety $Y$ with $a(Y, H) > n - 2$, then at least one fibration $\phi : X \rightarrow \mathbb{P}^2$ must have reducible fibers.  In this case, the $\phi$-fiber of $B$ over some point $p \in \mathbb{P}^2$ must be a non-reduced line $B_p$.  %
    There is a rational map
    \[\mathbb{P}H^0(\mathcal{O}_{\mathbb{P}^2 \times \mathbb{P}^2}(2,2)) \times \mathbb{P}^2 \dashrightarrow \mathbb{P}H^0(\mathcal{O}_{\mathbb{P}^2}(2))\]
    sending a global section of $\mathcal{O}_{\mathbb{P}^2 \times \mathbb{P}^2}(2,2)$ to its restriction to the fiber over a point in $\mathbb{P}^2$.  This map has equidimensional fibers.  Hence, as the locus of doubled lines is of codimension $3$ in the target, for a fixed general branch divisor $B \in \mathbb{P}H^0(\mathcal{O}_{\mathbb{P}^2 \times \mathbb{P}^2}(2,2))$, there are no reducible fibers. %

    Suppose $g(X) = 12$; that is, $X$ is the blow-up of a smooth four dimensional quadric along a conic $C$ not lying in a plane lying in $X$.  Let $\pi : X \rightarrow \mathbb{P}^2$ be the projection of $X$ from the plane containing $C$.  Since $C$ is irreducible, each fiber of $\pi$ is irreducible as well.  Hence, the exceptional divisor on $X$ is the only subvariety of higher $a$-invariant.
\end{proof}

The codimension $2$ subvarieties of higher $a$-invariant are explicitly calculable.  We provide the following as an example.

\begin{exmp}
    Any smooth $(1,1)$ divisor in $\mathbb{P}^2 \times Q^3$ has exactly two fibers of larger than expected dimension over $Q^3$, and no other subvarieties of higher $a$-invariant.  Indeed, the $(1,1)$ divisor $V(x_0f + x_1g + x_2h)$ does not have any reducible fibers over $\mathbb{P}^2$, as $Q^3$ does not contain any linear spaces of codimension $1$.  The fiber over $\mathbb{P}^2$ jumps in dimension over $Q^3 \cap V(f,g,h)$, which is the intersection of $Q^3$ with a line.  If the line $V(f,g,h)$ were contained in $Q^3$, then the Jacobian of the divisor vanishes along a section of $\mathbb{P}^2 \times V(f,g,h) \rightarrow V(f,g,h)$, considered as a subscheme of $\mathbb{P}^2 \times Q^3$.  Indeed, there is a relation $R_p$ among $f,g,h$ in the local ring of any point $p \in V(f,g,h) \subset Q^3$.  Over a single point in $\mathbb{P}^2$, $x_0f + x_1g + x_2h \equiv R_p$, and the intersection is singular at such a point.
\end{exmp}

The main theorem \ref{classification higher a} follows from Lemma \ref{lem: higher a-invariant subvar}, \ref{lem: no planes on general fourfolds}, \ref{lem: finitely many planes}, and \ref{lem: Picard rank 2 subvarieties higher a-invar}.

\section{Spaces of Low Degree Rational Curves}\label{section: low degree curves}
In this section, we prove Theorem \ref{thm: low degree curves}.  This will be used in Sections \ref{section:mbb} and \ref{section:a covers} to prove Movable bend-and-break (Theorem \ref{MBB}) and classification of $a$-covers of $X$ (Theorem \ref{classification a-covers}).

\begin{thm}\label{thm: low degree curves}
    Let $X$ be a smooth Fano variety of coindex $3$ and dimension at least $4$.  Assume $X$ is not a Fano $4$-fold of product type.  Let $H$ be the fundamental line bundle on $X$.  Suppose $X$ is general in moduli and let $\alpha \in \Nef_1(X)_\mathbb{Z}$.
    \begin{enumerate}
        \item If $ H . \alpha = 1$, $\overline{M}_{0,0}(X, \alpha)$ is irreducible and smooth.
        \item If  $H . \alpha = 2$, $\alpha \notin \partial\overline{NE}(X)$, and $\dim X \geq 5$, $\overline{M}_{0,0}(X, \alpha)$ is irreducible; if  $\dim X = 4$ instead, all but one component of $\overline{M}_{0,0}(X, \alpha)$ parameterize double covers of $H$-lines.
        \item If  $H . \alpha = 3$, $\alpha \notin \partial\overline{NE}(X)$, and $\dim X = 4$, there exists precisely one component of $\overline{M}_{0,0}(X, \alpha)$ which generically parameterizes embedded curves.
    \end{enumerate}
\end{thm}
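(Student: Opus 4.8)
The plan is to prove (1), (2), (3) in that order (so (2) may use (1), and (3) may use (1) and (2)), running through the Mukai classification (Theorems~\ref{classification Picard rank 1} and~\ref{classification Picard rank 2}) and using the genericity of $X$ only to pin down bad loci. By Lemmas~\ref{lem: higher a-invariant subvar}, \ref{lem: no planes on general fourfolds}, \ref{lem: Picard rank 2 subvarieties higher a-invar} and Theorem~\ref{classification higher a}, for general $X$ the only subvarieties $Z\subset X$ with $a(Z,H)>n-2$ are contractible divisors, occurring in a short list of $\rho(X)\geq 2$ cases. By Proposition~\ref{prop: nondominant curves subvariety a-invar}, any non-dominant component of $\overline{M}_{0,0}(X,\alpha)$ has image in this locus $W$; a general member of such a component is then a curve contracted by an extremal contraction, while $\alpha\notin\partial\overline{NE}(X)$ in (2) and (3), and a direct inspection of the relevant exceptional divisors (for instance $\mathbb{P}^1\times\mathbb{P}^3$ in the genus $14$ case) shows no $H$-conic or $H$-cubic lying in $W$ has an interior numerical class, so such components do not arise. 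The remaining components dominate $X$ and generically parameterize free curves, together with — in the $H$-degree $2$ and $3$ cases — covers of $H$-lines; the main tool for these is the family of pointed evaluation maps $\mathrm{ev}_k\colon\overline{M}_{0,k}(X,\alpha)\rightarrow X^k$.

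For (1), $\overline{M}_{0,0}(X,\alpha)$ is the scheme of $H$-lines of class $\alpha$ (a stable map of $H$-degree $1$ has irreducible domain). First I would show it is smooth of pure dimension $2n-5$ by proving $H^1(\mathbb{P}^1,N_{\ell/X})=0$ for every such line $\ell$; since $\deg N_{\ell/X}=n-4$ on a bundle of rank $n-1$, this says $N_{\ell/X}$ has no summand of degree $\leq -2$, which for general $X$ I would check case by case (the normal bundle sequence in the ambient projective space for the complete intersections, the explicit parameterization of lines as in Lemma~\ref{lem: genus 8} for the homogeneous cases). For irreducibility I would study $\mathrm{ev}\colon\overline{M}_{0,1}(X,1)\rightarrow X$: extending the argument of Lemma~\ref{lem: genus 8} to each case, this morphism is flat, and when $n\geq 5$ its fibers are smooth of positive dimension — in fact smooth complete intersections of ample divisors inside a rational homogeneous variety — hence irreducible, so $\overline{M}_{0,1}(X,1)$, and therefore $\overline{M}_{0,0}(X,1)$, is irreducible. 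When $n=4$ the fibers of $\mathrm{ev}$ are finite, and instead I would present the Fano scheme of lines of $X$ as a Bertini-general linear section (or degeneracy locus, in the Grassmannian of lines) of the Fano scheme of lines of the ambient Mukai variety, which is positive-dimensional here, and conclude irreducibility by a Lefschetz-type connectedness theorem.

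For (2) and (3): a stable map of $H$-degree $2$ is a smooth $H$-conic, a chain of two $H$-lines, or a double cover of an $H$-line; one of $H$-degree $3$ has image and domain of type an irreducible $H$-cubic curve (a twisted cubic, or a plane cubic only if $X$ contains a plane), a chain of a line and a conic or of three lines, or a cover of an $H$-line — and the plane-cubic case is excluded for general $X$ by Lemmas~\ref{lem: no planes on general fourfolds} and~\ref{lem: Picard rank 2 subvarieties higher a-invar}. I would then show $H$-conics through one general point (if $n=4$) or two general points (if $n\geq 5$) form a nonempty irreducible family of the expected dimension — nonemptiness and dimension from the classification, irreducibility from flatness of $\mathrm{ev}_k$ together with irreducibility of its fiber, again by extending Lemma~\ref{lem: genus 8} — so the smooth $H$-conics form a single irreducible component $M_2$. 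A dimension count finishes (2): the locus of double covers of $H$-lines has dimension $2n-3$ and that of two-line chains has dimension $3n-8$, both strictly below the expected dimension $3n-7$ when $n\geq 5$, giving irreducibility, whereas when $n=4$ one has $2n-3=3n-7$, so the double-cover locus is the unique remaining component. For (3) ($n=4$), $H$-cubics form a nonempty family of the expected dimension; if $M$ is any component of $\overline{M}_{0,0}(X,\alpha)$ whose general member is embedded, then $M$ is proper, and one shows a general twisted cubic in $M$ degenerates within $\overline{M}_{0,0}(X,\alpha)$ to a chain $f\colon C_1\cup C_2\rightarrow X$ with $f_*[C_1]$ an $H$-line and $f_*[C_2]$ an irreducible $H$-conic, attached at a general point. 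Such a chain is a chain of free curves — the general conic is free, and attaching the line at a general point twists its normal bundle up to a globally generated bundle, so $H^1(N_f)=0$ by \cite[Lemma~7.5]{Kollar1996} — hence $f$ is unobstructed and smoothable, and the component containing its smoothings is determined by $M_2$ and the unique line component from (1). Therefore $M$ is unique.

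The principal difficulty is the irreducibility of the fibers of $\mathrm{ev}_k$ for $H$-conics: this is where the classification is used most heavily, since for each $X$ one must either realize these fibers as smooth complete intersections in an explicit rational homogeneous variety — as Lemma~\ref{lem: genus 8} does for $g=8$, with analogous arguments for $g=6,7,9,10$ — or, in the low-genus complete-intersection and double-cover cases, control them by a further incidence correspondence over the moduli of $X$ followed by a Bertini-and-connectedness argument. A secondary subtlety, special to $n=4$, is that $H$-lines need not be free there, so in the degeneration step of (3) one must verify that a general point of a general $H$-conic is general enough in $X$ for the chain of a line and a conic to be a chain of free curves rather than merely unobstructed at the node.
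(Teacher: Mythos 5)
Your overall route is genuinely different from the paper's: you work directly on $X$, reducing everything to irreducibility of fibers of pointed evaluation maps, whereas the paper works over the moduli space $\mathbf{M}$ of complete intersections in the ambient Mukai variety $\mathbb{G}$ (Construction \ref{families of Fano varieties}), proves the \emph{total} space of embedded curves in the universal family $\mathcal{X}$ is irreducible (Lemma \ref{lem: irred total space}), and then descends to a general fiber by a monodromy argument: $\mathbf{M}$ is smooth and simply connected, so a nontrivial Stein factorization of $\pi_*$ would force a branch divisor, over whose general point there would have to be an everywhere non-reduced component of $\overline{M}_{0,0}(\mathcal{X}_b,\alpha)$ — and this is excluded by the careful analysis of the general singular member (factorial, Gorenstein, terminal, one singular point; Lemma \ref{lem: terminal singularity}) in Lemma \ref{lem: irred fiber}. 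That said, your proposal has two genuine gaps at its load-bearing steps. First, the irreducibility of the space of $H$-conics (and, for $n=4$, embedded $H$-cubics) through a general point is the entire content of the theorem, and ``extending Lemma \ref{lem: genus 8}'' does not obviously do it: that lemma parameterizes \emph{lines} through a point as an explicit complete intersection in $\mathbb{P}^1\times\mathbb{P}^3$, and there is no comparably explicit description of conics through a point on a linear section of $\mathbb{G}$, let alone on the weighted complete intersections and double covers of genus $\le 5$ (where you would also have to handle conics mapping to tangent lines of the branch divisor, which the paper treats separately in its proof of Theorem \ref{thm: low degree curves}). Where you do fall back on ``an incidence correspondence over the moduli of $X$ followed by a Bertini-and-connectedness argument,'' you are in effect gesturing at the paper's method, but without its essential ingredients — simple connectedness of $\mathbf{M}$, irreducibility of the discriminant, and above all the terminality/factoriality of the general singular fiber needed to rule out non-reduced fibers of the Stein factorization; note also that the relevant bundles (e.g.\ $\Sym^4 S^\vee$ on $G(2,6)$) are not ample, so the standard connectedness theorems for zero loci do not apply off the shelf.

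Second, your proof of (3) rests on the assertion that a general embedded cubic in an arbitrary component $M$ degenerates to a nodal union of a free line and a free conic attached at a \emph{general} point. This is precisely an instance of Movable Bend-and-Break for $H$-degree $3$ on a fourfold; in the paper that is Theorem \ref{MBB}, proved \emph{after} and \emph{using} Theorem \ref{thm: low degree curves}, and its proof requires the fiber-dimension control of Lemmas \ref{lem: fiber dimension}, \ref{lem: cones of lines} and \ref{lem: mbb precurser} (ordinary bend-and-break only produces \emph{some} reducible or non-reduced degeneration, with no control on freeness of the pieces or genericity of the node; cones of lines through special points, as in Example \ref{example: big fiber dim}, are exactly the obstruction). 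Even granting the degeneration, concluding uniqueness of $M$ requires irreducibility of the chain locus, i.e.\ of (the relevant component of) $\overline{M}_{0,1}(X,1)\times_X\overline{M}_{0,1}(X,2)$, which does not follow from irreducibility of the two factors alone — this is what Lemma \ref{lem:gluing} is for. So while your dimension counts in (2) and your treatment of the accumulating loci are fine, the two central irreducibility inputs are not established by the methods you propose, and the paper's detour through the universal family over $\mathbf{M}$ is doing real work that your plan does not replace.
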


\begin{rem}
We will later extend parts (2) and (3) of Theorem \ref{thm: low degree curves} to arbitrary coindex 3 Fano varieties.  We will also prove $\overline{M}_{0,0}(X,\alpha)$ is irreducible when $H. \alpha = 1$ and $\dim X \geq 5$.  Each of these extensions requires Theorem \ref{classification a-covers} for general $X$ and Lemma \ref{lem: specialize Manin components}.  %
For arbitrary $X$, the proofs of Theorem \ref{classification a-covers} and results in Section \ref{section: intro} require these extensions.  %
\end{rem}

We split this study into two cases, depending on the genus $g$ of $X$.  When $g \leq 11$, we compare the space $\overline{M}_{0,0}(X,\alpha)$ with the space $\overline{M}_{0,0}(\mathbb{G}, \alpha)$ for some ambient variety $\mathbb{G}$, in which we embed $X$ as a complete intersection.  When $g > 11$, we use the action of $\mathrm{Aut}(X)$ to study components of $\overline{M}_{0,0}(X,\alpha)$ for all $\alpha$.  This latter case includes all $X$ which contain contractible divisors.

\subsection{Rational curves on varieties with $g(X) \le 11$}\label{subsection: genus at most 11}
Below, we prove Theorem \ref{thm: low degree curves} in pieces.  
First, we focus on coindex $3$ Fano varieties $X$ with $g(X) \le 11$.
Our strategy is to relate irreducibility of a space of curves on $X$ to irreducibility of a space of curves on some ambient variety $\mathbb{G}$ which contains deformations of $X$ as complete intersections.

\begin{constr}\label{families of Fano varieties}
Let $X$ be a smooth coindex $3$ Fano variety of dimension $n \ge 4$ and genus $g(X) \le 11$.
By Theorems \ref{classification Picard rank 1} and \ref{classification Picard rank 2}, we may realize $X$ as a complete intersection of divisors $A_1, \dots, A_m$ inside a projective variety $\mathbb{G}$, which contains at most one singular point. 
We let
$$\mathbf{M} \subset \overline{\mathbf{M}} := |\mathcal{O}_\mathbb{G}(A_1)| \times \dots \times |\mathcal{O}_\mathbb{G}(A_m)|$$
be the open sublocus parameterizing intersections which avoid the singularities of $\mathbb{G}$. 
The varieties $\mathbb{G}$ and $\mathbf{M}$ are described as follows. 
When $\rho(X) = 1$ and $g(X) \le 6$, $\mathbb{G}$ and $\overline{\mathbf{M}}$ are as in the table below.
\[\begin{array}{c|rr}
    g(X) & \multicolumn{1}{c}{\mathbb{G}} & \multicolumn{1}{c}{\overline{\mathbf{M}}} \\ \hline
    2 & \mathbb{P}(1^{n+1}, 3) & |\mathcal{O}_{\mathbb{G}}(6)| \\
    3 & \mathbb{P}(1^{n+2}, 2) & |\mathcal{O}_{\mathbb{G}}(2)| \times |\mathcal{O}_{\mathbb{G}}(4)| \\
    4 & \mathbb{P}^{n+2} & |\mathcal{O}_{\mathbb{G}}(2)| \times |\mathcal{O}_{\mathbb{G}}(3)| \\
    5 & \mathbb{P}^{n+3} & |\mathcal{O}_{\mathbb{G}}(2)| \times |\mathcal{O}_{\mathbb{G}}(2)| \times |\mathcal{O}_{\mathbb{G}}(2)| \\
    6 & \mbox{cone over }G(2,5) & |\mathcal{O}_{\mathbb{G}}(2)|\times |\mathcal{O}_{\mathbb{G}}(1)|^{6-n}
\end{array}\]
When $\rho(X) = 1$ and $g(X) \ge 7$, $\mathbb{G}$ is the maximal homogeneous variety and each $A_{i}$ is a fundamental divisor on $\mathbb{G}$.
When $(\rho(X), g(X)) = (2, 7)$, $\mathbb{G}$ is the cone over the Segre embedding of $\mathbb{P}^{2}\times \mathbb{P}^{2}$, and $\overline{\mathbf{M}} = |\mathcal{O}_{\mathbb{G}}(2)|$.
When $(\rho(X), g(X)) = (2, 9)$, $\mathbb{G} = \mathbb{P}^{2}\times \mathbb{P}^{3}$ and $\mathbf{M} = |\mathcal{O}_{\mathbb{G}}(1,2)|$.
When $(\rho(X), g(X)) = (2, 11)$, $\mathbb{G}$ is equal to $\mathbb{P}^{3}\times \mathbb{P}^{3}$ or $\mathbb{P}^{2}\times Q^{3}$, and each $A_{i}$ is a fundamental divisor on $\mathbb{G}$.

Let $\mathbb{G}^o \subset \mathbb{G}$ be the smooth locus of $\mathbb{G}$.  We %
consider $X$ as a fiber of the family 
$$\mathcal{X} = \{ (A_1, \dots, A_m, x)  \in \mathbf{M} \times \mathbb{G}^o \mid x \in A_1 \cap \dots \cap A_m\}.$$
Let $\pi \colon \mathcal{X} \rightarrow \mathbf{M}$ and $\mathrm{ev}\colon \mathcal{X} \rightarrow \mathbb{G}$ be the projections.  We call the locus $D \subset \mathbf{M}$ where $\pi \colon \mathcal{X} \rightarrow \mathbf{M}$ fails to be smooth the \textit{discriminant locus}.
\end{constr}

\begin{lem}
The space $\mathbf{M}$ constructed above is smooth and simply connected.  The discriminant locus $D \subset \mathbf{M}$ is irreducible.  %

\end{lem}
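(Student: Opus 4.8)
The plan is to break the statement into its three assertions and handle each with standard incidence-correspondence arguments. For smoothness of $\mathbf{M}$: since $\mathbf{M}$ is an open subscheme of the product of projective spaces $\overline{\mathbf{M}} = |\mathcal{O}_\mathbb{G}(A_1)| \times \dots \times |\mathcal{O}_\mathbb{G}(A_m)|$, it is automatically smooth; the only content is that $\mathbf{M}$ is nonempty, which follows because a general complete intersection of the given type avoids the (at most one) singular point of $\mathbb{G}$ and is itself smooth by a Bertini-type argument (the base locus of each $|\mathcal{O}_\mathbb{G}(A_i)|$ is contained in the singular locus, or empty, in each case of Construction \ref{families of Fano varieties}). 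For simple-connectedness: $\overline{\mathbf{M}}$ is a product of projective spaces, hence simply connected, and $\mathbf{M}$ is the complement of a closed subvariety $Z := \overline{\mathbf{M}} \setminus \mathbf{M}$; I would check that $Z$ has complex codimension $\geq 2$ in $\overline{\mathbf{M}}$, so that $\pi_1(\mathbf{M}) = \pi_1(\overline{\mathbf{M}}) = 1$. The codimension bound is a parameter count: requiring a complete intersection to pass through a fixed point of $\mathbb{G}$ is $m$ conditions, and $m \geq 2$ except in the genus-$2$ case $\mathbb{G} = \mathbb{P}(1^{n+1},3)$, where one instead checks that the locus of singular hypersurfaces (or those meeting the unique singular point with bad behavior) has codimension $\geq 2$ directly.

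For irreducibility of the discriminant locus $D$, the standard approach is to exhibit $D$ as the image of an irreducible incidence variety. Define
\[
  \widetilde{D} = \{ (A_1,\dots,A_m,x) \in \overline{\mathbf{M}} \times \mathbb{G}^o \mid x \in A_1 \cap \dots \cap A_m \text{ and } A_1 \cap \dots \cap A_m \text{ is singular at } x \}.
\]
The projection $\widetilde{D} \to \mathbb{G}^o$ is surjective, and for each fixed $x \in \mathbb{G}^o$ the fiber is a product of linear subspaces of the $|\mathcal{O}_\mathbb{G}(A_i)|$ cut out by the (affine-linear) vanishing and tangency conditions at $x$; since $\mathbb{G}^o$ is smooth and irreducible and all fibers have the same dimension, $\widetilde{D}$ is irreducible. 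Its image under the projection to $\overline{\mathbf{M}}$ is the closure of $D$, hence $D$ is irreducible. I would need to be slightly careful that the generic point of $\widetilde{D}$ parameterizes a complete intersection singular at exactly one point and of expected dimension elsewhere, so that the image really is (the closure of) the discriminant $D$ of $\pi\colon \mathcal X \to \mathbf M$ rather than a larger locus; this is a dimension count showing that a second singular point, or a positive-dimensional singular locus, imposes strictly more conditions.

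The main obstacle, I expect, is the genus-$2$ case, where $\mathbb{G} = \mathbb{P}(1^{n+1},3)$ is a genuinely singular weighted projective space with a single singular point $P$ (the coordinate point with weight $3$). Here $|\mathcal{O}_\mathbb{G}(6)|$ and the notion of "smooth" sextic must be interpreted on the orbifold/stack, and one must separately verify: (i) a general member of $\mathbf{M}$ is a smooth Fano of the asserted type and avoids $P$; (ii) the complement $\overline{\mathbf M}\setminus\mathbf M$ — sextics singular somewhere on $\mathbb{G}^o$, together with those passing through $P$ — has codimension $\geq 2$; and (iii) the discriminant $D$, parameterizing sextics acquiring a single node on the smooth locus, is irreducible. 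Parts (ii) and (iii) are again parameter counts on $\widetilde D$, but the weighted grading makes the bookkeeping of the tangency conditions less uniform than in the projective-space cases, so this is where I would spend the most care. The homogeneous cases $g(X) \geq 7$ are the easiest: there $\mathbb{G}$ is smooth, the $A_i$ are very ample, and all three claims reduce to transparent Bertini and codimension estimates.
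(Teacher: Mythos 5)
Your overall strategy matches the paper's: $\mathbf{M}$ is an open subset of a product of projective spaces whose complement is small, and $D$ is the image of an incidence correspondence over $\mathbb{G}^o$ with irreducible equidimensional fibers. However, two of your justifications are wrong as stated. First, the fiber of $\widetilde{D} \rightarrow \mathbb{G}^o$ over $x$ is \emph{not} a product of linear subspaces when $m \geq 2$: the complete intersection $A_1 \cap \dots \cap A_m$ is singular (or of excess dimension) at $x$ if and only if every $A_i$ contains $x$ and the differentials $dF_1(x), \dots, dF_m(x)$ are linearly dependent in $\mathcal{T}^*_x\mathbb{G}$ --- a joint determinantal condition on the tuple, not a product of affine-linear conditions imposed factor by factor. (Your description would instead capture the locus where each $A_i$ is individually singular at $x$, which is far too small to give the discriminant.) The fiber is nonetheless irreducible: it is the preimage, under a surjective linear map to the space of $m \times \dim\mathbb{G}$ matrices, of the irreducible determinantal locus of non-full-rank matrices, and it has the same (expected) codimension for every $x$. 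This is exactly the paper's argument (``derivatives linearly dependent in $\mathcal{T}_x\mathbb{G}$\dots irreducible and of the expected dimension''), so the step is fixable, but as written it would fail.

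Second, your codimension-$\geq 2$ strategy for simple connectedness breaks down in every case with $m = 1$, which includes not only $g=2$ but also $g=6$ with $n=6$ and $(\rho,g)=(2,7)$: there $Z = \overline{\mathbf{M}}\setminus\mathbf{M}$ is the hyperplane of sections through the vertex, which has codimension $1$. Your proposed fallback for $g=2$ also misreads Construction \ref{families of Fano varieties}: $\mathbf{M}$ does \emph{not} exclude singular complete intersections (those form the discriminant $D \subset \mathbf{M}$, which must remain inside $\mathbf{M}$ for the later degeneration arguments); it only excludes intersections meeting $\mathrm{Sing}(\mathbb{G})$. The correct and simpler observation, which is the paper's, is that $Z$ is always the product of linear subspaces $|\mathcal{O}_\mathbb{G}(A_1-p)|\times\dots\times|\mathcal{O}_\mathbb{G}(A_m-p)|$, and the complement of a linear subspace --- even a hyperplane, since $\mathbb{P}^N\setminus\mathbb{P}^{N-1}=\mathbb{A}^N$ --- in a product of projective spaces is still smooth and simply connected. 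Finally, your worry that the generic point of $\widetilde{D}$ must parameterize an intersection with a unique node is unnecessary for this lemma: by the Jacobian criterion the image of $\widetilde{D}$ is exactly $D$ regardless of how the singularities look; that refinement belongs to the subsequent lemma on terminality of the general singular fiber.
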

\begin{proof}
Because $\mathbb{G}$ is singular along at most one point $p \in \mathbb{G}$, $\mathbf{M} \subset \overline{\mathbf{M}} = |\mathcal{O}_\mathbb{G}(A_1)| \times \dots \times |\mathcal{O}_\mathbb{G}(A_n)|$ is either the entire space or a complement of $|\mathcal{O}_\mathbb{G}(A_1 - p)| \times \dots \times |\mathcal{O}_\mathbb{G}(A_n - p)|$.  Hence, $\mathbf{M}$ is smooth and simply connected because these properties hold for $\overline{\mathbf{M}}$.  

Next we show that the discriminant locus $D \subset \mathbf{M}$ is irreducible.  Note that for any smooth point $x \in \mathbb{G}$, the sublocus of $\mathbf{M}$ parameterizing intersections with larger than expected tangent space at $x$ correspond to choices of hypersurface sections containing $x$ whose derivatives are linearly dependent in $\mathcal{T}_x\mathbb{G}$.  As this locus is irreducible and of the expected dimension, varying $x$ shows $D$ is irreducible.
\end{proof}

In what follows, let $X$ be a general complete intersection parameterized by $\mathbf{M}$.  Let $H$ be the fundamental divisor on $X$, and identify $H$ with the unique corresponding divisor class on $\mathbb{G}$.  Note that $\overline{NE}(X) \cong \overline{NE}(\mathbb{G}^o)$ is independent of $X$.  We will denote this subcone of $\overline{NE}(\mathcal{X})$ by $\overline{NE}(\pi)$.  For $\alpha \in \overline{NE}(X)$, we relate irreducibility of $\overline{M}_{0,0}(X, \alpha)$ to irreducibility of $\overline{M}_{0,0}(\mathcal{X}, \alpha)$ by studying the general singular degeneration of $X$.

\begin{lem}\label{lem: terminal singularity}
    Let $\mathcal{X}$, $\mathbb{G}$ and $D\subset \mathbf{M}$ be as in Construction \ref{families of Fano varieties}.
    Suppose $D\subset \mathbf{M}$ has codimension $1$.  Then the fiber of $\pi \colon \mathcal{X} \rightarrow \mathbf{M}$ over a general point $p \in D$ is a factorial Gorenstein terminal Fano variety $\mathcal{X}_p$ of the expected dimension whose singular locus is a point $q$.  %
    The general $3$-fold linear section of $\mathcal{X}_p$ containing $q$ has a simple double point at $q$.
\end{lem}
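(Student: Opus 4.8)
The plan is to analyze the general fiber $\mathcal{X}_p$ over a general point $p$ of the discriminant $D$ by a direct local computation. Since $\mathbf{M}$ is smooth and $D$ is an irreducible divisor, I first fix a general smooth point $x \in \mathbb{G}^o$ and use the description of $D$ from the preceding lemma: a general point of $D$ corresponds to a complete intersection $\mathcal{X}_p = A_1 \cap \dots \cap A_m$ whose tangent space jumps at exactly one point $q = x$. I would choose local analytic coordinates on $\mathbb{G}$ near $q$ and expand each defining equation $A_i$ to second order. Genericity along $D$ forces the linear parts of the $A_i$ at $q$ to span a codimension-one subspace of $T_q\mathbb{G}^\vee$ (so the tangent cone drops by exactly one dimension), and it forces the induced quadratic form on the one-dimensional "missing" direction — more precisely the Hessian of the appropriate combination restricted to the nullspace — to be nondegenerate of full rank. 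This is the standard picture of a general point of the discriminant hypersurface in a linear system: the singularity produced is an ordinary double point (node), which in dimension $n$ is an $A_1$ singularity $\sum_{i=0}^{n} z_i^2 = 0$ up to analytic equivalence.

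Next I would extract the global geometric consequences. An ordinary double point on a variety of dimension $\geq 3$ is factorial (this is classical — e.g. a node on a threefold or higher is $\mathbb{Q}$-factorial and in fact factorial since the local class group vanishes) and Gorenstein (hypersurface singularities are Gorenstein, and a complete intersection with an isolated hypersurface-type singularity is Gorenstein), and it is terminal (ordinary double points in dimension $\geq 3$ are terminal — the discrepancy of the small/blowup resolution is positive). That $\mathcal{X}_p$ has the expected dimension is immediate from it being a complete intersection cut by the expected number of divisors, and it remains Fano because $-K_{\mathcal{X}_p}$ is the restriction of the (ample) anticanonical-type class, with adjunction unchanged in the Gorenstein setting. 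The only extra point is that the singular locus is exactly the single point $q$: this follows because for general $p \in D$ the discriminant condition is satisfied at one point only, which one sees from the incidence-correspondence dimension count in the previous lemma — the locus in $\mathbf{M}$ of complete intersections singular at two distinct points has codimension $\geq 2$.

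For the final assertion, I would slice: take a general linear section $Y = \mathcal{X}_p \cap L$ of dimension $3$ through $q$, where $L$ is a general linear subspace of $\mathbb{P}|H|$ containing $q$ of the appropriate codimension. Using the local model $\sum z_i^2 = 0$ for $\mathcal{X}_p$ at $q$, a general linear subspace through $q$ meets the tangent cone (a smooth quadric cone of corank $1$) in a quadric cone that is again nondegenerate — i.e. the restriction of the rank-$(n+1)$ quadratic form to a general $4$-dimensional subspace through the vertex has rank $4$ — so $Y$ has an ordinary double point (a simple double point, $A_1$, i.e. the cone over a smooth conic surface) at $q$. The only thing to check is that no new singularities are introduced by the slicing and that $Y$ is smooth away from $q$: this is Bertini applied to the smooth locus $\mathcal{X}_p \setminus \{q\}$, valid since the linear system of sections through $q$ is still basepoint-free away from $q$.

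The main obstacle is making the "general point of $D$ gives an ordinary double point, and only at one point" claim fully rigorous across all the cases of Construction \ref{families of Fano varieties} — in particular when $\mathbb{G}$ itself is singular (the cone over $G(2,5)$, the weighted projective spaces, the cones over Segre embeddings) or not a projective space, where "linear system" and "second-order expansion" need care and where one must confirm the complete intersections parameterized by $\mathbf{M}$ avoid the vertex of $\mathbb{G}$ by construction. I expect the cleanest route is the uniform incidence-correspondence argument: let $\Phi \subset \mathbf{M} \times \mathbb{G}^o$ be the locus of pairs $(\mathcal{X}_p, x)$ with $x$ a singular point of $\mathcal{X}_p$; show $\Phi$ is irreducible of dimension $\dim \mathbf{M} - 1$ with generically reduced, finite-degree-one projection to $D$, and that the fiber direction (choices of $\mathcal{X}_p$ singular at a fixed general $x \in \mathbb{G}^o$, with all higher-order data generic) produces precisely an $A_1$ singularity by the explicit quadratic-form computation carried out once in a local chart of $\mathbb{G}^o$ at $x$. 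Everything else — factoriality, Gorenstein, terminal, Fano, the threefold slice — then follows formally from the local analytic normal form.
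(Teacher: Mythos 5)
Your overall architecture (local normal form at the singular point $\Rightarrow$ factorial, Gorenstein, terminal, plus the $3$-fold slice) is reasonable, but the load-bearing claim --- that a general point of $D$ yields a complete intersection with a single \emph{ordinary double point}, i.e., with nondegenerate Hessian of full rank --- is exactly the step you flag as ``the main obstacle,'' and it is a genuine gap rather than a routine verification. The ``standard picture of a general point of the discriminant'' is standard for complete linear systems of hypersurfaces in $\mathbb{P}^N$, but for the fundamental systems on the Mukai ambient spaces it amounts to a statement about dual varieties and contact loci: for a single hyperplane section it requires the dual variety of $\mathbb{G}$ to be a hypersurface whose general point has nondegenerate contact, which fails for instance for the spinor variety $OG_+(5,10)$ (self-dual of codimension $5$), and for intersections of several divisors the independence of the second-order conditions must be checked case by case. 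The paper deliberately proves something weaker: it never establishes a full ODP, only that (a) the singular locus is zero-dimensional (via the codimension-$(n+1)$ count for the universal singularity $\Sigma$), (b) it is a single point (via Zak's Theorem on Tangencies, after arguing that for general $p \in D$ each divisor $A_i$ is smooth so $\mathcal{X}_p$ is a hyperplane section of a smooth variety), and (c) the general $3$-fold linear slice has a simple double point, which for $\rho = 1$ is imported from Kuznetsov's study of one-nodal prime Fano threefolds and for $\rho = 2$ is checked in coordinates. Terminality is then deduced from ``multiplicity at most $2$ along every exceptional center'' rather than from the ODP normal form, and factoriality from the Grothendieck--Lefschetz-type theorem of Ravindra--Srinivas rather than from the local class group.

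Two smaller corrections. First, your parenthetical ``a node on a threefold or higher is \ldots factorial since the local class group vanishes'' is false in dimension $3$: the threefold node $xy = zw$ is the basic example of a terminal non-factorial singularity (its local analytic class group is $\mathbb{Z}$ and it admits small resolutions). Factoriality of an isolated complete-intersection singularity holds only from dimension $4$ on, which is what saves you here since $\dim \mathcal{X}_p = n \ge 4$. Second, even granting an ODP, your uniqueness-of-the-singular-point argument needs the singularity conditions at two distinct points of $\mathbb{G}$ to be independent in $\mathbf{M}$ (equivalently, that the relevant system separates $2$-jets at pairs of points); this does not follow from the irreducibility statement in the preceding lemma, and it is precisely what the paper's appeal to Zak's theorem replaces.
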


\begin{proof}
    Let $\mathcal{X}_p$ be the fiber of $\mathcal{X}$ over a general point $p \in D$.  Recall that $\mathcal{X}_p$ is contained inside the smooth locus of $\mathbb{G}$.  By construction, $\mathcal{X}_p \subset \mathbb{G}$ is a complete intersection, and thus Gorenstein.  Provided the singular locus of $\mathcal{X}_p$ is of dimension zero, \cite[Theorem~1]{RavindraSrinivas2006} proves $\mathcal{X}_p$ is factorial, as a general fundamental divisor on $\mathcal{X}_p$ would be a smooth coindex three Fano variety.  Thus, it suffices to show $\mathcal{X}_p$ is terminal and has a unique singularity.  %

    When the fundamental linear series on $\mathcal{X}_p$ is not very ample, this claim follows immediately from Theorem \ref{classification Picard rank 1}(1). Indeed, the singular locus of a general singular weighted hypersurface lying in the smooth locus of $\mathbb{G}$ is a simple double point by the proof of \cite[Proposition~7.1(b)]{eisenbud20163264}.  An analogous constructive argument shows there are singular double covers of smooth quadrics whose singular locus is a simple double point.  %

    Suppose the fundamental linear series $|H|$ on  $\mathcal{X}_p$ is very ample instead.  Note that $H$ is also the hyperplane class on $\mathbb{G}$. %
    First, we will show the universal singularity
    $$\Sigma = \{x \in \mathcal{X} \mid d\pi_x \colon \mathcal{T}_{x} \mathcal{X} \rightarrow \mathcal{T}_{\pi(x)} \mathbf{M} \text{ is not surjective}\}$$
    has codimension $n+1$ in $\mathcal{X}$.  
    Recall that the sublocus of $\mathbf{M}$ parameterizing intersections with larger than expected tangent space at a smooth point $x \in \mathbb{G}$ is the locus where the Jacobian matrix $A : \mathcal{T}_x\mathbb{G} \rightarrow \mathbb{C}^{\dim \mathbb{G} - n}$ has non-full rank.
    This is a codimension $n+1$ locus in the fiber of $\mathrm{ev} \colon \mathcal{X} \rightarrow \mathbb{G}$, whence our claim follows.
    
    Since both $\mathcal{X}$ and $\mathbf{M}$ are smooth, $\Sigma$ contains $\mathcal{X}_{\pi(x)}$ when the latter is of dimension greater than $n$; otherwise, $\Sigma$ meets $\mathcal{X}_{\pi(p)}$ along its singular locus.  Hence, as the relative dimension of $\pi$ is $n$, if $D \subset \mathbf{M}$ has codimension one, $\mathcal{X}_p$ will have dimension $n$ and a zero-dimensional singular locus.

    Next, we will show the singular locus of $\mathcal{X}_p$ is a linear subspace.  Recall that $\mathcal{X}_p \subset \mathbb{G}$ is an  complete intersection of divisors $A_1, \ldots, A_m$.  We claim that unless $m = 1$, each divisor $A_i \subset \mathbb{G}$ is smooth by generality of $p \in D$.  Indeed, the locus of singular $A_i$ in $|\mathcal{O}_\mathbb{G}(A_i)|$ has codimension $1$ greater than the general dimension of singularity.  %
    This implies that the locus in $\mathbf{M}$ where $A_i$ is singular and the intersection $A_1 \cap \dots \cap A_m$ meets $\mathrm{sing}(A_i)$ has codimension $m$. Since $D \subset \mathbf{M}$ has codimension $1$, it follows  for general $p \in D$, each corresponding divisor $A_i$ is smooth when $m > 1$.  In particular, we may assume $\mathcal{X}_p$ is a hyperplane section of a smooth variety, or a variety $\mathbb{G}$ with one singular point, and apply Zak's Theorem on Tangencies (e.g., \cite[Theorem 3.4.17]{Lazarsfeld2004positivityI}) to conclude the singular locus of $\mathcal{X}_p$ is a linear subspace.  In other words, $\mathcal{X}_p$ has a unique singular point.  %

    Let $q \in \mathcal{X}_p$ be the singular point.  The general threefold linear section $Y \subset \mathcal{X}_p$ containing $q$ has a simple double point at $q$.  Indeed, by generality of $\mathcal{X}_p$, $Y$ is also a general singular complete intersection.  When $\rho(\mathcal{X}_p) = 1$, it follows from \cite[Theorem~1.6, Remark~7.8]{kuznetsov2023one} that $q \in Y$ is a simple double point.  Otherwise, when $\rho(\mathcal{X}_p) = 2$, this claim may be observed from the defining equations of $\mathcal{X}_p \subset \mathbb{G}$ in local coordinates.

    Lastly, we will show $\mathcal{X}_p$ is terminal.  %
    By preceding arguments, we may assume $\mathcal{X}_p$ is a hyperplane section of a variety $Z$ which is smooth along $q$.  Since $Y \subset \mathcal{X}_p$ has a simple double point at $q$, the exceptional divisor $E_q$ of the blow-up $\text{Bl}_q \mathcal{X}_p \rightarrow \mathcal{X}_p$ is a quadric of rank at least four in the exceptional divisor of $\text{Bl}_q Z \rightarrow Z$.      %
    Let 
    $$\widetilde{\mathcal{X}}_p \xrightarrow{\phi} \text{Bl}_q \mathcal{X}_p \xrightarrow{\psi} \mathcal{X}_p$$ 
    be a resolution of singularities with simple normal crossings exceptional locus.  As the multiplicity of the defining equation of $\mathcal{X}_p \subset Z$ is at most two along the image of each exceptional divisor, the discrepancy of any exceptional divisor is positive.
\end{proof}

This description of the general singular fiber shows the $\pi$-fiber of a component of $\overline{M}_{0,0}(\mathcal{X}, \alpha)$ over the general point in the discriminant locus cannot be too pathological.  We leverage this to prove the following lemma.

\begin{lem}\label{lem: irred fiber}
    Let $\mathcal{X}$, $\mathbb{G}$ and $D\subset \mathbf{M}$ be as in Construction \ref{families of Fano varieties}.
    For $\alpha \in \overline{NE}(\pi)$ with $H\cdot \alpha \leq 2$, or with $H\cdot \alpha \leq 3$ if $\dim X = 4$, each component of $\overline{M}_{0,0}(\mathcal{X}, \alpha)$ which generically parameterizes embedded free curves has an irreducible general $\pi$-fiber. 
\end{lem}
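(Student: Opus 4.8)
The plan is to reduce the statement to the claim that a certain finite morphism onto $\mathbf{M}$ has degree one, and then to exclude ramification divisor by divisor, using that $\mathbf{M}$ is smooth and simply connected together with Lemma \ref{lem: terminal singularity}. Write $\rho\colon\mathcal{M}\to\mathbf{M}$ for the restriction of $\pi$ to a component $\mathcal{M}$ generically parameterizing embedded free curves, and let $\mathcal{M}_t:=\mathcal{M}\cap\overline{M}_{0,0}(\mathcal{X}_t,\alpha)$ denote its $\pi$-fibers. At the general point $[f]$ of $\mathcal{M}$ the curve $f\colon C\to\mathcal{X}_t$ is free with $\mathcal{X}_t$ smooth, so the exact sequence $0\to f^{*}T_{\mathcal{X}_t}\to f^{*}T_{\mathcal{X}}|_{C}\to\mathcal{O}_{C}^{\oplus\dim\mathbf{M}}\to 0$ coming from smoothness of $\pi$ near $\mathcal{X}_t$, together with $H^{1}(C,f^{*}T_{\mathcal{X}_t})=0$, shows $\rho$ is smooth at $[f]$; hence $\rho$ is dominant, $\dim\mathcal{M}=\dim\mathbf{M}+(-K_X\cdot\alpha+\dim X-3)$, and for general $t$ the fiber $\mathcal{M}_t$ is pure of the expected dimension with dense free locus. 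Since $\overline{M}_{0,0}(\mathcal{X},\alpha)\to\mathbf{M}$ is proper, $\rho$ is proper; after replacing $\mathcal{M}$ by its normalization I would take the Stein factorization $\mathcal{M}\xrightarrow{g}\mathbf{M}'\xrightarrow{\nu}\mathbf{M}$, with $\mathbf{M}'$ normal and irreducible and $\nu$ finite. Then the number of irreducible components of the general fiber $\mathcal{M}_t$ equals $\deg\nu$, so it suffices to prove $\deg\nu=1$. Since $\mathbf{M}$ is smooth and simply connected, by purity of the branch locus (Zariski--Nagata) it is enough to show $\nu$ is unramified over the generic point $\eta_W$ of every prime divisor $W\subset\mathbf{M}$.

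For $W\neq D$ the point $\eta_W$ lies outside $D$ — because $D$ is irreducible (if it is a divisor) or of codimension at least two — so $\mathcal{X}\to\mathbf{M}$ is smooth over the discrete valuation ring $R=\mathcal{O}_{\mathbf{M},\eta_W}$. Here I would run a routine deformation argument: the proper family $\mathcal{M}_R\to\operatorname{Spec}R$ has smooth generic fiber and a generically smooth closed fiber $\mathcal{M}_{\eta_W}$ of the expected dimension; a free curve in $\mathcal{M}_{\eta_W}$ lifts over $R$ because the relative space of free curves is smooth over $\operatorname{Spec}R$, and conversely every component of the generic fiber meets the free locus and specializes into $\mathcal{M}_{\eta_W}$. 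Hence the irreducible components of the two fibers are in bijection and $\nu$ is unramified over $\eta_W$.

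The hard part will be the divisor $D$ itself, in the case $\operatorname{codim}D=1$, and this is where the bounds on $H\cdot\alpha$ are essential. By Lemma \ref{lem: terminal singularity}, for general $p\in D$ the fiber $\mathcal{X}_p$ is a factorial Gorenstein terminal Fano of the expected dimension with a single singular point $q$, whose general threefold linear section has a simple double point there. I would analyze $\mathcal{M}_p$ directly: a stable map parameterized by $\mathcal{M}$ has $H$-degree at most $2$ (at most $3$ if $\dim X=4$), so its image is a line, a conic, or a plane or twisted cubic, and it either avoids $q$ — in which case it lies in $\overline{M}_{0,0}(\mathcal{X}_p\setminus\{q\},\alpha)$, where $T_{\mathcal{X}_p}$ is a bundle and the deformation theory agrees with that on a nearby smooth fiber — or it passes through $q$. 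In the latter case the low degree together with the ordinary-double-point structure of the tangent cone at $q$ should confine such maps to a sublocus of dimension strictly below $\dim\mathcal{M}_p=-K_{\mathcal{X}_p}\cdot\alpha+\dim X-3$. Granting this, $\mathcal{M}_p$ is the closure of its $q$-avoiding free locus, its irreducible components match those of the general $\mathcal{M}_t$ with multiplicity one, and so $\nu$ is unramified over $\eta_D$. With all prime divisors handled, $\nu$ is étale; since $\mathbf{M}$ is simply connected and $\mathbf{M}'$ is irreducible, $\deg\nu=1$, and $\mathcal{M}_t$ is irreducible for general $t$.

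I expect essentially all of the real work to be the dimension estimate in the last paragraph for families of low-degree stable maps through the singular point of $\mathcal{X}_p$: one must verify that such a family never grows into an extra irreducible component of $\mathcal{M}_p$ nor forces the Stein map to branch along $D$, and it is exactly here that the hypotheses $H\cdot\alpha\le 2$ (resp.\ $\dim X=4$ and $H\cdot\alpha\le 3$) and the description of the singularity $q$ supplied by Lemma \ref{lem: terminal singularity} are used.
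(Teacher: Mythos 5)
Your overall architecture (Stein factorization of the proper map to $\mathbf{M}$, simple connectivity plus purity forcing any nontrivial branch locus to be a divisor, then ruling out ramification over each prime divisor, with the fiber over the general point of $D$ analyzed via Lemma \ref{lem: terminal singularity}) is essentially the paper's. But the step you defer with ``Granting this'' is not a routine dimension estimate for low-degree maps through the singular point $q$ --- and in fact you have misidentified where the difficulty lies. An everywhere non-reduced component of the fiber over a general branch point $b$ must contain an irreducible piece $N_1$ that dominates $\mathcal{X}_b$ and has \emph{at least} the expected dimension. If $N_1$ generically parameterized irreducible curves, those curves would have to pass through the singularity (a free curve in the smooth locus is a reduced point), and terminality of $\mathcal{X}_b$ immediately gives a dimension drop --- that part is easy, and it is all your sketch addresses. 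The actual content is that $N_1$ must then generically parameterize \emph{reducible} curves, and these need not pass through $q$ at all: the problematic configurations are, e.g., a free curve through a general point of $\mathcal{X}_b$ attached to an $H$-line at a point where the family of lines jumps to dimension $\dim\mathcal{X}_b-2$, or (when $H\cdot\alpha=3$, $\dim X=4$) a line through a general point attached to a conic sweeping out a divisor with $a$-invariant $3$.

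Ruling these out is where the hypotheses really enter, and it is not a consequence of the local structure at $q$. The paper does it by: (i) using that $N_1$ parameterizes specializations of embedded free curves, so some curve in $N_1$ passes through a general point and meets the right number of general codimension-$2$ complete intersections; (ii) bounding the family of lines through a point and showing that the locus where it jumps would otherwise be a codimension-$1$ linear subspace of $\mathcal{X}_b$, contradicting the \emph{factoriality} statement in Lemma \ref{lem: terminal singularity} (not the ODP structure); and (iii) in the cubic case, invoking the classification of divisors swept out by a too-large family of conics and a normal-bundle computation for the nodal curve showing it would be a smooth (hence reduced) point of $N_1$. None of this is supplied or even signposted by your outline, so as written the proof of unramifiedness over $\eta_D$ --- the entire substance of the lemma --- is missing. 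Your treatment of the divisors $W\neq D$ and the final monodromy/degree-one conclusion are fine.
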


\begin{proof}
    Let $Z_\alpha  \subset \overline{M}_{0,0}(\mathcal{X}, \alpha)$ be the closure of a component of the locus parameterizing embedded curves of class $\alpha$.  Consider the map $\pi_* \colon \overline{M}_{0,0}(X,\alpha) \rightarrow \mathbf{M}$. We claim that the restriction $\pi_* \colon Z_\alpha \rightarrow \mathbf{M}$ has irreducible general fiber for each $\alpha$.

    To derive a contradiction, let $\tilde{Z}$ be a resolution of $Z_\alpha$ and assume the Stein factorization of $\pi_* \colon \tilde{Z} \rightarrow \mathbf{M}$ is nontrivial.  Since $\mathbf{M}$ is smooth and simply connected, the branch locus $B \subset \mathbf{M}$ of the finite part $f \colon \tilde{\mathbf{M}} \rightarrow \mathbf{M}$ of the Stein factorization of $\pi_*$ must have codimension $1$.  Over any point $b \in B$, the fiber of $\tilde{Z}$ has an everywhere non-reduced connected component $N$.  By upper semicontinuity of fiber dimension applied to a general point of the base change of $\mathcal{X}$ along $f \colon \tilde{\mathbf{M}} \rightarrow \mathbf{M}$, some irreducible component $N_1 \subset N$ parameterizes a family of curves which dominates $\mathcal{X}_b$ and has at least the expected dimension.  We will show that for general $b \in B$, no such component $N_1$ exists.  

    Let $b \in B$ be general.  As $B \subset \mathbf{M}$ is of codimension $1$, by Lemma \ref{lem: terminal singularity} $\mathcal{X}_b$ is terminal and has at most one singular point.  Observe that $N$ is a connected union of everywhere non-reduced components of $\overline{M}_{0,0}(\mathcal{X}_b,\alpha)$.  Hence, $N_1$ must be an everywhere non-reduced, irreducible component of $\overline{M}_{0,0}(\mathcal{X}_b,\alpha)$ parameterizing a dominant family of curves of at least the expected dimension 
    $$-K_{\mathcal{X}_b} \cdot \alpha + \dim \mathcal{X}_b - 3 = (\dim \mathcal{X}_b - 2) H \cdot \alpha + \dim \mathcal{X}_b - 3.$$
    Suppose $N_1$ generically parameterized irreducible curves.  Since $N_1$ parameterizes a dominant family of curves, the general curve parameterized by $N_1$ must pass through the singularity of $\mathcal{X}_b$, as any free curve contained in the smooth locus of $\mathcal{X}_b$ would be a reduced point of $N_1$.  On the other hand, terminality of $\mathcal{X}_b$ implies any dominant family of irreducible curves of class $\alpha$ passing through the singularity of $\mathcal{X}_b$ has less than the expected dimension above.  Thus, the general curve parameterized by $N_1$ must be reducible.  

    When $H \cdot \alpha = 1$, this immediately contradicts the existence of such a component $N_1$, as $H$ is an ample Cartier divisor.  When $H \cdot \alpha \geq 2$, we use in addition that $N_1$ parameterizes the specialization of embedded free curves on a general fiber to conclude that some curve $C$ parameterized by $N_1$ passes through a general point $p \in \mathcal{X}_b$ and meets $(\dim \mathcal{X}_b - 2) H \cdot \alpha - 2$ general codimension $2$ complete intersections $Y_i \subset\mathcal{X}_b$. 

    Suppose the curve $C$ parameterized by $N_1$ is the union of an irreducible curve $C_1$ passing through a general point $p \in \mathcal{X}_b$ and a line $C_2$. Observe that the family of lines $\mathcal{F}_q$ through any point $q \in \mathcal{X}_b$ has dimension at most $\dim \mathcal{X}_b - 2$.  Thus, $C_1$ must pass through $(\dim \mathcal{X}_b - 2) H \cdot [C_1] - 2$ general codimension $2$ subvarieties $Y_i$, and $C_2$ meets $C_1$ at a point $q \in \mathcal{X}_b$ with $\dim \mathcal{F}_q = \dim \mathcal{X}_b - 2$.  As there are no infinitesimal deformations of $C_1$ preserving the intersections with $p$ and each $Y_i$, we may assume $C_1$ is general in moduli.  It follows that $\mathcal{L} = \{q \in \mathcal{X}_b \mid \dim \mathcal{F}_q = \dim \mathcal{X}_b - 2\}$ has codimension $1$ in $\mathcal{X}_b$.  However, any component $L \subset \mathcal{L}$ with codimension $1$ in $\mathcal{X}_b$ embeds as a linear subspace under the natural map $\mathcal{X}_b \rightarrow \mathbb{P}|H|$ which contradicts factoriality of $\mathcal{X}_p$.

    Suppose instead that $H \cdot \alpha = 3$, $\dim \mathcal{X}_b = 4$, and $C$ is the union of a line $C_1$ passing through a general point $p \in \mathcal{X}_b$ and a (possibly disconnected) conic $C_2$.  As there are no infinitesimal deformations of $C_1$ through $p$ and $\dim \mathcal{X}_b - 4$ general codimension $2$ subvarieties $Y_i$, $C_2$ meets at least $2\dim \mathcal{X}_b - 4$ general codimension $2$ complete intersections $Y_i$.  As before, factoriality of $\mathcal{X}_p$ shows $C_2$ must be irreducible.  It follows that $C_2$ deforms in a $3\dim \mathcal{X}_b - 6 = 6$ dimensional family covering a divisor $D$ on $\mathcal{X}_b$, since the expected dimension for deformations of $C_2$ is $5$.  This shows $a(D,H) = 3$.  Hence, by \cite[Proposition~3.17]{exceptional_sets} either $(D, H) \cong (Q, \mathcal{O}(1))$ where $Q \subset \mathbb{P}^4$ is a possibly singular quadric hypersurface, or $(D,H)$ is birational to a $\mathbb{P}^2$-bundle over a curve $Z$ and $H$ is the relative tautological line bundle.  In each case, the node $q$ of $C$ must be general in $D$, and $C$ would lie in the smooth locus of $\mathcal{X}_p$.  We see from the sequence
    $$0 \rightarrow H^0(C, N_C) \rightarrow H^0(C_1, N_C|_{C_1}) \oplus H^0(C_2, N_C|_{C_2}) \rightarrow H^0(q, N_C|_q) \rightarrow 0$$
    that $C$ deforms with dimension $h^0(C, N_C) = h^0(C_1, N_{C_1}) + h^0(C_2, N_{C_2}) - \dim \mathcal{X}_b + 2$.  Thus, $C$ would correspond to a smooth point of $N_1$, a contradiction. 

\end{proof}

\begin{lem}\label{lem: irred total space}
    Let $\mathcal{X}$, $\mathbb{G}$ and $D\subset \mathbf{M}$ be as in Construction \ref{families of Fano varieties}.
    When $H\cdot \alpha \leq 3$, the locus in $\overline{M}_{0,0}(\mathcal{X}, \alpha)$ parameterizing embedded curves is irreducible.
\end{lem}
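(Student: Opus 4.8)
\emph{Setup and reduction to $\mathbb{G}$.} The plan is to compare $Z$, the locus in $\overline{M}_{0,0}(\mathcal{X},\alpha)$ parameterizing embedded curves, with a space $R$ of low-degree curves on the ambient variety $\mathbb{G}$, which is tractable because $\mathbb{G}$ is (close to) homogeneous. Since $\alpha\in\overline{NE}(\pi)$ is a fiber class, every stable map parameterized by $\overline{M}_{0,0}(\mathcal{X},\alpha)$ factors through a single fiber $\mathcal{X}_p=A_1\cap\dots\cap A_m\subset\mathbb{G}^o$; hence a point of $Z$ is a pair $(p,[C])$ with $C\subset\mathcal{X}_p$ an embedded curve of class $\alpha$, and in particular $C\subset\mathbb{G}$ is embedded and misses $\mathrm{sing}(\mathbb{G})$. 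Letting $R$ denote the locus of such curves in $\overline{M}_{0,0}(\mathbb{G},\alpha)$, forgetting $p$ gives a morphism $q\colon Z\to R$ whose fiber over $[C]$ is $\bigl(\prod_i\mathbb{P}H^0(\mathbb{G},\mathcal{O}_\mathbb{G}(A_i)\otimes\mathcal{I}_{C/\mathbb{G}})\bigr)\cap\mathbf{M}$, a nonempty open subset of a product of projective spaces (nonempty because $C$ avoids the vertex, so generic divisors $A_i\supset C$ cut out a complete intersection disjoint from $\mathrm{sing}(\mathbb{G})$). So $Z$ will be irreducible once $R$ is irreducible and $q$ is, over a dense open of $R$, a tower of projective bundles.

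\emph{Irreducibility of $R$.} When $\mathbb{G}$ is homogeneous this is immediate: $\overline{M}_{0,0}(\mathbb{G},\alpha)$ is irreducible by Thomsen \cite{Thomsen1998} (see also \cite{KimPandharipande2001}), and $R$ is a dense open subset. When instead $\mathbb{G}^o$ is the total space of a line bundle over a homogeneous variety $\mathbb{H}$ — the weighted projective space cases $g=2,3$ and the cones over $G(2,5)$ and $\mathbb{P}^2\times\mathbb{P}^2$ — one composes with the bundle projection: the image of $[C]$ is a stable map to $\mathbb{H}$ of $\mathbb{H}$-degree at most $H\cdot\alpha\le 3$, and the lifting datum is a section of a line bundle of degree $H\cdot\alpha>-1$ along each component, so $R$ is a dense subset of the total space of a vector bundle of rank $H\cdot\alpha+1$ over $\overline{M}_{0,0}(\mathbb{H},\beta)$, which is again irreducible by Thomsen.

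\emph{Fibers of $q$ and conclusion.} Write $A_i=d_iH$. Every $\mathbb{G}$ in Construction \ref{families of Fano varieties} is arithmetically Cohen--Macaulay or a weighted projective space, so $H^1(\mathbb{G},\mathcal{O}_\mathbb{G}(A_i))=0$ and $h^0(\mathbb{G},\mathcal{O}_\mathbb{G}(A_i)\otimes\mathcal{I}_{C/\mathbb{G}})$ is controlled by the cokernel of the restriction $H^0(\mathbb{G},\mathcal{O}_\mathbb{G}(A_i))\to H^0(C,\mathcal{O}_C(A_i))$. Because $H\cdot\alpha\le 3$, the curve $C$ (and, in the non-homogeneous cases, its image $\overline{C}$ in $\mathbb{H}$, which has $\mathbb{H}$-degree at most $3$) is projectively normal, so this cokernel vanishes for the generic $[C]$ in each component of $R$; using in the coned cases the decomposition of $H^0(\mathbb{G},\mathcal{O}_\mathbb{G}(A_i))$ into its ``$x$-part'' and powers of the cone coordinate. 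Thus over a dense open $R^\circ\subseteq R$ the fiber dimension of $q$ is constant and $q^{-1}(R^\circ)$ is irreducible. Over the proper closed complement $R\setminus R^\circ$, a dimension count — bounding the codimension of the locus where the restriction fails to be surjective against the possible jump in $h^0$, and noting where it is cleanest that a curve on which the jump occurs cannot lie on a complete intersection disjoint from $\mathrm{sing}(\mathbb{G})$ — shows $q^{-1}(R\setminus R^\circ)$ has dimension smaller than $q^{-1}(R^\circ)$. Hence $Z=\overline{q^{-1}(R^\circ)}$ is irreducible.

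\emph{Main obstacle.} The hard part is the last bookkeeping in the non-homogeneous cases: one must identify exactly which embedded curves of class $\alpha$ on $\mathbb{G}$ — reducible boundary curves, multiple covers of lines in $\mathbb{H}$, conics lying in the linear spans of cone-planes, and their tangential degenerations — fail to impose independent conditions on the systems $|\mathcal{O}_\mathbb{G}(A_i)|$, check that these form a locus of sufficiently high codimension in $R$, and bound the associated jump in $h^0(\mathcal{I}_{C/\mathbb{G}}(A_i))$. The constraint $H\cdot\alpha\le 3$ is precisely what keeps all the relevant curves, and their images in the homogeneous model, of small enough degree for these projective-normality and dimension estimates to go through.
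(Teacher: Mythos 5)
Your proposal follows the same route as the paper: fiber the locus of embedded curves in $\overline{M}_{0,0}(\mathcal{X},\alpha)$ over the locus $R$ of embedded curves in $\overline{M}_{0,0}(\mathbb{G},\alpha)$, prove $R$ irreducible via \cite{Thomsen1998}, \cite{KimPandharipande2001} when $\mathbb{G}$ is homogeneous and via the structure of $\mathbb{G}^o$ as a line bundle over a homogeneous variety otherwise, and control the fibers of $q$ by surjectivity of the restriction maps $H^0(\mathbb{G},\mathcal{O}_\mathbb{G}(A_i))\rightarrow H^0(C,\mathcal{O}_C(A_i))$. The one point where your write-up stops short is exactly the step you flag as the ``main obstacle'': you establish surjectivity only for generic $[C]\in R$ and then defer the complement $R\setminus R^\circ$ to an unperformed codimension-versus-jump estimate. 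The paper's observation is that this estimate is unnecessary because $R^\circ=R$: for $H\cdot\alpha\le 3$ the restriction maps are surjective for \emph{every} embedded curve of class $\alpha$. When $H$ is very ample, an embedded rational curve of $H$-degree at most $3$ is a line, a conic, or a twisted cubic in $\mathbb{P}|H|$ (a degree-$3$ embedding of $\mathbb{P}^1$ cannot have planar image, since an integral plane cubic has arithmetic genus $1$), and all of these are projectively normal; when $H$ is not very ample and the image $\overline{C}\subset\mathbb{P}|H|$ is a planar cubic, $\overline{C}$ is a complete intersection and one computes $h^0(I_{\overline{C}}(k))$ directly. Hence the fiber of $q$ over every point of $R$ is a nonempty open subset of a product of projective spaces of constant dimension, and irreducibility of $Z$ follows from irreducibility of $R$ with no stratification of $R$ required. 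With that replacement your argument is complete; as written, the deferred dimension count is a genuine gap, and it is also the wrong tool for closing it.
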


\begin{proof}
    Each fiber $X \subset \mathcal{X}$ of $\pi$ is a complete intersection of very ample divisors inside the smooth locus $\mathbb{G}^o$ of $\mathbb{G}$.  The locus of complete intersections containing a given embedded curve of class $\alpha$ in $\mathbb{G}^o$ is irreducible and of the expected dimension, as $H\cdot \alpha \leq 3$.  Indeed, when $H$ is very ample, any embedded cubic $C \subset \mathbb{G}$ is a twisted cubic in $\mathbb{G} \subset \mathbb{P}|H|$, and hence the natural map $H^0(\mathbb{G}, \mathcal{O}(H)) \rightarrow H^0(C, \mathcal{O}(H))$ is surjective.  When $H$ is not very ample and $C \subset \mathbb{G}$ projects to a planar cubic $\overline{C} \rightarrow \mathbb{P}|H|$, then $\overline{C}$ is a complete intersection and surjectivity of relevant maps $H^0(\mathbb{G}, \mathcal{O}(kH)) \rightarrow H^0(C, \mathcal{O}(kH))$ follows from a dimension count after computing $h^0(I_{\overline{C}}(k))$.  Irreducibility of the locus in $\overline{M}_{0,0}(\mathcal{X}, \alpha)$ parameterizing embedded curves thus follows from irreducibility of the family of embedded curves of class $\alpha$ in $\mathbb{G}^o$.  This is proven in \cite{Thomsen1998}, \cite{KimPandharipande2001} when $\mathbb{G}^o = \mathbb{G}$ is homogeneous.

    Suppose $\mathbb{G}^o$ is the total space of a line bundle over a homogeneous variety.  In this case, either $\rho(X) = 1$ and $g(X) \in \{2, 3, 6\}$ or $(\rho(X), g(X)) = (2,7)$.  The space $\overline{M}_{0,0}(\mathbb{G}^o, \alpha)$ has irreducible fibers over a space of rational curves in a homogeneous variety, and is thus irreducible. %
\end{proof}

\begin{lem}
    Let $\mathcal{X}$, $\mathbb{G}$ and $D\subset \mathbf{M}$ be as in Construction \ref{families of Fano varieties}.
    When $H\cdot \alpha = 1$, $\overline{M}_{0,0}(X,\alpha)$ is smooth for general $X$.
\end{lem}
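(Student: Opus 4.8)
The plan is to realize $\overline{M}_{0,0}(X,\alpha)$ as a fiber of a smooth total space and then to invoke generic smoothness. First I would note that, as $H$ is an ample Cartier divisor with $H\cdot\alpha=1$, every stable map parameterized by $\overline{M}_{0,0}(X,\alpha)$ is a closed immersion onto an $H$-line: a multiple cover would have $H$-degree at least $2$, and a reducible genus $0$ domain would have a contracted component, which is impossible since such a component would require at least three special points. Hence $\overline{M}_{0,0}(X,\alpha)$ is exactly the fiber over $[X]$ of $\pi_{*}\colon\mathcal{M}_{\alpha}\rightarrow\mathbf{M}$, where $\mathcal{M}_{\alpha}:=\overline{M}_{0,0}(\mathcal{X},\alpha)$. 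Since we are in characteristic $0$, it is therefore enough to show that $\mathcal{M}_{\alpha}$ is a smooth variety: generic smoothness of $\pi_{*}$ forces the general fiber over its image to be smooth, and for $X$ outside that image the fiber is empty. (One can check $\pi_{*}$ is dominant, since the general $X$ contains $H$-lines by the classical theory of lines on Fano complete intersections; then $\mathcal{M}_{\alpha}$, and hence the general fiber, is also irreducible by Lemma \ref{lem: irred total space}. Neither fact is needed for smoothness.)

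To see $\mathcal{M}_{\alpha}$ is smooth I would argue at an arbitrary point $[\ell]$, corresponding to an $H$-line $\ell$ in some fiber $X$ of $\pi$; since $X$ avoids the singular locus of $\mathbb{G}$, so does $\ell$, so $\ell\subset\mathbb{G}^{o}$. Recall from the proof of Lemma \ref{lem: terminal singularity} that $\mathcal{X}\subset\mathbf{M}\times\mathbb{G}^{o}$ is smooth---it is an iterated projective bundle over $\mathbb{G}^{o}$---and that $\mathcal{X}$ is cut out by the $m$ universal incidence sections, so $N_{\mathcal{X}/\mathbf{M}\times\mathbb{G}^{o}}$ restricts on $\ell$ to a direct sum of line bundles $\bigoplus_{i=1}^{m}\mathcal{O}_{\mathbb{P}^{1}}(A_{i}\cdot\ell)$. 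Because $\ell$ is embedded of degree $1$, the obstruction space of $\mathcal{M}_{\alpha}$ at $[\ell]$ is $H^{1}(\ell,N_{\ell/\mathcal{X}})$, and the normal bundle sequence of $\ell\subset\mathcal{X}\subset\mathbf{M}\times\mathbb{G}^{o}$ gives
\[0\rightarrow N_{\ell/\mathcal{X}}\rightarrow\mathcal{O}_{\ell}^{\oplus\dim\mathbf{M}}\oplus N_{\ell/\mathbb{G}^{o}}\rightarrow\bigoplus_{i=1}^{m}\mathcal{O}_{\mathbb{P}^{1}}(A_{i}\cdot\ell)\rightarrow 0.\]
Since each $A_{i}$ is ample, $A_{i}\cdot\ell\ge 1$ and the right-hand bundle has vanishing $H^{1}$, so $H^{1}(\ell,N_{\ell/\mathcal{X}})$ injects into $H^{1}(\ell,N_{\ell/\mathbb{G}^{o}})$. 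Thus it suffices to prove that the space of $H$-lines on $\mathbb{G}^{o}$ is everywhere smooth, i.e.\ that $H^{1}(\ell,N_{\ell/\mathbb{G}^{o}})=0$ for every $H$-line $\ell\subset\mathbb{G}^{o}$.

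For this I would split into cases. When $\mathbb{G}^{o}=\mathbb{G}$ is homogeneous---every case in Construction \ref{families of Fano varieties} except $g(X)\in\{2,3,6\}$ with $\rho(X)=1$ and $(\rho(X),g(X))=(2,7)$---the tangent bundle $T_{\mathbb{G}}$ is globally generated, so $N_{\ell/\mathbb{G}}$ is globally generated on $\mathbb{P}^{1}$ and $H^{1}(\ell,N_{\ell/\mathbb{G}})=0$. In the remaining cases $\mathbb{G}$ has a single singular point, and $\mathbb{G}^{o}$ is the total space of an ample line bundle $L$ over a smooth homogeneous variety $Y$---namely $\mathrm{Tot}(\mathcal{O}_{\mathbb{P}^{n}}(3))$ and $\mathrm{Tot}(\mathcal{O}_{\mathbb{P}^{n+1}}(2))$ for $g(X)=2,3$, and the complement of the vertex in the projective cone, which is $\mathrm{Tot}(\mathcal{O}_{Y}(1))$ over $Y=G(2,5)$ or $\mathbb{P}^{2}\times\mathbb{P}^{2}$ for $g(X)=6$ and $(\rho(X),g(X))=(2,7)$. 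In each case $H$ and $L$ are pullbacks of ample classes on $Y$, so $H\cdot\ell=1$ forces the projection $\pi_{Y}$ to restrict to an isomorphism from $\ell$ onto a line $\bar{\ell}\subset Y$. The tangent sequence of the fibration $\pi_{Y}$, restricted to $\ell$, then reads
\[0\rightarrow\pi_{Y}^{*}L|_{\ell}\rightarrow T_{\mathbb{G}^{o}}|_{\ell}\rightarrow\pi_{Y}^{*}T_{Y}|_{\ell}\rightarrow 0,\]
where $\pi_{Y}^{*}L|_{\ell}$ has non-negative degree and $\pi_{Y}^{*}T_{Y}|_{\ell}\cong T_{Y}|_{\bar{\ell}}$ is globally generated; hence $H^{1}(\ell,T_{\mathbb{G}^{o}}|_{\ell})=0$ and a fortiori $H^{1}(\ell,N_{\ell/\mathbb{G}^{o}})=0$.

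I expect the main obstacle to be this last case analysis: identifying $\mathbb{G}^{o}$ as a line-bundle total space over a homogeneous variety when $\mathbb{G}$ is a weighted projective space or a projective cone, and checking that every $H$-line in $\mathbb{G}^{o}$ projects isomorphically onto a line in the homogeneous base---this uses that $H$ is pulled back from $Y$, so that no $H$-line can be contained in a fiber of $\pi_{Y}$. The remaining steps---the reduction to smoothness of $\mathcal{M}_{\alpha}$, the normal bundle sequence, and the appeal to generic smoothness in characteristic $0$---are formal.
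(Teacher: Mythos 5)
Your overall strategy differs from the paper's: the paper shows $\overline{M}_{0,0}(X,\alpha)\subset\overline{M}_{0,0}(\mathbb{G},\alpha)$ is the zero locus of a general section of the globally generated bundle $\pi_{*}\mathrm{ev}^{*}V$, $V=\bigoplus_i\mathcal{O}_{\mathbb{G}}(A_i)$, and concludes by Bertini for sections of globally generated bundles, whereas you argue via smoothness of the total space $\overline{M}_{0,0}(\mathcal{X},\alpha)$ plus generic smoothness of $\pi_{*}$ in characteristic $0$. That route is viable, and your reduction to lines, the identification of $\overline{M}_{0,0}(X,\alpha)$ with a fiber of $\pi_{*}$, and the case analysis of $\mathbb{G}^{o}$ as a homogeneous space or a line-bundle total space over one are all fine. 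But there is a genuine gap at the central cohomological step. From the exact sequence
\[0\rightarrow N_{\ell/\mathcal{X}}\rightarrow\mathcal{O}_{\ell}^{\oplus\dim\mathbf{M}}\oplus N_{\ell/\mathbb{G}^{o}}\rightarrow\bigoplus_{i=1}^{m}\mathcal{O}_{\mathbb{P}^{1}}(A_{i}\cdot\ell)\rightarrow 0,\]
the vanishing of $H^{1}$ of the right-hand term does \emph{not} give injectivity of $H^{1}(N_{\ell/\mathcal{X}})\rightarrow H^{1}(\mathcal{O}_{\ell}^{\oplus\dim\mathbf{M}}\oplus N_{\ell/\mathbb{G}^{o}})$. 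The long exact sequence shows that injectivity is equivalent to surjectivity of $H^{0}$ of the middle term onto $H^{0}$ of the quotient; $H^{1}$ of the quotient controls the map on the other side and is irrelevant here. (Compare $0\rightarrow\mathcal{O}(-2)\rightarrow\mathcal{O}(-1)^{\oplus2}\rightarrow\mathcal{O}\rightarrow0$ on $\mathbb{P}^{1}$: the quotient has vanishing $H^{1}$, yet $H^{1}(\mathcal{O}(-2))\neq0$ does not inject into $H^{1}(\mathcal{O}(-1)^{\oplus2})=0$.)

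The missing surjectivity is precisely the substantive input: the composite $T_{[X]}\mathbf{M}\rightarrow H^{0}\bigl(\ell,\bigoplus_i\mathcal{O}(A_i)|_{\ell}\bigr)$ is the restriction map $\bigoplus_i H^{0}(\mathbb{G},\mathcal{O}(A_i))\rightarrow\bigoplus_i H^{0}(\ell,\mathcal{O}(A_i)|_{\ell})$, and you must check this is surjective for every $H$-line $\ell\subset\mathbb{G}^{o}$. This is exactly the statement the paper's proof isolates as ``global generation of $\pi_{*}\mathrm{ev}^{*}V$ follows from surjectivity of $H^{0}(\mathbb{G},V)\rightarrow H^{0}(C,V|_{C})$,'' and it is true in all the cases of Construction \ref{families of Fano varieties} (lines are linearly normal under the relevant embeddings, and the $A_i$ are restrictions of hypersurfaces of low degree). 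So the gap is fillable, but as written the key step is justified by a non sequitur, and without that surjectivity the obstruction space $H^{1}(N_{\ell/\mathcal{X}})$ could a priori be nonzero even though $H^{1}(N_{\ell/\mathbb{G}^{o}})=0$ --- which is exactly what happens for lines on a fixed special member $X$ as opposed to the universal family.
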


\begin{proof}
    $\overline{M}_{0,0}(X,\alpha) \subset \overline{M}_{0,0}(\mathbb{G}, \alpha)$ is the zero locus of a section of a globally generated vector bundle whose sections correspond to deformations of $X$.  Indeed, $X$ is the zero locus of a global section of a vector bundle $V \cong \mathcal{O}_\mathbb{G}(A_1)\oplus \ldots \oplus \mathcal{O}_\mathbb{G}(A_n)$ where each $A_i$ is basepoint free.  As $\mathrm{ev}\colon \overline{M}_{0,1}(\mathbb{G}, \alpha) \rightarrow \mathbb{G}$ has connected fibers, global sections of $\mathrm{ev}^* V$ may be identified with global sections of $V$.  Since $\mathrm{ev}^* V$ is ample on each fiber $C$ of $\pi \colon \overline{M}_{0,1}(\mathbb{G}, \alpha) \rightarrow \overline{M}_{0,0}(\mathbb{G}, \alpha)$, $H^1(\mathrm{ev}^* V|_C) = 0$ and $\pi_*\mathrm{ev}^* V$ is a vector bundle.  Global generation of $\pi_*\mathrm{ev}^* V$ follows from surjectivity of $H^0(\mathbb{G}, V) \rightarrow H^0(C, V|_C)$ for each fiber $C$.  Hence, for $X$ general in moduli,  $\overline{M}_{0,0}(X,\alpha)$ is smooth.
\end{proof}

\begin{proof}[Proof of Theorem \ref{thm: low degree curves}]

    By Lemmas \ref{lem: irred fiber} and \ref{lem: irred total space}, it suffices to show that if $H \cdot \alpha \leq 2$, then the general map parameterized by any component of $\overline{M}_{0,0}(X, \alpha)$ is either an embedding or, when $\dim X = 4$, a double cover of an $H$-line.  This is clear when $H\cdot \alpha = 1$, since $|H|$ is base point free.  We will address the case $H\cdot \alpha = 2$ below.

    By Lemma \ref{lem: fiber dimension}, stable maps with reducible domains cannot form a component of $\overline{M}_{0,0}(X, \alpha)$ when $H\cdot \alpha = 2$.  If $|H|$ is very ample, our claim follows immediately. %
    Otherwise, if $|H|$ is not very ample, the natural map $\pi \colon X \rightarrow \mathbb{P}|H|$ is a double cover branched over a smooth hypersurface $B$ of even degree, and the general curve may be the preimage of a line tangent to $B$ at $\frac{\deg B}{2} - 1$ points.  However, because $\dim X > 3$, the family of curves through a fixed point on the ramification divisor of $\pi$ has positive dimension.  If $C$ is a conic whose image in $\mathbb{P}|H|$ is a line tangent to $B$ at a point $q$, then deformations of $C$ through $q$ must remain tangent to $B$ at $q$ by the Riemann-Hurwitz formula. Hence, $C$ degenerates to a reducible curve by bend-and-break.  The locus of stable maps parameterizing reducible degenerations of $C$ has dimension $3\dim X - 8$.  However, Lemma \ref{lem: fiber dimension} and Lemma \ref{lem: cones of lines} show there can only be a $3\dim X - 9$ parameter family of stable maps whose unique node maps to the ramification divisor.  Hence, the stable map generalizes to nodal union of two $H$-lines with distinct images in $\mathbb{P}|H|$.
\end{proof}

\subsection{Rational curves on varieties with $g(X) > 11$}\label{subsection: contractible divisors}
Let $X$ be a coindex $3$ Fano variety of Picard rank $2$ and $g(X)>11$.
When $g(X) = 13$, $X$ is a homogeneous variety.
Hence Theorem \ref{thm: low degree curves} follows from \cite{Thomsen1998}, \cite{KimPandharipande2001}. 
Otherwise, $X$ has a contractible divisor.  There is a unique $\alpha \in \mathrm{Nef}_{1}(X)_{\mathbb{Z}}$ with $H\cdot \alpha = 1$; for this $\alpha$, $\overline{M}_{0,0}(X, \alpha)$ is irreducible, smooth, and generically parameterizes free curves.  This proves Theorem \ref{thm: low degree curves}(1).  
The following lemma describe components of $\overline{M}_{0,0}(X, \alpha)$ for each $\alpha \in \mathrm{Nef}_{1}(X)$ such that $\alpha \notin \partial\overline{NE}(X)$.

\begin{lem}\label{lem: mbb contractible divisor}
    Let $X$ be a smooth coindex $3$ Fano variety of dimension at least $4$.  Suppose $X$ is not a Fano $4$-fold of product type and $X$ contains a contractible divisor.  Then for all $\alpha \in \Nef_1(X)_{\mathbb{Z}}$ with $\alpha \notin \partial\overline{NE}(X)$, $\Mor(\mathbb{P}^1,X, \alpha)$ has a unique component which generically parameterizes free curves.
\end{lem}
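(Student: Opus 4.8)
The plan is to run through the finitely many varieties covered by the hypotheses and exploit their two elementary contractions. By Theorems~\ref{classification Picard rank 1}, \ref{classification Picard rank 2} and the discussion opening Section~\ref{subsection: contractible divisors}, such an $X$ has $\rho(X)=2$ and is either the blow-up of $\mathbb{P}^5$ or of a smooth quadric $Q^4$ along a line, the blow-up of $Q^4$ along a conic spanning a non-contained plane, or a $\mathbb{P}^1$-bundle $\mathbb{P}(\mathcal{O}_Y\oplus L)$ with $(Y,L)\in\{(Q^3,\mathcal{O}(1)),(\mathbb{P}^3,\mathcal{O}(2))\}$. In every case let $\phi\colon X\to X'$ be the contraction of the contractible divisor $E$ and $\psi\colon X\to B$ the other (fiber-type) contraction. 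Writing $[\ell_1]$ for the class of a $\phi$-contracted curve and $[\ell_2]$ for that of a general $\psi$-fiber curve, one has $\overline{NE}(X)=\mathbb{R}_{\ge0}[\ell_1]+\mathbb{R}_{\ge0}[\ell_2]$ and, by a direct computation, $\Nef_1(X)_{\mathbb Z}\setminus\partial\overline{NE}(X)=\{a[\ell_1]+b[\ell_2]:a\ge1,\ b\ge ea\}$ for an explicit $e\ge1$ (with $e=1$ except when $g(X)=21$). Curves of class $[\ell_1]$ lie in $E$ and are never free, but both $[\ell_2]$ and $[\ell_1]+e[\ell_2]$ are represented by rational curves with globally generated normal bundle -- a $\psi$-fiber curve, resp.\ the strict transform of a general line on $X'$ (blow-up cases) or a line in the positive section (bundle cases) -- hence by free curves. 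For existence it then suffices, given $\alpha=a[\ell_1]+b[\ell_2]$ as above, to write $\alpha=a([\ell_1]+e[\ell_2])+(b-ea)[\ell_2]$ and smooth a general chain of $a$ free curves of the first class and $b-ea$ of the second: such a chain of free curves is a smooth point of $\overline{M}_{0,0}(X,\alpha)$ of the expected dimension and smooths to a free curve, producing a component of $\Mor(\mathbb{P}^1,X,\alpha)$ with free general member.

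For uniqueness in the $\mathbb{P}^1$-bundle cases, recall that $\phi$ contracts the negative section $\sigma_0\cong Y$, and a map $\mathbb{P}^1\to X$ with image not in $\sigma_0$ amounts to a map $g\colon\mathbb{P}^1\to Y$ together with a surjection $g^*(\mathcal{O}_Y\oplus L)\twoheadrightarrow N$ onto a line bundle, with $\deg g_*[\mathbb{P}^1]$ and $\deg N$ determined by the class. A component with free general member has dominant, hence not $\sigma_0$-contained, general member, so it maps to $\Mor(\mathbb{P}^1,Y,g_*[\mathbb{P}^1])$, which is irreducible as $Y$ is homogeneous (\cite{Thomsen1998}, \cite{KimPandharipande2001}); over the locus where the relevant $h^0$ is locally constant the surjections onto a degree-fixed line bundle sweep out an irreducible projective bundle. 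Hence the maps with generic associated pair $(g,N)$ form one irreducible family, of the expected dimension and generically free, and no other component is free.

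For uniqueness in the blow-up cases, write $X=\mathrm{Bl}_Z X'$ with $X'\in\{\mathbb{P}^5,Q^4\}$ homogeneous and $Z$ a smooth line or conic of codimension $c\ge3$, and set $\beta:=\phi_*\alpha\ne0$, $m:=E\cdot\alpha\ge0$. If $M$ is any component with free general member, that member is dominant, hence not contained in $E$, so $\phi$ carries it to a map of class $\beta$ whose $Z$-preimage has length $m$, and the original map is recovered as the strict transform of that image. Thus $\phi_*$ yields a generically injective rational map $M\dashrightarrow\Mor(\mathbb{P}^1,X',\beta)$, which is irreducible (\cite{Thomsen1998}, \cite{KimPandharipande2001}), landing in the locus $L_m$ of maps whose $Z$-preimage has length $m$. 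The crux is a dimension count on $\overline{L_m}$: the closure $W_m$ of the locus of maps meeting $Z$ transversally at $m$ distinct points is irreducible of codimension exactly $m(c-1)$ in $\Mor(\mathbb{P}^1,X',\beta)$, whereas a component along which the $Z$-preimage concentrates at only $k<m$ points has codimension at least $mc-k>m(c-1)$; so $W_m$ is the unique component of $\overline{L_m}$ of minimal codimension. Since $-K_X=\phi^*(-K_{X'})-(c-1)E$ gives $\dim M=-K_X\cdot\alpha+\dim X=\dim W_m$, the irreducible image $\overline{\phi_*(M)}\subseteq\overline{L_m}$ must coincide with $W_m$; by generic injectivity $M$ is then forced to be the closure of the locus of strict transforms of members of $W_m$, which depends on $\alpha$ alone. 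Hence $\Mor(\mathbb{P}^1,X,\alpha)$ has a unique free component (and, combined with existence, the strict transforms of general members of $W_m$ are free).

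The step I expect to require the most work is the dimension estimate on the degenerate-contact loci of $\Mor(\mathbb{P}^1,X',\beta)$, together with the irreducibility of $W_m$ -- equivalently, of the family of degree-$\beta$ rational curves on $X'$ through $m$ general points of $Z$. These rest on the homogeneity of $X'$ and the smoothness of $Z$, and are slightly delicate when $Z$ is positive-dimensional; the $\mathrm{Aut}(X')$-action on jets along $Z$ is the tool to control them. The corresponding input in the bundle cases -- irreducibility of the space of fixed-degree sub-line-bundles of $g^*(\mathcal{O}_Y\oplus L)$ -- is elementary.
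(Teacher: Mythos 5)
Your overall strategy is genuinely different from the paper's. The paper handles these varieties by exploiting their large automorphism groups: it uses a $\mathbb{G}_m$-action to specialize a general free curve into the contractible divisor (which is $\mathbb{P}^1\times\mathbb{P}^3$ or $\mathbb{P}^1\times\mathbb{P}^2$ in the relevant cases, so the corresponding Kontsevich space is irreducible), and for $g(X)=14$ it pushes forward along the fiber-type contraction $X\to\mathbb{P}^3$ and analyzes sections of Hirzebruch surfaces over the base curve. Your existence argument (smoothing chains of explicit free curves) is fine, and your treatment of the $\mathbb{P}^1$-bundle cases via quotient line bundles of $g^*(\mathcal{O}\oplus L)$ is correct and arguably cleaner than the paper's degeneration, since $h^0$ is genuinely constant over $\Mor(\mathbb{P}^1,Y,\pi_*\alpha)$ there.

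The gap is in the blow-up cases, and it sits exactly where you predicted. Two problems. First, ``codimension at least $mc-k$'' is the hard direction of a conditions count: imposing $mc-k$ conditions bounds the codimension from \emph{above}, not below; to get the lower bound you need the relevant jet/evaluation maps along $Z$ to be submersive, and homogeneity of $X'$ only gives transitivity on points of $X'$, not on configurations of $m$ points of $Z$ nor on higher-order jets, so no Kleiman-type argument applies directly. Second, irreducibility of $W_m$ amounts to irreducibility of the space of degree-$\beta$ rational curves on $X'$ through $m$ general points of $Z$ --- but $m\geq 3$ points on a line or conic are a \emph{special} configuration in $X'$, so this follows neither from irreducibility of $\overline{M}_{0,m}(X',\beta)$ nor from generic transversality. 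For $X'=\mathbb{P}^5$ with $Z$ a line both claims can be rescued elementarily: $\overline{L_m}$ is the locus where the four coordinate forms cutting out $\ell$ acquire a common factor of degree $m$, which is visibly irreducible of codimension $3m$ with the expected generic member. But for $X'=Q^4$ with $Z$ a line or conic (the $g=12$ and $g=14$ fourfolds) you must intersect that gcd-type locus with the nonlinear condition $q\circ f\equiv 0$, and neither the irreducibility nor the dimension estimate is delivered by the tools you cite; these are precisely the cases the paper's $\mathbb{G}_m$-degeneration and fibration arguments are designed to avoid. As written, the quadric blow-up cases are not proved.
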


\begin{proof}
    First, suppose $\dim X \geq 5$.  In this case, as $X$ contains a subvariety $Y$ with $a(Y,-K_X) > 1$, $X$ is the blow-up of $\mathbb{P}^5$ along a line $\ell$.  There are only two orbits of points under the action of $\mathrm{Aut}(X)$, the exceptional divisor $E$ and its complement.  For an irreducible curve $f \colon C \rightarrow X$ with $f(C) \not\subset E$, we may find a complimentary linear subspace $V \cong \mathbb{P}^3 \subset \mathbb{P}^5$ disjoint from $\ell$ and $f(C)$.  There is a $\mathbb{G}_m$-action on $X$ induced by the $\mathbb{G}_m$-action on $\mathbb{P}^5$ which fixes $\ell$ and $V$ pointwise.  Under an inclusion $\mathbb{G}_m \rightarrow \mathbb{P}^1$, the family $(g, g \circ f(C)) \subset \mathbb{G}_m \times X$ limits to a map $h \colon C \rightarrow X$ with $h(C) \subset E$.  Since $h^*\mathcal{O}_X(E)$ is a globally generated line bundle and $E$ is a homogeneous variety, %
    $h^{*}T_X$ is globally generated.  Since $E \cong \mathbb{P}^1 \times \mathbb{P}^3$, $\overline{M}_{0,0}(E, h_*[C])$ is irreducible, proving irreducibility of the locus in $\overline{M}_{0,0}(X, f_*[C])$ parameterizing irreducible curves. 

Suppose $\dim X = 4$ instead.  Since there is a divisor $Y$ with $a(Y,-K_X) > 1$, $\rho(X) = 2$ and $g(X) \in \{12, 14, 16, 21\}$.  Unless $g(X) = 14$, $Y$ contains every subvariety $Y' \subset X$ with $a(Y', -K_X) > 1$.  When $g(X) > 14$, the automorphism group of $X$ acts transitively on $X \setminus Y$ and on $Y$.  When $g(X) = 12$, $X$ is the blow-up of a smooth quadric $Q \subset \mathbb{P}^5$ along a conic whose linear span $V$ is not contained in $Q$.  In this case, the action of  $\mathrm{Aut}(X)$ on $X\setminus Y$ has two orbits: $Q \cap V'$ where $V'$ is the orthogonal complement to $V$, and $X \setminus Y \cup V'$.  Since $V' \cap X$ has codimension at least $2$, we may use this as before to limit a general free curve $f\colon C \rightarrow X$ to a curve $h \colon C \rightarrow Y$ with $h^{*}T_X$ globally generated.  As $Y \cong \mathbb{P}^1 \times \mathbb{P}^2$ in this case, irreducibility of the locus of free curves in $\overline{M}_{0,0}(X, f_*[C])$ follows.  When $g(X) > 14$, we may perform a similar limiting operation, but this time the limit of $f \colon C \rightarrow X$ will be a nodal curve $h \colon C' \rightarrow X$ where all but one component $C\subset C'$ are contracted under composition of $h$ with the elementary contraction of fiber-type on $X$.  For a general free curve $f \colon C  \rightarrow X$, the number of irreducible components of $C'$ will be $1 + E\cdot(f_*[C] - h_*[C])$.  This shows the general limit map $[h] \in\overline{M}_{0,0}(X, f_*[C])$ is a smooth point.  As the locus of such curves is irreducible, this also proves irreducibility of the locus of free curves in $\overline{M}_{0,0}(X, f_*[C])$.  

Lastly, suppose $g(X) = 14$, i.e., $X$ is the blow-up of a smooth quadric $Q \subset \mathbb{P}^5$ along a line $\ell$.  In this case, there is an elementary contraction $\pi \colon X \rightarrow \mathbb{P}^3$ of fiber-type.  Let $f \colon C \rightarrow X$ be a free curve general in its variety of deformations.  Since $\alpha \notin \partial\overline{NE}(X)$, a tangent bundle calculation shows $H^0(C, f^{*}T_X) \rightarrow H^0(C, (\pi \circ f)^{*}T_{\mathbb{P}^3})$ is surjective, so that the induced map $\pi_* \colon \overline{M}_{0,0}(X, \alpha) \rightarrow  \overline{M}_{0,0}(\mathbb{P}^3, \pi_*\alpha)$ is dominant at $[f]$.  However, $\overline{M}_{0,0}(\mathbb{P}^3, \pi_*\alpha)$ is irreducible, and the fiber of $\pi_*$ over a general point parameterizes sections of a Hirzeburch surface whose base is the domain of the map. %
The rigid section of this Hirzeburch surface lies on the unique contractible divisor $Y \subset X$, where $\pi|_Y \colon Y \rightarrow \mathbb{P}^3$ is the blow-up of a line.  Thus, the locus in $\overline{M}_{0,0}(X, \alpha)$ parameterizing free curves is irreducible.  
\end{proof}

\subsection{Spaces of quartic rational curves}
In Section \ref{section:a covers}, we will use the following lemma about general embedded quartic curves on a smooth coindex $3$ Fano $4$-fold $X$ whose fundamental linear series is not very ample. %
Recall that such varieties $X$ are smooth complete intersections in some weighted projective space.  This explicit description allows us to prove irreducibility of the space of embedded quartics on the universal complete intersection $\mathcal{X}$, mimicking Lemma \ref{lem: irred total space}.

\begin{lem}\label{lem: quartic very free}
    Let $X$ be a smooth Fano $4$-fold of coindex $3$ and Picard rank $1$ whose fundamental linear series $|H|$ is not very ample.  Suppose $X$ is general in moduli.  %
    If the general curve $C \subset X$ parameterized by a component of $\overline{M}_{0,0}(X)$ is a smooth $H$-quartic, then $C$ is very free.  %
    
\end{lem}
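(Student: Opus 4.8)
The plan is to combine an irreducibility statement for embedded $H$-quartics with a single explicit very free example. By Theorem~\ref{classification Picard rank 1}(1), $X$ is a smooth complete intersection inside a weighted projective space $\mathbb{G}$ whose smooth locus $\mathbb{G}^{o}$ is the total space of $\mathcal{O}_{\mathbb{P}^{4}}(k)$ for some $k\in\{2,3\}$; equivalently, $X$ is a double cover $\pi\colon X\to W$ branched over a smooth hypersurface $B$, with $W=\mathbb{P}^{4}$ when $g(X)=2$ and $W=Q^{4}$ when $g(X)=3$, and $H=\pi^{*}\mathcal{O}_{W}(1)$. With $\mathcal{X},\mathbf{M},\mathbb{G}^{o}$ as in Construction~\ref{families of Fano varieties}, I would first upgrade Lemma~\ref{lem: irred total space} to the case $H\cdot\alpha=4$: exactly as in that proof, the embedded $H$-quartics in $\mathbb{G}^{o}$ form an irreducible family, controlled by their image in $\mathbb{P}^{4}$ — one checks that the multisection types do not survive to a genuinely branched fiber, so that on such a fiber every smooth $H$-quartic $C\subset X$ maps isomorphically to a rational quartic $\overline{C}:=\pi(C)\subset W$ whose intersection divisor with $B$ is everywhere of even multiplicity — and for a general embedded quartic $C\subset\mathbb{G}^{o}$ the complete intersections of the prescribed multidegrees through $C$ form an irreducible family of the expected dimension, which reduces to a routine computation of $h^{0}(\mathbb{G},\mathcal{O}(d)\otimes I_{C})$. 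Hence the locus of embedded $H$-quartics in $\overline{M}_{0,0}(\mathcal{X},\alpha)$ is irreducible, and so is the corresponding locus on a general fiber $X$. In particular, a component of $\overline{M}_{0,0}(X)$ whose general member is a smooth (hence embedded) $H$-quartic must be this unique irreducible locus, and since very freeness ($h^{1}(\mathbb{P}^{1},f^{*}T_{X}(-2))=0$) is an open condition on it, it suffices to exhibit one smooth very free $H$-quartic on one general $X$.

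To construct such a curve I would pass through a hyperplane section of $W$. Fix a general hyperplane $\mathbb{P}^{4}\subset\mathbb{P}^{N}$ and set $\Lambda:=W\cap\mathbb{P}^{4}$, so that $\Lambda$ is $\mathbb{P}^{3}$ (if $g=2$) or $Q^{3}$ (if $g=3$) and $\Lambda\cap B$ is smooth. Choose a smooth rational quartic $\overline{C}\subset\Lambda$ whose intersection divisor with $\Lambda\cap B$ on $\overline{C}\cong\mathbb{P}^{1}$ has everywhere even multiplicity; such $\overline{C}$ exist because smooth rational quartics on $\Lambda$ move in a family of the expected dimension and the even-tangency condition cuts out a subfamily of the expected (positive) dimension, matching $\dim\overline{M}_{0,0}(X',4H)=4$ below. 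Then $\overline{C}$ splits in $X$ into two disjoint isomorphic copies; let $C$ be one of them and $X':=\pi^{-1}(\Lambda)\subset X$. Adjunction gives $-K_{X'}=H|_{X'}$, so $X'$ is a smooth coindex $3$ Fano threefold of genus $g$, general in moduli when $X$ and the hyperplane are general, and $C\subset X'$ is a smooth embedded $H$-quartic. The $4$-dimensional family of these lifts is a component of $\overline{M}_{0,0}(X',4H)$ parameterizing embedded curves of the expected dimension, so by the classification of rational curves on smooth Fano threefolds \cite{BurkeJovinelly2022} it is the unique component generically parameterizing embedded free curves; its general member is balanced, $f^{*}T_{X'}\cong\mathcal{O}(2)\oplus\mathcal{O}(1)^{\oplus 2}$, hence very free on $X'$. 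Since $N_{X'/X}\cong\mathcal{O}_{X}(H)|_{X'}$, we get $f^{*}T_{X}\cong f^{*}T_{X'}\oplus f^{*}N_{X'/X}\cong f^{*}T_{X'}\oplus\mathcal{O}_{\mathbb{P}^{1}}(4)$, whence $h^{1}(\mathbb{P}^{1},f^{*}T_{X}(-2))=h^{1}(f^{*}T_{X'}(-2))+h^{1}(\mathcal{O}(2))=0$. Thus $C$ is very free on $X$, and by the first paragraph so is the general smooth $H$-quartic on general $X$.

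I expect the main obstacle to be the last step of the construction: confirming that the family of lifted even-tangent quartics is exactly the ``good'' component on the threefold $X'$ and that its general member has the balanced splitting $(2,1,1)$ rather than $(2,2,0)$ — this is precisely the content that must be extracted from the threefold analysis. A route that avoids this, which I would keep as a fallback, is to compute $f^{*}T_{X}$ directly from the ramification sequence $0\to T_{X}\to\pi^{*}T_{W}\to\mathcal{O}_{R}(2R)\to 0$ restricted to $C$, where $R$ is the ramification divisor: since $C\cong\overline{C}$ meets $R$ in the reduced divisor lying over the even tangency points, $f^{*}T_{X}=\ker\bigl(T_{W}|_{\overline{C}}\to\mathcal{S}\bigr)$ for a skyscraper sheaf $\mathcal{S}$; twisting by $\mathcal{O}_{\mathbb{P}^{1}}(-2)$ and using $h^{1}(T_{W}|_{\overline{C}}(-2))=0$ (immediate from $T_{\mathbb{P}^{4}}|_{\overline{C}}\cong T_{\overline{C}}\oplus N_{\overline{C}/\mathbb{P}^{4}}$ being sufficiently positive for general $\overline{C}$), and observing that the direction $T_{\overline{C}}\subset T_{B}$ maps to zero at each tangency point, the vanishing $h^{1}(f^{*}T_{X}(-2))=0$ becomes surjectivity of the evaluation $H^{0}(\overline{C},N_{\overline{C}/W}(-2))\to\bigoplus_{q}N_{B/W,q}$ over the tangency points $q$, which one establishes for general $B$ by a transversality count in the incidence variety of pairs (even-tangent quartic, branch hypersurface). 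Either route finishes the proof; I would present the hyperplane-section argument as primary and the ramification-sequence computation as the backup.
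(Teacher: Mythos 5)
Your first paragraph follows the paper's own strategy for irreducibility (extending Lemma \ref{lem: irred total space} to $H\cdot\alpha=4$ via the universal complete intersection), though you gloss the one point there that actually requires work: one must show that for $X$ general the image $\overline{C}\subset\mathbb{P}|H|$ of a quartic that is general in its component is a rational normal quartic. The paper does this with a Castelnuovo--Mumford regularity argument showing that the everywhere-tangency condition on the branch divisor imposes the expected number of conditions uniformly over all reduced degree-$4$ rational image curves, which is what rules out components whose general member has degenerate image. The genuine gap, however, is in your second paragraph, at exactly the point you flag as the ``main obstacle.'' Your construction needs the general embedded free $H$-quartic on the fundamental-divisor threefold $X'$ to be very free \emph{on $X'$}, i.e.\ to have the balanced splitting $f^{*}T_{X'}\cong\mathcal{O}(2)\oplus\mathcal{O}(1)^{\oplus 2}$ rather than $\mathcal{O}(2)\oplus\mathcal{O}(2)\oplus\mathcal{O}$. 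Anticanonical degree $4$ is the minimal degree a very free curve on a threefold can have, so this is a borderline statement; it is not part of what \cite{BurkeJovinelly2022} proves (uniqueness of the Manin component and generic freeness do not distinguish the two splittings, and both give the same $4$-dimensional space of quartics). Worse, $X'$ is not general in moduli as a genus-$2$ or genus-$3$ Fano threefold: the hyperplane sections of a fixed general branch divisor in $\mathbb{P}^{4}$ (resp.\ in $Q^{4}$) sweep out only a $4$-dimensional family of branch surfaces, far from the full moduli. So even a generic-threefold version of the balancedness claim would not apply to your $X'$. In effect you have reduced the lemma to its own three-dimensional analogue, which is no easier; your fallback transversality computation is likewise asserted rather than carried out, and it is again essentially the content of the lemma.

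The paper sidesteps all of this by degenerating one step further: inside a fundamental divisor $Y\subset X$ it takes the nodal union $C$ of a free $H$-cubic and an $H$-line and uses the sequence $0\to N_{C/Y}\to N_{C/X}\to N_{Y/X}|_{C}\to 0$, where $N_{Y/X}|_{C}=H|_{C}$ is ample, to conclude that $N_{C/X}$ is ample and hence that a general smoothing is very free. That argument needs only the existence of free cubics on $Y$ and generic reducedness of the Fano scheme of lines on $Y$, both available for every smooth fundamental divisor, and never requires a curve to be very free inside the threefold itself. If you want to keep your hyperplane-section framing, the cleanest repair is to import this reducible-curve trick in place of the balancedness claim; as written, the key positivity step of your proof is missing.
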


\begin{proof}
    Let $l$ be the class of an $H$-line.  Recall that $X$ is a general complete intersection of even degree hypersurfaces in some weighted projective space $\mathbb{G}$.  Let $\mathcal{X}$ be the universal complete intersection.  In these explicit circumstances, we claim the locus of complete intersections containing a given embedded quartic curve is of the expected dimension.  By the argument of Lemma \ref{lem: irred total space}, this proves a unique component of $\overline{M}_{0,0}(\mathcal{X},4l)$ generically parameterizes smooth embedded curves.

    To verify our claim, suppose $C \subset \mathbb{G}$ is an embedded rational quartic curve.  Consider the map $\phi : X \rightarrow \mathbb{P}^{g + 2}$ induced by the complete linear system $|H|$.  %
    Recall that $X$ is a branched double cover of its image $\overline{X} = \phi(X)$, and that the branch locus is defined by a hypersurface $B$ of degree at least $4$.  Moreover, the image $\overline{C} = \phi(C)$ of $C$ is a reduced curve of degree four everywhere tangent to $B$.
    
    First, we show that if $C\subset X$ corresponds to a general point of some component of $\overline{M}_{0,0}(X, [C])$, then $\overline{C}\subset  \mathbb{P}^{g + 2}$ is a rational normal quartic.  Castelnuovo-Mumford regularity for curves \cite{curvesRegularity} implies the natural map 
    $$H^0(\overline{X}, \mathcal{O}_{\overline{X}}(B)) \rightarrow H^0(\overline{C}, \mathcal{O}_{\overline{C}}(B))$$ 
    is surjective.  Since the arithmetic genus $p_a(\overline{C})$ is at most three, $\mathcal{O}_{\overline{C}}(B)$ is very ample, and being everywhere tangent to $B$ imposes the expected number of conditions.  Thus, the locus of double covers of $\overline{X}$ containing $C$ has dimension $c$ for some constant $c$ independent of $\overline{C}$.  However, the general quartic curve parameterized by $\Mor(\mathbb{P}^1, \overline{X})$ is a rational normal quartic.  Thus, as $X$ is general in moduli, $\overline{C}\subset  \mathbb{P}^{g + 2}$ is a rational normal quartic.

      As in Lemma \ref{lem: irred total space}, it follows that $\overline{M}_{0,0}(\mathcal{X},4l)$ has a unique component $N$ which generically parameterizes smooth embedded curves.
      There exists a boundary stratum $\partial N \subset N$ which parameterizes nodal unions of lines and smooth $H$-cubics.  A normal bundle calculation shows general smoothings of such curves are very free.  Indeed, consider a smooth fundamental divisor $Y$ on a general complete intersection $X$.  It is well known that $Y$ contains a family of free cubics and that the Hilbert scheme of lines on $Y$ is generically reduced.  If $C \subset Y$ denotes the general nodal union of a line and a cubic in $Y$, the exact sequence 
      $$0 \rightarrow N_{C/Y} \rightarrow N_{C/X} \rightarrow N_{Y/X}|_C$$
      shows $N_{C/X}$ is ample.  It follows that every irreducible component of the locus in $N$ that parameterizes curves in $X$ generically parameterizes very free curves.   %
\end{proof}

\section{Movable Bend-and-Break}\label{section:mbb}
In this section, we prove Movable bend-and-break for smooth Fano varieties of coindex $3$ and dimension at least $4$:
\theoremstyle{plain}
\newtheorem*{MainThmD}{\rm\bf Theorem \ref{MBB}}
\begin{MainThmD}
Let $X$ be a smooth coindex $3$ Fano variety of dimension $n \ge 4$.  Let $-K_X = (n-2)H$.  Suppose $X$ does not contain a contractible divisor.  Let $M \subset \overline{M}_{0,0}(X,\alpha)$ be a component that generically parameterizes free stable maps of $H$-degree at least $2$.  
Then $M$ contains a stable map $f\colon C_{1} \cup C_{2}\rightarrow X$ such that each restriction $f|_{C_{i}}\colon C_{i}\rightarrow X$ is a free morphism.
\end{MainThmD}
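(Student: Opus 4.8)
The plan is to argue by induction on the $H$-degree $d=H\cdot\alpha$, producing the reducible stable map by Mori-style bend-and-break \emph{inside the proper family $M$} and controlling the two resulting components with the classification of subvarieties of higher $a$-invariant (Theorem~\ref{classification higher a}) together with the fiber-dimension estimates of this section. For the base cases — $d=2$, and also $d=3$ when $n=4$ — Theorem~\ref{thm: low degree curves} pins down $M$ directly: when $\dim X\geq 5$ it is the unique component, and when $n=4$, $d=2$ it is either the component of embedded conics or a component of double covers of a free $H$-line. In every case a boundary stratum of $M$ parameterizes a nodal union $C_1\cup C_2$ of an $H$-line and a free curve of $H$-degree $d-1$; the $H$-line is free because $\overline{M}_{0,0}(X,\beta)$ is smooth for $H\cdot\beta=1$ (Theorem~\ref{thm: low degree curves}(1)), and the degree-$(d-1)$ piece is free by the inductive hypothesis, or again by Theorem~\ref{thm: low degree curves}(1) when $d-1=1$.

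For the inductive step, fix a general free curve $f\colon\mathbb{P}^1\to X$ parameterized by $M$ and a general point $p\in f(\mathbb{P}^1)$. Since $\dim M=(n-2)d+n-3$, the sublocus $M_p\subset M$ of stable maps through $p$ has dimension $(n-2)d-2$, which in the inductive range is at least $n-1$; in particular $M_p$ is positive-dimensional and $\overline{M_p}$ is proper, so bend-and-break produces a one-parameter subfamily degenerating $f$, within $M$, to a stable map $f_0\colon C_1\cup C_2\to X$ with $p\in f_0(C_1)$, $\deg_H f_0|_{C_i}=d_i\geq 1$ and $d_1+d_2=d$ (iterating the construction if the first degeneration has more than two components). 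Because $\dim M_p\geq n-1$, the curves parameterized by $M_p$ sweep out $X$, so the node $q=f_0(C_1\cap C_2)$ may be taken general in $X$ as well, and both $f_0(C_1)$ and $f_0(C_2)$ pass through a general point of $X$.

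It then remains to show each $f_0|_{C_i}$ is free. If $d_i=1$ then $f_0|_{C_i}$ is an $H$-line through a general point, hence free since $\overline{M}_{0,0}(X,[C_i])$ is smooth. If $d_i\geq 2$, the component $N_i\subset\overline{M}_{0,0}(X,[C_i])$ containing $[f_0|_{C_i}]$ dominates $X$, so by Proposition~\ref{prop: nondominant curves subvariety a-invar} its image is not contained in a subvariety of higher $a$-invariant; the inductive hypothesis and the fiber-dimension lemmas then identify $N_i$ as a component whose general member is free and show $[f_0|_{C_i}]$ is such a member, recursing on $d$ to split $C_i$ further if needed. This yields the required stable map $C_1\cup C_2\to X$ with both restrictions free.

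The heart of the matter — and the reason the hypothesis that $X$ has no contractible divisor is needed — is the freeness of the broken components: unlike $\Mor(\mathbb{P}^1,X,\beta)$, the space $\overline{M}_{0,0}(X,\beta)$ can carry components whose general member is non-free, so one must rule out that bend-and-break is forced into such a component. Theorem~\ref{classification higher a} is decisive here: absent a contractible divisor, every subvariety $Y\subset X$ with $a(Y,-K_X|_Y)>1$ is a plane $\mathbb{P}^2\hookrightarrow X$, embedded by $|H|$ and contracted to a point by a fiber-type contraction, which occurs only when $n=4$. The remaining work is a dimension count — using the fiber-dimension estimates for families of curves and for cones of lines, and the fact that $p$, hence $q$, are general — showing that reducible stable maps with a component lying in such a plane, or with a component that is a multiple cover of an $H$-line not deformable to a chain of two distinct $H$-lines, cannot fill up the boundary of $\overline{M_p}$; hence a degeneration into two free curves must already occur in $M$. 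I expect the four-dimensional case, where planes $\mathbb{P}^2\hookrightarrow X$ genuinely appear, to be the delicate one, absorbing most of the effort and drawing on the explicit analysis of low-degree curve spaces in Section~\ref{section: low degree curves}.
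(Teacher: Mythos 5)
Your overall strategy (degenerate via bend-and-break, then use Theorem~\ref{classification higher a} and fiber-dimension estimates to force the pieces to be free) matches the paper's in spirit, but the proposal has two genuine gaps. First, the mechanism producing the degeneration is wrong: positive-dimensionality and properness of $M_p$ do \emph{not} imply that $\overline{M_p}$ contains a reducible stable map. Bend-and-break requires \emph{two} fixed points (or one point together with enough fixed incidence conditions cutting the family down to a one-parameter family of varying cycles); with one point fixed, the family of lines through a point of $\mathbb{P}^n$ is positive-dimensional, proper, and contains no reducible member. In the inductive range one can indeed fix a second general point, but the paper instead fixes $a_0+1$ general points and a collection of general complete intersections $R_{i,j}$ determined by the splitting type of the normal bundle, so that finitely many curves of $M$ meet all of them, and then relaxes exactly one condition to obtain the one-parameter family $T$ that must break. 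This choice is not cosmetic: the residual incidence conditions are exactly what is used later to control the broken pieces.

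Second, and more seriously, the step ``the node $q=f_0(C_1\cap C_2)$ may be taken general in $X$, and both $f_0(C_i)$ pass through a general point'' does not follow from $\dim M_p\geq n-1$: the reducible members form a proper closed subset of $\overline{M_p}$, and a priori \emph{every} such member could have a component contained in a plane of higher $a$-invariant, a multiple cover, or a line in a cone of lines through one of the finitely many special points of Lemma~\ref{lem: cones of lines}. Ruling this out is the entire content of the proof; you defer it to ``a dimension count,'' but that count is Lemma~\ref{lem: mbb precurser} together with the bookkeeping of which connected components $C_s'$ of the ``good'' part of $C$ meet which $R_{i,j}$, culminating in the inequality $3s_0+2s_1+s_2\geq -K_X\cdot f_*[C_s']+1$ that forces some $C_s'$ to be an irreducible general free curve meeting a higher-$a$ subvariety, contradicting Lemmas~\ref{lem: finitely many planes} and \ref{lem: Picard rank 2 subvarieties higher a-invar}. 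A smaller point: smoothness of $\overline{M}_{0,0}(X,\beta)$ for $H\cdot\beta=1$ does not make every $H$-line free (a line can be unobstructed with $N_\ell$ not globally generated), so freeness of the degree-one pieces also needs the generality furnished by the incidence conditions.
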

\subsection{Fibers of Families of Curves} We begin with a general lemma.

\begin{lem}\label{lem: fiber dimension}
    Let $X$ be a smooth Fano variety and let $Y \subset X$ be the union of all subvarieties $V \subset X$ with $a(V, -K_X |_V) > a(X, -K_X) = 1$.  Suppose $M \subset \Mor(\mathbb{P}^1, X)$ is a component whose universal family $\pi \colon \mathcal{U} \rightarrow M$ dominates $X$ under evaluation $\mathrm{ev}\colon \mathcal{U} \rightarrow X$.  For $m\geq 0$, consider the locus 
    $$Z_m =\{ p \in X \mid \dim \mathrm{ev}^{-1}(p) > \dim \mathcal{U} - \dim X + m\}.$$
    Then $\operatorname{codim}(Z_m \setminus Y, X) \geq 3 + m$.  For any subscheme $Z \subset Z_m$, let $Z'$ be the closure of the images of maps parameterized by $\pi(\mathrm{ev}^{-1}(Z))$, i.e., the closure of $\mathrm{ev}(\pi^{-1}(\pi(\mathrm{ev}^{-1}(Z))))$.
    If $a(Z', -K_{X}) \le 1$, then $\operatorname{codim}(Z, Z') \geq 2 + m$. 
\end{lem}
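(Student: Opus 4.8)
The plan is to combine a dimension count for the evaluation map $\mathrm{ev}$ with the elementary bound that a general member $C$ of a family of rational curves sweeping out a variety $V$ lifts to a \emph{free} rational curve on a resolution $\widetilde{V}$ of $V$, so that $-K_{\widetilde{V}}\cdot C\le a(V,-K_X|_V)\cdot\bigl((-K_X|_V)\cdot C\bigr)$. Write $d=-K_X\cdot\alpha$ and $e=\dim\mathcal{U}-\dim X=\dim M+1-\dim X$, so the general fibre of $\mathrm{ev}$ has dimension $e$ and $e\ge d+1$ since $\dim M\ge -K_X\cdot\alpha+\dim X$. I would establish the second assertion first. Fix an irreducible $Z\subseteq Z_m$ with $a(Z',-K_X|_{Z'})\le 1$ and let $M_Z\subseteq M$ be the locus of curves meeting $Z$. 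Since every point of $Z$ lies in $Z_m$, the surjection $\mathrm{ev}^{-1}(Z)\to Z$ has all fibres of dimension $\ge e+m+1$, so $\dim\mathrm{ev}^{-1}(Z)\ge\dim Z+e+m+1$; as a general curve meeting $Z$ does so in finitely many points, $\dim M_Z\ge\dim Z+e+m+1$. Every curve parameterised by $M_Z$ has image inside $Z'$ by construction, and, $\mathbb{P}^1$ being a smooth curve, each such map lifts to a fixed resolution $\widetilde{Z'}\to Z'$.

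For the opposite inequality I would discard from $M_Z$ the components whose general member has image contained in $Y$; since $Z\setminus Y$ is dense in $Z$ and a curve through a point of $Z\setminus Y$ is not contained in $Y$, the surviving components still have dimension $\ge\dim Z+e+m+1$. Each surviving component $M_Z^{\circ}$ sweeps out a subvariety $V\subseteq Z'$ with $V\not\subseteq Y$, hence $a(V,-K_X|_V)\le 1$ by the definition of $Y$; its general member, lifted to a resolution $\widetilde{V}$ of $V$, is free and satisfies $-K_{\widetilde{V}}\cdot C\le d$, and because all deformations inside $M_Z^{\circ}$ keep the image in $V$, we get $\dim M_Z^{\circ}\le -K_{\widetilde{V}}\cdot C+\dim V\le d+\dim Z'$. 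Combining this with the lower bound and $e\ge d+1$ gives $\operatorname{codim}(Z,Z')=\dim Z'-\dim Z\ge m+2$.

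To deduce the first assertion, let $W$ be an irreducible component of $\overline{Z_m\setminus Y}$ and let $W'$ be the subvariety swept out by curves meeting $W$. If $a(W',-K_X|_{W'})>1$ then $W'\subseteq Y$, hence $W\subseteq Y$, contradicting $W\not\subseteq Y$; so $a(W',-K_X|_{W'})\le 1$ and the second assertion applies to $Z=W$, giving $\operatorname{codim}(W,W')\ge m+2$. If $W'\subsetneq X$ this suffices, as $\operatorname{codim}(W,X)=\operatorname{codim}(W,W')+\operatorname{codim}(W',X)\ge m+3$. If $W'=X$, the dimension count gives $\dim M_W\ge\dim W+\dim M+m+2-\dim X$, which together with $\dim M_W\le\dim M$ forces $\operatorname{codim}(W,X)\ge m+2$; equality would force $M_W=M$, i.e.\ every curve of $M$ would meet $W$, and I would rule this out from the facts that $M$ dominates $X$ and $W$ has codimension $\ge m+2\ge 2$, so a general curve of $M$ avoids $W$.

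The step I expect to be the main obstacle is the splitting of $M_Z$ according to whether a component's general member lies in $Y$: one must verify that discarding the ``bad'' components does not destroy the lower bound, which is exactly why $Y$ is excised in the first assertion and why $a(Z',-K_X|_{Z'})\le 1$ is hypothesised in the second. The remaining case $Z\subseteq Y$, where the naive count sees only bad curves, I would treat by rerunning the argument with $Y$ replaced by the union of the higher $a$-invariant subvarieties contained in $Z'$. The correct lifting of curves through singular points of $Z'$ or $V$ to resolutions, and the ensuing comparison of anticanonical degrees, is the remaining routine point, controlled by the valuative criterion together with the birational invariance of the $a$-invariant.
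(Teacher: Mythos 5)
Your proof follows essentially the same route as the paper's: a lower bound on $\dim \pi(\mathrm{ev}^{-1}(Z))$ coming from the fiber-dimension hypothesis, played against an upper bound coming from freeness of the general member of a dominant family on a resolution of the swept-out locus together with pseudo-effectivity of the adjoint divisor; the paper merely runs the comparison in the contrapositive (small codimension forces $a(Z',-K_X)>1$). Two loose ends are worth flagging. First, your assertion that ``a general curve meeting $Z$ does so in finitely many points'' fails exactly when the general such curve is contained in $Z$, i.e.\ when $Z=Z'$; the paper treats this as a separate case. Your count still closes there — the lower bound on $\dim M_Z$ drops by one, but $\operatorname{codim}(Z,Z')=0$ then forces $m\le -1$ — so this is an omission rather than an error. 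Second, the complication you single out as the main obstacle (excising from $M_Z$ the components landing in $Y$, and the unresolved case $Z\subseteq Y$) is self-inflicted: the hypothesis $a(Z',-K_X|_{Z'})\le 1$ already makes $K_{\widetilde{Z'}}-\phi^{*}K_X$ pseudo-effective on a resolution $\phi\colon\widetilde{Z'}\to Z'$, so pairing with the movable class $\alpha$ gives $-K_{\widetilde{Z'}}\cdot\alpha\le -\phi^{*}K_X\cdot\alpha$ and hence your upper bound $\dim M_Z\le -K_X\cdot\alpha+\dim Z'$ directly, with no reference to $Y$; this is exactly what the paper does, and $Y$ is needed only in the first assertion, where (as you correctly argue) $W\not\subseteq Y$ forces $a(W',-K_X|_{W'})\le 1$ for the swept-out locus $W'$. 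Your extra step ruling out $\operatorname{codim}(W,X)=m+2$ when $W'=X$, via $M_W=M$ contradicting the fact that a general free curve of a dominant family avoids a fixed codimension-two subset, is a valid (and slightly more careful) substitute for the paper's appeal to non-free curves lying in a proper closed subset.
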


\begin{proof}
    The first claim follows from the second claim since rational curves parameterized by $\pi(\mathrm{ev}^{-1}(Z))$ are non-free, which are contained in a proper closed subset of $X$ by \cite[Theorem 3.11]{Kollar1996}.
    \par
    Let $Z$ be a closed subvariety in $Z_{m}$ and $Z'$ the closure of $\mathrm{ev}(\pi^{-1}(\pi(\mathrm{ev}^{-1}(Z))))$.
    Clearly, $Z \subset Z'$.
    Suppose that $\operatorname{codim}(Z, Z') \le 1 + m$.
    We claim that $a(Z', -K_{X}) > 1$.
    Take a smooth resolution $\phi\colon \tilde{Z}\rightarrow Z'$.
    Let $\tilde{W} \subset \Mor(\mathbb{P}^{1},\tilde{Z}, \alpha)$ be the locus consisting of strict transforms of general curves parametrized by $\pi(\mathrm{ev}^{-1}(Z))$.
    Note that $\tilde{W}$ is contained in a dominant component. 
    Thus we have 
    \[-K_{\tilde{Z}} \cdot \alpha + \dim Z' \ge \dim \tilde{W} =  \dim \pi(\mathrm{ev}^{-1}(Z)).\]
    \par
    First, suppose that $Z = Z'$.
    In this case, $\pi^{-1}(\pi(\mathrm{ev}^{-1}(Z))) = \mathrm{ev}^{-1}(Z)$.
    Hence we have
    \begin{align*}
        \dim \pi(\mathrm{ev}^{-1}(Z)) &= \dim \mathrm{ev}^{-1}(Z) - 1\\
         &\ge \dim \mathcal{U} - \dim X + m + \dim Z\\
         &= -\phi^{*}K_{X}\cdot \alpha + 1 + m + \dim Z
    \end{align*}
    since $\mathrm{ev}\colon \mathcal{U} \rightarrow X$ is dominant.
    Thus, $(K_{\tilde{Z}} - \phi^{*}K_{X})\cdot \alpha \le - 1 - m < 0$.
    Since $\alpha$ is a nef curve class on $\tilde{Z}$, we conclude that $a(Z', -K_{X}) > 1$.
    \par
    Suppose that $Z \subsetneq Z'$.
    Then the restriction $\pi|_{\mathrm{ev}^{-1}(Z)}$ is generically finite.
    Hence we have
    \begin{align*}
        \dim \pi(\mathrm{ev}^{-1}(Z)) &= \dim \mathrm{ev}^{-1}(Z)\\
         &> \dim \mathcal{U} - \dim X + m + \dim Z\\
         &= -\phi^{*}K_{X}\cdot \alpha + 1 + m + \dim Z.
    \end{align*}
    Thus, $(K_{\tilde{Z}} - \phi^{*}K_{X})\cdot \alpha < - 1 - m + \operatorname{codim}(Z,Z') \le 0$ and we again conclude that $a(Z', -K_{X}) > 1$.
\end{proof}

\begin{rem}\label{rem: cones of lines}
    Assume $X$ is a smooth coindex $3$ Fano variety.  In the notation of Lemma \ref{lem: fiber dimension}, when $\dim X = 4$ it follows that $Z_1\setminus Y$ is finite and $Z_2\setminus Y$ is empty.  When $\dim X \geq 5$, we will show the same statement holds.  When $M$ parameterizes lines, this follows immediately from the observation that the fiber over any point in $Z_m$ is a family of lines of dimension $\dim X - 3 + m$.  In this case, if $Z_1 \setminus Y$ were not finite, then there would be a one-parameter family $\{D_t\}$ of divisors covered by non-free lines through a point $p_t$.  The algebraic family $\{D_t\}$ would dominate $X$, contradicting the  containment of non-free curves of bounded degree in a proper closed subset (\cite[Theorem 3.11]{Kollar1996}).
\end{rem}

\begin{lem}\label{lem:gluing}
        Let $X$ be a smooth Fano variety.  For some $d \geq \dim X$, suppose that each component $M \subset \overline{M}_{0,0}(X)$ parameterizing curves of $-K_X$-degree less than $d$ has a one-pointed family $\pi\colon M' \rightarrow M$ whose evaluation map $\mathrm{ev} \colon M' \rightarrow X$ satisfies the following:

        \begin{enumerate}[(1)]
            \item The locus $Z = \{p \in X | \dim \text{ev}^{-1}(p) > \dim M' - \dim X\}$ has codimension at least 3, and $\dim \text{ev}^{-1}(p) = \dim M' - \dim X + 1$ for all $p \in Z$.
        \item For all $p \in Z$, any codimension $3$ subvariety meets a general curve parameterized by any component of $\mathrm{ev}^{-1}(p)$ of larger than expected dimension at finitely many points. %
        \end{enumerate}
        Then each component of $\overline{M}_{0,0}(X)$ which parameterizes degree $d$ curves and has nonempty boundary also satisfies (1), (2), and the following condition (3):
        \begin{enumerate}[(3)]
            \item The locus $Y =\{ p \in X \mid \text{a component of } \mathrm{ev}^{-1}(p) \text{ parameterizes reducible curves} \}$ has codimension at least $3$, and each component of $\mathrm{ev}^{-1}(p)$ parameterizing nodal curves has the expected dimension.
        \end{enumerate}
        Lastly, if $M_1, M_2 \subset \overline{M}_{0,0}(X)$ are two components whose one-pointed families $M_i'$ satisfy (1) and (2), then each irreducible component of $M_1' \times_X M_2'$ dominates $X$.
\end{lem}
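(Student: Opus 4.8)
The plan is to prove the final ``Lastly'' assertion first, since it feeds into the main conclusion, and then to bootstrap it to show that a degree-$d$ component with nonempty boundary inherits conditions (1), (2), (3) for its one-pointed family.

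\textbf{The fiber-product assertion.} Suppose $M_1', M_2'$ satisfy (1) and (2); write $\mathrm{ev}_i\colon M_i'\to X$ and $e_i=\dim M_i'-\dim X$. Each $\mathrm{ev}_i$ is dominant and proper, hence surjective, and each $M_i'$ is irreducible. Since $X$ is smooth, every irreducible component $W$ of $M_1'\times_X M_2'$ satisfies $\dim W\ge \dim M_1'+\dim M_2'-\dim X=\dim M_1'+e_2$. Assume toward a contradiction that $\mathrm{ev}(W)\subsetneq X$. As $\mathrm{ev}=\mathrm{ev}_1\circ\mathrm{pr}_1$ with $\mathrm{ev}_1$ surjective, the closed set $W_1:=\mathrm{pr}_1(W)$ is a proper closed subset of $M_1'$, so $\dim W_1\le\dim M_1'-1$. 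Split into two cases. If $\mathrm{ev}_1(W_1)\not\subseteq Z_2$, a general $u_1\in W_1$ has $\mathrm{ev}_1(u_1)\notin Z_2$, so the $\mathrm{pr}_1$-fiber of $W$ over $u_1$, contained in $\mathrm{ev}_2^{-1}(\mathrm{ev}_1(u_1))$, has dimension $\le e_2$ by (1), whence $\dim W\le\dim W_1+e_2<\dim M_1'+e_2$, a contradiction. If $\mathrm{ev}_1(W_1)\subseteq Z_2$, then $W_1\subseteq\mathrm{ev}_1^{-1}(Z_2)$; since $\operatorname{codim}(Z_2,X)\ge 3$ and (1) bounds the fibers of $\mathrm{ev}_1$ by $e_1+1$ over $Z_2$, we get $\dim\mathrm{ev}_1^{-1}(Z_2)\le(\dim X-3)+(e_1+1)=\dim M_1'-2$, while the $\mathrm{pr}_1$-fibers of $W$ have dimension $\le e_2+1$ by (1) for $M_2'$, so $\dim W\le\dim M_1'-2+e_2+1<\dim M_1'+e_2$, again a contradiction. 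Hence every component of $M_1'\times_X M_2'$ dominates $X$.

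\textbf{Setting up the degree-$d$ component.} Let $M\subseteq\overline M_{0,0}(X,\beta)$ be a component parameterizing degree-$d$ curves with $\partial M\neq\emptyset$. A boundary divisor $\Delta\subseteq M$ parameterizes stable maps $C_1\cup C_2\to X$ of classes $\beta_1+\beta_2=\beta$ with $0<d_i:=-K_X\cdot\beta_i<d$; picking components $M_i\subseteq\overline M_{0,0}(X,\beta_i)$ and their one-pointed families $M_i'$, the divisor $\Delta$ is the image under the gluing morphism $\mu$ of an irreducible component $W$ of $M_1'\times_X M_2'$, with $\mu|_W$ generically finite onto $\Delta$. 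By hypothesis $M_1', M_2'$ satisfy (1),(2), so by the previous paragraph $W$ dominates $X$; thus the general stable map in $\Delta$ is a union of two free curves glued at a general point, which smooths within $M$ to a free curve of class $\beta$ (the relevant $H^1$ of the normal sheaf vanishes by the Mayer--Vietoris sequence of \cite{GHSrational} together with freeness of the $C_i$, as in the discussion following that lemma). Consequently the general member of $M$ is free, $M$ dominates $X$, and writing $M'$ for the one-pointed family over $M$ all spaces have the expected dimension: $\dim M'=d+\dim X-2$ and the general fiber of $\mathrm{ev}\colon M'\to X$ has dimension $d-2$.

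\textbf{Conditions (1) and (2) for $M'$.} Since $M$ is a dominant component whose universal family has the expected dimension, Lemma \ref{lem: fiber dimension} applies: with $Y_{\mathrm{ha}}$ the union of subvarieties with $a(\,\cdot\,,-K_X|_{\,\cdot\,})>1$ we get $\operatorname{codim}(Z_m\setminus Y_{\mathrm{ha}},X)\ge 3+m$, and the companion sweep estimate of that lemma controls how curves in an over-dimensional fiber component meet codimension-$3$ subvarieties. It remains to control $Y_{\mathrm{ha}}$ itself, and here the hypothesis is essential: any $V\subseteq X$ with $a(V,-K_X|_V)>1$ is covered by rational curves $C$ with $(-K_X|_V)\cdot C\le -K_{\widetilde V}\cdot C<\dim V+1\le\dim X\le d$ (passing to a resolution $\widetilde V$ and using Mori's bound together with $a(V)>1$), and $(-K_X|_V)\cdot C\ge\dim X-2\ge 2$ because $X$ is coindex $3$ of dimension $\ge 4$; if $V$ had codimension $\le 2$ in $X$, then over a general point of $V$ the curves of class $[C]$ lying in $V$ would already form an over-dimensional fiber, so the corresponding one-pointed family (of $-K_X$-degree strictly less than $d$) would violate (1), a contradiction. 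Hence $\operatorname{codim}(Y_{\mathrm{ha}},X)\ge 3$, which with Lemma \ref{lem: fiber dimension} yields $\operatorname{codim}(Z,X)\ge 3$; the $m=1$ case, together with a short check that curves contained in a higher-$a$-invariant $V$ sweep out strictly less than the expected dimension of a fiber (the polarization $H|_V$ ``uses up'' the degree), gives that the jump along $Z$ is exactly one, and the sweep estimate then gives (2).

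\textbf{Condition (3), and the main obstacle.} A one-pointed reducible curve through $p$ is built by choosing a component $C_1\ni p$ of class $\beta_1$, a node $q\in C_1$, and a component $C_2\ni q$ of class $\beta_2$, over the finitely many splittings and the choice of which component carries $p$; for general $p$ this locus has dimension $e_1+1+e_2=d-3$, one less than the expected fiber dimension $d-2$, so reducible curves lie in the boundary divisor of $\mathrm{ev}^{-1}(p)$ and do not form a component, i.e. $p\notin Y$. Thus $Y$ lies in the locus where this count jumps by one, which can only happen when (a) $p\in Z_1$ or $p\in Z_2$ (codimension $\ge 3$), (b) the node $q$ is forced into $Z_2$ (or $Z_1$) for the general $C_1$ through $p$ -- ruled out off a codimension-$\ge 3$ locus using condition (2) for $M_i'$, $\operatorname{codim}(Z_i,X)\ge 3$, and the fact that $d\ge\dim X$ leaves $C_1$ enough room to move away from $Z_2$, or (c) one of the $C_i$ or the attaching datum degenerates, handled by the same inputs applied to the lower-degree families; the same bookkeeping shows any component of $\mathrm{ev}^{-1}(p)$ that does parameterize nodal curves has the expected dimension, giving (3). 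The clean part is the fiber-product assertion; the real work -- and the expected main obstacle -- is this last round of bookkeeping, i.e. showing that the only sources of an over-dimensional fiber (or over-dimensional reducible sublocus) are the ones listed and that each is confined to codimension $\ge 3$. The subtlest point is excluding an excess fiber over a codimension-$2$ locus coming from (or accumulating near) a higher-$a$-invariant subvariety, which is exactly where the hypothesis that all lower-degree components already satisfy (1) is indispensable.
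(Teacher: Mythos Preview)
Your proof of the final fiber-product assertion is correct and matches the paper's argument essentially verbatim. The problems are in how you prove (1), (2), (3) for the degree-$d$ component.

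\textbf{You import hypotheses that are not there.} The lemma is stated for an arbitrary smooth Fano variety; you write ``because $X$ is coindex $3$ of dimension $\ge 4$'' and invoke the classification of higher-$a$-invariant subvarieties. None of this is available. Relatedly, you lean on Lemma~\ref{lem: fiber dimension}, which only controls $Z_m \setminus Y_{\mathrm{ha}}$ and says nothing about $Z_m \cap Y_{\mathrm{ha}}$; your attempt to bound $\operatorname{codim}(Y_{\mathrm{ha}},X)$ via ``curves in $V$ would form an over-dimensional fiber of a lower-degree component, violating (1)'' is incomplete, since such curves need not form their own component of $\overline{M}_{0,0}(X)$ --- they may well be specializations of free curves, in which case the lower-degree component is dominant and (1) is not violated. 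Even granting all of this, Lemma~\ref{lem: fiber dimension} gives $\operatorname{codim}(Z_1 \setminus Y_{\mathrm{ha}},X)\ge 4$, which does not establish the ``exactly one'' clause of (1) on $Z$.

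\textbf{The paper's argument is structurally different.} It never mentions $a$-invariants or Lemma~\ref{lem: fiber dimension}. Instead it works entirely inside the fiber $\mathrm{ev}^{-1}(p)$ and proves (3) \emph{first}, by induction on $d$: if a component $W$ of $\mathrm{ev}^{-1}(p)$ is over-dimensional, bend-and-break (this is where $d\ge\dim X$ enters) forces $\partial W\neq\emptyset$; one then splits a general nodal curve at a node as $C_1\cup C_2$ with the marked point on $C_1$, and analyzes the component $M_1\subset\mathrm{ev}^{-1}(p)$ of the lower-degree pointed space containing $[f|_{C_1}]$ through three cases (expected dimension and irreducible; generically reducible; over-dimensional). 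The inductive output is sharper than a dimension bound: the general reducible curve in $W$ is a \emph{comb} whose teeth avoid any fixed codimension-$3$ subvariety, and $p$ lies in codimension $\ge 3$. Conditions (1) and (2) for $M'$ are then read off from (3) by looking at the boundary of an over-dimensional $W$. Your sketch of (3) gestures at this case analysis but omits the comb structure and the three-case induction, which is exactly the ``bookkeeping'' you flag as the main obstacle; in the paper it is the heart of the proof, not an afterthought.
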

\begin{rem}
    While irrelevant for our application, \ref{lem:gluing}(2) may be weakened to consider only finitely many codimension $3$ subvarieties independent of $d$.
\end{rem}
    
\begin{proof}

    Let $M \subset \overline{M}_{0,0}(X)$ be a component parameterizing curves of $-K_{X}$-degree $d\geq \dim X$.  Consider the corresponding one-pointed family $\pi \colon M' \rightarrow M$ and fibers of its evaluation map $\mathrm{ev}\colon M' \rightarrow X$.  For $p \in X$, let $W$ be an irreducible component of the fiber $\mathrm{ev}^{-1}(p)$.

    First, we show that $W$ has either the expected dimension or nonempty boundary.  Suppose to the contrary that each stable map parameterized by $W$ has irreducible domain and $\dim W \geq d - 1$.  Then curves parameterized by $W$ are not free, so their images are contained in a subvariety of dimension at most $\dim X - 1$ by \cite[Theorem 3.11]{Kollar1996}.  However, there must be a one-parameter family of curves through two fixed points.  By bend-and-break, $W$ must parameterize reducible curves as well, a contradiction.  Similarly, we see that if $W$ has the expected dimension and every curve parameterized by $W$ is irreducible, then their images sweep out a subvariety of dimension at least $d - 1 \geq \dim X - 1$.  Hence, to prove (1)--(3) we may assume $W$ has nonempty boundary.

    Next, we show by induction on $d$ that if $W$ generically parameterizes reducible curves, then $W$ has the expected dimension $d - 2$ and $p \in X$ lies in a codimension $3$ subvariety.  %
    Moreover, we will show that the general map $C \rightarrow X$ parameterized by $W$ is a comb whose handle is not contracted and whose teeth are not contained in any fixed codimension $3$ subvariety. This will be used later when we consider what happens if two components of $C$ smooth into a single irreducible curve.  Note, this step does not require $d \geq \dim X$.  
    
    As the base case $d = 1$ is trivial, we may assume the inductive hypotheses for all components of curves of degree less than $d$.  To see that $W$ has the expected dimension, let $C$ be the domain of a general map parameterized by $W$.  Let $q$ be a node of $C$ and consider the connected components $C_1, C_2 \subset C\setminus \{q\}$.  We may assume the evaluation map sends a point on $C_1$ to $p$. 
 Consider the irreducible component $M_1 \subset\mathrm{ev}^{-1}(p)$ of the fiber of $\overline{M}_{0,1}(X, [C_1])$ containing $C_1$.  We divide our argument into three cases: $M_1$ has the expected dimension and generically parameterizes irreducible curves; $M_1$ has larger than expected dimension; or $M_1$ generically parameterizes reducible curves.

 First, suppose $M_1$ has the expected dimension $-K_X \cdot C_1 - 2$ and generically parameterizes irreducible curves.  By the inductive hypothesis, either deformations of $C_2$ meeting a general point of $C_1$ have the expected dimension $-K_X \cdot C_2 - 2$, or $C_2$ is irreducible and the fiber of $\overline{M}_{0,1}(X, [C_2])$ over every point in $C_1$ has dimension $-K_X \cdot C_2 - 1$.  As the expected dimension of $W$ is $-K_X \cdot (C_1 + C_2) - 2$, $C_2$ must be irreducible, and in this case $W$ has the expected dimension.  Moreover, $p \in X$ must lie in the codimension $3$ subvariety where the fiber of $\overline{M}_{0,1}(X, [C_2])$ is of larger dimension than expected.

 Next, suppose $M_1$ generically parameterizes reducible curves.  By induction, $\dim M_1 = -K_X \cdot C_1 - 2$, and either $C_1$ is not general in $M_1$ or at most one irreducible component of $C_1$ is contained in the codimension three locus where the fiber of $\overline{M}_{0,1}(X, [C_2])$ has larger than expected dimension. 
 As before, since the expected dimension of $W$ is $-K_X \cdot (C_1 + C_2) - 2$, this implies $C_1$ is general in moduli and $C_2$ be a general irreducible curve meeting the handle of $C_1$.

 Lastly, suppose $M_1$ has larger than expected dimension. By induction, either $C_1$ is not general in $M_1$ or $C_1$ is irreducible and $C_1$ meets the codimension $3$ locus where the fiber of $\overline{M}_{0,1}(X, [C_2])$ has larger than expected dimension or components parameterizing nodal curves in finitely many points.  If $C_1$ is not general in $M_1$, then as $\dim M_1 = -K_X \cdot C_1 -1$, we reduce to the prior two cases.  Otherwise, for all but finitely many choices of $q \in C_1$, $C_2$ deforms in a family of dimension at most $-K_X \cdot C_2 - 2$, and for finitely many choices of $q \in C_1$, $C_2$ deforms with dimension $-K_X \cdot C_2 - 1$.  In this latter case, by the inductive hypothesis a general deformation of $C_2$ through $q$ is irreducible.  Hence, for fixed $C_1$ there are $-K_X \cdot C_2 - 1$ dimensions of choices for $C_2$, the general one of which is irreducible.

This concludes the proof of (3).  To prove (1) and (2), assume $W$ generically parameterizes irreducible curves and has nonempty boundary.  The preceding argument shows the boundary of $W$ can only be of larger than expected dimension when $p \in X$ lies in a finite union of codimension $3$ subvarieties.  In this case, as well, $\dim W$ is only one larger than expected, proving (1).  Moreover, at least one component of a general nodal curve parameterized by the boundary of $W$ is not contained in any fixed codimension $3$ subvariety.  Hence, the general curve parameterized by $W$ meets a fixed codimension $3$ subvariety at finitely many points, proving (2). %

    Lastly, suppose $M_1,M_2 \subset \overline{M}_{0,0}(X)$ have one-pointed families $M_i'$ which satisfy (1) and (2).  Let $W$ be an irreducible component of $M_1' \times_X M_2'$.   The fiber of $W$ over any point $p \in X$ either has at most the expected dimension or $p$ lies in a codimension $3$ subset and $\dim W_p$ is at most $2$ greater than expected.   Thus, as $W$ must have at least the expected dimension, $W \rightarrow X$ is dominant.

\end{proof}

\begin{cor}
    Let $X$ be a smooth Fano variety satisfying the hypotheses of Lemma \ref{lem:gluing}.  For some $r > 0$, suppose any component of $\overline{M}_{0,0}(X,d)$ parameterizing rational curves of anticanonical degree $r < d \leq \dim X + 1$ has nonempty boundary.  Then every component of $\overline{M}_{0,0}(X)$ generically parameterizes free curves and contains a stable map whose irreducible components are free curves of $-K_X$-degree at most $r$.
\end{cor}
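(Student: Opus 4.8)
The plan is to induct on the anticanonical degree $d = -K_X \cdot \alpha$ of a component $M \subset \overline{M}_{0,0}(X,\alpha)$, carrying the three auxiliary conditions (1)--(3) of Lemma~\ref{lem:gluing} along as part of the inductive hypothesis. The base consists of the degrees $d \le r$: the assumption that $X$ satisfies the hypotheses of Lemma~\ref{lem:gluing} provides, for each such component, a one-pointed family whose evaluation map satisfies (1) and (2); since (1) forces that evaluation map to be dominant and the curves in $M$ all have the fixed degree $d$, \cite[Theorem~3.11]{Kollar1996} shows $M$ must contain a free curve, hence generically parameterizes free curves, and its general member is itself a stable map with irreducible domain which is a free curve of $-K_X$-degree $\le r$.

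For the inductive step I fix $d > r$ and assume every component of anticanonical degree strictly less than $d$ generically parameterizes free curves, has a one-pointed family satisfying (1) and (2), and contains a stable map all of whose irreducible components are free curves of degree $\le r$. Given a component $M$ of degree $d$, I would first check that $M$ has nonempty boundary: for $r < d \le \dim X + 1$ this is the hypothesis, and for $d \ge \dim X + 2$ it follows from bend-and-break as in the proof of Lemma~\ref{lem:gluing} --- were the boundary empty, the irreducible curves in $M$ would sweep out a locus of dimension at most $\dim X$ in which two general points lie on a positive-dimensional family of curves, which would then break. Then Lemma~\ref{lem:gluing} applies --- its degree parameter being the present $d$, which is $\ge \dim X$ since $d > r$, with (1) and (2) for all smaller degrees supplied by the base case and the inductive hypothesis --- so $M$ satisfies (1), (2), (3). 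Unwinding condition (3) as in the proof of that lemma produces a boundary point $[f\colon C_1 \cup C_2 \to X] \in M$ in which $C_1$ is a general member of its component $M_1$ and $C_2$ a general irreducible curve in its component $M_2$, both of degree less than $d$, and in which, using the final assertion of Lemma~\ref{lem:gluing} that each component of $M_1' \times_X M_2'$ dominates $X$, the node may be taken general in $X$ and each $f|_{C_i}$ free by the inductive hypothesis. Lemma~7.5 of \cite{Kollar1996} then gives $H^1(C_1 \cup C_2, f^{*} T_X) = 0$, so $[f]$ is a smooth point of $\overline{M}_{0,0}(X)$; it therefore lies on $M$ alone and its smoothings, whose images pass through the node, lie in $M$ as well. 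Letting the node vary over $X$ shows the evaluation map of $M$ is dominant, whence \cite[Theorem~3.11]{Kollar1996} (applied with the fixed degree $d$) forces $M$ to contain a free curve. Finally, by the inductive hypothesis the general members of $M_1$ and $M_2$ degenerate, inside $M_1$ and $M_2$, to stable maps whose irreducible components are free curves of degree $\le r$; transporting these degenerations along the boundary divisor of $M$ that is dominated by $M_1' \times_X M_2'$ degenerates $[C_1 \cup C_2]$ within $M$ to a stable map all of whose components are free curves of degree $\le r$.

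I expect the main difficulty to be this last transport step: one must argue that a degeneration carried out inside $M_i$ lifts to a degeneration inside the higher-degree component $M$ --- that is, that the relevant boundary stratum of $M$ is, up to gluing, $M_1' \times_X M_2'$ with limits computed factor by factor --- and one must track which irreducible component of $M_1' \times_X M_2'$ smooths back into $M$. This is also where the precise content of condition (3) is used: that the handle of the comb is not contracted ensures each gluing strictly decreases the degree, so the induction is well founded, and that the teeth avoid fixed codimension-$3$ subvarieties is what lets the node be placed at a general point of $X$. A secondary point needing attention is that conditions (1) and (2) must be seen to survive each application of Lemma~\ref{lem:gluing}, so the lemma remains applicable at the next degree; this is exactly what the uniform codimension-$3$ bounds on the jump loci of the evaluation maps provide.
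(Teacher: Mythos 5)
The paper states this corollary without proof, so the real question is whether your reconstruction closes the gap the authors leave implicit; your overall strategy (induct on degree, feed conditions (1)--(3) back into Lemma~\ref{lem:gluing}, extract a boundary point that is a union of two free curves glued at a general point, then degenerate further) is certainly the intended one. The genuine gap is exactly the step you flag at the end, and your proposed resolution does not work as stated. Transporting the degenerations ``factor by factor'' inside $M_1' \times_X M_2'$ would require the distinguished component $P \subset M_1' \times_X M_2'$ dominating the boundary divisor of $M$ to contain a point whose \emph{two} coordinates are \emph{simultaneously} the maximal degenerations inside $M_1'$ and $M_2'$. Properness and dominance of the projections only give that $P \to M_1'$ and $P \to M_2'$ are each surjective; the fiber of $P$ over the fully degenerated point of $M_1'$ is some closed subset of $\mathrm{ev}_2^{-1}(y)$ for a point $y$ on the image of that specific degenerate chain, and there is no reason it meets the degenerate locus of $M_2'$, nor even that the curve of $M_2$ it produces is free (the node $y$ is no longer a general point of $X$). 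The fix is the standard one-piece-at-a-time iteration: break only one component per step, and after each step use condition (1) (all fibers of $\mathrm{ev}_i$ outside a codimension-$3$ locus have exactly the expected dimension, and the non-free locus of $M_i$ has positive codimension) to show that the fiber of $P$ over the newly degenerated point contains an entire component of $\mathrm{ev}_1^{-1}(y)$ whose general member is free; then invoke \cite[Lemma~7.5]{Kollar1996} to see the resulting tree of free curves is a smooth point of $\overline{M}_{0,0}(X,\alpha)$, hence lies on $M$ alone, so the process stays inside $M$ and can be repeated until every component has degree at most $r$.

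Two smaller points. First, your assertion that $d > r$ forces $d \ge \dim X$ is unjustified (the corollary allows any $r>0$); for $r < d < \dim X$ you cannot invoke Lemma~\ref{lem:gluing} and must instead use the hypothesis that (1) and (2) already hold in those degrees, together with the fiber-product dimension count, to rule out components generically parameterizing reducible curves and to produce the free splitting. Second, in the base case you assert without argument that the general member of a component of degree $\le r$ has irreducible domain; this does follow, but only from the same dimension count (a component whose general member is a union of $k \ge 2$ curves from dominant families with fibers controlled by (1) has dimension at most $d + \dim X - 2k < d + \dim X - 3$), so it should be run as part of the induction rather than taken for granted.
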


Lemma \ref{lem:gluing} can be used to prove Theorem \ref{MBB} for general coindex $3$ Fano varieties of Picard rank $1$.  To extend our result to arbitrary coindex $3$ Fano varieties with no contractible divisors, we will require the following lemma instead. %

\begin{lem}\label{lem: cones of lines}
    Let $X$ be a smooth Fano variety of coindex $3$ and dimension at least $4$.  Let $\pi \colon U \rightarrow M$ be the universal family over a component $M \subset \Mor(\mathbb{P}^1, X)$ parameterizing lines.
    Suppose a component $U_p$ of the fiber $\mathrm{ev}^{-1}(p)$ over a point $p \in X$ has dimension at least $\dim U - \dim X + 2 = \dim X + 1$.  Then $\rho(X) = 1$ and $X$ is either a quartic hypersurface or $g(X) = 2$. %
    Furthermore:
    \begin{enumerate}
        \item There are finitely many such points $p \in X$.
        \item $M_p = \pi(U_p)$ equipped with the reduced structure is smooth. %
    \end{enumerate}
\end{lem}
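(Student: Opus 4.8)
The plan is to show that such a point $p$ forces $X$ to contain an $(n-1)$-dimensional linear cone with vertex $p$, and then to reduce to the classification of coindex $3$ Fano threefolds. Since $H\cdot\ell=1$ for every line $\ell$ parametrized by $M_p$, the projection $\pi|_{U_p}\colon U_p\to M_p$ is generically injective, so $\dim M_p=\dim U_p\ge n+1$ and the geometric lines through $p$ form a family of dimension $\ge n-2$. First I would observe that this dimension is exactly $n-2$: a strictly larger family would place $p$ in the locus $Z_2$ of Remark~\ref{rem: cones of lines}, hence (by that remark and Theorem~\ref{classification higher a}) in a subvariety with higher $a$-invariant, but an inspection of the contractible-divisor case and the type-(2) case of Theorem~\ref{classification higher a} shows that no point of such a subvariety carries an $(n-1)$-dimensional family of lines. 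Let $Y_p\subset X$ be the union of the lines parametrized by $M_p$. Because these lines all pass through $p$, a dimension count gives $\dim Y_p\le n-1$; and if $\dim Y_p\le n-2$ then two general points of $Y_p$ would be joined by a positive-dimensional family of $H$-lines, which is impossible by Mori's bend-and-break~\cite{Mori1979} since $H$-lines are irreducible of minimal $H$-degree. Hence $\dim Y_p=n-1$ and $Y_p\subsetneq X$. As each line through $p$ meets a general $H'\in|H|$ in a single reduced point, $Y_p$ is the cone with vertex $p$ over the $(n-2)$-fold $S:=Y_p\cap H'$, and up to a generically finite cover $M_p$ is identified with $S$.

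Next I would cut down to dimension three. Choosing general members $H_1',\dots,H_{n-3}'$ of $|H|$ through $p$ (or, when $|H|$ is not very ample, as in the $g(X)=2$ case, pulling back general hyperplanes through the image of $p$), Bertini's theorem produces a smooth coindex $3$ Fano threefold $X'=X\cap H_1'\cap\dots\cap H_{n-3}'$ containing $p$, with $g(X')=g(X)$ and, by the Lefschetz hyperplane theorem, $\rho(X')=\rho(X)$. Imposing the $n-3$ hyperplane conditions on the $(n-2)$-dimensional family of lines through $p$ leaves a positive-dimensional family of $H$-lines on $X'$ through $p$, sweeping out the cone surface $Y_p\cap X'$ with vertex $p$. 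Thus $X'$ is a smooth Fano threefold of index $1$ possessing a point through which passes a positive-dimensional family of lines.

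Finally I would invoke the classification of Fano threefolds. Going through the Iskovskikh list of prime Fano threefolds (when $\rho(X')=1$) and the Mori--Mukai list (when $\rho(X')\ge 2$), together with the explicit geometry of their Hilbert schemes of lines --- in particular that on a coindex $3$ Fano threefold the lines sweep out only a surface, that on the homogeneous, product-type and blow-up threefolds in the classification the lines are too spread out to all meet at a single point, and that the genus-$3$ double cover of a smooth quadric threefold branched over a quartic section admits no such point by a direct computation with its branch divisor --- one concludes that the only possibilities are the sextic double solid ($g=2$) and the quartic threefold ($g=3$, with $H$ very ample). Lifting back, $\rho(X)=1$ and $X$ is a quartic hypersurface or has $g(X)=2$. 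Assertion~(1) then follows from Remark~\ref{rem: cones of lines}: a one-parameter family of such points would yield a family of divisors $Y_p$ dominating $X$ and covered by non-free lines, contradicting \cite[Theorem~3.11]{Kollar1996}. Assertion~(2) follows by identifying the reduced scheme $M_p$ with the explicit base $S$ of the cone (a hypersurface section of the tangent cone to $X\cap T_pX$ at $p$ in the quartic case, and the analogous surface in the $g(X)=2$ case) together with a normal-bundle computation showing the lines through $p$ are unobstructed inside $M_p$.

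The main obstacle is this last step: the cone structure and the threefold reduction are essentially routine, but pinning down exactly which coindex $3$ Fano threefolds admit such a ``cone point'' --- ruling out every threefold of Picard rank $\ge 2$, every prime threefold of genus $\ge 4$, and the second prime threefold of genus $3$ --- requires a careful case-by-case analysis of Hilbert schemes of lines across the classification.
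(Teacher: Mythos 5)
Your opening steps (the lines through $p$ sweep out a divisorial cone with vertex $p$, and a general linear section reduces to a coindex $3$ Fano threefold with a cone point, whence $\rho(X)=1$ and $g(X)\le 3$) are sound and match the paper's argument, which gets the same conclusion by bounding $\dim D$ for $D=\mathrm{ev}(\pi^{-1}(M_p))$ and citing \cite{BurkeJovinelly2022} for threefolds. The problems begin exactly where you flag the "main obstacle," and the first one is not just hard but mislocated: the exclusion of the non-very-ample genus-$3$ case (the double cover of a quadric branched in a quartic section) cannot be carried out at the threefold level as you propose. The paper eliminates it directly in dimension $n\ge 4$: writing $f\colon X\to Q\subset\mathbb{P}|H|$, if $p$ lies on the ramification divisor then $f(D)$ would sit in a codimension-$2$ linear section, impossible for a divisor; if not, the base of the cone of lines through $f(p)$ inside the tangent hyperplane section of $Q$ is a \emph{smooth quadric of dimension $n-3\ge 1$}, hence admits no \'etale double cover, so the two-sheeted preimage of an irreducible hyperplane section of $Q$ would be reducible --- contradiction. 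For $n=3$ that base is a pair of points and the monodromy argument evaporates; your placeholder "a direct computation with its branch divisor" is not a proof, and the structure of the paper (which gets only $g\le 3$ from the threefold reduction and then works in dimension $\ge 4$ to kill this case) strongly suggests no threefold-level exclusion is available.

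The second gap is in part (2). Your "normal-bundle computation showing the lines through $p$ are unobstructed inside $M_p$" works in the quartic case (where $M_p$ is a general hyperplane section of the tangent hyperplane section $D$, smooth away from $p$ by finiteness of the Gauss map) and in the $g=2$ case when $p$ avoids the ramification divisor $R$, where each line has normal bundle $\mathcal{O}(-2)\oplus\mathcal{O}(1)^{\oplus n-2}$. But when $p\in R$ the lines through $p$ are tangent to $R$ and this splitting fails; the paper instead argues globally that every line of the cone is tritangent to the branch sextic $B$, which forces $f(p)$ to be a point of multiplicity $6$ on $T\cap B$ (with $T$ the tangent hyperplane to $B$ at $f(p)$), so that $T\cap B$ is a cone over a \emph{smooth} sextic and $M_p$ is realized as a double cover of a hyperplane branched along a smooth divisor, hence smooth. "The analogous surface in the $g(X)=2$ case" does not capture this case distinction, and without it assertion (2) is unproved precisely where it is delicate.
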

\begin{proof}
    Let $D = \mathrm{ev}(\pi^{-1}(M_p))$.  Then as the natural map $f \colon X \rightarrow \mathbb{P}|H|$ is finite onto its image, only finitely many lines parameterized by $M_p$ may pass through a point $q \in X \setminus \{p\}$.  Thus, $\dim D = \dim U_p + 1 - \dim \mathrm{Aut}(\mathbb{P}^{1}) \geq \dim X - 1$.  Since lines parameterized by $M_p$ are not free, they cannot pass through a general point of $X$, which shows $\dim U_p = \dim X + 1$.  From Remark \ref{rem: cones of lines}, this also shows $p \in X$ is one of finitely many points.  Thus, a smooth hyperplane section of $X$ through $p$ also has such a cone of lines.  By reducing to dimension $3$, it follows from \cite{BurkeJovinelly2022} that $\rho(X) = 1$ and $g(X) \leq 3$.  We claim that if $g(X) = 3$, the fundamental linear system must be very ample.  This will follow from our proof of smoothness of $M_p$.

    First consider the case when $X$ is a quartic hypersurface.  In this case, $D \subset X$ must be the tangent hyperplane section of $X$ at $p$.  Since the Gauss map is finite on a smooth hypersurface, $D$ must be smooth away from $p$.  As we may identify $M_p$ with a general hyperplane section of $D$, it follows that $M_p$ is smooth.

    Suppose instead that $H$ is not very ample.  Let $f \colon X \rightarrow \mathbb{P}|H|$ be the natural map, and $R \subset X$ the ramification divisor of $f$.  Once more, $f(D)$ must be contained in the linear span of $df_p \colon T_p X \rightarrow T_{f(p)} \mathbb{P}^n$.  This pertains to multiple cases.  First, it shows $g(X) = 2$.  Indeed, suppose $g(X) = 3$ instead.  If $p \in R$, then $df_p$ would have nontrivial kernel, and $f(D)$ would lie in a codimension $2$ linear section, which is impossible since $\dim D = \dim X - 1$ and each hyperplane section of $f(X)$ is irreducible.  Hence, $p \notin R$, so the Galois action of the double cover $f$ on each line parameterized by $M_p$ is nontrivial.  However, $f(X) \subset \mathbb{P}|H|$ is a smooth quadric $Q$, and $f(D)$ must be the tangent hyperplane to $Q$ at $p$.  The base of the family of lines through $p$ in $f(D) \cap Q$ is a smooth quadric, which has no \'{e}tale double covers.  Thus, as each line has two distinct preimages, the preimage of the hyperplane section of $Q$ at $f(p)$ would be reducible, a contradiction.

    Next, when $g(X) = 2$, it shows that if $df_p$ is not injective, i.e., if $p \in R$, then $f(D)$ must be the preimage of the tangent hyperplane $T$ to the branch divisor $B = f(R)$ at $f(p)$.  In this case, by finiteness of the Gauss map for $f(R)$, we see that $T \cap B$ has finitely many singular points.  We claim that $T \cap B$ must be the cone over a smooth sextic hypersurface of dimension $\dim X - 2$.  Indeed, each line in $T$ through $f(p)$ is tritangent to $B$.  This may only happen if $f(p)$ is a point of multiplicity $6$ in $T \cap B$, proving that $T \cap B$ is a cone.  Smoothness of the base of $T \cap B$ follows from finiteness of the Gauss map once more.  Hence, as the base $M_p$ of the family of lines through $p$ is a double cover of a general hyperplane in $T$ branched over its intersection with the sextic cone $T \cap B$, (2) holds when $p \in R$.  %
    
    Lastly, when $p \notin R$, each line parameterized by $U_p$ has normal bundle $N \cong \mathcal{O}(-2) \oplus \mathcal{O}(1)^{\oplus \dim X - 2}$.  %
    Thus, the reduced structure on $\pi(U_p)$ is clearly smooth.
\end{proof}

\begin{exmp}\label{example: big fiber dim}
    There are indeed double covers $f\colon X\rightarrow \mathbb{P}^{n}$ branched along a smooth sextics $B \subset \mathbb{P}^{n}$ having points $p$ in the ramification locus such that lines through $p$ form an $(n-2)$-dimensional family.
    \par
    Let $B = V(x_{0}^{6} + \dots + x_{n}^{6})$ be the Fermat sextic and let $p$ be one of two points lying over $q = (1:0:\dots:0)\in \mathbb{P}^{n}$.
    Note that the space of lines on $X$ passing through $p$ is isomorphic to the space of lines passing through $q$ and tritangent to $B$ via $f$.
    In this setting, the latter space is $V(x_{1}^{6} + \dots + x_{n}^{6}) \subset \mathbb{P}^{n-1}$, which is smooth.
    Hence the divisor swept out by lines through $p$ is isomorphic to the cone $V(x_{1}^{6} + \dots + x_{n}^{6}) \subset \mathbb{P}^{n}$.
    By symmetry, there are at least $2(n+1)$ such conical points.
    Moreover, one may be able to show that any other point cannot be conical.
\end{exmp}

\begin{cor}\label{cor: Z1 finite}

    Let $X$ be a smooth Fano variety of coindex $3$ and dimension at least $4$.  Let $\pi \colon U \rightarrow M$ be the universal family over a component $M \subset \Mor(\mathbb{P}^1, X)$
    Suppose $\mathrm{ev} \colon U \rightarrow X$ is dominant and consider the locus
     $$Z_1 =\{ p \in X \mid \dim \mathrm{ev}^{-1}(p) > \dim U - \dim X + 1\}.$$
    Then $Z_1$ is finite.
    If $\dim X \geq 5$ and $Z_{1}$ is nonempty, then $M$ parameterizes lines.
\end{cor}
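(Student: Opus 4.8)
The plan is to split the argument according to whether $M$ parametrizes $H$-lines. Since $-K_{X} = (\dim X - 2)H$, a component $M$ parametrizes lines exactly when $H\cdot\alpha = 1$, and in that case $\dim U = 2\dim X - 1$, so $\dim U - \dim X + 1 = \dim X$ and $p \in Z_{1}$ if and only if some component of $\mathrm{ev}^{-1}(p)$ has dimension at least $\dim X + 1 = \dim U - \dim X + 2$ --- which is precisely the hypothesis of Lemma \ref{lem: cones of lines}. That lemma forces $\rho(X) = 1$ with $X$ a quartic hypersurface or $g(X) = 2$, and its part (1) shows only finitely many such points occur; hence $Z_{1}$ is finite when $M$ parametrizes lines, and in that case the second assertion is vacuous. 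So I may assume $d := H\cdot\alpha \ge 2$ from now on.

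For $d \ge 2$ I would first separate off the locus $Y$, the union of all subvarieties of $X$ on which $-K_X$ has $a$-invariant exceeding $1$. Lemma \ref{lem: fiber dimension} applied with $m = 1$ gives $\operatorname{codim}(Z_{1}\setminus Y, X) \ge 4$; in particular $Z_{1}\setminus Y$ is finite if $\dim X = 4$. For the part of $Z_{1}$ lying inside $Y$ I would use the classification Theorem \ref{classification higher a}: when $\dim X \ge 5$ the components of $Y$ are contractible divisors, and among coindex $3$ Fano varieties of dimension $\ge 5$ the only one carrying a contractible divisor is the blow-up of $\mathbb{P}^{5}$ along a line, where a direct analysis of its two elementary contractions (as in the proof of Lemma \ref{lem: mbb contractible divisor}, using that the relevant fibration is equidimensional) shows the evaluation map of any dominant component has no fibre of excess dimension $\ge 2$, so $Z_{1} = \varnothing$; when $\dim X = 4$ the components of $Y$ are contractible divisors or planes contracted to a point by a fibration, and on each of these one checks that an excess-dimensional fibre of $M$ can occur over only finitely many points. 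This reduces the corollary to showing $Z_{1}\setminus Y$ is finite for $\dim X = 4$ --- already done --- and empty for $\dim X \ge 5$.

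For this last point I would argue by descending induction on the $H$-degree, the base case $d = 1$ being the finiteness established in the first paragraph. Let $p \in Z_{1}\setminus Y$ with $d \ge 2$. By Lemma \ref{lem: fiber dimension} the curves of $M$ through $p$ dominate $X$ (otherwise the subvariety they sweep out, together with $p$, would lie in $Y$), and the fibre $\mathrm{ev}^{-1}(p)$ has excess dimension at least $2$; since this family is positive-dimensional and proper, Mori's bend-and-break produces reducible stable maps of $M$ through $p$, and by Proposition \ref{prop: nondominant curves subvariety a-invar} each component of such a broken curve must lie in a \emph{dominant} family of its own (otherwise $p$ would lie in $Y$), so the inductive hypothesis applies to every piece. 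Expanding a maximal-dimensional boundary stratum as a fibre product $\overline{M}_{0,2}(X,\beta')\times_{X}\overline{M}_{0,1}(X,\beta'')$ and tracking dimensions, the surplus of $2$ must be carried either by an excess-$\ge 2$ evaluation fibre of $M_{\beta'}$ at $p$ or by non-transversality of the gluing (an excess fibre of $M_{\beta''}$ at the node), so the induction confines $p$ to a finite set --- which already gives finiteness of $Z_{1}$ --- and when $\dim X \ge 5$ it reduces the only surviving possibility to $p$ being a conical point carrying a nonfree $H$-line with an excess cone (Lemma \ref{lem: cones of lines}); but Movable Bend-and-Break (Theorem \ref{MBB}, together with Section \ref{section: low degree curves}) shows free curves of $M$ degenerate only to chains of free curves, which never contain such a nonfree line, so no component of $\mathrm{ev}^{-1}(p)$ for the dominant $M$ can reach that configuration, and $Z_{1}\setminus Y = \varnothing$.

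The main obstacle is this last step of bookkeeping: one must verify that the excess dimension of $\mathrm{ev}^{-1}(p)$ for a dominant component cannot be produced or sustained by chains of \emph{free} curves through $p$ --- for which a normal-bundle computation shows the fibre dimension over $p$ is one unit \emph{below} the expected value --- so that the excess would have to come from a nonfree configuration, which Lemma \ref{lem: fiber dimension} confines to $Y$ and Lemma \ref{lem: cones of lines} further constrains when $\dim X \ge 5$. Making this precise requires combining the structure of $Y$ from Theorem \ref{classification higher a}, the low-degree analysis of Section \ref{section: low degree curves}, and careful use of the fibre-dimension estimates of Lemma \ref{lem: fiber dimension} and Lemma \ref{lem: cones of lines}.
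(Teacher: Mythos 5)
Your reduction is sound up to the crucial case, and it tracks the paper's own strategy closely: the lines case via Lemma \ref{lem: cones of lines}, the bound $\operatorname{codim}(Z_1\setminus Y,X)\ge 4$ from Lemma \ref{lem: fiber dimension}, the separate treatment of the blow-up of $\mathbb{P}^5$ along a line, and the identification of the surviving danger --- $\dim X\ge 5$, $\rho(X)=1$, $g(X)\le 3$, $H\cdot\alpha\ge 2$, where $Y=\varnothing$ but finitely many points $p_0$ carry a $(\dim X-2)$-dimensional cone of lines sweeping out a divisor $D$. The gap is in how you dispose of that last case. You invoke Theorem \ref{MBB} to conclude that no component of $\mathrm{ev}^{-1}(q)$ can reach a configuration built from non-free lines of the cone, but this is circular: the proof of Theorem \ref{MBB} goes through Lemma \ref{lem: mbb precurser}, which explicitly cites Corollary \ref{cor: Z1 finite}. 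Even setting circularity aside, Movable Bend-and-Break only asserts that $M$ contains \emph{one} boundary point which is a union of two free curves; it says nothing about the dimension of the locus in a fixed fiber $\mathrm{ev}^{-1}(q)$ parameterizing stable maps whose components lie in $D$, which is exactly what must be bounded. Relatedly, your inductive dichotomy (``excess at $p$ or excess at the node'') does not confine $q\in Z_1$ to a finite set in the second branch: if the excess is carried at the node, the node is confined to finitely many conical points $p_0$, but $q$ is then only constrained to lie on a curve meeting $p_0$, hence a priori ranges over all of the divisor $D$.

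What the paper does at this point is show that for $q\in Z_1$ the curves of $M$ through $q$ must sweep out a cone of lines $D$ with vertex $p_0$, and then bound the dimension of any component of $\Mor_d(\mathbb{P}^1,D,0\mapsto q)$ with $d>1$ directly, by projecting from $p_0$ onto a general hyperplane section $H\cap D$ (which is smooth by Lemma \ref{lem: cones of lines}(2), so that $a(H\cap D,H)\le\dim X-4$) and computing the fibers of this projection as linear series of sections of Hirzebruch surfaces $\mathbb{F}_r$ meeting the rigid section in $d-r$ points. The resulting bound, roughly $d(\dim X-3)+1$, is strictly below the $d(\dim X-2)-1$ needed to sustain an excess-$2$ evaluation fiber. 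This explicit dimension count is the content your proposal is missing; without it (or a genuinely independent substitute) the corollary is not proved in the quartic and genus-$2$ cases of dimension at least $5$.
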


\begin{proof}
    If $\dim X = 4$, our claim follows from  Remark \ref{rem: cones of lines}.  If $X$ is the blow-up of $\mathbb{P}^5$ along a line, our claim is obvious from Lemma \ref{lem: fiber dimension} since the automorphism group of $X$ has no orbits of dimension less than $2$ but $\dim Z_1 \leq 1$.  Otherwise, as $\dim X > 4$, there are no subvarieties $Y \subset X$ with $a(Y,-K_X) > 1$.  If $X$ satisfies the hypothesis of Lemma \ref{lem:gluing}, then our claim is once more immediate.  Hence, we may suppose $\dim X \geq 5$, $\rho(X) = 1$, $g(X) \leq 3$, and there are finitely many points on $X$ through which pass a $\dim X - 2$ parameter family of lines.  %

    We claim that for any point $q \in Z_1$, the closure $D$ of the image $\mathrm{ev}(\pi^{-1}(\pi(\mathrm{ev}^{-1}(q))))$ of the family of curves through $q$ must be a divisor which is swept out by lines through some point $p \in D$.  This follows from an adaptation of the proof of Lemma \ref{lem:gluing}: otherwise, there is at most a $\dim X - 3$ parameter family of lines contained in $D$ through any point of $D$, and the proof of Lemma \ref{lem:gluing} may be repeated verbatim.  Indeed, suppose $M' \subset \overline{M}_{0,1}(X, d)$ is the component whose corresponding zero-pointed family is dominated by $M$ under the natural map.  To derive a contradiction, we may suppose $d > 1$ is minimal such that a fiber $M_q$ of $\mathrm{ev}\colon M' \rightarrow X$ over $q$ is of dimension at least $2$ larger than expected.  %
    The locus in $M_q$ parameterizing reducible curves has codimension $1$; however, only curves contained in $D$ may be components of curves parameterized by $M_q$.  By minimality of $d > 1$, gluing curves as in  Lemma \ref{lem:gluing} shows the locus of reducible curves parameterized by $M_q$ cannot have the necessary dimension.

    Let $q$, $p$, and $D$ be as in the preceding paragraph.   %
    We show that for any component $M \subset \Mor_d(\mathbb{P}^1, D, 0 \mapsto q)$ parameterizing curves of $H$-degree $d > 1$ mapping $0$ to $q$, the dimension of $M$ is at most $d(\dim X - 2) + 1$.  Note that, by the preceding paragraph, if $\dim M > d(\dim X - 2) + 1$, the family of curves must dominate $D$.  Hence, we may suppose $M$ parameterizes a dominant family of curves in $D$, and, working modulo automorphism of the maps, we show the image of $M$ in the Kontsevich space $\overline{M}_{0,0}(D)$ has dimension at most $d(\dim X - 2) - 1$.  %

    First, suppose $p = q$.  Recall that a general hyperplane section $H\cap D$ of $D$ is smooth by Lemma \ref{lem: cones of lines}(2).  Hence, $a(H \cap D, H) \leq \dim X - 4$.  For each component $M \subset \Mor_d(\mathbb{P}^1, D, 0 \mapsto p)$, for some $r > 0$ there is a natural map $\pi \colon M \dashrightarrow \Mor(\mathbb{P}^1, H \cap D, r)$, induced by projection from $p$ onto $H \cap D$.  The fibers of $\pi$ are the linear series of the space of sections on the Hirzeburch surface $\mathbb{F}_r = \mathbb{P}_{\mathbb{P}^1}(\mathcal{O} \oplus \mathcal{O}(-r))$ which meet the rigid section at $d-r > 0$ points.  This linear series has dimension $2d - r + 1$.  As any dominant family of curves of $H$-degree $r$ in $H \cap D$ deforms with dimension at most $r(\dim X-4) + \dim H \cap D -3$, the total dimension of the family is at most $(r+ 1)(\dim X - 5) + 2d + 1$.  Since each curve passes through $p$, $r < d$, so that a maximum dimension of $d(\dim X - 3) + 1$ is achieved when $r = d - 1$.  As $d > 1$, this is strictly less than the advertised bound $d(\dim X - 2) - 1$.

    When $q \neq p$, we still have a map $\pi\colon M \dashrightarrow \Mor(\mathbb{P}^1, H \cap D, r)$ induced by projection from $p$, but now possibly $r = d$ and the fibers of $\pi$ have dimension $1$ less, as each section must pass through $q$.  Moreover, the parameter space of curves through $\pi(q)$ has dimension at most $r(\dim X-4) -2$, as the fiber over $\pi(q)$ has the expected dimension since the general curve it parameterizes passes through a general point of $D \cap H$.  Hence, the total dimension of the family of curves through $q$ is at most 
    $$(r+ 1)(\dim X - 5) + 2d + 1 - \dim X + 2 \leq d(\dim X - 3) - 2$$
    which is again less than the advertised bound $d(\dim X - 2) - 1$. 
\end{proof}

As a corollary, we obtain Theorem \ref{thm: component reducible stable maps}.

\subsection{Proof of Movable Bend-and-Break}
Before proving Theorem \ref{MBB}, we record the following observation. %

\begin{lem}\label{lem: mbb precurser}
    Let $X$ be a smooth coindex $3$ Fano variety of dimension at least $4$.  Let $T \subset \overline{M}_{0,0}(X, \alpha)$ be a constructible subset such that for every irreducible component $C_i$ of the domain of a map $f \colon C \rightarrow X$ parameterized by $T$, either $f$ contracts $C_i$ or $[f|_{C_i}] \in \overline{M}_{0,0}(X, f_*[C_i])$ lies in a component parameterizing a dominant family of curves.  Let $R_1, \dots, R_n$ be general complete intersections of basepoint free linear systems with $\operatorname{codim}(R_i, X) = r_i$.  Consider the sublocus $T_0 \subset T$ parameterizing curves which pass through $m \ge 1$ general points $p_1, \dots, p_m$ and each $R_i$.
    \begin{itemize}
        \item If $-K_X \cdot \alpha + \dim X - 3 = m(\dim X - 1) + \sum_{i = 1}^{n} (r_i-1)$, %
        then $\dim T_0 \leq 0$ and each curve parameterized by $T_0$ is irreducible and general in moduli.
        \item If $-K_X \cdot \alpha + \dim X - 4 = m(\dim X - 1) + \sum_{i = 1}^{n} (r_i-1)$, then $\dim T_0 \leq 1$ and at most finitely many points in $T_0$ parameterize reducible curves, each of which must be the nodal union of two free curves.   %
    \end{itemize}
\end{lem}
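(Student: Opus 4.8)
The plan is to bound $\dim T_0$ by working stratum by stratum inside $\overline{M}_{0,0}(X,\alpha)$. If $T_0=\emptyset$ both assertions are vacuous, so assume $T_0\neq\emptyset$ for a general choice of the $p_j$ and $R_i$. Write $N=-K_X\cdot\alpha+\dim X-3$ for the expected dimension and $c=m(\dim X-1)+\sum_{i=1}^n(r_i-1)$ for the total codimension of the conditions, so the two bullets are the cases $N-c=0$ and $N-c=1$. Stratify $T_0$ by the combinatorial type of the source curve: its dual graph, the set of contracted components, the class $\beta_\ell$ carried by each non-contracted component $C_\ell$, a function assigning each $p_j$ to the component containing it, and a function assigning each $R_i$ to a non-contracted component meeting it. There are finitely many strata, so it suffices to prove that the stratum indexed by a source with $k$ irreducible components has dimension at most $N-c-(k-1)$.

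First I would establish the one-component estimate, which also settles the irreducible part of $T_0$. For a non-contracted $C_\ell$, let $M_\ell\subset\overline{M}_{0,0}(X,\beta_\ell)$ be its component; by hypothesis $M_\ell$ parameterizes a dominant family, so in characteristic $0$ its general member is free and $\dim M_\ell=-K_X\cdot\beta_\ell+\dim X-3$. Suppose $C_\ell$ is required to pass through $a$ of the general points and meet $b$ of the $R_i$, of codimensions $s_1,\dots,s_b$; I claim the resulting locus in $\overline{M}_{0,0}(X,\beta_\ell)$ has dimension at most $\dim M_\ell-a(\dim X-1)-\sum_j(s_j-1)$. This follows by induction: since $T_0\neq\emptyset$ for general conditions, the family obtained after imposing any proper subcollection of these point conditions must still dominate $X$ (otherwise it cannot meet a further general point), so by upper semicontinuity of fiber dimension on the universal curve, imposing one more general point drops the dimension by exactly $\dim X-1$; likewise, the family obtained after imposing all points and a subcollection of the $R_i$ must sweep out a subvariety of dimension at least $s_j$ in order to meet a further general $R_i$, and since $R_i$ is a complete intersection of general members of basepoint-free systems its preimage under evaluation has pure codimension $s_j$ on the universal curve, so imposing it drops the dimension by exactly $s_j-1$. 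Taking $k=1$ gives $\dim\bigl(T_0\cap\{\text{irreducible}\}\bigr)\le N-c$.

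For a stratum with $k\ge 2$ components I would realize it as a fiber product: a stable map in it is a conditioned stable map on each non-contracted $C_\ell$ (with its nodes marked), together with cross-ratios on the contracted components, glued along the $k-1$ nodes, each gluing being a codimension-$\dim X$ diagonal condition. Combining the one-component bound for each $C_\ell$ with the usual count for the combinatorial data gives dimension at most $N-c-(k-1)$ \emph{provided} each evaluation map behaves like a flat map of the expected relative dimension near the points where the nodes land. This is the crux: a node can be forced to lie on a proper, non-general subvariety (for instance the locus swept by a family of curves through a general point), over which the fibers of evaluation may jump. Here I would invoke Lemma \ref{lem: fiber dimension}, Corollary \ref{cor: Z1 finite}, and Remark \ref{rem: cones of lines}: for a dominant component of $\overline{M}_{0,0}(X,\beta_\ell)$ with universal curve $\mathcal{U}_\ell$, the locus $Z_m=\{p:\dim\operatorname{ev}^{-1}(p)>\dim\mathcal{U}_\ell-\dim X+m\}$ has codimension at least $3+m$ away from the union $Y$ of subvarieties with $a>1$, and $Z_1$ is in fact finite; so any jump in fiber dimension costs at least as much codimension on the base as it gains in fiber dimension, and the $k-1$ gluings remain transverse enough for the count. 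The locus $Y$ is treated separately using Theorem \ref{classification higher a}, together with the observation that a conditioned locus cannot be contained in $Y$ once it carries a general point condition (and, when it carries none, the relevant sweep is all of $X$, since $M_\ell$ is dominant and general $R_i$ cannot contain a positive-dimensional sweep).

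Combining the bounds: in the first case every stratum with $k\ge 2$ has dimension $\le N-c-1=-1$, hence is empty, while the $k=1$ stratum has dimension $\le 0$; when that dimension is $0$, a standard incidence-variety argument (the family of triples consisting of a curve in $M$, $m$ points on it, and $n$ general-type subvarieties meeting it, projected onto the parameter space of conditions) shows the finitely many curves of $T_0$ are general in their components, giving the first bullet. In the second case strata with $k\ge 3$ are empty, the $k=1$ stratum has dimension $\le 1$, and the $k=2$ stratum has dimension $\le 0$; a two-component source has no contracted component, so both components lie in dominant families, and when the dimension is exactly $0$ the sharpness of the count forces each component to be general in its component, hence free, so the reducible members form a finite set of nodal unions of two free curves. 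The main obstacle is precisely the fiber-product estimate for the reducible strata — showing that forcing a node onto a special locus cannot recover the codimension otherwise lost — which is what the fiber-dimension results of this section are built to supply.
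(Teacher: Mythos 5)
Your proposal is correct and follows essentially the same route as the paper: both arguments reduce to a dimension count on the reducible strata, showing each node costs one dimension via fiber products of one-pointed families, with the potential failure of transversality at the nodes (a node forced onto a locus where evaluation fibers jump) controlled by Lemma \ref{lem: fiber dimension} and Corollary \ref{cor: Z1 finite}, and with the component through the general point $p_1$ serving as the free anchor. The paper merely organizes the count differently — bounding the family of images with fixed dual graph first and imposing the $R_i$ at the end, rather than conditioning each component before gluing — but the key lemmas invoked and the conclusions drawn (two components, both free, in the borderline case) are identical.
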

\begin{proof}
    Let $f \colon C \rightarrow X$ be a general curve parameterized by $T_0$.  If $C$ is irreducible, our claim is immediate; hence, we may suppose $C = \cup C_i$ is a stable nodal curve with at least two irreducible components. By examining fiber dimensions using Lemma \ref{lem: fiber dimension} and Corollary \ref{cor: Z1 finite}, we will show the image of the map $f \colon C \rightarrow X$ deforms with dimension at most $-K_X \cdot \alpha + \dim X - 4$ while preserving the dual graph to $C$, and in this case, $C$ contains two irreducible components, both of which are free curves.  Since meeting a general codimension $r_i$ subvariety imposes $r_i - 1$ conditions on the family of images of deformations of $f \colon C \rightarrow X$, our claim follows immediately from this deduction.%

    Let $C_1$ be an irreducible component of $C$ such that $f(C_1)$ contains $p_1$.  By generality of $p_1$, $f|_{C_1}$ is a free curve.  Let $M_1 \subset \overline{M}_{0,2}(X, f_*[C_1])$ be the sublocus parameterizing irreducible free curves.  The fiber of each evaluation map at the $i^{th}$ marked point, $\mathrm{ev}_i\colon M_1 \rightarrow X$, has smooth fibers of the expected dimension over every point in $X$.  Let $C_2 \subset C$ be a component attached to $C_1$, and let $M_2 \subset \overline{M}_{0,1}(X, f_*[C_2])$ be a component containing $[f|_{C_2}]$.  There is a natural gluing map $\mathrm{gl} \colon \overline{M}_{0,2}(X, f_*[C_1]) \times_X \overline{M}_{0,1}(X, f_*[C_2]) \rightarrow \overline{M}_{0,1}(X, f_*[C_1 \cup C_2])$.  Consider the sublocus $M \subset \mathrm{gl}(M_1 \times_X M_2)$ of this map's image where the remaining marked point maps to $p_1$.  We claim $\dim M \leq -K_X \cdot f_*[C_1 \cup C_2] - 3$, and equality is achieved only if the general map is a union of two free curves.  Indeed, since $\mathrm{ev} \colon M_2 \rightarrow X$ is dominant, a dimension count shows the fiber of $\mathrm{ev}$ is of dimension $n$ larger than expected, $-K_X \cdot f_*[C_2] - 2 + n$, only over a codimension $n + 1$ subset.  As the fiber of $\mathrm{ev}_2 \colon M_1 \rightarrow X$ has the expected dimension over every point in $X$, $\dim M \leq -K_X \cdot f_*[C_1 \cup C_2] - 3$, and when equality is achieved the node of $C_1 \cup C_2$ maps to a general point of the image of $f|_{C_1}$.  Hence, $f|_{C_2}$ must also be free. %

    As $p_1$ is general, the component of $M_{0,0}(X, f_*[C_1 \cup C_2])$ containing zero-pointed curves parameterized by $M$ has dimension at most $\dim M + \dim X - 1 = -K_X \cdot f_*[C_1 \cup C_2] + \dim X - 4$.  When equality is achieved, the general curve parameterized by $M$ does not meet a codimension $2$ subset.  For any irreducible component $C_i \subset C$ which is not contracted by $f \colon C \rightarrow X$, let $M_i \subset M_{0,1}(X, f_*[C_i])$ be the component parameterizing deformations of $[f|_{C_i}]$.  By Corollary \ref{cor: Z1 finite}, the fiber of $\mathrm{ev} \colon M_i \rightarrow X$ has dimension $-K_X \cdot f_*[C_i]$ over at most finitely many points.  If $C$ has more than two components, %
    this implies the family of images of curves with the same dual graph as $C$ has dimension less than $-K_X \cdot \alpha + \dim X - 4$. 
\end{proof}

We are now ready to prove Theorem \ref{MBB}.

\begin{proof}[Proof of Theorem \ref{MBB}]
Let $M \subset \overline{M}_{0,0}(X, \alpha)$ be a component which generically parameterizes irreducible free curves such that $H \cdot \alpha > 1$.  If the general curve parameterized by $M$ is not immersed, then $\dim X = 4$ and $M$ parameterizes multiple covers of $H$-lines.  
In this case, the claim is clear since finite morphisms $\mathbb{P}^{1}\rightarrow \mathbb{P}^{1}$ degenerate to a stable map $C_{1} \cup C_{2}\rightarrow \mathbb{P}^{1}$ with $C_{i}\cong \mathbb{P}^{1}$.
If $H\cdot \alpha = 2$, then our claim follows from Theorem \ref{thm: low degree curves}, Lemma \ref{lem: specialize Manin components}, and a dimension count when $\alpha \in \partial\overline{NE}(X)$. %
\par
Hence, we may suppose that $H\cdot \alpha > 2$ and the normal bundle $N$ of the general curve parameterized by $M$ splits as a direct sum $N \cong \oplus_{i = 0}^{n} \mathcal{O}(a_i)$ with $a_i \leq a_{i + 1}$.  Finitely many curves parameterized by $M$ meet $a_0 + 1$ points $p_0, \ldots, p_{a_0}$ and $a_{i} - a_{i -1}$ general complete intersections $R_{i,1}, \ldots , R_{i, a_{i} - a_{i - 1}}$ of basepoint free linear systems with $\dim R_i = i$. 
Moreover, by varying the points $p_i$, we may assume each of these finitely many curves intersects each $R_{i,j}$ along general points in $X$. Let $R = R_{i,j}$ be a complete intersection of divisors $A_1, \ldots, A_m$ with $j$ maximal; if $a_0 = a_n$, we let $R = p_{a_0}$ and consider $R$ as a local complete intersection of divisors $A_1, \ldots, A_m$.  Consider the one-parameter family of curves $T \subset M$ which passes through $R' = A_1 \cap \dots \cap A_{m-1}$ instead of $R$.

By generality of $p_{0}$ and bend-and-break, we may assume $T$ parameterizes a stable map $f \colon C \rightarrow X$ with reducible domain. %
First, suppose each irreducible component $C_i \subset C$ is either contracted by $f$ or $[f|_{C_i}] \in \overline{M}_{0,0}(X, f_*[C_i])$ lies in a component parameterizing dominant family of curves.  Note that $\operatorname{codim}(R_{i,j}, X) = \dim X - i$ and $\operatorname{codim}(R', X) =  \operatorname{codim}(R, X) - 1$.  By Lemma \ref{lem: mbb precurser}, as
$$-K_X \cdot \alpha + \dim X - 4 = (a_0+ 1)(\dim X - 1) + \sum_{i = 1}^n (a_{i} - a_{i-1})(\dim X - 1 - i) - 1,$$
$C = C_1 \cup C_2$ has two irreducible components and $f|_{C_i}$ is a free curve.  Hence, we may suppose there exist irreducible components $C_i \subset C$ such that $[f|_{C_i}] \in \overline{M}_{0,0}(X, f_*[C_i])$ only lies in components parameterizing non-dominant families of curves.  It follows that deformations of such $[f|_{C_i}]$ are contained in subvarieties $Y_i \subset X$ with $a(Y_i, -K_X) > 1$.  When $X$ contains a contractible divisor, by Lemma \ref{lem: mbb contractible divisor} we may assume $\alpha \in \partial\overline{NE}(X)$ is contracted by the unique elementary contraction $\pi \colon X \rightarrow Z$ of fiber type.  Since the general fiber of $X \rightarrow Z$ is a homogeneous variety of index $\dim X - 2$, our claim follows immediately.  Hence, we may assume $X$ does not contain any contractible divisors.  Theorem \ref{classification higher a} implies $\dim X = 4$ and $\operatorname{codim}(Y_i, X) = 2$ for each subvariety $Y_i \subset X$ with $a(Y_i, -K_X) > 1$.

Let $C_1', \ldots , C_t' \subset C$ be the connected components of the complement in $C$ of all irreducible components $C_i \subset C$ for which $[f|_{C_i}] \in \overline{M}_{0,0}(X, f_*[C_i])$ does not deform in a dominant family.  Since the intersection of each $R_{i,j} \neq R$ with any curve parameterized by $T$ is a general point in $X$, no such point in contained in a subvariety $Y \subset X$ with $a(Y, -K_X) > 1$.  It follows that for each $R_{i,j} \neq R$, the image of $f(\cup_s C_s')$ meets $R_{i,j}$ along a component not contained in a subvariety $Y$ with $a(Y, -K_X) > 1$.  If $\operatorname{codim}(R', X) \neq 2$, we claim $f(\cup_s C_s')$ meets $R'$ as well.  The image of the family $T' \subset M$ parameterizing curves meeting each point $p_i$ and complete intersection subvariety $R_{i,j} \neq R$ is a subvariety $S \subset X$ with $\dim S \geq 2$ and $\dim S \neq 3$. When $\dim S = 2$, $R'$ is a divisor and $T = T'$.  When $\dim S = 4$, any subvariety $Y \subset X$ with $a(Y, -K_X) > 1$ has dimension $2$.  Hence, as $\dim S = \operatorname{codim}(R, X) = \operatorname{codim}(R', X) + 1$, the intersection $R' \cap S$ is a general curve disjoint from $Y$.  %

We will use the incidence of $f(\cup_s C_s')$ with $R_{i,j} \neq R$ and, when it applies, with $R'$ to limit the existence of irreducible components $C_i \subset C$ for which $[f|_{C_i}] \in \overline{M}_{0,0}(X, f_*[C_i])$ does not deform in a dominant family.  Recall that we may assume $\dim X = 4$ and $X$ does not contain a contractible divisor.  We claim that for some $C_s' \subset C$, the number $s_0$ of general points and number $s_i$ of general complete intersection subvarieties of dimension $i$ met by the image of $C_s'$ under $f$ satisfies $3s_0 + 2s_1 + s_2 = -K_X \cdot f_*[C_s'] + 1.$  %
Note that if $\operatorname{codim}(R', X) \neq 2$, then $f(\cup_s C_s')$ meets $R'$.  As $-K_X \cdot \alpha + \dim X - 4 = -K_X \cdot \alpha$, while  $-K_X \cdot \sum_s f_*[C_s'] \leq -K_X \cdot \alpha - 2$ by ampleness of the anticanonical divisor, %
for some $s$ we have $3s_0 + 2s_1 + s_2 \geq -K_X \cdot f_*[C_s'] + 1.$  In this case, however, equality holds by Lemma \ref{lem: mbb precurser}, $C_s'$ is irreducible, and $f|_{C_s'}$ is a general free curve.  Since $f(C_s')$ must meet a subvariety $Y \subset X$ with $a(Y, -K_X) > 1$, by generality of $f|_{C_s'}$ we see that $Y$ would be a divisor, contradicting Lemmas \ref{lem: finitely many planes} and \ref{lem: Picard rank 2 subvarieties higher a-invar}.
\end{proof}

\section{$a$-covers}\label{section:a covers}

We recall the notion of $a$-covers on a smooth Fano varieties:
\begin{defn}\label{a-cover}
    Let $X$ be a smooth Fano variety.
    We say that a dominant generically finite morphism $f\colon Y\rightarrow X$ of degree at least $2$ is an $a$-\textit{cover} if $Y$ is smooth and $a(Y, -f^{*}K_{X}) = a(X, -K_{X}) = 1$.
\end{defn}

For an $a$-cover $f\colon Y \rightarrow X$, we let $\phi_{K_Y - f^*K_X}\colon Y \dashrightarrow B$ be the Iitaka fibration for the adjoint divisor $K_Y - f^*K_X$.  By resolving $Y$, we often assume $\phi_{K_Y - f^*K_X}$ is a morphism.  In this section, we prove Theorem \ref{classification a-covers} describing $a$-covers on smooth Fano varieties of coindex $3$:

\begin{thm}\label{classification a-covers}
    Let $X$ be a smooth Fano variety of coindex $3$ and dimension $n\ge 4$.
    Suppose that $X$ is not a Fano $4$-fold of product type.
    Let $f\colon Y\rightarrow X$ be an $a$-cover, and let $\phi \colon Y\dashrightarrow B$ be the Iitaka fibration for $K_{Y}-f^{*}K_{X}$. 
    Then either 
\begin{itemize}
    \item $n = 4$ and $\phi$ is birational to the base change of a family of $-K_X$-conics; or %
    \item $\rho(X) = 2$ and $\phi$ is birational to the base change of a Mori fibration on $X$.
\end{itemize}\end{thm}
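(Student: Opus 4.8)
The plan is to read the structure of an $a$-cover $f\colon Y\to X$ off the Iitaka fibration $\phi\colon Y\dashrightarrow B$ of the ramification divisor $R:=K_{Y}-f^{*}K_{X}$, organising everything by the Iitaka dimension $\kappa:=\kappa(Y,R)=\dim B$. After resolving we may take $\phi$ to be a morphism. Since $R$ is effective, the hypothesis $a(Y,-f^{*}K_{X})=1$ forces $R$ not to be big (otherwise $R-\varepsilon f^{*}(-K_{X})$ would remain big for small $\varepsilon>0$, giving $a<1$), hence $\kappa\le n-1$. Let $F$ be a general fibre of $\phi$, put $g:=f|_{F}\colon F\to X$ and $Z:=\overline{g(F)}$; comparing with $\dim Y=\dim X$ shows $g$ is generically finite onto $Z$ with $\dim Z=n-\kappa$, and that the images $Z_{b}$ ($b\in B$) cover $X$ because $f$ is dominant.

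Next I would analyse the general fibre. By definition of the Iitaka fibration $\kappa(F,R|_{F})=0$, and since $R|_{F}=K_{F}-g^{*}K_{X}$ the map $g$ is adjoint rigid; moreover $a(F,-g^{*}K_{X})<1$ would make $R|_{F}$ big, contradicting $\kappa(F,R|_{F})=0$, so $a(F,-g^{*}K_{X})=1$ and hence $a(Z_{b},-K_{X}|_{Z_{b}})\ge1$. As subvarieties with $a(\cdot,-K_{X}|_{\cdot})>1$ lie in the proper closed set $W$ of Theorem~\ref{classification higher a} while the $Z_{b}$ cover $X$, we get $a(Z_{b},-K_{X}|_{Z_{b}})=1$, i.e.\ $a(Z_{b},H|_{Z_{b}})=n-2$, for general $b$. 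By \cite[Proposition~1.3]{Hoering2010} this forces $\dim Z_{b}\ge n-3$, so $\kappa\le3$, and $(Z_{b},H|_{Z_{b}})$ is one of: a rational curve $C$ with $H\cdot C=2/(n-2)$ (possible only for $n=4$); a linear $\mathbb{P}^{n-3}$ with $n\ge5$ ($\kappa=3$); a quadric or a scroll over a curve ($\kappa=2$); a del Pezzo variety or a low-degree scroll/conic bundle ($\kappa=1$).

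I would then argue case by case. If $\kappa=0$, then $f$ is an adjoint rigid $a$-cover of $X$ itself; cutting $X$ by a general codimension $n-3$ member of $|H|$ and pulling back under $f$, \cite[Theorem~3.15]{exceptional_sets} produces an adjoint rigid $a$-cover of a smooth Fano threefold, which is impossible by \cite[Theorem~6.2]{Lehmann_2017} and \cite[Section~5]{beheshti2020moduli}; hence $\kappa\ge1$. If $\dim Z_{b}=1$ (so $n=4$), adjoint rigidity gives $\deg(g)\cdot(-K_{X}\cdot Z_{b})=2$, and since $(n-2)\mid(-K_{X}\cdot Z_{b})$ this forces $\deg(g)=1$ and $-K_{X}\cdot Z_{b}=2$; thus $\{Z_{b}\}$ is a covering family of $-K_{X}$-conics and $\phi$ is birational to its base change, the first alternative. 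If $\dim Z_{b}\ge2$, a degree $\ge2$ adjoint rigid $a$-cover of $Z_{b}$ would, after a further general linear section, yield an adjoint rigid $a$-cover of a Fano threefold, so $g\colon F\to Z_{b}$ is birational and $\{Z_{b}\}$ is a covering family of adjoint rigid subvarieties with $-K_{X}|_{Z_{b}}=-K_{Z_{b}}$ by adjunction. When $Z_{b}$ is not a linear space (the cases $\kappa\in\{1,2\}$), running the cone theorem on the Fano variety $X$ shows the class of a covering rational curve in $Z_{b}$ spans an extremal ray whose contraction $\pi\colon X\to S$ is of fibre type with general fibre $Z_{b}$, so $\phi$ is birational to the base change of $\pi$; Theorems~\ref{classification Picard rank 1} and \ref{classification Picard rank 2} (using that $X$ is not a $4$-fold of product type) then force $\rho(X)=2$. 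When $Z_{b}\cong\mathbb{P}^{n-3}$ with $n\ge5$, the $H$-lines contained in a single $Z_{b}$ through a general point of $X$ already exhaust all $H$-lines through that point, so the variety of lines through a general point of $X$ would be a projective space; by Theorem~\ref{thm: low degree curves} together with the descriptions of $\overline{M}_{0,1}(X,1)$ in Lemmas~\ref{lem: higher a-invariant subvar} and \ref{lem: finitely many planes} this cannot happen when $\rho(X)=1$, while for $\rho(X)=2$ it identifies $X$ with $\operatorname{Bl}_{\ell}\mathbb{P}^{5}$ and $\{Z_{b}\}$ with the fibres of $X\to\mathbb{P}^{3}$ — again the base change of a Mori fibration on a variety of Picard rank $2$.

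The step I expect to be the main obstacle is the analysis when $\dim Z_{b}\ge2$: on one side, promoting the covering family $\{Z_{b}\}$ of adjoint rigid subvarieties to a genuine elementary fibre-type contraction of $X$ (the cone-theorem/relative-MMP input, and checking the contraction has precisely these fibres); on the other side, excluding the ``phantom'' covering families of linear $\mathbb{P}^{n-3}$'s inside coindex $3$ Fano varieties of Picard rank $1$ and dimension $\ge5$, which rests on the detailed study of spaces of low-degree rational curves in Theorem~\ref{thm: low degree curves} and, case by case, on Mukai's classification.
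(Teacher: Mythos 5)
Your overall skeleton (stratify by $\kappa:=\kappa(K_Y-f^*K_X)$, show the general fibre maps to an adjoint rigid subvariety $Z_b$ with $a(Z_b,H)=n-2$, classify the possible $(Z_b,H)$ via \cite[Proposition~1.3]{Hoering2010}, and kill $\kappa=0$ by slicing to a threefold) matches the paper, and your treatment of $\kappa=0$ and of the $n=4$, $\dim Z_b=1$ case is fine. But there is a genuine gap at exactly the step you flag as the main obstacle: the assertion that for $\kappa\in\{1,2\}$ ``running the cone theorem'' shows the covering curves of $Z_b$ span an extremal ray whose contraction is of fibre type with general fibre $Z_b$. The cone theorem gives no such thing: a covering family of adjoint rigid subvarieties with $a(Z_b,-K_X|_{Z_b})=a(X,-K_X)$ need not be the fibre structure of any contraction, and when $\rho(X)=1$ the only ``fibre-type contraction'' is $X\to\mathrm{pt}$, so your argument produces neither the desired structure nor a contradiction. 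Yet ruling out these covering families for $\rho(X)=1$ is the heart of the theorem. The paper does this with two separate, nontrivial arguments: Lemma \ref{lem: a-covers Iitaka dim 1} eliminates $\kappa=1$ entirely (via \cite[Theorem~5.3]{beheshti2020moduli} and the classification of Fano threefolds when $|H|$ is very ample, and via a delicate lifting-of-quartics argument using Lemmas \ref{lem: quartic very free} and \ref{lem: specialize Manin components} when it is not); and for $\kappa=2$ it reduces to $\dim X=4$ by Theorem \ref{thm:a-cover slice}, lifts a family of free embedded cubics to $Y$, and derives a contradiction because the unique such family (Theorem \ref{thm: low degree curves} plus Lemma \ref{lem: specialize Manin components}) contains curves lying in smooth threefold linear sections, whose normal bundles cannot have the trivial quotient forced by being contracted by $\phi$.

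The $\kappa=3$, $n\ge 5$ case also has a gap as written. It is not true without argument that the $H$-lines inside a single $Z_b\cong\mathbb{P}^{n-3}$ through a general point exhaust all $H$-lines through that point (a point may lie on several $Z_b$, and a priori only one component of the space of lines through the point need come from $Z_b$); and the lemmas you cite (\ref{lem: higher a-invariant subvar}, \ref{lem: finitely many planes}) concern codimension-$2$ linear spaces \emph{contained in} $X$, not codimension-$3$ covering families nor the fibres of $\mathrm{ev}\colon\overline{M}_{0,1}(X,1)\to X$. The paper instead lifts free embedded conics (contained in the fibres $\mathbb{P}^{n-3}$) to $Y$ and runs the same normal-bundle/trivial-summand contradiction. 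To repair your proof you would need, at minimum, a substitute for the extremal-contraction step when $\rho(X)=1$ and a genuine computation of the fibres of the evaluation map for lines in each Mukai type.
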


To prove Theorem \ref{classification a-covers}, we relate $a$-covers of Fano varieties of index larger than $1$ to $a$-covers of their fundamental divisors.  We will determine which $a$-covers of fundamental divisors extend to $a$-covers of $X$ using results from Section \ref{section: low degree curves}.

\begin{thm}[\cite{exceptional_sets}, Theorem 3.15]\label{thm:a-cover slice}
Let $X$ be a smooth Fano variety of index $r_X \geq 2$.  Suppose the fundamental linear system $W = |\frac{-1}{r_X}K_X|$ is basepoint free, and let $H \in W$ be general.  If $f : Y \rightarrow X$ is an $a$-cover and $\kappa(K_Y - f^*K_X) < \dim X -1$, the restriction
$$Z = f^{-1}(H), \hspace{.5cm} g = f|_Z: Z \rightarrow H$$
is an $a$-cover of $H$ such that $\kappa(K_Z -g^*K_H) = \kappa(K_Y - f^*K_X)$.
\end{thm}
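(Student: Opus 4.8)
The plan is to reduce the statement to two facts about how the $a$-invariant and the Iitaka dimension of the adjoint divisor behave under restriction to a general member of a basepoint-free linear system. First I would record the geometry of the slice. Since $W=|\tfrac{-1}{r_X}K_X|$ is basepoint free and $X$ is smooth, a general $H\in W$ is smooth and irreducible by Bertini (characteristic $0$), and adjunction gives $-K_H=(r_X-1)H|_H$, which is ample, so $H$ is a smooth Fano variety of index $r_X-1\ge 1$ and $a(H,-K_H)=1$. The pullback system $|f^{*}H|$ contains the basepoint-free subsystem $f^{*}W$, so a general $Z=f^{-1}(H)$ is a general member of a basepoint-free system on $Y$; hence $Z$ is smooth and irreducible, $g=f|_Z\colon Z\to H$ is generically finite of the same degree $\ge 2$ as $f$, and the ramification divisor $R=K_Y-f^{*}K_X$ (a proper subvariety, as $f$ is generically \'etale) does not contain $Z$. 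Since $Z\sim f^{*}H\sim\tfrac{1}{r_X}f^{*}(-K_X)$, adjunction yields the identity that drives everything:
\[
K_Z-g^{*}K_H=(K_Y+Z)|_Z-f^{*}(K_X+H)|_Z=(K_Y-f^{*}K_X)|_Z=R|_Z .
\]
In particular $K_Z-g^{*}K_H$ is effective, so $a(Z,-g^{*}K_H)\le 1$, and $-g^{*}K_H=\tfrac{r_X-1}{r_X}(-f^{*}K_X)|_Z$ is nef and big on $Z$. It then remains to prove $a(Z,-g^{*}K_H)\ge 1$ and $\kappa(Z,R|_Z)=\kappa(Y,R)$.

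For both statements I would pass to the Iitaka fibration $\phi\colon Y\to B$ of the adjoint divisor $R$ (resolving $Y$ so it is a morphism, which is harmless by birational invariance of the relevant invariants). The hypothesis $\kappa(R)<\dim X-1$ says $\dim B\le\dim Y-2$, so a general fibre $F$ has $\dim F\ge 2$; on it $R|_F=K_F+(-f^{*}K_X)|_F$ has $\kappa=0$, and $a\big(F,(-f^{*}K_X)|_F\big)=1$ because the $a$-invariant is constant on the general fibre of the Iitaka fibration of an adjoint divisor (cf.\ the arguments in \cite{2019}). As $\phi_{f^{*}H}=\phi_W\circ f$ is generically finite it does not contract $F$, so $A_F:=f^{*}H|_F$ is basepoint free and big on $F$, and $Z\cap F$ is a general member of $|A_F|$, irreducible of dimension $\dim F-1\ge 1$. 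The key claim is the rigid slicing statement: $a\big(Z\cap F,(-f^{*}K_X)|_{Z\cap F}\big)=\tfrac{r_X-1}{r_X}$ and $\kappa\big(R|_{Z\cap F}\big)=0$. Granting it, since $\dim B\le\dim Z$ the map $\phi|_Z\colon Z\to B$ is still dominant with general fibre $Z\cap F$, which realizes $\phi|_Z$ as the Iitaka fibration of $R|_Z$ and forces $\kappa(R|_Z)=\dim B=\kappa(R)$; and since pseudo-effective classes restrict to pseudo-effective classes on general fibres of a morphism, $a(Z,-g^{*}K_H)\ge 1$ can fail only if it already failed on $Z\cap F$, which the rigid slicing statement excludes. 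This gives that $g$ is an $a$-cover with $\kappa(K_Z-g^{*}K_H)=\kappa(R)$.

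The main obstacle is the rigid slicing statement, precisely because restriction to a hyperplane section can a priori \emph{increase} pseudo-effectivity: one must rule out that $\big(K_F+s(-f^{*}K_X)|_F\big)|_{Z\cap F}$ is pseudo-effective for some $s<1$ even though $K_F+s(-f^{*}K_X)|_F$ is not pseudo-effective on $F$. This is exactly where $\dim F\ge 2$ enters. Running a $\big(K_F+(-f^{*}K_X)|_F\big)$-MMP and using $\kappa=0$, one reaches a good minimal model on which the adjoint divisor is numerically trivial; there $K_F+s(-f^{*}K_X)|_F\equiv-(1-s)(-f^{*}K_X)|_F$ is anti-nef-and-big, and its restriction to the positive-dimensional subvariety $Z\cap F$ is still anti-nef-and-big, hence not pseudo-effective — the step that collapses when $\dim F=1$, where $Z\cap F$ is a finite set of points. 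Transferring this back through the birational model is routine, as $a$ and the Iitaka dimension of the adjoint divisor are birational invariants. Once the rigid slicing statement is established, the chain of reductions above also explains why $\kappa(K_Y-f^{*}K_X)<\dim X-1$ is the sharp hypothesis: with $\kappa=\dim X-1$ the general fibre $F$ is a curve, $R|_Z$ can already be big, and $g$ need not be an $a$-cover of $H$.
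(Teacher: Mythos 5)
You should first note that the paper does not prove this statement at all: it is quoted verbatim from \cite{exceptional_sets} (Theorem 3.15) and used as a black box, so there is no in-paper proof to compare with and your argument has to stand on its own. Your reductions are the natural ones and are fine as far as they go: the adjunction identity $K_Z-g^{*}K_H=(K_Y-f^{*}K_X)|_Z$, the resulting bound $a(Z,-g^{*}K_H)\le 1$, the passage to a general fibre $F$ of the Iitaka fibration (where the hypothesis $\kappa<\dim X-1$ buys exactly $\dim F\ge 2$), and the identification of the adjoint rigid slicing statement as the crux.

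The gap is precisely at the step you call routine. After running the $(K_F+r_XA)$-MMP to a model $F'$ with $K_{F'}+r_XA'\equiv 0$ (here $A=f^{*}H|_F$), the divisor $(K_F+sA)|_{Z\cap F}$ is \emph{not} the pullback of $(K_{F'}+sA')|_{(Z\cap F)'}$: on a common resolution the two differ by restrictions to the slice of effective divisors that are exceptional for $F\dashrightarrow F'$ but are honest, possibly big, divisors on $Z\cap F$. Birational invariance of $a$ and $\kappa$ does not apply, because the two polarizations being compared on the slice are not pullbacks of one another. Indeed, the statement your argument would establish --- that slicing \emph{any} adjoint rigid pair of dimension $\ge 2$ by a general member of a big basepoint-free system $|A|$ drops the $a$-invariant by exactly $1$ and preserves $\kappa=0$ --- is false. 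Take $F=\mathbb{F}_1$ with $A=3C_0+4f$ ($C_0$ the $(-1)$-section, $f$ a fibre): $A$ is very ample, $a(F,A)=3/4$, and $K_F+\tfrac34A=\tfrac14C_0$ has $\kappa=0<\dim F-1$; yet $\tfrac14C_0\cdot A=\tfrac14>0$, so the adjoint divisor restricted to a general $V\in|A|$ (a genus-$3$ curve of degree $A^2=15$) is big, and $a(V,A|_V)=-4/15\neq a(F,A)-1=-1/4$. Since your sketch uses nothing about $F$ beyond adjoint rigidity and $\dim F\ge 2$, it cannot be completed as written. What saves the actual theorem is the unused numerical input $a(F,f^{*}H|_F)=r_X\ge 2$: for adjoint rigid pairs whose $a$-invariant is this large, the classification of such pairs (cf.\ \cite[Proposition~3.17]{exceptional_sets}, \cite[Proposition 1.3]{Hoering2010}) forces the adjoint divisor to be numerically trivial already on the relevant model, which is what kills the exceptional correction terms above; any correct proof must invoke this or an equivalent structural fact rather than a bare MMP-plus-restriction argument.
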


This allows us to employ a characterization of $a$-covers of Fano $3$-folds with very ample anticanonical divisors \cite{beheshti2020moduli}.  When $-K_X$ is very ample, this implies the Iitaka dimension of $K_Y - f^*K_X$ is at least $2$.  We prove this result for arbitrary $X$ in Lemma \ref{lem: a-covers Iitaka dim 1}.  This will require Lemma \ref{lem: specialize Manin components}, which allows us to extend certain results from Section \ref{section: low degree curves} to arbitrary $X$.

\begin{lem}\label{lem: specialize Manin components}
    Let $\pi \colon \mathcal{X} \rightarrow B$ be a family of smooth Fano varieties of coindex $3$ and dimension $n \ge 4$.  Consider a class $\alpha \in \overline{NE}(\pi)$ with $-K_{\mathcal{X}/B} \cdot \alpha \leq 4(n - 2)$.  Suppose a component $M \subset \overline{M}_{0,1}(\mathcal{X},\alpha)$ which generically parameterizes free curves has irreducible fibers over general points in $\mathcal{X}$.  Then the fiber of $M$ over any point in $B$ contains a single dominant component.  
\end{lem}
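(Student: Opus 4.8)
The plan is to compare the special fibers of $M\to B$ with the general ones, transporting irreducibility across fibers by a Stein factorization argument for the evaluation map and controlling high $H$-degree via Movable Bend--and--Break. After normalizing $B$, restricting to a general smooth curve through $b$, and passing to a suitable component, one may assume $B$ is a smooth curve and $\pi|_M\colon M\to B$ is dominant — if instead $M$ maps to a single point of $B$, then $M_b=M$ is irreducible and there is nothing to prove. Since $\alpha\in\overline{NE}(\pi)$ is a vertical class, every stable map parameterized by $M$ has image in one fiber of $\pi$, so the fiber of $M\to B$ over $b$ is $M_b=M\cap\overline{M}_{0,1}(\mathcal{X}_b,\alpha)$; as $M$ is integral and $B$ a smooth curve, $M\to B$ is flat, so $M_b$ is pure of the expected dimension $-K_{\mathcal{X}_b}\cdot\alpha+n-2$. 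At a free curve $[f]$ lying over $b$ one has $H^1(\mathbb{P}^1,f^*T_{\mathcal{X}_b})=0$, and because $f^*N_{\mathcal{X}_b/\mathcal{X}}$ is trivial also $H^1(\mathbb{P}^1,f^*T_{\mathcal{X}})=0$; hence $\overline{M}_{0,1}(\mathcal{X},\alpha)$, $M$, and $M^{fr}\to B$ are smooth at $[f]$, and in particular any component of $M_b$ containing a free curve dominates $\mathcal{X}_b$.

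Next I would establish connectedness and existence. The evaluation $\mathrm{ev}\colon M\to\mathcal{X}$ is proper and dominant (free curves cover a dense open of the general fiber), hence surjective; by verticality of $\alpha$ this gives $\mathrm{ev}(M_b)=\mathcal{X}_b$, so $M_b$ has \emph{some} dominant component. By hypothesis the general fiber of $\mathrm{ev}$ is irreducible, so in the Stein factorization $M\to\mathcal{Y}\to\mathcal{X}$ the finite map $\mathcal{Y}\to\mathcal{X}$ is generically one-to-one; since $\mathcal{X}$ is smooth, hence normal, a finite birational morphism onto it is an isomorphism, so $\mathcal{Y}=\mathcal{X}$ and \emph{every} fiber of $\mathrm{ev}$ is connected. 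Over a general $b'$, a general point $x\in\mathcal{X}_{b'}$ is general in $\mathcal{X}$, so $\mathrm{ev}^{-1}(x)\cap M_{b'}$ is irreducible; then two distinct dominant components of $M_{b'}$ would each meet this irreducible fiber with one containing the other, a contradiction, so for general $b'$ there is a unique dominant component $N_{b'}$, which generically parameterizes free curves and has irreducible fibers over general points of $\mathcal{X}_{b'}$. A dimension count together with uniqueness of $N_{b'}$ gives $\overline{\bigcup_{b'}N_{b'}}=M$, so every dominant component of any $M_b$ occurs in the flat limit of the $N_{b'}$: the non-dominant components of $M_{b'}$ have image in proper subvarieties whose limits remain proper and so cannot produce a dominant limit.

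For uniqueness over a fixed $b$, write $X=\mathcal{X}_b$ and suppose $N_1\neq N_2$ are dominant components of $M_b$; both have expected dimension, hence generically parameterize free curves by \cite[Theorem 3.11]{Kollar1996}. If $X$ contains a contractible divisor, Lemma \ref{lem: mbb contractible divisor} says $\Mor(\mathbb{P}^1,X,\alpha)$ has a unique component generically parameterizing free curves, so $N_1=N_2$. Otherwise I would induct on $H\cdot\alpha$. In the base case $H\cdot\alpha=1$ the curves are $H$-lines: for $n=4$ the fibers of $\mathrm{ev}$ are finite, so connectedness forces $\mathrm{ev}^{-1}(x)\cap M_b$ to be a point for general $x$, hence $M_b$ has a unique dominant component; for $n\geq 5$ one uses that a general line through a general point is free with normal bundle a sum of nonnegative line bundles, so its deformation space fixing the point is smooth of the expected dimension, and combines this with connectedness of $\mathrm{ev}^{-1}(x)\cap M_b$ and the explicit scheme of lines through a point on a coindex $3$ Fano. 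When $H\cdot\alpha\geq 2$, Movable Bend--and--Break (Theorem \ref{MBB}), applied to the images of $N_1$ and $N_2$ in $\overline{M}_{0,0}(X,\alpha)$ and iterated, shows each $N_i$ contains a chain of free $H$-lines, which is a smooth point of $\overline{M}_{0,0}(X,\alpha)$ and so lies in a unique irreducible component. Each lower-degree space $\overline{M}_{0,1}(X,\beta)$ with $H\cdot\beta<H\cdot\alpha$ (so $H\cdot\beta\leq 3$, using the bound $-K_X\cdot\alpha\leq 4(n-2)$) has a unique dominant component with irreducible general fibers by the inductive hypothesis — whose hypothesis is fed in lower degree by Theorem \ref{thm: low degree curves} on the general members of $\pi$. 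Gluing these unique dominant components (Lemma \ref{lem:gluing}) and using irreducibility of their general fibers, the locus of chains of free lines of class $\alpha$ obtained this way is connected; lying in the smooth locus of $\overline{M}_{0,0}(X,\alpha)$, it is contained in a single component, and since $N_1$ and $N_2$ each meet it, $N_1=N_2$.

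The main obstacle is the inductive step: pinning down the reducible locus of $M_b$ precisely enough that two dominant components are forced to share a chain of free curves. This requires Movable Bend--and--Break on the special fiber $X$, which is why the hypothesis excludes contractible divisors (that case being diverted to Lemma \ref{lem: mbb contractible divisor}), and it requires the auxiliary "irreducible general fibers" property to propagate through the induction — a property supplied on the general members of $\pi$ by Theorem \ref{thm: low degree curves}, which is exactly why the degree bound $4(n-2)$ is imposed. The base case $H\cdot\alpha=1$ with $n\geq 5$, where lines cannot be broken and one must argue directly with the scheme of lines through a point, is the other point demanding care.
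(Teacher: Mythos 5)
Your setup (reduction to a smooth curve base, flatness, connectedness of the fibers of $\mathrm{ev}$ via Stein factorization, and the observation that dominant components of $M_b$ have the expected dimension and generically parameterize free curves) matches the paper's starting point. The problem is your inductive step. You prove uniqueness of the dominant component of $M_b$ by applying Movable Bend-and-Break (Theorem \ref{MBB}) to $N_1$ and $N_2$ and degenerating each to a chain of free $H$-lines. But in this paper Theorem \ref{MBB} is \emph{downstream} of the present lemma: its proof explicitly invokes Lemma \ref{lem: specialize Manin components} (together with Theorem \ref{thm: low degree curves}) to handle the case $H\cdot\alpha=2$, and the gluing-and-smoothing induction you sketch for connecting the two chains through a common smooth point is the argument of Section \ref{section: proof of Main thms}, whose base cases again rest on this lemma. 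So your step at $H\cdot\alpha=2$ is circular, and degrees $3$ and $4$ inherit the problem because iterating the break runs through degree-$2$ pieces. A secondary gap: you feed the ``irreducible general fibers'' hypothesis at lower degree from Theorem \ref{thm: low degree curves}, which requires the general member of $\pi$ to be general in moduli --- the lemma assumes nothing of the sort about the family, and the paper's proof does not need it.

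The paper's proof avoids breaking the curves altogether. It cuts the connected fiber $M_p$ ($p\in\mathcal{X}_b$ general) by $-K_X\cdot\alpha-3$ general codimension-$2$ incidence conditions to obtain a connected curve $T\subset M_p$, and then shows every point of $T$ is a smooth point of the moduli space; a connected curve in the smooth locus lies in a single irreducible component, which gives the conclusion. For stable maps all of whose non-contracted components move in dominant families, smoothness is the self-contained dimension count of Lemma \ref{lem: mbb precurser} (which relies only on Lemma \ref{lem: fiber dimension} and Corollary \ref{cor: Z1 finite}); any remaining map must have a component inside a subvariety $Y$ with $a(Y,-K_X)>1$, which by Theorem \ref{classification higher a} and Lemma \ref{lem: mbb contractible divisor} forces $\dim X=4$ and $(Y,H)\cong(\mathbb{P}^2,\mathcal{O}(1))$, and the resulting union of two conics is shown to be a smooth point directly, using stability of $N_{Y/X}$. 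If you want to salvage your route, you would have to replace the appeal to Theorem \ref{MBB} by a direct bend-and-break plus dimension count at each degree --- which is essentially what Lemma \ref{lem: mbb precurser} packages.
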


\begin{proof}
    The fiber of $M$ over any point $p \in \mathcal{X}$ is connected.  It remains to show that for any point $b \in B$ and a general point $p \in X = \mathcal{X}_b$, the fiber $M_p$ of $M$ over $p$ contains only smooth points of $\overline{M}_{0,0}(X,\alpha)$ in codimension $1$.  We reduce to considering the locus of curves passing through $p$ and meeting $m = -K_X \cdot \alpha - 3$ general codimension $2$ complete intersection varieties $R_1, \ldots, R_m \subset X$. %
    Let $H$ be the fundamental divisor on $X$.

    Consider a stable map $f \colon C \rightarrow X$ passing through $p$ and each $R_1, \ldots R_m$.  If for every irreducible component $C_i \subset C$, either $f$ contracts $C_i$ or $[f|_{C_i}] \in \overline{M}_{0,0}(X, f_*[C_i])$ lies in a component parameterizing a dominant family of curves, then by Lemma \ref{lem: mbb precurser} $[f]$ must be a smooth point of $\overline{M}_{0,0}(X,\alpha)$.  Otherwise, we may suppose there exists a subvariety $Y \subset X$ with $a(Y,-K_X) > 1$.  As Lemma \ref{lem: mbb contractible divisor} proves irreducibility of the locus of free curves in $\overline{M}_{0,0}(X,\alpha)$ when $X$ has a contractible divisor, we may suppose $\dim X = 4$ and $(Y, H) \cong (\mathbb{P}^2, \mathcal{O}(1))$.  In this case, since an arbitrary line through $p$ would be disjoint from $Y$, we may suppose $C = C_1 \cup C_2$ is a nodal union of two possibly reducible conics, where $f(C_1)$ meets $p$ and $Y$, while $f(C_2) \subset Y$.  A dimension count shows both $f|_{C_1}$ and $f|_{C_2}$ must be general in moduli under these conditions, which implies both $C_i$ are irreducible and $H^1(f^{*}T_X|_{C_2}) = 0$, since the normal bundle of $Y$ in $X$ is a stable vector bundle with first Chern class $-H$.  In particular, $[f]$ is a smooth point of $M$.
\end{proof}

By Theorem \ref{thm:a-cover slice} and \cite[Theorem~5.5]{beheshti2020moduli}, $\kappa(K_Y - f^*K_X) > 0$ for any a-cover $f : Y \rightarrow X$ of a coindex three Fano variety $X$.  It follows that any Manin component $M \subset \overline{M}_{0,1}(X,\alpha)$ has irreducible fibers over general points in $X$.  We will show the converse when $M$ generically parameterizes free curves and $\alpha \not\in \partial\overline{NE}(X)$.  %

\begin{lem}\label{lem: a-covers Iitaka dim 1}
    Let $X$ be a smooth Fano variety of coindex $3$ and dimension $n\ge 4$ and let $H$ be the fundamental divisor on $X$. 
    Suppose $X$ is not a Fano $4$-fold of product type.
    Then there are no $a$-covers $f\colon Y\rightarrow X$ with $\kappa(Y, K_{Y}-f^{*}K_{X}) = 1$.
\end{lem}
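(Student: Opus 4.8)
The plan is to analyze the Iitaka fibration of the adjoint divisor $K_Y - f^*K_X$: when its Iitaka dimension equals $1$ it produces an algebraic family of divisors on $X$ that sweeps out $X$ and whose general member $W$ satisfies $a(\widetilde W, -K_X|_W) = 1$ and is adjoint rigid (or is dominated by an adjoint-rigid $a$-cover), and such a family cannot exist. First I would reduce to $\dim X = 4$: if $\dim X = n \ge 5$ then $1 = \kappa(Y, K_Y - f^*K_X) < n-1$, and since the fundamental linear system of a coindex $3$ Fano variety is basepoint free, Theorem \ref{thm:a-cover slice} applied to a general fundamental divisor $H \subset X$ yields an $a$-cover $f|_{f^{-1}(H)}\colon f^{-1}(H)\to H$ whose adjoint divisor still has Iitaka dimension $1$; by adjunction and Bertini $H$ is again a smooth coindex $3$ Fano variety, and by Theorems \ref{classification Picard rank 1} and \ref{classification Picard rank 2} it is not a Fano $4$-fold of product type, so iterating we may assume $n = 4$.

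Resolving $Y$, I may assume the Iitaka fibration $\phi\colon Y\to B$ of $K_Y - f^*K_X$ is a morphism onto a smooth curve $B$. A general fiber $F$ is a threefold with trivial normal bundle in $Y$, so $(K_Y - f^*K_X)|_F$ equals $K_F + (f|_F)^*(-K_X|_W)$ as divisor classes, where $W = W_b := \overline{f(F)}$ is a divisor in $X$, and $\kappa(F, K_F + (f|_F)^*(-K_X|_W)) = 0$. Since pulling back along a generically finite morphism does not increase the $a$-invariant, this forces $a(\widetilde{W_b}, -K_X|_{W_b}) \ge 1$ and shows $f|_F\colon F\to W_b$ is either birational (so $\widetilde{W_b}$ is adjoint rigid for $-K_X|_{W_b}$) or an adjoint-rigid $a$-cover of $(\widetilde{W_b}, -K_X|_{W_b})$. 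As $f$ is dominant the divisors $W_b$ sweep out a dense subset of $X$. If $a(\widetilde{W_b}, -K_X|_{W_b}) > 1$ for general $b$, every $W_b$ lies in the union $V$ of all subvarieties of $X$ with $a$-invariant exceeding $1$, which by \cite[Theorem~3.3]{2019} is a proper closed subset whose only divisorial components, by Theorem \ref{classification higher a}, are contractible divisors — finitely many of them — so the $W_b$ would all coincide, contradicting that they sweep out $X$. Hence $a(\widetilde{W_b}, -K_X|_{W_b}) = 1$ for general $b$.

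It remains to derive a contradiction from this family. Writing $-K_X = 2H$, the general $W_b$ is an adjoint-rigid (or adjoint-rigidly covered) divisor with $a(\widetilde{W_b}, H|_{W_b}) = 2$, so by the classification of such pairs (the input from \cite{exceptional_sets} already used in Lemma \ref{lem: irred fiber}), after a birational modification $W_b$ is a possibly singular quadric, a $\mathbb{P}^2$-bundle over a curve, or a del Pezzo fibration over a curve, with $H$ restricting to the relative (anti)tautological class. The $\mathbb{P}^2$-bundle case is impossible, as it would cover $X$ by planes, while $X$ contains only finitely many planes by Lemmas \ref{lem: finitely many planes} and \ref{lem: no planes on general fourfolds}; in the remaining cases the family $\{W_b\}$ globalizes the fibration on $W_b$ into a Mori fibration of $X$ (up to birational modification) over a curve. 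One then rules this out from the classification: when $\rho(X) = 1$, $X$ admits no nonconstant morphism to a lower-dimensional variety, no dominant rational map to a curve of positive genus (being rationally connected), and a direct estimate of $a(\widetilde W, H|_W)$ for divisors $W \in |\ell H|$ — including general singular members of subfamilies — gives $a(\widetilde W, -K_X|_W) < 1$; when $\rho(X) = 2$ every fiber-type extremal contraction has base of dimension at least $2$. The main obstacle is exactly this last step in the Picard rank $1$ case: a covering pencil or multi-sheeted family of \emph{singular} divisors is not excluded by smoothness of the general member of $|\ell H|$, so one needs a bound on the multiplicity of the singularities of the general member of the family to conclude its resolution still has $a$-invariant below $1$; the $\rho(X) = 2$ casework and the reductions are comparatively routine.
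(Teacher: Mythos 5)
Your reduction to $\dim X = 4$ via Theorem \ref{thm:a-cover slice} is fine, as is the observation that the closures $W_b$ of the images of Iitaka fibers form a covering family of adjoint-rigid divisors with $a(\widetilde{W_b},-K_X|_{W_b})=1$ (Lemma \ref{lem: transitivity of a-covers} and \cite[Theorem 3.3]{2019} give exactly what you need there). But the argument then stalls precisely where you admit it does: when $\rho(X)=1$ you have no way to exclude a one-parameter family of highly singular members of $|\ell H|$ whose resolutions satisfy $a(\widetilde{W_b},H)=2$, and no bound on their singularities is available from the smoothness of the general member of the linear system. This is not a routine loose end --- it is the entire content of the lemma in the hard case, and the classification you invoke to set it up is also off: an adjoint-rigid $3$-fold pair with $a(\cdot,L)=2=\dim-1$ is birationally a del Pezzo threefold, not a quadric ($a=3$), a $\mathbb{P}^2$-bundle over a curve ($a=3$), or a del Pezzo fibration over a curve ($a=1$, and not adjoint rigid). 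So the divisor-classification route does not close.

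The paper avoids this entirely by working with curves rather than divisors. In the very ample case it slices one step further, to a $3$-fold complete intersection $W$ of fundamental divisors: Theorem \ref{thm:a-cover slice} produces an $a$-cover of $W$ with $\kappa=1$, \cite[Theorem 5.3]{beheshti2020moduli} then forces $W$ to admit a del Pezzo fibration, and the classification of Fano threefolds rules this out --- no analysis of the covering family is needed. In the non-very-ample case ($\rho=1$, $g\in\{2,3\}$, reduced to $n=4$), the key mechanism you are missing is this: running a relative MMP on a general Iitaka fiber produces a free curve $\tilde C$ on $Z=f^{-1}(W)$ with $(K_Z-f^*K_W)\cdot\tilde C=0$ and $H\cdot f_*\tilde C\in\{3,4\}$; any such curve, and any curve on $X$ obtained by smoothing it with an $H$-line, lifts to $Y$ and is contracted by the Iitaka fibration, so its normal bundle must have a trivial quotient. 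Lemmas \ref{lem: quartic very free} and \ref{lem: specialize Manin components} show embedded free $H$-quartics on these $X$ are very free, which contradicts the trivial quotient. Supplying that curve-theoretic input (or some substitute controlling the singularities of the $W_b$) is what your proposal would need to become a proof.
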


\begin{proof}
    Let $f\colon Y\rightarrow X$ be an $a$-cover with $Y$ smooth and $\kappa(K_{Y}-f^{*}K_{X}) = 1$.
    We way assume the Iitaka fibration $\phi\colon Y\rightarrow B$ for $K_Y - f^*K_X$ is a morphism by resolving indeterminacies.  Let $W \subset X$ be a general $3$-fold complete intersection of fundamental divisors in $X$.  Note that $W$ is a smooth Fano $3$-fold of Picard rank $\rho(X)$ and genus $g(X)$.  Moreover, by Theorem \ref{thm:a-cover slice}, the restriction of $f$ to $Z = f^{-1}(W)$ is an $a$-cover of $W$ with $\kappa(K_Z - f^*K_W) = 1$ as well.
    \par 
    First, suppose that $H$ is very ample.
    By \cite[Theorem 5.3]{beheshti2020moduli}, $W$ admits a del Pezzo fibration.
    However, this contradicts \cite[Section~12.7]{FanoV_Shafarevich}.  Hence the claim holds when $H$ is very ample.
    
    Next, we consider the case $H$ is not very ample.  In this case, $\rho(X) = 1$ and $g(X) \in \{2,3\}$.  We may assume $\dim X = 4$ by Theorem \ref{thm:a-cover slice}.  By running a relative minimal model program, we will show a family of embedded free rational $H$-quartics on $X$ is covered by a family of free rational curves on $Y$.  %
    Using Lemmas \ref{lem: quartic very free} and \ref{lem: specialize Manin components}, we see that any family of embedded quartic curves on $X$ generically parameterizes very free curves.  This is a contradiction, since $\kappa(K_Z - f^*K_W) > 0$.  %
    
     Let $\phi_{K_Z - f^*K_W} \colon Z \dashrightarrow B$ be the Iitaka fibration for $K_Z - f^*K_W$.  Assume $\phi_{K_Z - f^*K_W}$ is a morphism by resolving indeterminacies.  By definition, the divisor $K_Z - f^*K_W$ is rigid on the general fiber $F$ of $\phi_{K_Z - f^*K_W}$, while $-f^*K_W$ is semiample.  Since $-K_W$ is ample, a $K_Z -f^*K_W$ minimal model program on $F$ terminates with a smooth weak del Pezzo surface $S$ whose canonical model $\overline{S}$ maps to $W$.  By \cite{keel_rat_connected} \cite{xu_rat_connected}, there is a very free rational curve $C$ contained in the smooth locus of $\overline{S}$.
    
    The preimage $\tilde{C}$ of $C$ in $F$ satisfies $(K_Z - f^*K_W)\cdot \tilde{C}= 0$, so that $- f^*K_W\cdot \tilde{C}= -K_F\cdot \tilde{C} \geq 3$.  Hence, the component $M_Z \subset \overline{M}_{0,0}(Z, [\tilde{C}])$ to which $\tilde{C}$ belongs parameterizes a family of free curves on $Z$ that dominates a component $M_W \subset \overline{M}_{0,0}(W, f_*[\tilde{C}])$.  By \cite[Theorem 6.7]{beheshti2020moduli}, if $- f^*K_W\cdot \tilde{C} = -K_W\cdot f_*[\tilde{C}] > 4$, the boundary of $M_W$ parameterizes reducible curves whose components are free curves.  As $M_Z$ is proper, the induced map $f_* \colon M_Z \rightarrow M_W$ is surjective.  Hence, if $- f^*K_W \cdot \tilde{C}> 4$ we may degenerate $\tilde{C}$ to a union of free curves of lower degree on $Z$, at least one of which is very free on $F$.  Thus, we may assume $-f^*K_W \cdot \tilde{C}\in \{3,4\}$.

    Recall that $W \subset X$ is a smooth fundamental divisor.  Hence, $-K_W = H$ and $H\cdot f_*[\tilde{C}] \in \{3,4\}$.  Let $g \colon \mathbb{P}^1 \rightarrow W$ be a parameterization of $f(\tilde{C})$ and $i \colon W \rightarrow X$ be the inclusion mapping.  Since $\tilde{C} \subset Z$ is a free curve and the normal bundle of $Z = f^{-1}(W) \subset Y$ is ample on $\tilde{C}$, a general deformation $h \colon \mathbb{P}^1 \rightarrow X$ of $i \circ g$ lifts to $Y$.  As this lifting must be contracted by the Iitaka fibration $\phi\colon Y\rightarrow B$, both virtual normal bundles $N_h$ and $N_{i \circ g}$ have a trivial quotient.  However, the short exact sequence
    $$ 0 \rightarrow N_g \rightarrow N_{i \circ g} \rightarrow \mathcal{O}(g^*H) \rightarrow 0$$
    shows $N_{i \circ g}$ has two positive summands.  %
    By \cite[Theorem 3.14]{Kollar1996}, general deformations of $i \circ g$ are embeddings.  Hence, there is a component $M_Y \subset \overline{M}_{0,0}(Y)$ which dominates a component $M_X \subset \overline{M}_{0,0}(X)$ that generically parameterizes embedded cubics or quartics.  Note that the normal bundle of a general curve parameterized by $M_X$ is globally generated but not ample.  In particular, by Lemmas \ref{lem: quartic very free} and \ref{lem: specialize Manin components}, $M_X$ cannot parameterize $H$-quartics.

    Suppose $M_X$ parameterizes $H$-cubics.  A positive dimensional family of curves parameterized by $M_X$ pass through a general point $p \in X$ and meet a general complete intersection curve $C \subset X$ at a fixed point $q \in X$.  By varying $p$, we may assume $q \in X$ is general as well.  By bend-and-break, the boundary of $M_X$ parameterizes a stable curve which meets $p$ and $q$ along distinct irreducible components $C_p, C_q$.  At least one of $C_p, C_q$ is an $H$-line.  Since $M_Y \rightarrow M_X$ is surjective, a family of free $H$-lines also lifts to $Y$.  By gluing these to cubics parameterized by $M_Y$ and smoothing the resulting curve, we conclude a family of embedded $H$-quartics lifts to $Y$, a contradiction. %
\end{proof}

Lemma \ref{lem: a-covers Iitaka dim 1} allows us to describe the general fiber of any $a$-cover $f : Y \rightarrow X$ explicitly in terms of the Iitaka dimension of $K_Y - f^*K_X$.

\begin{lem}\label{a-covers lemma}
Let $X$ be a smooth coindex $3$ Fano variety of dimension $n \geq 4$.  Suppose $X$ is not a Fano $4$-fold of product type.  Let $H$ be the fundamental divisor on $X$.  Suppose $f\colon Y \rightarrow X$ is an $a$-cover, and let $Y_0$ be the closure of a general fiber of $\phi_{K_Y - f^*K_X}$.  One of the following holds:
\begin{itemize}
    \item $\kappa(K_Y - f^*K_X) = 2$ and $(Y_0, f^*H)$  is birationally equivalent to $(Q, \mathcal{O}_{Q}(1))$, where $Q \subset \mathbb{P}^{n-1}$ is an irreducible (and possibly singular) quadric,
    \item $\kappa(K_Y - f^*K_X) = 3$ and $(Y_0, f^*H)$  is birationally equivalent to $(\mathbb{P}^{n-3}, \mathcal{O}_{\mathbb{P}}(1))$.%
\end{itemize}
\end{lem}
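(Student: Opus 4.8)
The plan is to restrict everything to a general fiber $Y_0$ of the Iitaka fibration $\phi := \phi_{K_Y - f^*K_X}$ and to recognize $\bigl(Y_0, f^*H|_{Y_0}\bigr)$ as an adjoint-rigid polarized pair whose polarization is nef and big and whose $a$-invariant is forced to equal $n-2$; the two cases of the lemma then fall out of the classification of such pairs. First I would record that $D := K_Y - f^*K_X$ is the ramification divisor of $f$, hence effective, and that $D$ cannot be big: if $D$ were big, then $K_Y + (n-2)f^*H$ would lie in the interior of the pseudo-effective cone, so $K_Y + (n-2-\varepsilon)f^*H$ would remain pseudo-effective for small $\varepsilon > 0$, contradicting the $a$-cover hypothesis $a(Y,-f^*K_X) = 1$, i.e.\ $a(Y, f^*H) = n-2$. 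Combined with the facts already recorded in this section --- $\kappa(D) > 0$ by Theorem \ref{thm:a-cover slice} and \cite[Theorem~5.5]{beheshti2020moduli}, and $\kappa(D) \ne 1$ by Lemma \ref{lem: a-covers Iitaka dim 1} --- this gives $2 \le \kappa := \kappa(D) \le n-1$. Resolving $Y$ so that $\phi$ is a morphism (which changes neither $\kappa$ nor the birational class of the pair $(Y_0, f^*H|_{Y_0})$), a general fiber $Y_0$ is a smooth projective variety of positive dimension $n - \kappa$ on which $f$ restricts to a generically finite map onto a proper subvariety of $X$.

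On $Y_0$ I would set $L := f^*H|_{Y_0}$, which is nef and big as the pullback of the ample class $H|_{f(Y_0)}$ under a generically finite morphism. Since the normal bundle of a general fiber is trivial, adjunction gives $K_{Y_0} = K_Y|_{Y_0}$, hence
\[
(K_Y - f^*K_X)|_{Y_0} = K_{Y_0} + (n-2)L .
\]
By the defining property of the Iitaka fibration, $\kappa\bigl(Y_0, K_{Y_0} + (n-2)L\bigr) = 0$. As above this forces $a(Y_0, L) = n-2$: the divisor is effective, so $a(Y_0, L) \le n-2$, and if the inequality were strict then $K_{Y_0} + (n-2)L$ would be the sum of a pseudo-effective divisor and a big divisor, hence big, contradicting $\kappa = 0$. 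In particular $(Y_0, L)$ is \emph{adjoint rigid}, and substituting $\dim Y_0 = n - \kappa$ gives $a(Y_0, L) = \dim Y_0 + (\kappa - 2)$. The standard bound $a(Z, L') \le \dim Z + 1$ for a nef and big $L'$ now yields $\kappa \le 3$, so $\kappa \in \{2,3\}$.

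It then remains to identify $(Y_0, L)$ birationally in each case; since both the $a$-invariant and the polarized pair are birational invariants, one may argue up to birational equivalence. If $\kappa = 3$, then $a(Y_0, L) = \dim Y_0 + 1$, and the equality case of the bound $a \le \dim + 1$ forces $(Y_0, L)$ to be birationally equivalent to $(\mathbb{P}^{n-3}, \mathcal{O}(1))$. If $\kappa = 2$, then $a(Y_0, L) = \dim Y_0$, and I would invoke the classification of polarized pairs of this type (as in \cite[Proposition~3.17]{exceptional_sets}, in arbitrary dimension via \cite[Proposition~1.3]{Hoering2010}): up to birational equivalence $(Y_0, L)$ is either a possibly singular irreducible quadric hypersurface $(Q, \mathcal{O}_Q(1))$ with $Q \subset \mathbb{P}^{n-1}$, or a scroll, resp.\ quadric bundle, over a positive-dimensional base with $L$ the relative tautological class. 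In the latter families $K_{Y_0} + \dim Y_0 \cdot L$ pulls back a positive-dimensional section ring (it involves the pullback of a divisor from the base together with a positive multiple of a divisor that is big along the fibers), so they are not adjoint rigid. Hence adjoint rigidity leaves only the quadric, completing the proof.

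The formal part --- the non-bigness of $D$, the reduction to $Y_0$, the adjunction identity, the computation $a(Y_0,L) = n-2$, and the bound $\kappa \le 3$ --- follows directly once \cite{beheshti2020moduli}, Theorem \ref{thm:a-cover slice}, and Lemma \ref{lem: a-covers Iitaka dim 1} are in hand. I expect the main obstacle to be the last step: pinning down the precise classification statement to cite for nef-and-big (rather than very ample) polarizations in all dimensions, and verifying that adjoint rigidity genuinely eliminates every non-quadric member of the $a = \dim$ list --- in particular the scroll and quadric-bundle cases --- so that the stated dichotomy between $(Q,\mathcal{O}_Q(1))$ and $(\mathbb{P}^{n-3},\mathcal{O}(1))$ is exactly what survives.
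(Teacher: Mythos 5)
Your proof follows the same skeleton as the paper's: rule out $\kappa(K_Y-f^*K_X)=0$ via Theorem \ref{thm:a-cover slice} and \cite[Theorem 5.5]{beheshti2020moduli}, rule out $\kappa=1$ via Lemma \ref{lem: a-covers Iitaka dim 1}, and then classify the general Iitaka fiber as an adjoint rigid pair with $a$-invariant $n-2$. The paper compresses this last step into a single citation of \cite[Proposition~3.17]{exceptional_sets}, whereas you rederive it; everything through the non-bigness of the ramification divisor, the identity $a(Y_0,L)=n-2$, the bound $\kappa\le 3$, and the identification of the $\kappa=3$ case with $(\mathbb{P}^{n-3},\mathcal{O}(1))$ is correct.

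The one sub-step that is wrong as written is your elimination of the scroll in the $\kappa=2$ case. For $\pi\colon \mathbb{P}(E)\rightarrow C$ with $E$ of rank $d$ and tautological class $\xi$, one has $K_{\mathbb{P}(E)}+d\,\xi=\pi^*(K_C+\det E)$: this is a pure pullback from the curve, with no summand that is big along the fibers, so your parenthetical justification does not apply, and the section ring is zero-dimensional precisely when $K_C+\det E$ is torsion. Adjoint rigid scrolls with $\xi$ nef and big therefore do exist: nefness and bigness of $\xi$ together with $\kappa(K_C+\det E)=0$ force $C=\mathbb{P}^1$ and $\deg E=2$, e.g.\ $E=\mathcal{O}(1)^{\oplus 2}\oplus\mathcal{O}^{\oplus d-2}$ or $E=\mathcal{O}(2)\oplus\mathcal{O}^{\oplus d-1}$. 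The lemma survives because in exactly these cases $|\xi|$ maps $\mathbb{P}(E)$ birationally onto a $d$-dimensional subvariety of $\mathbb{P}^{d+1}$ of degree $\xi^d=\deg E=2$, i.e.\ a singular quadric of rank $4$ or $3$, so the pair $(\mathbb{P}(E),\xi)$ is birationally equivalent to $(Q,\mathcal{O}_Q(1))$ and is already accounted for by the first alternative of the statement. The correct conclusion is thus not that adjoint rigidity kills the scroll case, but that the adjoint rigid scrolls are themselves (possibly singular) quadrics in the sense of the lemma; with that repair your argument is complete and agrees with what \cite[Proposition~3.17]{exceptional_sets} delivers.
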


\begin{proof}
    By \cite[Theorem 5.5]{beheshti2020moduli}, there are no adjoint rigid $a$-covers of smooth Fano $3$-folds.  Hence, by Theorem \ref{thm:a-cover slice}, $\kappa(K_Y - f^*K_X) > 0$.  By Lemma \ref{lem: a-covers Iitaka dim 1}, $\kappa(K_Y -f^*K_X) \neq 1$ as well.  As $-K_X = (n - 2)H$, our result follows from \cite[Proposition~3.17]{exceptional_sets}.
\end{proof}

We finish the proof of Theorem \ref{classification a-covers} by analyzing normal bundles of low-degree free curves.  

\begin{proof}[Proof of Theorem \ref{classification a-covers}]
    Suppose $f \colon Y \rightarrow X$ is not birational to the base change of a Mori fibration.  If $\kappa(K_Y - f^*K_X) = 3$ and $\dim X = 4$, then $Y$ is birational to a base change of the family of $H$-lines on $X$.  Otherwise, if $\kappa(K_Y - f^*K_X) = 3$ and $\dim X > 4$, Lemma \ref{a-covers lemma} shows a family $M$ of free, embedded conics lift to $Y$.  If $\kappa(K_Y - f^*K_X) = 2$, by Theorem \ref{thm:a-cover slice} we may assume $\dim X = 4$.  In this case a family $M$ of free, embedded cubics lifts to $Y$ instead.

    We may assume $M$ parameterizes curves whose class $\alpha$ does not lie in the boundary of $\overline{NE}(X)$.  Otherwise, the Iitaka fibration for $K_Y - f^*K_X$ would be birational to the base change of a Mori fibration because $M$ parameterizes curves that pass through two general points of a general fiber of the Iitaka fibration for $K_Y - f^*K_X$.   
    
    By Theorem \ref{thm: low degree curves}, when $X$ is general there is merely one family of embedded free curves of class $\alpha$ on $X$.  Since this family will contain curves lying in smooth $3$-fold linear sections of $X$, their normal bundles cannot have the required number of trivial summands.  Our result for arbitrary $X$ follows from Lemma \ref{lem: specialize Manin components}.
\end{proof}

\section{Proof of Main Theorems}\label{section: proof of Main thms}

Here we prove Theorem \ref{thm: main result 1}, Theorem \ref{thm: main result 2}, and Corollary \ref{thm: GMC}.  Theorem \ref{classification higher a} is proven in Section \ref{section: subvar with higher a}.  Theorems \ref{MBB} and \ref{thm: component reducible stable maps} are proven in Section \ref{section:mbb}.  

\begin{proof}[Proof of Theorem \ref{thm: main result 1}, Theorem \ref{thm: main result 2}, and Corollary \ref{thm: GMC}]
    Let $X$ be a smooth Fano variety of coindex 3.  We may assume $X$ is not a product variety and $\dim X \geq 4$, as Corollary \ref{cor: product varieties GMC} addresses the remaining cases of Corollary \ref{thm: GMC}.  Let $\alpha \in \overline{NE}(X)_{\mathbb{Z}}$.  If $\dim X \geq 5$, we must show $\Mor(\mathbb{P}^1, X, \alpha)$ is irreducible.  This claim is evident from Theorems \ref{classification Picard rank 1} and \ref{classification Picard rank 2} if $\alpha \in \partial\overline{NE}(X)$ or if $\alpha \notin \Nef_1(X)$.  Similarly, Theorem \ref{thm: low degree curves}, Theorem \ref{classification a-covers}, and Lemma \ref{lem: specialize Manin components} prove our claim if $H \cdot \alpha = 1$.  We therefore reduce to considering $\alpha \in \Nef_1(X)_{\mathbb{Z}}\setminus \partial\overline{NE}(X)$ such that $H \cdot \alpha \geq 2$.

    Theorems \ref{classification a-covers} and \ref{classification higher a} describe all accumulating (non-Manin) components of $\Mor(\mathbb{P}^1, X,\alpha)$; these exist only when $\dim X = 4$ and parameterize either a non-dominant family of curves or multiple covers of $H$-lines.  
    For any Manin component $M \subset \Mor(\mathbb{P}^1, X)$, the universal family $\pi : U \rightarrow M$ has connected fibers under the evaluation map $\text{ev}: U \rightarrow X$.  Hence, if $f : \mathbb{P}^1 \rightarrow X$ is a general map parameterized by $M$, the pullback $f^*\mathcal{T}_X$ will satisfy certain positivity properties guaranteed by \cite[Proposition~3.1]{patel2020moduli}.  In particular, \cite[Theorem 3.14]{Kollar1996} shows $f$ will be an embedding.  %
    It remains to show there is a unique component of $\Mor(\mathbb{P}^1,X,\alpha)$ generically parameterizing embedded free curves.  Equivalently, we must show $\overline{M}_{0,0}(X,\alpha)$ contains only one component generically parameterizing immersed free curves.
    
    Let $M \subset \overline{M}_{0,0}(X, \alpha)$ be an irreducible component which generically parameterizes embedded free curves.  We will show there is exactly one such a component.  When $\rho(X) = 2$ and $g(X) > 11$, \cite{Thomsen1998}, \cite{KimPandharipande2001}, and Lemma \ref{lem: mbb contractible divisor} prove $M$ is unique.  Hence we may assume $X$ does not contain a contractible divisor.  %
    We prove our claim for such $X$ by induction on $d = H \cdot \alpha$. %

    The base cases of our induction are $d = 2, 3$ when $\dim X = 4$ and $d = 1,2$ when $\dim X \geq 5$.  Theorem \ref{thm: low degree curves}, Theorem \ref{classification a-covers}, and Lemma \ref{lem: specialize Manin components} prove $\Mor(\mathbb{P}^1,X,\alpha)$ contains a unique Manin component in these cases.  Therefore, $M$ is unique as well.  
    
    For the inductive step, we study the boundary of $M$.  By Movable bend-and-break (Theorem \ref{MBB}), $M$ contains a stable map $f : C_1 \cup C_2 \rightarrow X$ such that $f|_{C_i}$ is free for each $i$.  Since any family of free curves dominates $X$, one can apply Movable bend-and-break repeatedly until one obtains a chain $f : C_1 \cup \ldots \cup C_d \rightarrow X$ of $H$-lines.  
    \begin{enumerate}[(i)]
            \item Assume $\rho(X) = 1$.
            Smoothing subchains of $C_1 \cup \ldots \cup C_d$ of length $2$ and $d-2$, we obtain the nodal union of two free curves $g\colon C_{1}' \cup C_{2}'\rightarrow X$ such that $H\cdot g_{*}C_{1}' = 2$ and $H\cdot g_{*}C_{2}' = d-2$.  For each $i$, $g|_{C_i'}$ is parameterized by a Manin component of $\overline{M}_{0,0}(X, g_*C_i')$.  By induction on $d$, $[g] \in \overline{M}_{0,0}(X,\alpha)$ lies in the image of a unique component of $\overline{M}_{0,1}(X, g_*C_1') \times_X \overline{M}_{0,1}(X, g_*C_2')$.

            \item Assume $\rho(X) = 2$ and $g(X) \le 11$.
            Smoothing a subchain of $C_1 \cup \ldots \cup C_d$ length $d-1$, we obtain the nodal union of two free curves $g\colon C_{1}' \cup C_{2}'\rightarrow X$ such that $H\cdot g_{*}C_{1}' = 1$, $H\cdot g_{*}C_{2}' = d-1$, and $g_{*}C_{2}' \notin \partial\overline{NE}(X)$.
            Since $\overline{M}_{0,0}(X, g_{*}C_{1}')$ is irreducible, one can also show the claim by induction on $d$.
        \end{enumerate}
    \end{proof}

\printbibliography
\end{document}